\newtheorem{theorem}{Theorem}[section]
\newtheorem{corollary}[theorem]{Corollary}
\newtheorem{lemma}[theorem]{Lemma}
\newtheorem{proposition}[theorem]{Proposition}
\theoremstyle{definition}
\newtheorem{definition}[theorem]{Definition}
\newtheorem{remark}[theorem]{Remark}
\newtheorem{example}[theorem]{Example}
\numberwithin{equation}{section}
\newcommand\R{\mathbb{R}}
\newcommand\Z{\mathbb{Z}}
\newcommand\N{\mathbb{N}}
\newcommand\C{\mathbb{C}}
\newcommand\n{\mathbf{n}}
\newcommand\eps{\varepsilon}
\renewcommand{\Re}{\textnormal{Re}}
\newcommand\Poly{{\operatorname{Poly}}}
\newcommand\TV{{\operatorname{TV}}}
\newcommand\Lip{{\operatorname{Lip}}}
\renewcommand\deg{\textnormal{deg}}
\newcommand{\meas}{\textnormal{meas}}
\let\oldpmod\pmod
\renewcommand{\pmod}[1]{\hspace{-0.12cm}\oldpmod {#1}}
\let\oldmod\mod
\renewcommand{\mod}[1]{\hspace{-0.12cm}\oldmod {#1}}
\newcommand{\asum}{\sideset{}{^{\ast}}\sum}
\begin{document}

\author[K. Matom\"aki]{Kaisa Matom\"aki}
\address{Department of Mathematics and Statistics \\
University of Turku, 20014 Turku\\
Finland}
\email{ksmato@utu.fi}

\author[M. Radziwi{\l\l}]{Maksym Radziwi{\l\l}}
\address{Department of Mathematics, Northwestern University, 2033 Sheridan Road, Evanston, IL 60208, USA}
\email{maksym.radziwill@northwestern.edu}

\author[X. Shao]{Xuancheng Shao}
\address{Department of Mathematics, University of Kentucky\\
715 Patterson Office Tower\\
Lexington, KY 40506\\
USA}
\email{xuancheng.shao@uky.edu}

\author[T. Tao]{Terence Tao}
\address{Department of Mathematics, UCLA\\
405 Hilgard Ave\\
Los Angeles, CA 90095\\
USA}
\email{tao@math.ucla.edu}

\author[J. Ter\"av\"ainen]{Joni Ter\"av\"ainen}
\address{Department of Pure Mathematics and Mathematical Statistics, University of Cambridge\\
Cambridge CB3 0WB\\
UK}
\email{joni.p.teravainen@gmail.com}

\title[Higher uniformity II]{Higher uniformity of arithmetic functions in short intervals II.  Almost all intervals}

\begin{abstract}
We study higher uniformity properties of the von Mangoldt function $\Lambda$, the M\"obius function $\mu$,  and the divisor functions $d_k$ on short intervals $(x,x+H]$ for almost all $x \in [X, 2X]$.

Let $\Lambda^\sharp$ and $d_k^\sharp$ be suitable approximants of $\Lambda$ and $d_k$, $G/\Gamma$ a filtered nilmanifold, and $F \colon G/\Gamma \to \C$ a Lipschitz function. Then our results imply for instance that when $X^{1/3+\varepsilon} \leq H \leq X$ we have, for almost all $x \in [X, 2X]$,
\[
\sup_{g \in \Poly(\Z \to G)} \left| \sum_{x < n \leq x+H} (\Lambda(n)-\Lambda^\sharp(n)) \overline{F}(g(n)\Gamma) \right| \ll H\log^{-A} X
\]
for any fixed $A>0$, and that when $X^{\varepsilon} \leq H \leq X$ we have, for almost all $x \in [X, 2X]$,
\[
\sup_{g \in \Poly(\Z \to G)} \left| \sum_{x < n \leq x+H} (d_k(n)-d_k^\sharp(n)) \overline{F}(g(n)\Gamma) \right| = o(H \log^{k-1} X).
\]

As a consequence, we show that the short interval Gowers norms $\|\Lambda-\Lambda^\sharp\|_{U^s(X,X+H]}$ and $\|d_k-d_k^\sharp\|_{U^s(X,X+H]}$ are also asymptotically small for any fixed $s$ in the same ranges of $H$. This in turn allows us to establish the Hardy--Littlewood conjecture and the divisor correlation conjecture with a short average over one variable.

Our main new ingredients are type $II$ estimates obtained by developing a ``contagion lemma'' for nilsequences and then using this to ``scale up'' an approximate functional equation for the nilsequence to a larger scale. This extends an approach developed by Walsh for Fourier uniformity.
\end{abstract}

\maketitle

\tableofcontents

\section{Introduction}

This paper is a sequel to our previous paper~\cite{MSTT-all}, which was focused on discorrelation of arithmetic functions $f \colon \mathbb{N} \to \mathbb{C}$ with arbitrary nilsequences $n \mapsto F(g(n) \Gamma)$ in all short intervals $(X,X+H]$ with $H = X^\theta$.  In this paper we consider the corresponding question for \emph{almost all} short intervals $(x,x+H]$ with $x \in [X,2X]$, thus we permit a small number of exceptional short intervals on which there might be strong correlation.  It turns out that in this setting we can reduce the exponent $\theta$ significantly.

As in the previous paper we will restrict our attention to the following model examples of functions $f$ and their approximants $f^\sharp$:
\begin{itemize}
\item[$\mu$:]  The \emph{M\"obius function} $\mu(n)$, defined as $(-1)^j$ when $n$ is the product of $j$ distinct primes, and $0$ otherwise, with the approximant $\mu^\sharp$ set equal to zero.
\item[$\Lambda$:]  The \emph{von Mangoldt function} $\Lambda(n)$, defined to equal $\log p$ when $n$ is a power $p^j$ of a prime $p$ for some $j \geq 1$, and $0$ otherwise, with approximant
\begin{equation}\label{eq:Lambdasharpdef}
 \Lambda^\sharp(n) \coloneqq \frac{P(R)}{\varphi(P(R))}1_{(n,P(R))=1}, \quad \text{where}\quad  P(w)\coloneqq\prod_{p<w}p,\quad R \coloneqq \exp( (\log X)^{1/10} ).
\end{equation}
\item[$d_k$:]  The \emph{$k^{\mathrm{th}}$ divisor function} $d_k(n)$, defined to equal to the number of representations of $n$ as the product $n=n_1\cdots n_k$ of $k$ natural numbers, where $k \geq 2$ is a fixed natural number.  (In particular, all implied constants in our asymptotic notation are allowed to depend on $k$.)  The approximant $d_k^\sharp$ will be taken to be
\begin{equation}\label{dks-def}
 d_k^\sharp(n) \coloneqq \sum_{\substack{m \leq R_k^{2k-2}\\ m\mid n}} P_m(\log n),
\end{equation}
where $R_k$ is the parameter
\begin{equation*}
 R_k \coloneqq X^{\frac{\varepsilon}{10k}},
\end{equation*}
where $0 < \varepsilon \leq 1$ is small but fixed\footnote{In~\cite{MSTT-all} we stated our results for $\varepsilon=1$, but as noted in~\cite[Section 1]{MSTT-all}, the arguments work equally well with smaller choices of $\varepsilon$, so long as the constants in the asymptotic notation are allowed to depend on $\varepsilon$.}
and the polynomials $P_m(t)$ (which have degree at most $k-1$) are given by the formula
\begin{align}\label{eq:Pmt} P_m(t) \coloneqq \sum_{j=0}^{k-1} \binom{k}{j} \sum_{\substack{m = n_1 \dotsm n_{k-1}  \\ n_1,\dots,n_j \leq R_k  \\ R_k < n_{j+1},\dots,n_{k-1} \leq R_k^2}} \frac{\left( t - \log(n_1 \dotsm n_j R_k^{k-j})\right)^{k-j-1}}{(k-j-1)! \log^{k-j-1} R_k}.
\end{align}
\end{itemize}
We refer the reader to the discussion in~\cite[Section 3.1]{MSTT-all} for a justification for choosing these approximants $\mu^{\sharp}, \Lambda^\sharp$, $d_k^\sharp$.

By a ``nilsequence'', we mean a function of the form $n \mapsto F(g(n)\Gamma)$, where $G/\Gamma$ is a filtered nilmanifold and $F \colon G/\Gamma \to \C$ is a Lipschitz function.  The precise definitions of these terms will be given in Section~\ref{nilmanifold-sec}, but
for now we remark (as mentioned for example in~\cite[Section 1]{MSTT-all}) that polynomial phases $F(g(n)\Gamma) = e(P(n))$, with $P \colon \Z \to \R$ a polynomial of degree $d$, are a special case of nilsequences.

As in~\cite{MSTT-all}, for technical reasons, it can be beneficial to consider ``maximal discorrelation'' estimates. Loosely following Robert and Sargos~\cite{robert-sargos}, we adopt the notation that, for an interval $I$,
\begin{equation}\label{maximal-sum}
 \left|\sum_{n \in I \cap \Z} f(n)\right|^* \coloneqq \sup_{P \subset I \cap \Z} \left|\sum_{n \in P} f(n)\right|,
\end{equation}
where $P$ ranges over all arithmetic progressions in $I \cap \Z$.

Now we are ready to state our main theorem\footnote{For definitions of terms such as ``filtered nilmanifold'' and $\Poly(\Z \to G)$, see Definitions~\ref{def:filtNilman} and~\ref{def:FiltGroup} below.  For our conventions for asymptotic notation, see Section~\ref{notation-sec}.}.

\begin{theorem}[Discorrelation estimate]\label{discorrelation-thm} Let $X \geq 3$ and $X^{\theta+\varepsilon} \leq H \leq X^{1-\varepsilon}$ for some $0 \leq \theta < 1$ and $\eps > 0$. Let $\delta \in (0, 1/2)$. Let $G/\Gamma$ be a filtered nilmanifold of some degree $d$ and dimension $D$, and complexity at most $1/\delta$, and let $F \colon G/\Gamma \to \C$ be a Lipschitz function of norm at most $1/\delta$.
\begin{itemize}
\item[(i)]  If $\theta = 1/3$ and $A>0$, then
\begin{align*}
\sup_{g \in \Poly(\Z \to G)} \left| \sum_{x < n \leq x+H} \mu(n) \overline{F}(g(n)\Gamma) \right|^* &\leq \frac{H}{\log^{A} X}
\end{align*}
for all $x\in [X,2X]$ outside of a set of measure $O_{A}(\delta^{-O_{d,D,\varepsilon}(1)}X\log^{-A}X)$.
\item[(ii)] If $\theta = 1/3$ and $A>0$, then
\begin{align*}
\sup_{g \in \Poly(\Z \to G)} \left| \sum_{x < n \leq x+H} (\Lambda(n) - \Lambda^\sharp(n)) \overline{F}(g(n)\Gamma) \right|^* &\leq \frac{H}{\log^{A} X}
\end{align*}
for all $x\in [X,2X]$ outside of a set of measure $O_{A}(\delta^{-O_{d,D,\varepsilon}(1)}X\log^{-A}X)$.
\item[(iii)] If $\theta = 1/3$ and $k \geq 2$, then for some constant $c_{k,d,D,\varepsilon}>0$ we have
\begin{align*}
\sup_{g \in \Poly(\Z \to G)}\left| \sum_{x < n \leq x+H} (d_k(n) - d_k^\sharp(n)) \overline{F}(g(n)\Gamma) \right|^* \leq \frac{H}{X^{c_{k,d,D,\varepsilon}}}
\end{align*}
for all $x\in [X,2X]$ outside of a set of measure $O( \delta^{-O_{d,D,\varepsilon}(1)}X^{1-c_{k,d,D,\varepsilon}})$.
\item[(iv)] If $\theta=0$, $A>0,$ and $\eta>0$, and additionally $F$ is $1$-bounded, then
\begin{align*}
\sup_{g \in \Poly(\Z \to G)} \left| \sum_{x < n \leq x+H} \mu(n) \overline{F}(g(n)\Gamma) \right|^*\leq \eta H
\end{align*}
for all $x \in [X,2X]$ outside of a set of measure $O_{A}(\delta^{-O_{d,D,\varepsilon,\eta}(1)} X \log^{-A} X)$.
\item[(v)] If $\theta = 0$, $A>0$, $k \geq 2$ and $\eta>0$, and additionally $F$ is $1$-bounded, then
\begin{align*}
\sup_{g \in \Poly(\Z \to G)}\left| \sum_{x < n \leq x+H} (d_k(n) - d_k^\sharp(n)) \overline{F}(g(n)\Gamma) \right|^* \leq \eta H \log^{k-1} X
\end{align*}
for all $x \in [X,2X]$ outside of a set of measure $O_{A}(\delta^{-O_{d,D,\varepsilon,\eta}(1)} X \log^{-A} X)$.
\end{itemize}
\end{theorem}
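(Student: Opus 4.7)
The plan is to reduce all five parts to type $I$ and type $II$ bilinear estimates via combinatorial identities (Heath--Brown for $\mu$ and $\Lambda$, direct convolution for $d_k$), to treat the type $I$ sums by equidistribution on $G/\Gamma$, and to establish the type $II$ bound via the new nilmanifold contagion technique advertised in the abstract. After subtracting the approximants $\Lambda^\sharp$ and $d_k^\sharp$, which by design annihilate the type $I$ shapes up to scales $R$ and $R_k^{2k-2}$ respectively, one is left with bilinear convolutions $\alpha*\beta$ whose supports $N_1,N_2$ satisfy $X^{1/3}\lesssim N_i\lesssim X^{2/3}$; this is precisely why the exponent $\theta=1/3$ appears in parts (i)--(iii). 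The maximal modification $|\cdot|^*$ is absorbed throughout by writing $1_P$, for $P\subset(x,x+H]\cap\Z$ an arithmetic progression, as a short average of $O(\log X)$ linear Fourier phases, each of which can be folded into the polynomial sequence $g\in\Poly(\Z\to G)$ without inflating its complexity.

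For the type $I$ sums I would invoke the proof (rather than only the statement) of Green--Tao equidistribution: in each short interval $(x,x+H]$, the polynomial orbit $(g(dm)\Gamma)_m$ either equidistributes on $G/\Gamma$, giving direct cancellation against the nontrivial Fourier content of $F$, or factors through a subnilmanifold of lower complexity; in the latter case the Mal'cev structure combined with the smooth variation of $\alpha(d)$ in $d$ supplies the required bound outside an exceptional set of measure $O(\delta^{-O(1)}X\log^{-A}X)$.

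The heart of the argument is the type $II$ estimate, which I would prove in the mean-square form
\begin{align*}
\int_X^{2X} \sup_{g,\alpha,\beta} \left|\sum_{\substack{n_1n_2\in(x,x+H]\\ n_i\sim N_i}}\alpha(n_1)\beta(n_2)\overline{F}(g(n_1n_2)\Gamma)\right|^{*2}\!dx\ \ll\ \delta^{-O(1)} X H^2\log^{-2A}X,
\end{align*}
and then pass to the pointwise statement by Markov's inequality. Expanding the square, applying Cauchy--Schwarz in $n_2$, and integrating in $x$ reduces matters to controlling averaged correlations of the shifted nilsequences $n_1\mapsto F(g(n_1n_2)\Gamma)\overline{F}(g(n_1n_2')\Gamma)$ over pairs $(n_2,n_2')$ with $|n_2-n_2'|=O(H/N_1)$. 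If these correlations are atypically large for many such pairs, the contagion lemma forces $F(g(\cdot)\Gamma)$ to satisfy an approximate multiplicative functional equation at scale $H$; iterating the contagion over $n_1$ scales this equation up first to scale $HN_1$ and then, after a further round, to a scale close to $X$, at which point the Green--Tao factorisation theorem forces $g$ onto a subnilmanifold of smaller dimension, and the argument closes by induction on $D$.

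The main obstacle will be the contagion-and-scale-up step itself. In the nilmanifold setting the elementary manipulation of linear phases available in Walsh's Fourier-side argument must be replaced by a careful orbit-closure analysis in Mal'cev coordinates, arranged so that the lower-degree component of $g$ absorbs the structural information while the top-degree piece is genuinely simplified at each stage, with explicit polynomial control on complexity loss so that the final exceptional-set bound $O(\delta^{-O_{d,D,\varepsilon}(1)}X\log^{-A}X)$ survives. Parts (iv) and (v), in the longer range $H\geq X^\varepsilon$, should then follow by combining the above type $II$ estimate with Matom\"aki--Radziwi\l{}\l{}-style mean-value machinery for $\mu$ and $d_k$, which already yields qualitative almost-everywhere cancellation at scale $X^\varepsilon$ and meshes cleanly with the new nilsequence input.
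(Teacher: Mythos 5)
Your outline captures the broad shape of the argument (combinatorial decomposition, type $I$/type $II$ dichotomy, a contagion-and-rescaling step for the type $II$ minor arcs, induction on dimension), but there are two concrete gaps, each of which is fatal as written. First, the combinatorial decomposition is wrong at the exponent $\theta=1/3$. The type $II$ inverse theorem only yields a contradiction when one of the two convolution factors is supported on $[X^{\eta}, \delta^{O(1)}H]$, i.e.\ strictly below $H=X^{1/3+\varepsilon}$ --- not merely in $[X^{1/3},X^{2/3}]$ as you assert. After Heath--Brown, the configurations with two variables of size $\approx X^{1/2}$ (or more generally $N_1N_2> X^{1-\varepsilon/2}$ with both $N_i\leq X^{2/3-\varepsilon/2}$) are neither type $I$ nor usable type $II$, and they must be handled by a separate \emph{type $I_2$} estimate (sums of the form $\alpha*1*1$ with $\alpha$ supported on $[1,X^{(3\theta-1)/2+\varepsilon/2}]$), which is exactly the input that forces $\theta\geq 1/3$. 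Your proposal omits this class entirely, so the decomposition does not close; your stated reason for the exponent $1/3$ misidentifies the mechanism.

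Second, the minor-arc machinery cannot terminate the way you describe, because the approximate multiplicative functional equation produced by the contagion/scaling-up step has genuine solutions: in the abelian case the surviving phases are precisely the truncated Taylor expansions of $n\mapsto T\log n$ with $|T|\ll\delta^{-O(1)}(X/H)^{d+1}$, i.e.\ the Archimedean characters $n^{iT}$. The Green--Tao factorisation theorem and the induction on $D$ cannot rule these out; one needs a separate \emph{major arc} estimate showing that $\mu$, $\Lambda-\Lambda^\sharp$ and $d_k-d_k^\sharp$ discorrelate with $n^{iT}$ in almost all short intervals, which in this range of $H$ rests on Parseval/variance identities, Vinogradov--Korobov bounds, the Baker--Harman--Pintz mean-value lemma for Dirichlet polynomials, and (for large $|T|$) van der Corput. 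This analytic component is entirely absent from your plan, and without it the induction does not close. Two further cautions: formulating the type $II$ bound in mean square with $\sup_{g,\alpha,\beta}$ \emph{inside} the integral is both unnecessary (the $\alpha,\beta$ from Heath--Brown are fixed) and problematic, since after expanding the square the $x$-dependence of $g_x$ prevents the interchange of summations you rely on --- the contagion argument is designed precisely to relate $g_x$ at \emph{different} base points $x$, not to self-correlate a single fixed $g$; and for parts (iv)--(v) the correct substitute for Heath--Brown is a Tur\'an--Kubilius-type identity introducing a prime factor in $[X^{\kappa},X^{\varepsilon/10}]$, which again feeds into both the type $II$ inverse theorem and a weighted major arc estimate.
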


By the preceding remarks, Theorem~\ref{discorrelation-thm} works in particular with polynomial phase twists. Hence
we have the following corollary (proven in \Cref{sec:polyphase}), which is new already for linear phases.

\begin{corollary}[Discorrelation of $\mu$ and $\Lambda$ with polynomial phases]\label{cor:discorrelation} Let $X\geq 3$ and $X^{1/3+\varepsilon}\leq H\leq X^{1-\varepsilon}$ for some $\varepsilon>0$. Let $d\geq 1$, and for each $x\in [X,2X]$ let $P_x\colon \mathbb{Z}\to \mathbb{R}$ be a polynomial of degree $d$. Also let $A>0$.
\begin{itemize}
    \item[(i)] We have
    \begin{align*}
     \left|\sum_{x<n\leq x+H}\mu(n)e(P_x(n))\right|\leq \frac{H}{\log^A X}
    \end{align*}
for all $x\in [X,2X]$ outside of a set of measure $O_{A,d,\varepsilon}(X\log^{-A}X)$.

    \item[(ii)]  Either we have
    \begin{align*}
     \left|\sum_{x<n\leq x+H}\Lambda(n)e(P_x(n))\right|\leq \frac{H}{\log^A X}
    \end{align*}
for all $x\in [X,2X]$ outside of a set of measure $O_{A,\varepsilon,d}(X\log^{-A}X)$ or there exists an integer $1\leq q\ll (\log X)^{O_{A,d,\varepsilon}(1)}$ such that
\begin{align*}
\max_{1\leq j\leq d}H^j\|q\alpha_{j,x}\|_{\R/\Z}\leq (\log X)^{O_{A,d,\varepsilon}(1)}
\end{align*}
for measure $\gg_{A,d,\varepsilon}X\log^{-A}X$ of $x\in [X,2X]$,
where $\alpha_{j,x}$ is the degree $j$ coefficient of the polynomial $n\mapsto P_x(x+n)$ and $\|y\|_{\R/\Z}$ denotes the distance from $y$ to the nearest integer(s).
\end{itemize}
\end{corollary}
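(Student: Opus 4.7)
Proof plan. The plan is to reduce both parts to Theorem~\ref{discorrelation-thm} by realizing polynomial phases as nilsequences on the abelian degree-$d$ nilmanifold $G/\Gamma=\R/\Z$, equipped with the filtration $G_i=\R$ for $0\leq i\leq d$ and $G_i=\{0\}$ otherwise, and the single Lipschitz function $F(\theta)=e(\theta)$ of complexity $O_d(1)$. Every polynomial phase $n\mapsto e(P_x(n))$ with $\deg P_x\leq d$ then equals $F(g_x(n)\Gamma)$ for some $g_x\in\Poly(\Z\to G)$, so with a constant $\delta$ depending only on $d$, part~(i) follows immediately from Theorem~\ref{discorrelation-thm}(i), whose supremum over $\Poly(\Z\to G)$ dominates the quantity in~(i) pointwise in $x$.

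For part~(ii), the analogous application of Theorem~\ref{discorrelation-thm}(ii) yields
\[
\left|\sum_{x<n\leq x+H}(\Lambda(n)-\Lambda^\sharp(n))e(P_x(n))\right|\leq \frac{H}{2\log^A X}
\]
outside a set of measure $O_{A,d,\varepsilon}(X\log^{-A}X)$. If the first alternative of~(ii) fails, then choosing the implied constants appropriately, the triangle inequality produces a set $E\subseteq[X,2X]$ of measure $\gg_{A,d,\varepsilon}X\log^{-A}X$ on which
\[
\left|\sum_{x<n\leq x+H}\Lambda^\sharp(n)e(P_x(n))\right|>\frac{H}{2\log^A X}.
\]
It remains to show that this lower bound on the sieve-weighted polynomial phase sum forces the asserted major arc condition on the coefficients $\alpha_{j,x}$ of $n\mapsto P_x(x+n)$.

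For this final deduction, I would invoke a classical minor-arcs estimate for $\Lambda^\sharp$: for any polynomial $Q$ of degree $d$ and any interval $I\subseteq[X,2X]$ of length $H$ as above,
\[
\left|\sum_{n\in I}\Lambda^\sharp(n)e(Q(n))\right|\ll \frac{H}{\log^A X}
\]
whenever no $1\leq q\leq\log^{O_{A,d}(1)}X$ supplies simultaneous rational approximations $|I|^j\|q\cdot[n^j]Q\|_{\R/\Z}\leq\log^{O_{A,d}(1)}X$ for all $1\leq j\leq d$. Such an estimate follows from a standard Weyl-differencing of the polynomial phase sum, reducing it to the linear Fourier transform of $\Lambda^\sharp$, combined with Vinogradov-type minor-arcs bounds for that transform -- which, unlike for $\Lambda$, are essentially elementary because $\Lambda^\sharp$ is purely a sieve weight with no type II obstruction. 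Applying this estimate contrapositively with $Q(n)=P_x(n)$, and translating via $n=x+m$ to convert the coefficients into the $\alpha_{j,x}$, then yields the claimed major arc condition.

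The main obstacle in carrying this out is the Weyl-differencing bookkeeping in the third step: one must invoke a Weyl-type inequality that controls \emph{all} polynomial coefficients simultaneously with a single polylogarithmic modulus (not merely the leading one), and then track how the $d-1$ iterated differencings lift a Diophantine approximation on the resulting linear frequency back to simultaneous approximations on every coefficient $\alpha_{j,x}$ of $P_x(x+n)$, uniformly in $x$ and with the final modulus $q$ remaining of size $\log^{O_{A,d,\varepsilon}(1)}X$.
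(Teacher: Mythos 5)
Your treatment of part (i) and the first reduction in part (ii) coincides with the paper's: part (i) is immediate from Theorem~\ref{discorrelation-thm}(i) upon viewing $e(P_x(n))$ as a nilsequence on $\R/\Z$ with a degree-$d$ filtration, and if the first alternative of (ii) fails then Theorem~\ref{discorrelation-thm}(ii) and the triangle inequality force $\left|\sum_{x<n\leq x+H}\Lambda^{\sharp}(n)e(P_x(n))\right|\gg H\log^{-A}X$ on a set of measure $\gg_{A,d,\varepsilon}X\log^{-A}X$. The gap is in the step you yourself flag as the ``main obstacle'': you have not proved the minor-arc estimate for $\Lambda^{\sharp}$ twisted by a polynomial phase, and the route you sketch for it (Weyl-differencing the degree-$d$ phase down to a linear one and then invoking ``Vinogradov-type'' bounds for the Fourier transform of $\Lambda^{\sharp}$) is not the right mechanism. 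Iterated Weyl differencing yields Diophantine information only on the \emph{leading} coefficient, with $2^{d-1}$-type losses; recovering simultaneous polylogarithmic-quality approximations to \emph{all} coefficients $\alpha_{j,x}$, with a \emph{single} modulus $q$ valid on a positive-measure set of $x$, is precisely the nontrivial content you would still have to supply. Moreover ``Vinogradov-type minor-arc bounds'' are a tool for $\Lambda$, not for the sieve weight $\Lambda^{\sharp}$, whose exponential sums are controlled by type $I$ (divisor-switching) arguments.

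The paper closes this step without any new Weyl analysis, using machinery already available in the text: Lemma~\ref{le:dkdecompose}(i) writes $\Lambda^{\sharp}=a*1+E$ with $a$ supported on $[1,X^{\varepsilon/5}]$, $|a|\ll\log X$, and $E$ negligible on short intervals, so the large $\Lambda^{\sharp}$-sum becomes a large type $I$ correlation. One then applies the type $I$ inverse theorem, Theorem~\ref{inverse}(i) (which already produces a single nontrivial horizontal character valid for a positive-measure subset of $x$), together with Remark~\ref{rem:RZhorchar} to identify that character with multiplication by an integer $q\ll(\log X)^{O_{A,d,\varepsilon}(1)}$, obtaining $\|qP_x\bmod 1\|_{C^{\infty}(x,x+H]}\ll(\log X)^{O_{A,d,\varepsilon}(1)}$; the coefficient bounds on $\alpha_{j,x}$ then follow from~\eqref{eq:smoothcomparison}. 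You should replace your Weyl-differencing sketch with this citation chain; as written, the final deduction of part (ii) is not established.
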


In case of correlations of $d_2$ with a \emph{fixed linear phase}, we can obtain power-saving estimates for almost all sums of length $X^{\varepsilon}$; see Theorem~\ref{thm_d2} below.

\subsection{Previous results}
We start by discussing the results concerning the von Mangoldt function $\Lambda$. Huxley's classical zero density estimate  can be used to show (see for example~\cite[Section 9.1]{harman-book}) that for any $A>0$ and $X^{1/6+\varepsilon} \leq H \leq X$ we have
\begin{align}\label{eq:lambdashortsum}
 \left| \sum_{x < n \leq x+H} (\Lambda(n) - 1) \right|\leq  H \log^{-A} X
\end{align}
for all $A>0$ and for all $x\in [X,2X]$ outside of an exceptional set of measure $O_{A,\varepsilon}(X\log^{-A}X)$; the exponent $1/6$ was recently improved to $2/15$ by Guth and Maynard~\cite{guth-maynard}. Moreover, under\footnote{One can also obtain similar results assuming just the Lindel\"of hypothesis or the density hypothesis, though perhaps without the power saving.} the Riemann hypothesis, the regime of $H$ can be extended to $X^{\varepsilon} \leq H \leq X$ (in fact, one has a power-saving bound for the left-hand side of~\eqref{eq:lambdashortsum} then). Using the fundamental lemma of the sieve, one can replace the approximant $1$ here by the more refined approximant $\Lambda^\sharp$, and a modification of the arguments used to prove~\eqref{eq:lambdashortsum} also gives for $H$ in the same regime the maximal analogue
\begin{equation}
\label{eq:Lambda1/6}
 \left| \sum_{x < n \leq x+H} (\Lambda(n) - \Lambda^\sharp(n)) \right|^* \leq  H\log^{-A} X
\end{equation}
for all $x\in [X,2X]$ outside of an exceptional set of measure $O_{A,\varepsilon}(X\log^{-A}X)$. 

Next we consider discorrelation of the von Mangoldt function against Fourier phases and nilsequences. In the prequel~\cite{MSTT-all} to this paper we proved that
\begin{equation}
\label{eq:PartILambda}
\sup_{g \in \Poly(\Z \to G)} \left| \sum_{X < n \leq X+H} (\Lambda(n) - \Lambda^\sharp(n)) \overline{F}(g(n) \Gamma) \right|^* \ll_{A,\eps,F,G/\Gamma} H \log^{-A} X
\end{equation}
for $X^{5/8+\varepsilon} \leq H \leq X^{1-\varepsilon}$ for any filtered nilmanifold $G/\Gamma$ and Lipschitz function $F \colon G/\Gamma \to \C$. This generalized the work of Zhan~\cite{Zhan} concerning the linear phase case $\overline{F}(g(n) \Gamma) = e(\alpha n)$ for some $\alpha \in \mathbb{R}$.

These are currently the best known discorrelation estimates for $\Lambda$ that hold for all short intervals. To the best of our knowledge, no further improvement to the range of validity of~\eqref{eq:PartILambda} exists in the literature, even in the case of linear phase functions and almost all short intervals.

Let us next discuss the M\"obius function, for which stronger results are known. The result~\eqref{eq:Lambda1/6} works with $\Lambda-\Lambda^{\sharp}$ replaced by $\mu$ with a very similar proof, but two of us~\cite{mr-annals} (see also~\cite{mr-p}) established unconditionally that in the much wider regime $X^\varepsilon \leq H \leq X$ we have
$$
\left| \sum_{x < n \leq x+H} \mu(n) \right| \leq  H \log^{-c} X$$
for all $x\in [X,2X]$ outside of a set of measure $O_{\varepsilon}(X\log^{-c}X)$
for some absolute constant $c>0$. In fact, in~\cite{mr-annals}  similar results (but with weaker discorrelation and exceptional set bounds) were obtained for any $H = H(X) \leq X$ that goes to infinity as $X \to \infty$. In the sequel~\cite{MR-II}, improved bounds were obtained for the measure of the exceptional set of $x\in [X,2X]$.

Concerning correlations with nilsequences,~\eqref{eq:PartILambda} works with $\Lambda$ replaced by $\mu$, and in~\cite{MSTT-all} we reached intervals of length $H \geq X^{3/5+\varepsilon}$ with weaker error terms, strengthening the earlier works~\cite{MatoTera},~\cite{matomaki-shao} concerning polynomial phases.

The work~\cite{MRTTZ} of four of us together with Ziegler (combined with a routine Fourier expansion to handle the maximal truncation) gives for $X^\varepsilon \leq H \leq X^{1-\varepsilon}$ the bound
\begin{equation}\label{xx}
\sup_{g \in \Poly(\Z \to G)} \left| \sum_{x < n \leq x+H} \mu(n) \overline{F}(g(n)\Gamma)\right|^* = o_{X \to \infty;\varepsilon}(H)
\end{equation}
for all $x\in [X,2X]$ outside of a set of measure $o_{\varepsilon,d, F, G/\Gamma}(X)$, whenever $G/\Gamma$ is a filtered nilmanifold and $F \colon G/\Gamma \to \C$ is Lipschitz. This implies a qualitative version of Theorem~\ref{discorrelation-thm}(iv) where the bound for the measure of the exceptional set is $o_{X \to \infty; \eps,d,D}(X)$.

The bound~\eqref{xx} was a generalization of work of three of us~\cite{mrt-fourier} concerning linear phases. A simplified proof of this linear phase case was recently established by Walsh~\cite{walsh}, based on exploiting the phenomenon of ``contagiousness'' of local frequencies. The proof of~\eqref{xx} given in~\cite{MRTTZ} is quite complicated; in this paper we give a new proof of~\eqref{xx}, using the ideas of Walsh~\cite{walsh} to simplify the arguments, which also gives a slightly stronger outcome in that we obtain better control on the exceptional set.

For applications toward Chowla's conjecture demonstrated in~\cite{TaoEq}, it is important that there is a supremum in~\eqref{xx}. However, the $H$-range would need to be significantly improved in order to deduce the (logarithmic) Chowla conjecture. See~\cite{MRTTZ} and~\cite{Walsh2},\cite{walsh3} for some progress on this in the case of polynomial and linear phases, respectively.

The variant of~\eqref{xx} without the supremum is easier. Regarding this version, X. He and Z. Wang~\cite{HeWang} have established that, for any fixed filtered nilmanifold $G/\Gamma$, $g\in \Poly(\Z\to G)$, and $F\colon G/\Gamma\to \mathbb{C}$ Lipschitz and any $H=H(X)\leq X$ tending to infinity, we have
\begin{equation*}
\left| \sum_{x < n \leq x+H} \mu(n) \overline{F}(g(n)\Gamma)\right|^* = o_{X \to \infty}(H)
\end{equation*}
for all $x \in [X, 2X]$ outside of a set of measure $o(X)$. This  generalized the work of three of us~\cite{MRT} concerning linear phases which in turn generalized~\cite{mr-annals}.

Finally, we review the known results for the divisor functions $d_k$ for fixed $k \geq 2$. As is well known, the mean value of $d_k$ on $[1,X]$ is $\asymp \log^{k-1} X$. Similarly, for $X^{\varepsilon}\leq H\leq X$ we have
\begin{equation*}
\sum_{X < n \leq X+H} d_k(n) \asymp_{\eps} H \log^{k-1} X;
\end{equation*}
the upper bound part of this follows from Shiu's bound~\cite{shiu} for sums of multiplicative functions on short intervals, and the lower bound part follows by considering a minorant of $d_k(n)$ given by restricting $k-1$ of the $k$ variables in the definition of $d_k(n)$ to be $\leq X^{\varepsilon/(2k)}$.

For $k=2$, one can use the second moment estimates of Jutila~\cite{jutila} for the divisor problem in short intervals to show that in the regime $X^\varepsilon \leq H \leq X^{1-\varepsilon}$ one has
$$
\left| \sum_{x < n \leq x+H} (d_2(n) - d_2^\sharp(n))\right| \leq  HX^{-c\eps}$$
for some absolute constant $c>0$ and for all $x\in [X,2X]$ outside a set of measure $\ll X^{1-c\eps}$. Improving on  work of three of the authors~\cite{mrt-correlations-II}, Mangerel~\cite{mangerel} and Y.-C. Sun~\cite{sun} have recently extended the result of~\cite{mr-annals} on bounded multiplicative functions in short intervals to divisor-bounded multiplicative functions, and in particular Sun's work can be used to show that, for any fixed $k \geq 2$, $\varepsilon>0$ and any $(\log X)^{k\log k-k+1+\varepsilon} \leq H \leq X$, we have
$$
\left| \sum_{x < n \leq x+H} (d_k(n) - d_k^\sharp(n))\right|= o_{X \to \infty}(H \log^{k-1} X)$$
for all $x\in [X,2X]$ outside of a set of measure $o_{X\to \infty}(X)$. 

For correlations of $d_k$ with polynomial phases, using the approach introduced in~\cite{Walsh2}, M. Wang~\cite{mengdi-wang} showed that in the regime $\exp(C(\log X)^{1/2}(\log \log X)^{1/2})\leq H\leq X$ with $C$ large enough, we have
for almost all $x\in [X,2X]$ the estimate
\begin{align*}
\sup_{\substack{P\in \mathbb{R}[y]\\\deg(P)\leq s}}\left|\sum_{x<n\leq x+H}(d_k(n)-d_k^{\sharp}(n))e(P(n))\right|=o_s(H\log^{k-1}X).    
\end{align*}
In~\cite{mengdi-wang}, a different approximant than $d_k^{\sharp}$ was used (which is simpler but does not allow as good error terms), but for the sake of comparison we have stated the result with the qualitatively equivalent approximant $d_k^{\sharp}$. The result of~\cite{mengdi-wang} thus applies to shorter intervals than Theorem~\ref{discorrelation-thm}(v), but is only available for polynomial phases. 

Other than the result of~\cite{mengdi-wang}, the only results we are aware of in previous literature apply to all short intervals, rather than almost all short intervals. In particular, in the prequel~\cite{MSTT-all} to this paper, we showed that for $X^{\theta_k+\varepsilon}\leq H\leq X^{1-\varepsilon}$ we have
$$
\sup_{g \in \Poly(\Z \to G)} \left| \sum_{X < n \leq X+H} (d_k(n) - d_k^\sharp(n)) \overline{F}(g(n)\Gamma)\right|^* \ll_{\eps,F,G/\Gamma} HX^{-c_{k,G/\Gamma}\eps}$$
for any filtered nilmanifold $G/\Gamma$ and Lipschitz function $F \colon G/\Gamma \to \C$, some constant $c_{k,G/\Gamma}>0$, with $\theta_2 = 1/3$, $\theta_3 = 5/9$, and $\theta_k = 5/8$ for $k \geq 4$. In particular, the $k=2$ case of Theorem~\ref{discorrelation-thm}(iii) follows from our earlier work, even without the $x$-average.

\subsection{Gowers uniformity}

As in~\cite{MSTT-all}, our discorrelation estimates with nilsequences for arithmetic functions in short intervals imply that their Gowers norms over short intervals are small.

For any non-negative integer $s \geq 1$, and any function $f \colon \Z \to \C$ with finite support, define the (unnormalized) Gowers uniformity norm
$$ \| f \|_{U^{s}(\Z)} \coloneqq \left( \sum_{x,h_1,\dots,h_{s} \in \Z} \prod_{\vec \omega \in \{0,1\}^{s}} \mathcal{C}^{|\vec \omega|} f(x+\omega_1 h_1+\dots+\omega_{s} h_{s}) \right)^{1/2^{s}}$$
where $\vec \omega = (\omega_1,\dots,\omega_{s})$, $|\vec \omega| \coloneqq \omega_1+\dots+\omega_{s}$, and $\mathcal{C} \colon z \mapsto \overline{z}$ is the complex conjugation map. Then for any interval $(x,x+H]$ with $H \geq 1$ and any $f \colon \Z \to \C$ (not necessarily of finite support), define the \emph{Gowers uniformity norm over} $(x,x+H]$ by
\begin{equation*}
 \| f \|_{U^{s}(x,x+H]} \coloneqq \frac{\| f 1_{(x,x+H]} \|_{U^{s}(\Z)}}{\| 1_{(x,x+H]} \|_{U^{s}(\Z)}}
\end{equation*}
where $1_{(x,x+H]} \colon \Z \to \C$ is the indicator function of $(x,x+H]$.

Using the inverse theorem for the Gowers norms (see Proposition~\ref{prop_inverse}) and a construction of pseudorandom majorants in Section~\ref{gowers-sec}, we can deduce a Gowers uniformity estimate from Theorem~\ref{discorrelation-thm}. There and later we use the approximant
\begin{equation}
\label{eq:Lambda_w_def}
\Lambda_{w}(n) \coloneqq  \frac{W}{\varphi(W)}1_{(n,W )=1},
\end{equation}
with $W \coloneqq  \prod_{p\leq w} p$ and $X$ large in terms of $w$.

\begin{theorem}[Gowers uniformity estimate]\label{thm_gowers}
Let $X\geq H\geq 3$ satisfy $X^{\theta+\varepsilon}\leq H\leq X^{1-\varepsilon}$ for some $0 \leq \theta < 1$ and some fixed $\eps > 0$.  Let $s\geq 1$ and $k\geq 2$ be fixed integers, and let $A>0$. Let $w \geq 1$ and let $\Lambda_w$ be as in~\eqref{eq:Lambda_w_def}, and assume that $X$ is large enough in terms of $w$.
\begin{itemize}
\item[(i)]   If $\theta = 1/3$, then
\begin{align*}
\|\Lambda-\Lambda_w\|_{U^s(x,x+H]}&=o_{w\to \infty}(1),\\
\max_{\substack{1\leq a\leq W\\(a,W)=1}}\left\|\frac{\varphi(W)}{W}\Lambda(W\cdot+a)-1\right\|_{U^s(x,x+H]}&=o_{w\to \infty}(1)
\end{align*}
for all $x\in [X,2X]$ outside of an exceptional set of measure $O_{A,\eps}(X\log^{-A}X)$.
\item[(ii)]  If $\theta = 0$, then
\begin{align*}
   \|\mu\|_{U^s(x,x+H]}=o_{X \to \infty}(1)
\end{align*}
for all $x\in [X,2X]$ outside of an exceptional set of measure $O_{A,\eps}(X\log^{-A}X)$.
\item[(iii)]    If $\theta = 0$, $k \geq 2$ and $W'$ satisfies $W\mid W'\mid W^{\lfloor w\rfloor}$, then
\begin{align}\label{eq:gowersdk}\begin{split}
\|d_k-d_k^{\sharp}\|_{U^s(x,x+H]}&=o_{X \to \infty}(\log^{k-1} X),\\
\max_{\substack{1\leq a\leq W'\\(a,W')=1}}\left\|\left(\frac{W}{\varphi(W)}\right)^{k-1}d_k(W'\cdot+a)-\frac{\log^{k-1}X}{(k-1)!}\right\|_{U^s(x,x+H]}&=o_{w \to \infty}(\log^{k-1} X)
\end{split}
\end{align}
for all $x\in [X,2X]$ outside of an exceptional set of measure $O_{A,\eps}(X\log^{-A}X)$.
\end{itemize}
\end{theorem}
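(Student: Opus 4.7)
The strategy is to deduce Theorem~\ref{thm_gowers} from Theorem~\ref{discorrelation-thm} by applying the inverse theorem for the Gowers norms (Proposition~\ref{prop_inverse}), together with the pseudorandom majorants for $\Lambda - \Lambda_w$ and $d_k - d_k^\sharp$ to be constructed in Section~\ref{gowers-sec}. I argue by contrapositive in each of the three cases.

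Fix $f$ to be one of $\mu$, $\Lambda - \Lambda_w$, or $d_k - d_k^\sharp$, and suppose that $\| f \|_{U^s(x, x+H]}$ exceeds some $\delta > 0$ (resp.\ $\delta \log^{k-1} X$ in case (iii)) for a set $E \subseteq [X, 2X]$ of Lebesgue measure larger than $C X \log^{-A} X$, with $C$ large. In case (ii) the function $\mu$ is $1$-bounded, so the standard $U^s$ inverse theorem produces, for each $x \in E$, a filtered nilmanifold $G_x/\Gamma_x$, a polynomial sequence $g_x \in \Poly(\Z \to G_x)$, and a Lipschitz function $F_x$, all of complexity $O_\delta(1)$, with
\[
\left| \sum_{x < n \leq x+H} \mu(n)\, \overline{F_x}(g_x(n)\Gamma_x) \right| \gg_\delta H.
\]
In cases (i) and (iii) the functions are unbounded, so one applies the inverse theorem relative to a pseudorandom majorant $\nu_x$ on $(x, x+H]$ dominating $|f|$; the conclusion is again a correlation bound of the form $\gg_\delta H$ (resp.\ $\gg_\delta H \log^{k-1} X$) by a standard transference argument once the majorants satisfy linear forms conditions of bounded complexity.

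Next, I reduce to a single nilmanifold and cutoff function. Because the complexity bounds on $(G_x/\Gamma_x, F_x)$ are uniform in $x$, a routine compactness/pigeonhole argument identifies some $G/\Gamma$ and $F$ of complexity $O_\delta(1)$ realizing the correlation bound on a set $E' \subseteq E$ of measure $\gg_\delta |E|$, with only $g_x$ depending on $x$. Since the maximal-sum notation~\eqref{maximal-sum} appearing in Theorem~\ref{discorrelation-thm} already absorbs a supremum over $g \in \Poly(\Z \to G)$, this violates the appropriate item of Theorem~\ref{discorrelation-thm} provided $|E'|$ exceeds $O_A(\delta^{-O(1)} X \log^{-A} X)$ (resp.\ $O(\delta^{-O(1)} X^{1 - c_{k,d,D,\eps}})$), which is arranged by choosing $C$ large in terms of $\delta, A$. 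The ``$W$-tricked'' variants in the second lines of (i) and (iii) follow from the same argument after substituting $n \mapsto W n + a$ or $n \mapsto W' n + a$, which converts the Gowers norm on $(x, x+H]$ into a Gowers norm over an interval of length $H/W$ or $H/W'$ against a nilsequence of the same complexity; since $W, W'$ are bounded in terms of $w$, the range of $H$ is preserved up to negligible factors.

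The main obstacle will be the construction of the pseudorandom majorants $\nu_x$ at the short scale $H \geq X^{\theta + \eps}$ in the almost-all-$x$ regime. For $\Lambda - \Lambda_w$ one expects a Green--Tao-style truncated-divisor-sum majorant, but the linear forms conditions
\[
\sum_{x < n \leq x+H} \nu_x(n + h_1) \cdots \nu_x(n + h_t) = (1 + o(1))\, H
\]
cannot be expected to hold pointwise in $x$ at such short scales; one instead verifies them after averaging over $x \in [X, 2X]$, using a Chebyshev/second-moment argument that reduces matters to mean-value estimates on shifted Selberg-type sieve weights. The $d_k$ case needs an analogous construction via divisor-bounded majorants, which is technically more delicate due to the larger typical size of $d_k$. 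These constructions, carried out in Section~\ref{gowers-sec}, together with the reductions above, yield Theorem~\ref{thm_gowers}.
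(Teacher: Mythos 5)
Your overall strategy---inverse theorem for the Gowers norms plus pseudorandom majorants plus the discorrelation estimate of Theorem~\ref{discorrelation-thm}---is the same as the paper's, but as written there are genuine gaps. First, the approximants do not match: Theorem~\ref{discorrelation-thm} controls $\Lambda-\Lambda^\sharp$ and $d_k-d_k^\sharp$, not $\Lambda-\Lambda_w$, and the relative inverse theorem cannot be applied to $\Lambda-\Lambda_w$ directly on $(x,x+H]$, because the pseudorandom majorants of Lemma~\ref{le_pseudoinshort} exist only for the $W$-tricked functions along progressions $Wn+a$ (resp.\ $W'n+a$); an un-tricked majorant of mean $\asymp 1$ dominating $\Lambda$ cannot satisfy the linear-forms conditions. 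The paper therefore first proves the $W$-tricked Gowers bound for $\frac{\varphi(W)}{W}(\Lambda-\Lambda^\sharp)(W\cdot+a)$, then uses pseudorandomness of the approximant to replace $\Lambda^\sharp$ by $1$, and finally invokes \cite[Lemma 9.8]{MSTT-all} to pass to $\Lambda-\Lambda_w$; none of these conversions appears in your sketch. Second, the second displays of (i) and especially (iii) are not obtained by a mere substitution $n\mapsto W'n+a$: for (iii) one must show that $(\tfrac{W}{\varphi(W)})^{k-1}(\log^{1-k}X)\,d_k^\sharp(W'\cdot+a)$ is $U^s$-close to the constant $1/(k-1)!$ on almost all short intervals, and this requires the log-free type $I$ inverse estimate (Proposition~\ref{prop:sharpinverse}), needed to avoid losing powers of $\log$ for the unbounded $d_k$, together with the almost-all-intervals asymptotic for $d_k$ in progressions with strong exceptional-set bounds (Lemma~\ref{le:dkshort}), which itself rests on the type $II$ major-arc machinery. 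Your proposal does not address this component at all.

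Third, your discussion of the majorants has the emphasis backwards. The linear-forms conditions for the $W$-tricked majorants do hold essentially pointwise in $x$ at scale $H\geq X^{\eps}$: the majorants are truncated divisor sums with moduli at most $X^{\eps/(2D)}\ll H$, so pseudorandomness at location $x$ transfers from pseudorandomness at location $0$ via Lemma~\ref{le_pseudo}, with an exceptional set of $x$ needed only in the $d_k$ case (large square divisors, removed by a Markov-type bound giving measure $\ll XH^{-1/2}\log^{O(1)}X$). Your alternative of verifying the conditions only on average over $x$ and upgrading by Chebyshev could perhaps be made to work, but it needs second-moment estimates for each linear-forms count, which you neither state nor reduce to anything known, and the premise that pointwise verification ``cannot be expected'' is incorrect. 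Finally, the contrapositive-plus-pigeonhole reduction to a single $G/\Gamma$ and $F$ is repairable but is exactly the quantitative bookkeeping the paper avoids by running Proposition~\ref{prop_inverse} in the forward direction; the real work lies in the items above.
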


Theorem~\ref{thm_gowers}(ii)  was earlier established  in~\cite{MRTTZ} but with a qualitative exceptional set bound of $o_{X \to \infty; \eps}(X)$. The argument used to prove Theorem~\ref{thm_gowers}(ii) (which develops an idea of Walsh~\cite{walsh}) in fact gives a different, simpler proof of the main result in~\cite{MRTTZ}.

In the case $s=2$, the inverse theorem for the Gowers norm has polynomial dependencies by a  standard Fourier-analytic argument, so one could strengthen Theorem~\ref{thm_gowers}(i) to
\begin{align*}
 \|\Lambda-\Lambda_w\|_{U^2(x,x+H]}\leq w^{-c}
\end{align*}
with the same conditions, but now in the larger regime $w\ll_{A,\eps} \log^{A}X$. Similarly, Theorem~\ref{discorrelation-thm}(iii) gives that for $X^{1/3+\varepsilon}\leq H\leq X^{1-\varepsilon}$ we have
\begin{align*}
\|d_k-d_k^{\sharp}\|_{U^2(x,x+H]}\leq X^{-c_{k,\varepsilon}}
\end{align*}
for some constant $c_k>0$ and for all $x\in [X,2X]$ outside of a set of measure $O_{\eps}(X^{1-c_{k,\varepsilon}})$.

\begin{remark} \label{rmk:quantitative}
It is natural to wonder if the quasipolynomial inverse theorem of Leng--Sah--Sawhney~\cite{LSS} would allow quantifying the Gowers norm bounds in Theorem~\ref{thm_gowers}. While we believe a quantification would in principle be possible, the error terms obtained would likely be rather weak (perhaps triple-logarithmic or so), the largest difficulty being tracking the dimension dependence of the ``nilsequence contagion argument'' in Section~\ref{contagion-sec}. Another bottleneck for the M\"obius and divisor functions is that, in order to obtain savings of order $O(\varepsilon)$ using the Tur\'an--Kubilius identity, we need to handle type $II$ sums of length $X^{\varepsilon'}$ where $\varepsilon'=\exp(-\varepsilon^{-2})$. The arguments in Section~\ref{type-ii} involving iterated ``scaling up'' of a phase relation are not quantitatively very effective and would limit the speed at which $\varepsilon$ can go to $0$. For these reasons, and in order not to further complicate the paper, we have chosen to keep the Gowers norm results qualitative. 
\end{remark}

\subsection{Applications}

Theorem~\ref{thm_gowers} allows us to study $\ell$-point correlations with only one averaging variable --- Theorem~\ref{thm_gowers}(i) combined with the generalized von Neumann theorem and (somewhat involved) sieve-theoretic computations implies the following.

\begin{theorem}[$\ell$-point Hardy--Littlewood with one averaging variable]\label{thm:HL} Let $\ell \in \N$ and $\varepsilon>0$ be fixed, and let $X^{1/3+\varepsilon} \leq H \leq X^{1-\varepsilon}$. Then we have
\begin{align*}
\sum_{n\leq X}\Lambda(n)\Lambda(n+h)\cdots \Lambda(n+(\ell-1)h)=(1+o_{X\to \infty}(1))\mathfrak{S}(0,h,\ldots, (\ell-1)h)X
\end{align*}
for proportion $1-o_{X \to \infty}(1)$  of integers $1\leq h\leq H$, where
\begin{align}\label{eq:singular}
\mathfrak{S}(h_1, \dotsc, h_\ell) \coloneqq \prod_{p} \left(1-\frac{|\{h_1, \dotsc, h_\ell\} \pmod{p}|}{p}\right) \left(1-\frac{1}{p}\right)^{-\ell}
\end{align}
is the usual singular series.
\end{theorem}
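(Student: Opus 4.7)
My plan is to combine Theorem~\ref{thm_gowers}(i) with the Green--Tao generalized von Neumann theorem and a sieve computation of the main term, carried out residue class by residue class after a $W$-trick. Fix a slowly growing $w = w(X) \to \infty$, set $W := \prod_{p \leq w} p$, and for $(a,W)=1$ put $F_a(m) := \frac{\varphi(W)}{W}\Lambda(Wm+a) - 1$; by Theorem~\ref{thm_gowers}(i) with $s$ chosen large in terms of $\ell$, we have $\|F_a\|_{U^s(x,x+H]} = o_{w\to\infty}(1)$ uniformly in $(a,W)=1$ for $x$ outside an exceptional set of measure $O_A(X\log^{-A}X)$. Decompose $T(h) := \sum_{n \leq X}\prod_{j=0}^{\ell-1}\Lambda(n+jh)$ by the residue class of $n \pmod{W}$; only those $a_0 \in (\Z/W\Z)^\times$ with $(a_0+jh, W) = 1$ for every $j$ contribute, and on each such class $\Lambda(n+jh) = \frac{W}{\varphi(W)}(F_{a_j}(m+c_j) + 1)$ for integer parameters $(a_j, c_j)$ determined by $(a_0, h, j)$. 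Expanding $\prod_j(F_{a_j}(m+c_j)+1) = \sum_{S \subseteq \{0,\ldots,\ell-1\}}\prod_{j \in S}F_{a_j}(m+c_j)$ and evaluating the $S = \emptyset$ term, summed over $m$ and over valid $a_0$, a routine sieve/local-factor computation identifies the resulting main term as $(1 + o_{w\to\infty}(1))\mathfrak{S}_h X$.

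For each nonempty $S$, I would partition $[0, X/W]$ into intervals of length $\asymp H/W$ and apply the generalized von Neumann theorem on each, using a pseudorandom majorant of $F_a$ of the type constructed in Section~\ref{gowers-sec}. Because the forms $\{m+c_j : j \in S\}$ have Cauchy--Schwarz complexity at most $|S|-2 \leq \ell-2$, the multi-correlation on each short interval is controlled by $\|F_{a_{j_0}}\|_{U^{\ell-1}(\cdot)}$ for some $j_0 \in S$, which is $o_{w\to\infty}(1)$ by our choice of $s$. The contribution from the exceptional short intervals (of total length $o(X)$) is handled by $\nu$-majorization together with Cauchy--Schwarz.

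The chief obstacle is passing from the $h$-averaged statement that the generalized von Neumann theorem naturally produces to the claim for a $1 - o(1)$ proportion of individual $h$. I would resolve this by inserting an arbitrary weight $\alpha \colon [1,H] \to [-1,1]$ inside the sum over $h$: treating $\psi(n,h) = h$ as an additional linear form in the von Neumann system raises the Gowers norm requirement by one (still provided by Theorem~\ref{thm_gowers}(i) upon taking $s = \ell$), and yields the uniform bound
\[
\sum_{h \leq H}\alpha(h)\bigl(T(h) - \mathfrak{S}_h X\bigr) = o(HX).
\]
Specializing $\alpha(h) = \sgn(T(h) - \mathfrak{S}_h X)$ gives $\sum_{h \leq H}|T(h) - \mathfrak{S}_h X| = o(HX)$; since $\mathfrak{S}_h \gg 1$ for a density-one set of $h \leq H$ by standard singular-series estimates, Markov's inequality then delivers the claim.
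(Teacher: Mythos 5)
Your overall route is the same as the paper's: a $W$-trick, a local/multiplicative computation of the main term (the paper's Proposition~\ref{prop:simplecorrrelation} and Corollary~\ref{cor:simplecorrelation}), and a dualization over $h$ with bounded weights so that the error term becomes a two-variable linear-forms average that can be fed into the generalized von Neumann theorem (Lemma~\ref{le:gvnthm}) with the pseudorandom majorants of Section~\ref{gowers-sec} and the short-interval Gowers bounds of Theorem~\ref{thm_gowers}(i), followed by Markov in $h$. However, there is one genuine gap in how you localize the long $n$-sum. You propose to partition $[0,X/W]$ into $\asymp X/H$ \emph{disjoint} blocks of length $\asymp H/W$, apply von Neumann on each block, and dismiss "the exceptional short intervals (of total length $o(X)$)". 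Theorem~\ref{thm_gowers}(i) only says that the set of $x\in[X,2X]$ where the Gowers bound fails has \emph{measure} $O_A(X\log^{-A}X)$. Since $H\geq X^{1/3+\varepsilon}$, this exceptional measure is far larger than the number $\asymp X/H$ of block locations, so nothing prevents the exceptional set from containing a neighbourhood of every single block starting point; in that scenario your argument controls none of the blocks, and the claim that the bad blocks have total length $o(X)$ is unjustified. The same issue infects the crude treatment of the bad blocks by majorization, which again needs their number to be a $o(1)$ fraction.

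The missing idea is precisely the paper's shift-averaging: before localizing, replace $n$ by $m+r$ with an auxiliary variable $r\leq H$ and average over $r$ (the cost is a boundary error $O(H^2X^{o(1)})=o(HX)$). Then \emph{every} integer $m\leq X/W'$ serves as a window location, the forms $r+jh$ (in the two variables $h,r\leq H/W'$, after also fixing $h$ modulo $W'$ so that the shifts $c_j(h)$ are genuinely affine-linear) land in $(m,m+\ell H]$, and the exceptional set of locations, being of cardinality $O_A(X\log^{-A}X)$ out of $\asymp X$ windows, contributes negligibly via Shiu-type bounds. Equivalently, you could keep your block picture but average over the offset of the partition; either way this averaging step is essential and is exactly what the paper does in its Step 2. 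Two smaller points: you need to split $h$ into residue classes modulo $W$ (not just $n$) for the von Neumann system to consist of affine-linear forms, and your closing claim that $\mathfrak{S}_h\gg 1$ for a density-one set of $h$ is false for $\ell\geq 2$ (it vanishes unless every prime $p\leq\ell$ divides $h$); the correct conclusion from $\sum_{h\leq H}|T(h)-\mathfrak{S}_hX|=o(HX)$ is $T(h)=\mathfrak{S}_hX+o(X)$ for almost all $h$, which yields the multiplicative form only on the admissible $h$, where indeed $\mathfrak{S}_h\gg_\ell 1$.
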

Allowing $H$ to be fixed in Theorem~\ref{thm:HL} would roughly correspond to the Hardy--Littlewood prime tuples conjecture~\cite{HL}.  The coefficients $0,1,\dots,\ell-1$ here can be replaced by any other fixed, distinct integers if desired. Previously, the case $H=X$ and $\ell$ arbitrary was known by work of Green, the fourth author and Ziegler~\cite{green-tao},\cite{gt-mobius},\cite{gtz}, and the case $H=X^{8/33+\varepsilon}$ and $\ell=2$ was known by work of three of the authors~\cite{mrt-correlationsI}. For $\ell\geq 3$ and $H\leq X^{1-\varepsilon}$, we are not aware of any previous results of this shape in the literature, although using results of part I of this series~\cite{MSTT-all}, together with arguments from Section~\ref{sec:applications} of the current paper,  it would be possible to deduce the case $H=X^{5/8+\varepsilon}$ and $\ell$ arbitrary. Thus we obtain a significant improvement over what the methods of part I would give.

We get a similar result for the divisor function using Theorem~\ref{thm_gowers}(iii), the generalized von Neumann theorem and some divisor correlation computations.

\begin{theorem}[Divisor correlation conjecture with one averaging variable]\label{thm:divisor} \label{app_div} Let $k, \ell \in \N$ and $\varepsilon>0$ be fixed, and let $X^{\varepsilon} \leq H \leq X^{1-\varepsilon}$. Then we have
\begin{align*}
\sum_{n\leq X}d_k(n)d_k(n+h)\cdots d_k(n+(\ell-1)h)=\left(\frac{C(h,k,\ell)}{((k-1)!)^{\ell}}+o_{X\to \infty}(1)\right)(\log X)^{k\ell-\ell}
\end{align*}
for proportion $1-o_{X \to \infty}(1)$  of integers $1\leq h\leq H$, where \begin{align}\label{eq:Chkl}
C(h,k,\ell)\coloneqq \prod_{p}\lim_{Y\to \infty}\frac{\mathbb{E}_{n\leq Y}d_{k,p}(n)d_{k,p}(n+h)\cdots d_{k,p}(n+(\ell-1)h)}{(\mathbb{E}_{n\leq Y}d_{k,p}(n))^{\ell}}
\end{align}
and $d_{k,p}$ is the multiplicative function given on prime powers by $d_{k,p}(q^{a})=\binom{k-1+a}{k-1}$ if $q=p$ and $d_{k,p}(q^{a})=1$ otherwise.
\end{theorem}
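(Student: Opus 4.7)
The plan is to prove Theorem~\ref{thm:divisor} by an $\ell$-term generalized von Neumann argument. Write $d_k = d_k^\sharp + (d_k - d_k^\sharp)$ in each of the $\ell$ factors of $\prod_{j=0}^{\ell-1} d_k(n+jh)$ and expand: the ``main term'' $\prod_j d_k^\sharp(n+jh)$ will be evaluated by an Euler product computation, while each of the $2^\ell - 1$ ``error terms'' contains at least one factor of $d_k - d_k^\sharp$ and will be shown to be of lower order using the Gowers uniformity of Theorem~\ref{thm_gowers}(iii). Throughout I would choose the approximant parameter $\varepsilon_0$ in the definition of $d_k^\sharp$ to be much smaller than $\varepsilon/(k\ell)$, let $w = w(X) \to \infty$ slowly, $W = \prod_{p \leq w} p$, and $W' = W^{\lfloor w \rfloor}$, and (by dyadic decomposition) reduce to showing the asymptotic with $n$ ranging over $(Y, 2Y]$ for a single scale $Y \asymp X$.

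\textbf{Main term.} Further partition $(Y, 2Y]$ into intervals $I_x = (x, x + H]$ with $x \in [Y, 2Y]$, so that for $h \leq H$ all shifts $n+jh$ with $n \in I_x$ lie in $I_x^* = (x, x + \ell H]$. Split the inner sum $\sum_{n \in I_x} \prod_j d_k^\sharp(n+jh)$ into residue classes $n \equiv a_0 \pmod{W'}$. Classes in which some $a_0 + jh$ fails to be coprime to $W'$ have density $o_{w \to \infty}(1)$ and contribute a lower-order error (via a Shiu-type upper bound). On the coprime classes, the sum unfolds as a sum over divisor tuples $(m_0, \dots, m_{\ell-1})$ with $m_j \leq R_k^{2k-2}$ of $\prod_j P_{m_j}(\log n)$ times the count of $n \in I_x$ satisfying $m_j \mid n+jh$ for each $j$ and $n \equiv a_0 \pmod{W'}$. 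Since $W' \cdot \mathrm{lcm}_j(m_j/(m_j, jh)) \leq X^{O_{k,\ell}(\varepsilon_0)}$ is much smaller than $|I_x| = H \geq X^\varepsilon$, the inner count equals $|I_x|/(W' \cdot \mathrm{lcm}_j(m_j/(m_j, jh)))$ up to negligible error. A standard multiplicative computation, together with the explicit form \eqref{eq:Pmt} of $P_m$, identifies the resulting product over primes with $\frac{C(h,k,\ell)}{((k-1)!)^\ell}(\log X)^{k\ell - \ell} \cdot |I_x|$.

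\textbf{Error terms.} For each of the $2^\ell - 1$ error terms, the distinguished factor $(d_k - d_k^\sharp)(n+j_0 h)$ has $U^{\ell-1}(I_x^*)$-norm $o(\log^{k-1} X)$ for $x \in [Y, 2Y]$ outside an exceptional set $E$ of measure $O_A(Y \log^{-A} Y)$, by Theorem~\ref{thm_gowers}(iii) applied with $H$ replaced by $\ell H$ and $\varepsilon$ replaced by $\varepsilon/2$ (valid for large $X$ since $\ell H \in [X^{\varepsilon/2}, X^{1-\varepsilon/2}]$). I would then invoke a generalized von Neumann theorem (in the style of Green--Tao--Ziegler or Conlon--Fox--Zhao) on $I_x^*$, using this $U^{\ell-1}$ bound together with a Selberg-sieve pseudorandom majorant constructed as in Section~\ref{gowers-sec} that dominates both $d_k$ and $d_k^\sharp$ after the $W$-trick and satisfies the linear forms condition of complexity $\ell$; this yields $o((\log X)^{k\ell - \ell}) |I_x|$ for $x \notin E$. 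For $x \in E$, Cauchy--Schwarz combined with Shiu's bound gives a crude upper bound $|I_x| (\log X)^{O_{k,\ell}(1)}$, so the total contribution of $E$ is $O_A(X (\log X)^{k\ell - \ell - A + O_{k,\ell}(1)})$, negligible for $A$ chosen large. Summing over $x$ and averaging over $h \leq H$ via Markov's inequality then gives the claimed asymptotic for a $1 - o(1)$ proportion of $h$.

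\textbf{Main obstacle.} The principal technical hurdle is the construction of the pseudorandom majorant for the (normalized, $W$-tricked) divisor function satisfying a linear-forms condition of complexity $\ell$ that is uniform in the shift parameter $h \leq H$; divisor majorants are more tractable than prime majorants, but the $h$-dependence of the linear forms (and the requirement that the majorant works on a short interval of length $\ell H$ rather than all of $[X, 2X]$) requires care. A secondary challenge is the Euler-product bookkeeping in the main term computation: one must verify that the emergent product over primes agrees exactly with $C(h,k,\ell)/((k-1)!)^\ell$ in \eqref{eq:Chkl}, uniformly for $h$ outside an $o(1)$-density exceptional subset of $[1, H]$.
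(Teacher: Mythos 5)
Your main-term plan and your treatment of the exceptional set of $x$ are broadly in the spirit of the paper, but the error-term step as you describe it has a genuine gap. You propose to fix $h$, fix $x\notin E$, and bound $\sum_{n\in I_x}\prod_j g_j(n+jh)$ (with one factor $g_{j_0}=d_k-d_k^\sharp$ of small $U^{\ell-1}(I_x^*)$ norm) by a generalized von Neumann theorem. But for a \emph{fixed} $h$ the forms $n\mapsto n+jh$ all have the same linear part in the single averaging variable $n$; they are pairwise proportional, the system has no finite Cauchy--Schwarz complexity, and no generalized von Neumann theorem (Green--Tao--Ziegler, Conlon--Fox--Zhao, or Lemma~\ref{le:gvnthm}) applies. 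Gowers uniformity of $d_k-d_k^\sharp$ on $(x,x+\ell H]$ gives no control whatsoever on the correlation along a single fixed shift; if it did, your argument would prove the divisor correlation conjecture for \emph{every} $h$, which is far stronger than the theorem and well beyond these methods. The averaging over $h$ is not a cosmetic final step done by Markov: it must be kept \emph{inside} the von Neumann application. Concretely, the paper first applies Markov to reduce to bounding $\sum_{h\leq H}c(h)\mathcal{E}_{\mathcal I}(h)$ for an arbitrary unimodular weight $c(h)$, then introduces an extra shift variable $r\leq H$ so that the relevant system is $\psi_j(h,r)=m+jh+r$ in the two variables $(h,r)$ (with $m$ playing the role of the location), splits into residue classes modulo $W'$, and only then invokes Lemma~\ref{le:gvnthm} for each $m$ outside the exceptional set; the two-variable system has pairwise non-proportional linear parts and bounded complexity, which is what makes the $U^{D}$ control usable.

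A second, related discrepancy: the paper's decomposition is not $d_k=d_k^\sharp+(d_k-d_k^\sharp)$ but $d_k=f_w+(d_k-f_w)$ with the $W$-tricked model $f_w=(\varphi(W)/W)^{k-1}\frac{\log^{k-1}X}{(k-1)!}d_{k,W}$. This matters for the von Neumann step, because after restricting to a coprime residue class $b\pmod{W'}$ the uniform factor becomes $(W/\varphi(W))^{k-1}\frac{(k-1)!}{\log^{k-1}X}d_k(W'n+b)-1$, i.e.\ exactly the object controlled by the \emph{second} estimate in Theorem~\ref{thm_gowers}(iii) and majorized by Lemma~\ref{le_pseudoinshort}(ii); the main term $\prod_j f_w(n+jh)$ is then evaluated by Proposition~\ref{prop:simplecorrrelation} and Corollary~\ref{cor:simplecorrelation}, and the non-coprime (``non-good'') residue classes are handled with Henriot's and Shiu's bounds rather than being negligible for free. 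Your alternative of using $d_k^\sharp$ as the model and unfolding its type $I$ structure for the main term is not unreasonable in principle, but you would still have to restructure the error-term argument around the $h$-average as above, supply the uniformity input for the $W$-tricked $d_k$ minus a constant (your first estimate $\|d_k-d_k^\sharp\|_{U^s}$ alone does not directly feed the von Neumann lemma once the other factors are unbounded), and redo the singular-series bookkeeping uniformly in $h$ outside a density-$o(1)$ exceptional set.
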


\begin{remark}
It is possible to evaluate the constants $C(h,k,\ell)$ explicitly with sufficient effort. See~\cite{tao-computation} for the evaluation of $C(h,k,2)$.
\end{remark}

Allowing $H$ to be fixed in Theorem~\ref{thm:divisor}, and taking $\ell=2$, would correspond to (leading order) verification of the well known divisor correlation conjecture~\cite[Conjecture 1.1]{mrt-correlationsI},~\cite[Conjecture 3]{conrey-gonek},~\cite{ivic-divisor},~\cite{vinogradov-divisor}. Previously, the case $H=X$ and $\ell$ arbitrary was established in the work of Matthiesen~\cite{matthiesen-linear}, and the case $H=(\log X)^{10000k\log k}$ and $\ell=2$ was proven by three of the authors~\cite{mrt-correlations-II}. Moreover, it would be possible to deduce the case  $H=X^{3/5+\varepsilon}$ and $\ell$ arbitrary from part I of this series~\cite{MSTT-all} combined with Section~\ref{sec:applications} of the current paper. Thus, we again obtain a significant improvement over what methods of part I would give.

\subsection{The case of \texorpdfstring{$d_2$}{d2} revisited}

In the case of $d_2$ and a fixed linear phase, we prove in Section~\ref{d2-sec} the following result that has better savings than what Theorem~\ref{discorrelation-thm}(v) implies:

\begin{theorem}[Improved discorrelation for $d_2$ against fixed linear phase]\label{thm_d2}
   Let $X\geq 3$ and $X^{\varepsilon} \leq H \leq X^{1 - \varepsilon}$ for some $\varepsilon \in (0, \tfrac{1}{300})$. Also let $\alpha\in \mathbb{R}$. Then we have
    $$
    \Bigg |\sum_{x < n \leq x + H} (d_2(n) - d_2^{\sharp}(n)) e(n\alpha) \Bigg |^{*} dx \leq HX^{ - \varepsilon/1000}
    $$
for all $x\in [X,2X]$ outside of a set of measure $O_{\varepsilon}(X^{1-\varepsilon/1000})$.
\end{theorem}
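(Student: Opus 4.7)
My plan is to prove Theorem~\ref{thm_d2} by combining the Dirichlet hyperbola decomposition of $d_2$ with an $L^2$-in-$x$ analysis of the resulting bilinear sums, using an $(\alpha$-dependent$)$ case analysis into major and minor arcs. Since $d_2 - d_2^\sharp$ becomes a pure type II quantity once the small-divisor contribution is absorbed by the approximant, the short-interval problem becomes a bilinear exponential sum with a fixed linear phase, for which classical Vinogradov--Vaughan methods (together with short-interval divisor estimates of Jutila type) should suffice.

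\textbf{Step 1 (Maximal sum and hyperbola).} The maximal sum over APs $P \subseteq (x, x+H] \cap \Z$ is reduced, at a cost of a $(\log X)^{O(1)}$ factor, to the plain sum $\sum_{n \in I'}(d_2(n) - d_2^\sharp(n)) e(n\beta)$ uniformly in $\beta \in \R$ and subintervals $I' \subseteq (x, x+H]$, via Fourier expansion of the indicator of an arithmetic progression (which shifts $\alpha$ by a rational with denominator $\leq H$). Applying then the identity
\begin{equation*}
d_2(n) = 2\sum_{\substack{a \mid n \\ a \leq \sqrt{n}}} 1 - 1_{n = \square}
\end{equation*}
together with the sieve-theoretic calculation of Section~3.1 of~\cite{MSTT-all}, the contribution of divisors $a \leq R_2^2 = X^{\varepsilon/10}$ is exactly captured by $d_2^\sharp$ up to admissible error, leaving a type II sum of the form $\sum_{a \sim A}\sum_{b:\ x < ab \leq x+H} e(ab\beta)$ with $A$ dyadic in $(X^{\varepsilon/10}, X^{1/2}]$.

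\textbf{Step 2 ($L^2$ analysis and case split).} The inner sum over $b$ is a geometric series bounded by $\min(H/a + 1,\, \tfrac{1}{2}\|a\beta\|_{\R/\Z}^{-1})$. Applying Dirichlet's theorem, write $\beta = p/q + \eta$ with $(p,q) = 1$, $1 \leq q \leq X^{1 - \varepsilon/2}$, and $|\eta| \leq 1/(qX^{1-\varepsilon/2})$. In the \emph{major arc regime} $q \leq X^{\varepsilon/500}$, the factor $e(n\eta)$ varies by at most $X^{-\varepsilon/2}$ on $(x, x+H]$, and partial summation reduces the problem to estimating
\begin{equation*}
\sum_{r \pmod q} e(rp/q) \sum_{\substack{x < n \leq x+H \\ n \equiv r \pmod q}} (d_2(n) - d_2^\sharp(n))
\end{equation*}
in $L^2(x)$; for each residue class I invoke a Jutila-type mean-square estimate (derived by twisted Vorono\"i summation adapted to $d_2^\sharp$). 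In the \emph{minor arc regime} $q > X^{\varepsilon/500}$, after squaring the type II sum in $x$ and expanding, the off-diagonal contribution reduces to Fej\'er-like sums $\sum_{|h| \leq H/a} e(ah\beta)(H - a|h|)_+$, which are the squared modulus of Dirichlet kernels and hence controllable whenever $\|a\beta\|_{\R/\Z}$ is not too small. Summing over dyadic blocks of $a$ and exploiting the lower bound $\|a\beta\|_{\R/\Z} \gtrsim 1/q$ available from Dirichlet approximation (for $a < q$), and from the classical spacing estimates for $\{a\beta \bmod 1\}$ otherwise, yields a power saving of $X^{-\varepsilon/500}$ in $L^2(x)$. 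Chebyshev's inequality then produces the pointwise bound $HX^{-\varepsilon/1000}$ outside an exceptional set of measure $O(X^{1-\varepsilon/1000})$.

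\textbf{Main obstacle.} The most delicate step is the major arc case: establishing a mean-square bound of the form
\begin{equation*}
\int_X^{2X} \Bigg|\sum_{\substack{x < n \leq x+H \\ n \equiv r \pmod q}} (d_2(n) - d_2^\sharp(n))\Bigg|^2 dx \ll \frac{XH^2}{q^2} X^{-\varepsilon/100}
\end{equation*}
uniformly in $r$ and for $q \leq X^{\varepsilon/500}$, with the explicit approximant $d_2^\sharp$ defined in~\eqref{dks-def}. Bounds of Jutila and Motohashi give analogous results for smoothly truncated variants of $d_2$, but must be re-derived via twisted Vorono\"i summation to verify that the polynomials $P_m$ from~\eqref{eq:Pmt} precisely cancel the main term on the dual Bessel side, uniformly in $q$ and the residue class $r$. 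This is an essentially algebraic calculation paralleling the justification of $d_2^\sharp$ in Section~3.1 of~\cite{MSTT-all}, but combined with the classical Vorono\"i analysis to extract the power saving; the need to control the residue-class dependence uniformly for $q$ up to $X^{\varepsilon/500}$ is what ultimately constrains the power saving $X^{-\varepsilon/1000}$ and the size of the exceptional set.
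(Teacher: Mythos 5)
There is a genuine gap, and it sits exactly where the work has to happen. In your minor-arc step, squaring $\sum_{a\sim A}\sum_{b\colon x<ab\le x+H}e(ab\beta)$ and integrating in $x$ produces (up to boundary terms) $\sum_{a_1,a_2\sim A}\sum_{b_1,b_2}e((a_1b_1-a_2b_2)\beta)\,(H-|a_1b_1-a_2b_2|)_+$, and the Fej\'er/Dirichlet-kernel sums you describe account only for the diagonal $a_1=a_2$. The off-diagonal terms $a_1\neq a_2$ are a binary additive divisor problem (counting $a_1b_1-a_2b_2=h$ with $|h|\le H$, weighted by $e(h\beta)$), and they dominate precisely in the hardest range $A$ near $X^{1/2}$ and $H=X^{\varepsilon}$: elementary counting of $b_2$ for each $(a_1,a_2,b_1)$ carries an $O(1)$ error, giving a total of order $A\cdot X$, which for $A\asymp X^{1/2}$ is $X^{3/2}$ and vastly exceeds the target $H^2X^{1-\varepsilon/500}$; so one must extract cancellation from $e(h\beta)$ inside the error terms, which your stated tools (Dirichlet-kernel bounds and spacing of $\{a\beta\}$) do not do. Trying to avoid the cross terms by Cauchy--Schwarz in $a$ loses a factor $A$ and then the spacing argument produces a term of size about $XAq\ge XA X^{\varepsilon/500}$ per dyadic block, again far above the target when $A$ is large and $H$ small. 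In short, the minor-arc $L^2$ analysis as described does not close, and closing it would require genuinely nontrivial shifted-convolution input that is absent from the plan.

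The major-arc step has a second gap: the mean-square bound you yourself flag as the ``main obstacle'' (a Jutila-type estimate for $d_2-d_2^{\sharp}$ in progressions, uniform in $q\le X^{\varepsilon/500}$ and in the residue class, with the explicit approximant of~\eqref{dks-def}) is simply deferred, and it is the crux. Relatedly, your Step 1 claim that the divisors $a\le R_2^2$ are ``exactly captured by $d_2^{\sharp}$'' is not correct pointwise: for $k=2$ the approximant is $d_2^{\sharp}(n)=2\sum_{m\mid n,\,m\le R_2}1+\sum_{m\mid n,\,R_2<m\le R_2^2}\frac{\log n-2\log R_2}{\log R_2}$, which matches the small-divisor part of the hyperbola only on average, so the reduction to a clean type $II$ sum already presupposes the cancellation-on-average that the unproved major-arc estimate is meant to supply. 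For contrast, the paper's proof avoids both difficulties: after Dirichlet approximation with denominator $q\le X^{\varepsilon/2}$ it passes to subintervals of length $H'=qQ^{1/2}$ on which the phase is exactly $e(an/q)$ (so no minor arcs ever arise), compares with a long interval of length $X^{1-1/100}$ via the type $I_2$ estimate of the prequel, converts additive characters to Dirichlet characters through Gauss sums, and then gets the variance bound from the Parseval lemma (Lemma~\ref{le:perron}) combined with the fourth moment of Dirichlet $L$-functions, which packages, with the required uniformity in $q$ and the residue class, precisely the mean-square information you were proposing to re-derive by twisted Vorono\"i summation.
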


The proof method of Theorem~\ref{thm_d2} is rather different from that of Theorem~\ref{discorrelation-thm}, relying on more classical inputs such as the fourth moment of Dirichlet $L$-functions.

\begin{remark} The most notable aspect of Theorem \ref{thm_d2} is the power-saving bound for the discorrelation. It is plausible that, with some work, the methods in~\cite{mrt-correlations-II} could prove a variant of this theorem in which $d_2$ can be replaced by $d_k$ for an arbitrary $k\geq 2$, and $H$ is permitted to be as small as $\log^{C_k} X$ for some $C_k$ depending on $k$, but with the savings $X^{-\eta}$ significantly weakened to $\log^{-c_k} X$ for some $c_k>0$.  However, we do not attempt to establish this claim here.
\end{remark}

\subsection{Methods of proof}

We now describe (in somewhat informal terms) the general strategy of proof of Theorem~\ref{discorrelation-thm}, although for various technical reasons the actual rigorous proof will not quite follow the intuitive plan that is outlined here.  See also \Cref{fig-logic} for a high-level depiction of the proof and \Cref{fig-major,fig-inv,fig-contagion} for the structure of proofs of several key intermediate theorems.  (These figures could also be combined into one giant diagram if desired.)

\begin{figure}[H]
    \centering
\begin{tikzpicture}[node distance=1cm and 1cm]
    \node[draw, rectangle, align=center,fill=green!20] (main) {Discorrelation estimate \\ \Cref{discorrelation-thm}};
    \node[draw, rectangle, left=of main, align=center] (cor) {Discorrelation with poly. \\ \Cref{cor:discorrelation}};
    \node[draw, rectangle, right=of main, align=center,fill=red!20] (d2) {$d_2$ discorrelation \\ \Cref{thm_d2}};
    \node[draw, rectangle, below=of main, align=center] (gowers) {Gowers uniformity \\ \Cref{thm_gowers}};
    \node[draw, rectangle, below left=of gowers, align=center] (hl) {Hardy--Littlewood conj. \\ \Cref{thm:HL}};
    \node[draw, rectangle, below right=of gowers, align=center] (divisor) {Divisor correlations \\ \Cref{thm:divisor}};
    \node[draw, rectangle, above left=of main, align=center,fill=blue!20] (major) {Major arc estimate \\ \Cref{discorrelation-thm-major}};
    \node[draw, rectangle, above=of main, align=center] (inv) {Inverse theorem \\ \Cref{inverse}};
    \node[draw, rectangle, above right=of main, align=center] (comb) {Comb. decompositions \\ \Cref{comb-lambda,comb-mu}};
    \node[draw, rectangle, above left=of inv, align=center] (invold) {Type $I$, $I_2$ inv. theorems \cite{MSTT-all} \\ \Cref{inverse}(i), (iv) };
    \node[draw, rectangle, above right=of inv, align=center,fill=yellow!20] (invnew) {Type $II$ inv. theorem \\ \Cref{inverse}(ii), (iii)};
    \draw[-{Latex[length=4mm, width=2mm]}] (main) -- (cor);
    \draw[-{Latex[length=4mm, width=2mm]}] (main) -- (gowers);
    \draw[-{Latex[length=4mm, width=2mm]}] (gowers) -- (hl);
    \draw[-{Latex[length=4mm, width=2mm]}] (gowers) -- (divisor);
    \draw[-{Latex[length=4mm, width=2mm]}] (major) -- (main);
    \draw[-{Latex[length=4mm, width=2mm]}] (inv) -- (main);
    \draw[-{Latex[length=4mm, width=2mm]}] (comb) -- (main);
    \draw[-{Latex[length=4mm, width=2mm]}] (invold) -- (inv);
    \draw[-{Latex[length=4mm, width=2mm]}] (invnew) -- (inv);
\end{tikzpicture}
    \caption{Logical relationships used to both prove and apply the main estimate, \Cref{discorrelation-thm} (in green). \Cref{discorrelation-thm-major} (in blue) and \Cref{inverse}(ii), (iii) (in yellow) will themselves require lengthy proofs; see \Cref{fig-major,fig-inv} respectively. \Cref{thm_d2} (in red) is proven by different (and more classical) methods.}
    \label{fig-logic}
\end{figure}
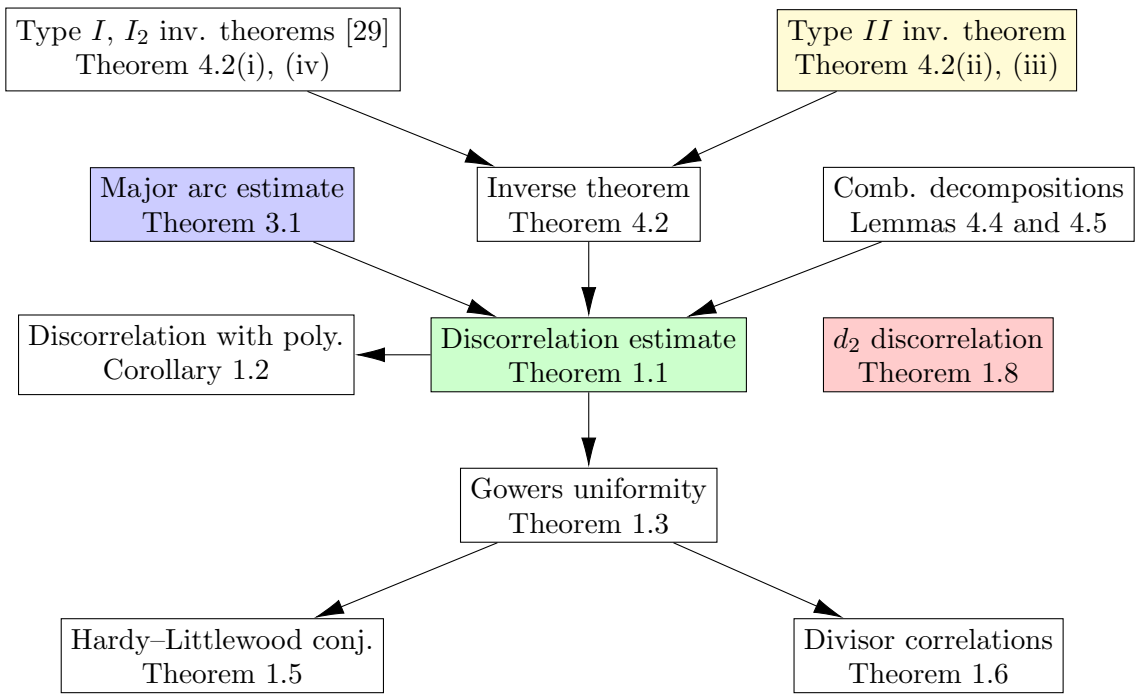

The first step in the case of Theorem~\ref{discorrelation-thm}(i)--(iii), which is standard, is to apply Heath--Brown's identity (Lemma~\ref{hb-identity}) and decompose $\mu, \Lambda, d_k$ and their approximants (up to small error terms) into three standard types of sums for divisor-bounded coefficients $\alpha, \beta$:
\begin{itemize}
\item[($I$)] \emph{Type $I$} sums, which are essentially of the form $\alpha * 1$ for some arithmetic function $\alpha \colon \N \to \C$ supported on the interval $[1, HX^{-\varepsilon/2}]$.
\item[($I_2$)] \emph{Type $I_2$} sums, which are essentially of the form $\alpha * 1 * 1$ for some arithmetic function $\alpha \colon \N \to \C$ supported on the interval $[1, H^{3/2} X^{-1/2-\varepsilon/2}]$.
\item[($II$)] \emph{Type $II$} sums, which are of the form $\alpha * \beta$ for some arithmetic functions $\alpha, \beta \colon \N \to \C$ with $\alpha$ supported on the interval $[X^{\varepsilon/2}, H X^{-\varepsilon/2}]$.
\end{itemize}

On the other hand, the first step in proving Theorem~\ref{discorrelation-thm}(iv)--(v) is to apply an identity of Tur\'an--Kubilius type, which leads only to type $I$ sums in the range $[1,X^{\varepsilon/2}]$ and to type $II$ sums with one variable in the range $[X^{\kappa},X^{\varepsilon/2}]$, with $\kappa>0$ an arbitrarily small constant.  We formalize these decompositions as \Cref{comb-lambda,comb-mu}.

We then split our considerations to the \emph{major arc case} in which the nilsequence
$F(g(n)\Gamma)$ is more-or-less replaced by $n^{iT}$ for some $T = X^{O(1)}$, and the \emph{minor arc case} in which $F$ has mean zero and $g(n)\Gamma$ is
highly equidistributed in the nilmanifold $G/\Gamma$.

The major arc case of Theorem~\ref{discorrelation-thm} is formalized as \Cref{discorrelation-thm-major}, and is proven in Section~\ref{major-arc-sec}. Here we use Parseval's identity to relate the variance of the short interval sum to Dirichlet polynomial estimates. We split this case into two subcases: the case of small $|T|$ (i.e., $|T|\leq X^{1+\varepsilon/2}/H$) and the case of large $|T|$ (i.e., $|T|> X^{1+\varepsilon/2}/H$). If $|T|$ is small, then $n^{iT}$ does not vary much on intervals of the form $[x,x+HX^{-\varepsilon/2}]$, which allows us to effectively reduce to the case $T=0$. This case could be deduced from known estimates on arithmetic functions in almost all short intervals, although with the large $|T|$ case in mind we found it convenient to reprove these estimates.  If instead $|T|$ is large, we split into type $I$ and $II$ sums in the usual fashion. The type $II$ sums are handled using Dirichlet polynomial estimates of Baker--Harman--Pintz~\cite{baker-harman-pintz}. After this, we are left with handling only a very narrow type $I$ range where we have a smooth variable of length $>X^{1-\varepsilon}$. This case in turn can be handled by Taylor expanding $n^{iT} = e(\frac{T}{2\pi} \log n)$ on a short interval around $x$ as a polynomial exponential phase and using van der Corput's exponential sum bound to show that the sequence $\left(\frac{T}{2\pi x}\mod 1\right)_{x\in [X,2X]}$ (corresponding to the linear term in the Taylor expansion) equidistributes.

To prove Theorem~\ref{discorrelation-thm}(i)--(iii) in the remaining, minor arc, cases,
in case of type $I$ and type $I_2$ sums, we simply use our estimates from~\cite{MSTT-all}. Unfortunately, for these contributions we have not found a way to use the additional averaging over $x$ to extend the range where $\alpha$ can be supported.

The remaining task is then to handle the contribution of type $II$ minor arc sums. Let us consider sums of the type
\begin{equation}\label{Expo}
\sum_{\substack{x < ab \leq x + H \\ A_{II} <a\leq 2A_{II}}} \alpha(a)  \beta(b) \overline{F}(g_x(ab) \Gamma),
\end{equation}
with $H^{\varepsilon/2} \leq A_{II} \leq HX^{-\varepsilon/2}$, where $g_x\colon \Z\to G$ is a polynomial sequence, for all $x\in [X,2X]$ outside a small exceptional set.
To treat these sums, we can use a Fourier decomposition and the equidistribution theory of nilmanifolds to reduce (roughly speaking) to treating the following two special cases of these sums:
\begin{itemize}
\item \emph{Abelian type $II$ minor arc sums} in which $F(g_x(n)\Gamma) = e(P_x(n))$ is a polynomial phase that does not ``look like'' a character $n^{iT}$ (or more generally $\chi(n) n^{iT}$ for some Dirichlet character $\chi$ of small conductor) in the sense that the Taylor coefficients of $e(P_x(n))$ around $X$ do not align with the corresponding Taylor coefficients of such characters.

\item \emph{Nonabelian type $II$ minor arc sums}, in which $g_x(n) \Gamma$ is highly equidistributed in the nilmanifold $G/\Gamma$, which is arising from a nonabelian nilpotent group $G$, and $F$ exhibits non-trivial oscillation in the direction of the center $Z(G)$ of $G$ (which one can reduce to be one-dimensional).
\end{itemize}

To handle both these cases, we extend the methodology of Walsh~\cite{walsh} to develop two tools: a theory of ``nilsequence contagion'' and a combinatorial ``scaling'' argument.

To state the nilsequence contagion result (Theorem~\ref{nil-contagion}) that we establish, we need to introduce a relation $\sim_{I,\eta}$ between polynomial sequences on $G$, where $I$ is an interval and $\eta\in (0,1)$. We say that $g_1\sim_{I,\eta}g_2$ for two polynomial sequences $g_1,g_2\colon \Z\to G$ if $g_1=\varepsilon g_2\gamma$ for some $(\eta^{-1},I)$-smooth polynomial sequence $\varepsilon$ and some $\eta^{-1}$-rational polynomial sequence $\gamma$. We are then interested in what can be said of a family of polynomial maps $(g_n)_{n\in [P,2P]\cap \mathbb{N}}, g_n\colon \Z\to G$ if we have the relation
\begin{align}\label{eq:gnn'}
g_{n}(n'\cdot)\sim_{\frac{1}{nn'}I,\eta}g_{n'}(n\cdot)
\end{align}
for proportion $\geq \eta$ of pairs $(n,n')\in ([P,2P]\cap \N)^2$. We claim that, under suitable conditions on the parameters $P, |I|, \eta$, there must exist a polynomial sequence $g_{*}$ such that
\begin{align*}
 g_n\sim_{\frac{1}{n}I,\eta}g_{*}(n\cdot)
\end{align*}
for $\gg \eta^{O(1)}P$ choices of $n\in [P,2P]\cap \N$  (and these indeed are clearly solutions to~\eqref{eq:gnn'} if we adjust the value of $\eta$ by a polynomial amount). 

Walsh showed in~\cite{walsh} (see also~\cite{Walsh2},~\cite{walsh3}) that an analogous claim holds in the case where $g_n(y)=\alpha_ny$ is a linear polynomial on $\R/\Z$, and this was one key ingredient that enabled him to give a simpler proof of the main result of~\cite{mrt-fourier}. Our result extends this to the case of arbitrary polynomial sequences on Lie groups and in addition, importantly for the proof of Theorem~\ref{discorrelation-thm}, our result allows extra flexibility already in the case of linear polynomials (we can improve the dependencies between the different parameters $P, |I|, \eta$, resulting in improved exponents for Theorem~\ref{discorrelation-thm}). 

The proof of the nilsequence contagion result involves first showing a version for classical polynomials (Theorem~\ref{thm_contagious}) by some combinatorial arguments that give iteratively stronger conclusions until we reach the desired conclusion. We then prove the general case by considering a certain two-variable family of subspaces of $\Poly(\Z\to G)$ and performing a double induction with respect to this family, where the starting case is trivial, the final case is the full nilsequence contagion result, and the passage from one case to the next is enabled by the polynomial case.

The scaling argument executed in Section~\ref{type-ii} is a combinatorial argument that together with the nilsequence contagion lemma shows that if the type $II$ sum~\eqref{Expo} is large, then the phase function $x\mapsto g_x$, initially defined on $[X,2X]$, can be extended to a larger scale $[XA_{II}^K,C_KXA_{II}^K]$ for any fixed $K$, such that the phases $g_x$ at scale $X$ and  the phases $g_z$ at scale $XA_{II}^K$ are related by an ``approximate functional equation'' roughly of the form
\begin{align*}
g_z(m\cdot)\sim_{\frac{1}{m}(z,z+HA_{II}^{K}],(\log X)^{-O_K(1)}} g_x \end{align*}
for ``many'' $m\asymp A_{II}^K$. Since $A_{II}\geq H^{\varepsilon/2}\geq X^{\kappa}$ for some constant $\kappa>0$, in all cases of Theorem~\ref{discorrelation-thm} we can find $K\ll_{\varepsilon}1$ such that $A_{II}^{K}\geq X$; thus, we can reach a very large scale where the phase functions ``communicate'' with those at scale $[X,2X]$.  We then employ a technical proposition (Proposition~\ref{prop:Furstenberg-Weiss}), related to the non-abelian type $II$ estimate established in the prequel paper~\cite{MSTT-all}, to show that the functional equation~\eqref{eq:gnn'} has no solutions in the non-abelian minor arc case where $G$ is non-abelian with one-dimensional center and $g_x\Gamma$ is highly equidistributed. In the complementary abelian case, we can reduce to $G/\Gamma=\R/\Z$. We then show that the only solutions are essentially those where $g_x$ is the truncated Taylor series around $x$ of the function $n\mapsto T\log n$ for some $T\ll X^{O(1)}$;  for this a key ingredient is Proposition~\ref{prop:logarithm} that relates approximately dilation-invariant polynomials to Taylor polynomials of the logarithm function. This then brings us back to the major arc case that was discussed above.

\begin{remark}
There is an obstacle in improving the exponent $\theta$ in Theorem~\ref{discorrelation-thm}(i) below $\theta=1/3$. Namely, for $H=X^{1/3 - \varepsilon}$, we do not know how to handle $d_3$-type sums
\[
\sum_{\substack{X < abc \leq X + H \\ X^{1/3-\varepsilon/2} < a, b, c \leq X^{1/3+\varepsilon/2}}} \overline{F}(g_x(abc) \Gamma).
\]
arising from Heath-Brown's identity.  Our type $II$ estimates are not applicable anymore since none of the variables is supported on numbers $\leq HX^{-\eta}$ for any $\eta>0$. On the other hand, our
type $I_2$ estimates from our previous paper~\cite{MSTT-all} rely on a certain decomposition of hyperbolic neighborhoods into arithmetic progressions which is only non-trivial in the regime $\theta \geq 1/3$.
\end{remark}

\subsection{Acknowledgments}

KM was supported by Academy of Finland grant no. 285894. MR acknowledges the support of NSF grant DMS-2401106. XS was supported by NSF grant DMS-2200565. TT was supported by a Simons Investigator grant, the James and Carol Collins Chair, the Mathematical Analysis \& Application Research Fund Endowment, and by NSF grant DMS-1764034. JT was supported by Academy of Finland grant no. 362303, a von Neumann Fellowship (NSF grant DMS-1926686), and funding from European Union's Horizon
Europe research and innovation programme under Marie Sk\l{}odowska-Curie grant agreement no.
101058904 and ERC grant agreement no. 101162746.  

 We are grateful to the anonymous referee for a careful reading of the paper and for numerous helpful comments and corrections. We thank Jiseong Kim for a correction.

\subsection{Notation}\label{notation-sec}

We use $Y \ll Z$, $Y = O(Z)$, or (when $Y \geq 0$) $Z \gg Y$ to denote the estimate $|Y| \leq CZ$ for some constant $C$.  If we permit this constant to depend on one or more parameters we shall indicate this by appropriate subscripts, thus for instance $O_{\eps,A}(Z)$ denotes a quantity bounded in modulus by $C_{\eps,A} Z$ for some quantity $C_{\eps,A}$ depending only on $\eps,A$.  We write $Y \asymp Z$ for $Y \ll Z \ll Y$, and we write $y\sim Y$ for $Y<y\leq 2Y$. When working with $d_k$, all implied constants are permitted to depend on $k$.  We also write $X = o_{w \to \infty}(Y)$ (respectively $X=o_{\delta\to 0}(Y)$) to denote an estimate of the form $|X| \leq c(w) Y$ where $c(w) \to 0$ as $w \to \infty$ (respectively, $|X| \leq c(\delta) Y$ where $c(\delta) \to 0$ as $\delta \to 0$); again, if the implied decay rate $c$ needs to depend on additional parameters, we indicate this with appropriate subscripts.

If $x$ is a real number (respectively an element of $\R/\Z$), we write $e(x) \coloneqq e^{2\pi i x}$ and let $\|x\|_{\R/\Z}$ denote the distance of $x$ to the nearest integer (respectively to zero).  We let $\{x\}$ be the \emph{signed} fractional part of $x$, that is to say the unique real number in $(-1/2,1/2]$ that is equal to $x$ modulo $1$.  In particular $\|x\|_{\R/\Z} = |\{x\}|$. We use $\log$ to denote the natural logarithm function $x \mapsto \log x$.

We use $1_E$ to denote the indicator of an event $E$, thus $1_E$ equals $1$ when $E$ is true and $0$ otherwise.  If $S$ is a set, we write $1_S$ for the indicator function $1_S(n) \coloneqq 1_{n \in S}$.

We write $|E|$ for the cardinality of a finite set $E$. If $I$ is an interval, we use $|I|$ to denote its length. We write $\meas(\mathcal{B})$ for the Lebesgue measure of a set $\mathcal{B}\subset \mathbb{R}$.
If $\vec \omega=(\omega_1,\ldots, \omega_s)\in \{0,1\}^{s}$ is a vector, we write $|\vec \omega|\coloneqq\omega_1+\cdots+\omega_s$.

Unless otherwise specified, all sums range over natural number values, except for sums over $p$ which are understood to range over primes.  We use $d\mid n$ to denote the assertion that $d$ divides $n$; $(n,m)$ to denote the greatest common divisor of $n$ and $m$, $n = a \ (q)$ to denote the assertion that $n$ and $a$ have the same residue mod $q$; and $(f*g)(n) \coloneqq \sum_{d\mid n} f(d) g(n/d)$ to denote the Dirichlet convolution of two arithmetic functions $f,g \colon \N \to \C$.  We use $d \mid W^\infty$ to denote the claim that $d\mid W^m$ for sufficiently large $m$, or equivalently that all prime factors of $d$ divide $W$. We also sometimes write $P(w)=\prod_{p<w}p$.

The \emph{height} of a rational number $a/b$ with $a,b$ coprime is defined as $\max(|a|, |b|)$.

\section{Basic tools}\label{sec:lemmas}

In this section we state various basic tools and definitions that we will need in this paper. Most of these are quoted from our previous paper~\cite{MSTT-all}, but some (in particular, Proposition~\ref{prop:logarithm}) were not present in that work.

\subsection{Total variation}

The notion of maximal summation, which we defined in~\eqref{maximal-sum}, interacts well with the notion of total variation.

\begin{definition}[Total variation]\label{tv-def}  Given any function $f\colon P \to \C$ on an arithmetic progression $P$, the \emph{total variation norm} $\|f\|_{\TV(P)}$ is defined by
$$ \|f\|_{\TV(P)} \coloneqq \sup_{n \in P} |f(n)| + \sup_{n_1 < \dots < n_k} \sum_{j=1}^{k-1} |f(n_{j+1})-f(n_j)|$$
where the second supremum ranges over all increasing finite sequences $n_1 < \dots < n_k$ in $P$ and all $k \geq 1$.  Note that in this finitary setting one can simply take $n_1,\dots,n_k$ to be the elements of $P$ in increasing order, if desired. We define $\|f\|_{\TV(P)}=0$ when $P$ is empty.  For any natural number $q \geq 1$, we also set
$$ \|f\|_{\TV(P;q)} \coloneqq \sum_{a \in \Z/q\Z} \|f\|_{\TV(P \cap (a+q\Z))}.$$
\end{definition}

Informally, if $f$ is bounded in $\TV(P;q)$ norm, then $f$ does not vary much on each residue class modulo $q$ in $P$.  Applying the fundamental theorem of calculus we see that if $f \colon I \to \C$ is a continuously differentiable function on an interval $I$ and $P\subset I$ is an arithmetic progression, then
\begin{equation*}
 \|f\|_{\TV(P)} \ll \sup_{t \in I} |f(t)| + \int_I |f'(t)|\ dt.
\end{equation*}
  Furthermore, from the identity $ab-a'b' = (a-a')b + (b-b')a'$ we see that
\begin{equation*}
\| fg \|_{\TV(P;q)} \ll \|f\|_{\TV(P;q)} \|g\|_{\TV(P;q)}
\end{equation*}
for any functions $f,g \colon P \to \C$ defined on an arithmetic progression, and any $q \geq 1$.

We recall some basic properties of maximal summation:

\begin{lemma}[Basic properties of maximal sums]\label{basic-prop}~\cite[Lemma 2.2]{MSTT-all}\ 
\begin{itemize}
\item[(i)]  (Triangle inequalities) For any subprogression $P'$ of an arithmetic progression $P$, and any $f \colon P \to \C$ we have
$$ \left| \sum_{n \in P} f(n) 1_{P'}(n) \right|^* = \left| \sum_{n \in P'} f(n) \right|^* \leq \left| \sum_{n \in P} f(n) \right|^*$$
and
$$ \left|\sum_{n \in P} f(n)\right| \leq\left |\sum_{n \in P} f(n)\right|^* \leq \sum_{n \in P} |f(n)|.$$
If $P$ can be partitioned into two subprogressions as $P = P_1 \uplus P_2$, then
\begin{equation*}
\left |\sum_{n \in P} f(n)\right|^* \leq \left|\sum_{n \in P_1} f(n)\right|^* + \left|\sum_{n \in P_2} f(n)\right|^*.
\end{equation*}
Finally, the map $f \mapsto |\sum_{n \in P} f(n)|^*$ is a norm.
\item[(ii)]  (Summation by parts) Let $P$ be an arithmetic progression, and let $f,g \colon P \to \C$ be functions.  Then we have
\begin{equation*}
 \left| \sum_{n \in P} f(n) g(n) \right|^* \leq \|g\|_{\TV(P)} \left| \sum_{n \in P} f(n) \right|^*
\end{equation*}
and more generally
\begin{equation*}
 \left| \sum_{n \in P} f(n) g(n) \right|^* \leq \|g\|_{\TV(P;q)} \left| \sum_{n \in P} f(n) \right|^*
\end{equation*}
for any $q \geq 1$.
\end{itemize}
\end{lemma}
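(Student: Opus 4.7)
Both parts rest on two structural facts about arithmetic progressions: (a) the intersection of two arithmetic progressions is itself an arithmetic progression or empty, and (b) a subprogression of a subprogression of $P$ is still a subprogression of $P$. With these in hand, every assertion reduces to a tracking of which collections of subprogressions we are supping over.

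For part (i), I would argue as follows. The first equality $|\sum_{n\in P} f(n)1_{P'}(n)|^* = |\sum_{n\in P'} f(n)|^*$ follows because for any subprogression $Q\subseteq P$, the sum $\sum_{n\in Q}f(n)1_{P'}(n)$ equals $\sum_{n\in Q\cap P'}f(n)$, and by (a) the intersection $Q\cap P'$ ranges exactly over subprogressions of $P'$ as $Q$ ranges over subprogressions of $P$. The ensuing inequality $\leq |\sum_{n\in P}f(n)|^*$ is then immediate from (b): every subprogression of $P'$ is a subprogression of $P$, so the right supremum is taken over a larger family. The pointwise bounds $|\sum_P f|\leq|\sum_Pf|^*\leq \sum_P|f|$ come, respectively, from choosing the competing progression $Q=P$ and from the ordinary triangle inequality applied to each $Q$. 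For the partition bound, any subprogression $Q\subseteq P$ splits as $Q=(Q\cap P_1)\uplus(Q\cap P_2)$ with both pieces subprogressions (by (a)), so the ordinary triangle inequality combined with the first equality yields $|\sum_{n\in Q} f(n)|\leq|\sum_{n\in P_1} f(n)|^*+|\sum_{n\in P_2} f(n)|^*$, and we take the sup over $Q$. The seminorm property is automatic, since $|\cdot|^*$ is a supremum of seminorms (one per subprogression).

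For part (ii), the idea is summation by parts. In the case $q=1$, for each subprogression $Q=\{b_1<\dots<b_M\}\subseteq P$, Abel summation gives
\[
\sum_{n\in Q} f(n)g(n) = g(b_M)T_M - \sum_{j=1}^{M-1} T_j\bigl(g(b_{j+1})-g(b_j)\bigr),
\]
where $T_j\coloneqq\sum_{k\leq j} f(b_k)$ is a sum over the initial subprogression $\{b_1,\dots,b_j\}\subseteq P$, and hence $|T_j|\leq |\sum_{n\in P}f(n)|^*$. The triangle inequality, the telescoping estimate $|g(b_M)|+\sum_j|g(b_{j+1})-g(b_j)|\leq \|g\|_{\TV(Q)}\leq\|g\|_{\TV(P)}$, and taking the supremum over $Q\subseteq P$ then prove the $q=1$ case. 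For general $q$, I would decompose $Q=\bigcup_{a\in\Z/q\Z}(Q\cap(a+q\Z))$ into residue classes (each of which is a subprogression of $P$ by (a)), apply the $q=1$ bound within each residue class, and sum over $a$; since $\|g\|_{\TV(Q\cap(a+q\Z))}\leq\|g\|_{\TV(P\cap(a+q\Z))}$, the total is bounded by $\|g\|_{\TV(P;q)}\cdot|\sum_{n\in P}f(n)|^*$, as required.

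The only subtle point anywhere is bookkeeping around what counts as a subprogression and verifying that intersections and partitions of APs remain APs; beyond this combinatorial care, there is no real obstacle — the argument is essentially a careful packaging of Abel summation together with monotonicity of the $|\cdot|^*$ seminorm under shrinking the ambient progression.
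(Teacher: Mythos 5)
Your proof is correct, and it is essentially the argument behind the cited result: this paper imports the lemma from~\cite[Lemma 2.2]{MSTT-all} without reproving it, and the proof there proceeds exactly as you do, using that intersections of arithmetic progressions are progressions (so the supremum over subprogressions is monotone and compatible with restriction/partition) together with Abel summation applied within each residue class, with the partial sums over initial segments bounded by $|\sum_{n\in P}f(n)|^*$ and the coefficients controlled by the $\TV(P;q)$ norm.
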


\subsection{Vinogradov lemma}

If $f \colon \Z \to \R/\Z$ is a function, and $I$ is an interval of some length $|I| \geq 1$, we define the \emph{smoothness norm}
\begin{align}\label{eq:smoothf} \| f \|_{C^d(I)} \coloneqq \sup_{0 \leq j \leq d} \sup_{n \in I} |I|^j \| \partial^j_1 f(n) \|_{\R/\Z}
\end{align}
where $\partial_1$ is the difference operator\footnote{In our previous paper \cite{MSTT-all} we used the convention $\partial_1 f(n) = f(n)-f(n-1)$ instead, but this makes little difference to the final statements, and we have switched to this convention to align with other literature in the subject.} $\partial_1 f(n) \coloneqq f(n+1) - f(n)$. We make a similar definition if $f$ takes values in $\R$ instead of $\R/\Z$ (where now we replace $\| \|_{\R/\Z}$ by the absolute value). In the case where $f$ is a polynomial of degree $d$, we have $\partial^j_1 f = 0$ for all $j>d$, and in this case we write $\|f\|_{C^d(I)}$ as $\|f\|_{C^\infty(I)}$. We remark that this definition of $\|f\|_{C^\infty(I)}$ deviates slightly from that in~\cite[Definition 2.7]{green-tao-ratner}; in particular, we allow the index $j$ to equal zero and we allow $n$ to range over $I$ instead of setting $n$ to equal $0$. If $I=[x,x+H]$ with $x,H\geq 1$ and $f(n)=\sum_{k=0}^{d}\alpha_k \binom{n-n_0}{k}$ for some integer $n_0 \in I$, then
\begin{align}\label{eq:smoothcomparison}
\|f\|_{C^{\infty}(I)}\asymp_d \max_{0\leq j\leq d}H^j\|\alpha_j\|_{\R/\Z}.  
\end{align}
The lower bound is clear (specialize $n$ to $n_0$ in~\eqref{eq:smoothf}).  The upper bound comes from using the Taylor expansion identity
$$ \partial_1^j f(n) = \sum_{k=j}^d \binom{n-n_0}{k-j} \partial_1^k f(n_0)$$
and the triangle inequality.  

 If $f$ is a polynomial of degree $d$, for any intervals $I\subset I'$ we have
\begin{align}\label{eq:smoothness}
\|f\|_{C^{\infty}(I)}\leq \|f\|_{C^{\infty}(I')}\ll_d \left(\frac{|I'|}{|I|}\right)^d \|f\|_{C^{\infty}(I)};   
\end{align}
indeed, the first inequality is trivial, and for the second one we can use~\eqref{eq:smoothcomparison}. We also claim the dilation property
\begin{equation}\label{eq:dil}
 \|f(a \cdot) \|_{C^\infty(\frac{1}{a}I)} \ll_d \|f\|_{C^\infty(I)}
\end{equation}
for any $1 \leq a \leq |I|$, where here and later $\frac{1}{a}I := \{\frac{t}{a} \colon t \in I\}$.  In view of~\eqref{eq:smoothcomparison}, it suffices to show that for every $0 \leq k \leq d$, the dilated binomial coefficient $\binom{an}{k}$ has an expansion of the form $\sum_{j=0}^k c_{j,k} \binom{n}{j}$ with $c_{j,k} = O_d(a^k)$.  But this is clear since $\binom{an}{k}$ is a degree $k$ polynomial in $n$ with coefficients $O_d(a^k)$.

We have the following convenient form of the Vinogradov lemma:

\begin{lemma}[Vinogradov lemma]\label{vin}\cite[Lemma 2.3]{MSTT-all}  Let $\eps, \delta \in (0, 1/2)$, $d \geq 0$, and let $P \colon \Z \to \R/\Z$ be a polynomial of degree at most $d$. Let $I$ be an interval with $|I| \geq 1$, and suppose that
$$ \| P(n) \|_{\R/\Z} \leq \eps$$
for at least $\delta |I|$ integers $n \in I$.  Then either we have $\delta \ll_d \eps$, or else 
$$ \| qP \|_{C^\infty(I)} \ll_d \delta^{-O_d(1)} \eps $$
for some integer $1 \leq q \ll_d \delta^{-O_d(1)}$.
\end{lemma}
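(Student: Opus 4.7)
The plan is to argue by induction on the degree $d$. For the base case $d=0$, the polynomial $P \equiv \alpha$ is constant, so any witness $n \in A := \{n \in I : \|P(n)\|_{\R/\Z} \leq \eps\}$ gives $\|P\|_{C^\infty(I)} = \|\alpha\|_{\R/\Z} \leq \eps$ with $q = 1$; if instead $A$ is empty the hypothesis forces $\delta < 1/|I|$, which is subsumed by the alternative.

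For the inductive step, set $A$ as above, so $|A| \geq \delta |I|$, and expand $P(n) = \sum_{k=0}^d c_k \binom{n - n_0}{k}$ around a fixed $n_0 \in I$. A standard Cauchy--Schwarz count of pairs $(n, n+h) \in A \times A$ with $|h| \leq |I|/2$ followed by pigeonhole in $h$ produces $\gg \delta^2 |I|$ values of $h$ for which the slice $A_h := \{n : n, n+h \in A\}$ satisfies $|A_h| \gg \delta^2 |I|$. For each such $h$, the difference polynomial $\partial_h P(n) := P(n+h) - P(n)$ has degree $\leq d-1$ and $\|\partial_h P(n)\|_{\R/\Z} \leq 2\eps$ on $A_h$. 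Applying the inductive hypothesis on $I \cap (I - h)$ yields, outside the alternative branch (which we absorb into $\delta \ll_d \eps$), integers $q_h \leq \delta^{-O_d(1)}$ with $\|q_h \partial_h P\|_{C^\infty(I \cap (I-h))} \ll_d \delta^{-O_d(1)} \eps$. Pigeonholing in $q_h$ yields a single $q \leq \delta^{-O_d(1)}$ working for $\gg \delta^{O_d(1)} |I|$ values of $h$.

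By Vandermonde's identity, the coefficient of $\binom{n-n_0}{d-1}$ in the binomial expansion of $\partial_h P$ is $c_d h$, so the lower bound in \eqref{eq:smoothcomparison} at index $d-1$ forces $|I|^{d-1} \|q c_d h\|_{\R/\Z} \ll_d \delta^{-O_d(1)} \eps$ for these many $h$. Invoking the degree-$1$ case of the lemma on the linear polynomial $h \mapsto q c_d h$ then produces $q' \leq \delta^{-O_d(1)}$ with $|I|^d \|q' q c_d\|_{\R/\Z} \ll_d \delta^{-O_d(1)} \eps$; set $Q := q' q$. To propagate this control to the lower coefficients, let $m \in \Z$ be the nearest integer to $Q c_d$ and consider $\tilde P := Q P - Q c_d \binom{\cdot - n_0}{d}$, a polynomial of degree $\leq d-1$. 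Since $|(Q c_d - m) \binom{n - n_0}{d}| \ll_d \delta^{-O_d(1)} \eps$ uniformly for $n \in I$, one has $\|\tilde P(n)\|_{\R/\Z} \ll_d \delta^{-O_d(1)} \eps$ on $A$; a final application of the inductive hypothesis produces $q'' \leq \delta^{-O_d(1)}$ with $\|q'' \tilde P\|_{C^\infty(I)} \ll_d \delta^{-O_d(1)} \eps$. Combining this with the bound $\|q'' Q c_d \binom{\cdot - n_0}{d}\|_{C^\infty(I)} \ll_d \delta^{-O_d(1)} \eps$ (which follows from our top-coefficient control together with \eqref{eq:smoothcomparison}) yields $\|q'' Q P\|_{C^\infty(I)} \ll_d \delta^{-O_d(1)} \eps$ with $q'' Q \ll_d \delta^{-O_d(1)}$, closing the induction.

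The main obstacle is the bookkeeping of constants: each inductive invocation multiplies the allowable size of $q$ and the effective $\eps$ by a further $\delta^{-O_d(1)}$ factor, and each time absorbs an alternative branch of the form $\delta^{c} \ll_d \eps$; one must verify these cumulative inflations remain bounded in terms of $d$ alone, and that the collection of sub-alternatives can be consolidated into the single stated alternative. With \eqref{eq:smoothness} and \eqref{eq:smoothcomparison} available, this tracking is routine but is by far the most laborious part of the argument.
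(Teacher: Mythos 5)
The paper does not actually prove this lemma: it is quoted verbatim from the prequel \cite{MSTT-all}, where it is in turn deduced from the Green--Tao quantitative Vinogradov-type lemma \cite[Lemma 4.5]{green-tao-ratner}. Your Weyl-differencing induction is therefore really an attempt to reprove that underlying lemma from scratch, which is a legitimate route, but as written it has a structural gap: the degree-one case is never established, and the induction is circular exactly there. Your base case is $d=0$, and your inductive step for degree $d$ invokes ``the degree-$1$ case of the lemma'' for the map $h \mapsto q c_d h$. That appeal is covered by the inductive hypothesis only when $d \geq 2$; for $d=1$ the differencing step converts the degree-one statement into another degree-one recurrence statement in the shift variable $h$, i.e.\ into the very statement being proven. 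The linear case is the classical Vinogradov lemma and is the real engine of the whole argument --- it is where the crucial gain of a factor $|I|^{-j}$ in the frequency bounds is produced, via a pigeonhole on gaps/residue classes and a drift analysis --- and it does not follow from the trivial constant case. It must be proved separately before the induction can run.

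Second, the ``bookkeeping'' you defer is not routine; as described it is invalid. Each appeal to the inductive hypothesis is made with an inflated tolerance ($2\eps$, then $\delta^{-O_d(1)}\eps/|I|^{d-1}$, then $\delta^{-O_d(1)}\eps$ for $\tilde P$) and a deflated density ($\gg \delta^2$ or $\gg \delta^{O_d(1)}$), so its failure branch reads $\delta^{c} \ll_d \delta^{-C}\eps$, i.e.\ $\eps \gg_d \delta^{O_d(1)}$. Since $\delta<1$, this is strictly \emph{weaker} than the stated alternative $\delta \ll_d \eps$, and it cannot be ``absorbed'' into it: the implication goes the wrong way, and the inflation of $\eps$ at each inductive call is intrinsic to your scheme, not a matter of tracking constants. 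What your argument can deliver is the dichotomy with first alternative $\eps \gg_d \delta^{O_d(1)}$ (which is essentially the Green--Tao formulation and suffices for every application of Lemma~\ref{vin} in this paper), but not the literal statement. Relatedly, your Cauchy--Schwarz step produces nonzero shifts $h$ only when $|I| \gg \delta^{-2}$, and later steps need $|I| \gg \delta^{-O_d(1)}$; the formulation in \cite[Lemma 4.5]{green-tao-ratner} carries an extra alternative of the form $|I| \ll_d \delta^{-O_d(1)}$ precisely for this reason, and your write-up neither treats the short-interval regime nor has any escape clause for it, so it needs to be addressed explicitly (for instance by proving that regime separately, or by tracking how the quoted formulation disposes of it).
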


As a useful corollary of Lemma~\ref{vin} we have, roughly speaking, that if many dilates of a polynomial are smooth, then the polynomial itself is smooth, providing a converse of sorts to \eqref{eq:dil}.

\begin{corollary}[Concatenating dilated smoothness]\label{smooth-dilate}\cite[Corollary 2.4]{MSTT-all}  Let $\delta \in (0, 1/2)$, $d \geq 0$, and let $P \colon \Z \to \R/\Z$ be a polynomial of degree at most $d$.  Let $A \geq 1$, let $I$ be an interval with $|I| \geq 2A$, and suppose that
\begin{equation*}
 \| P(a \cdot) \|_{C^\infty(\frac{1}{a} I)} \leq \frac{1}{\delta}
\end{equation*}
for at least $\delta A$ integers $a$ in $[A,2A]$. Then either we have $|I| \ll_d \delta^{-O_d(1)} A$, or else 
$$ \| q P \|_{C^\infty(I)} \ll_d \delta^{-O_d(1)}$$
for some integer $1 \leq q \ll_d \delta^{-O_d(1)}$.
\end{corollary}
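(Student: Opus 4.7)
The plan is to proceed by induction on the degree $d$ of $P$, using Lemma~\ref{vin} at each step to peel off the leading binomial coefficient of $P$ and reduce to a polynomial of degree $d - 1$. The base case $d = 0$ is immediate: $P$ is constant, so the hypothesis directly gives $\|P\|_{\R/\Z} \leq 1/\delta$ and one may take $q = 1$.

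For the inductive step, I would first fix any $n_0 \in I \cap \Z$ and write $P(n) = \sum_{k=0}^d \alpha_k \binom{n - n_0}{k}$. Since $\binom{an - n_0}{k}$ is a polynomial of degree $k$ in $n$ with leading monomial coefficient $a^k/k!$, we have $\partial_1^d(P(a\cdot))(n) = \alpha_d a^d$ as a constant in $n$, and so the $j=d$ case of the hypothesis yields
\[
\|\alpha_d a^d\|_{\R/\Z} \leq (a/|I|)^d/\delta
\]
for at least $\delta A$ integers $a \in [A, 2A]$. I would then apply Lemma~\ref{vin} to the degree-$d$ polynomial $F(a) := \alpha_d a^d$ on $[A, 2A]$ with $\eps := (2A/|I|)^d/\delta$, concluding either $\delta \ll_d \eps$ (which rearranges to the first alternative $|I| \ll_d \delta^{-O_d(1)} A$), or else $\|q_1 F\|_{C^\infty([A, 2A])} \ll_d \delta^{-O_d(1)} A^d/|I|^d$ for some integer $1 \leq q_1 \ll_d \delta^{-O_d(1)}$. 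In the latter case, the leading binomial coefficient of $q_1 F$ equals $q_1 d!\,\alpha_d$, so \eqref{eq:smoothcomparison} forces $|I|^d \|q_1 d!\,\alpha_d\|_{\R/\Z} \ll_d \delta^{-O_d(1)}$; setting $q := q_1 d!$ thus controls the top binomial coefficient of $qP$.

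To close the induction, I would write $q\alpha_d = m + \eta$ with $m \in \Z$ and $|\eta| \ll_d \delta^{-O_d(1)}/|I|^d$, and consider the degree-$(d-1)$ polynomial $P''(n) := \sum_{k=0}^{d-1} q\alpha_k \binom{n - n_0}{k}$. Since $m\binom{n - n_0}{d} \in \Z$, we have $qP \equiv P'' + \eta\binom{\cdot - n_0}{d} \pmod 1$. Iterating the telescoping identity $\binom{m}{d} - \binom{m - a}{d} = \sum_{i=1}^a \binom{m - i}{d - 1}$ shows $\partial_1^j \binom{a\cdot - n_0}{d}(n)$ is a sum of $a^j$ binomial coefficients at integer arguments of magnitude $\ll_d |I|$ (since $n_0 \in I$ forces $|an - n_0| \leq |I|$ for $n \in \tfrac{1}{a}I$, and $a \leq |I|/2$), giving $|\partial_1^j \binom{an - n_0}{d}| \ll_d a^j |I|^{d - j}$. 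Hence $\|\eta \binom{a\cdot - n_0}{d}\|_{C^\infty(\frac{1}{a}I)} \ll_d \delta^{-O_d(1)}$, which together with the trivial bound $\|qP(a\cdot)\|_{C^\infty(\frac{1}{a}I)} \leq q/\delta \ll_d \delta^{-O_d(1)}$ shows that $P''$ satisfies the hypothesis with parameter $\delta' \asymp_d \delta^{O_d(1)}$. The inductive hypothesis for $P''$ then produces $q' \ll_d \delta^{-O_d(1)}$ with $\|q' P''\|_{C^\infty(I)} \ll_d \delta^{-O_d(1)}$, and combining via \eqref{eq:smoothcomparison} with the top-coefficient bound yields $\|q'qP\|_{C^\infty(I)} \ll_d \delta^{-O_d(1)}$ with multiplier $q'' := q'q$. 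The main obstacle will be careful bookkeeping of the polynomial losses in $\delta$ across the $d$ induction steps; this is tractable because Lemma~\ref{vin} has polynomial dependencies in $1/\delta$.
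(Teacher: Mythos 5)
Your proof is correct; the paper does not actually prove this corollary here but quotes it from the prequel \cite{MSTT-all}, whose argument likewise isolates the leading binomial coefficient via the Vinogradov lemma and descends through the degrees, so your induction-on-degree route is essentially the same approach. The one point worth making explicit is that Lemma~\ref{vin} requires $\eps < 1/2$, but if your $\eps = (2A/|I|)^d/\delta \geq 1/2$ then $|I| \ll_d \delta^{-O_d(1)} A$ and one is in the first alternative anyway, so this is harmless.
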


For the proof of our type $II$ estimates, we also need the following proposition (which can be seen as an abstraction of~\cite[Proposition 2.2]{matomaki-shao}) that relates polynomials $P$ satisfying a certain dilation invariance to multiples $T \log$ of the logarithm function.

\begin{proposition}[Approximately dilation-invariant polynomials behave like the logarithm]\label{prop:logarithm} Let $d\geq 0$. Let $X\geq H, A \geq 2$ and $0 < \delta \leq \frac{1}{\log X}$ with $H \geq \delta^{-C_{d}} \max(A, X/A)$ for a large enough constant $C_{d}$. Let $P \colon \mathbb{Z}\to \mathbb{R}/\mathbb{Z}$ be a polynomial of degree $d$. Cover $(A,2A]$ by $O(X/H)$ intervals $I_{A'}= (A',A'\left(1+\frac{H}{X}\right)]$ with $A\leq A'\leq 2A$, with each point belonging to at most $O(1)$ of these intervals. Suppose that for at least $\delta X/H$ of the intervals $I_{A'}$ there exist at least $\delta|I_{A'}|^2$ integer pairs $(a_1,a_2)\in I_{A'}^2$ for which we have  
\begin{align}\label{eq:P1P2}
 \|P(a_1\cdot)-P(a_2\cdot)\|_{C^{\infty}(X/A',X/A'+H/A']}\leq \frac{1}{\delta}.   
\end{align}
Then there exist a real number $T$ satisfying $|T|\ll_{d} \delta^{-O_{d}(1)}(X/H)^{d+1}$ and an integer $1\leq q\ll_{d}\delta^{-O_{d}(1)}$ such that 
\begin{align*}
  \|qP(\cdot)-T\log(\cdot)\|_{C^d(X,X+H]}\ll_{d} \delta^{-O_{d}(1)}.    
\end{align*}
\end{proposition}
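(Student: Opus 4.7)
The plan is to exploit the rigidity of the logarithm under dilations---$\log(ay) = \log a + \log y$, so $T\log(a_1 y) - T\log(a_2 y)$ is constant in $y$---and extract, from the hypothesized approximate dilation invariance of $P$, Diophantine information on the Taylor coefficients of $P$ at $X$ that forces them to match those of $T\log$ for some $T$.

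\textbf{Setup.} Write $P(X+h) = \sum_{k=0}^{d} \alpha_k\binom{h}{k}$ with $\alpha_k\in\R/\Z$. By~\eqref{eq:smoothcomparison}, the desired conclusion reduces to finding an integer $1\le q\ll_d \delta^{-O_d(1)}$ and a real $T$ with $|T|\ll_d \delta^{-O_d(1)}(X/H)^{d+1}$ such that $\|q\alpha_k-\tau_k\|_{\R/\Z}\ll_d \delta^{-O_d(1)}/H^k$ for every $0\le k\le d$, where $\tau_k$ is the $k$-th binomial Taylor coefficient of $T\log$ around $X$, read off from $T\log(X+h)=T\log X + T\sum_{j\ge 1}(-1)^{j-1}h^j/(jX^j)$.

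\textbf{Per-scale analysis.} For each good $A'$, pigeonholing on the second coordinate of the $\delta|I_{A'}|^2$ good pairs yields a reference $a_2 = a_2(A')\in I_{A'}$ with $\delta|I_{A'}|$ choices of $a_1\in I_{A'}$ satisfying $\|P(a_1\cdot)-P(a_2\cdot)\|_{C^\infty(X/A',X/A'+H/A']}\le 1/\delta$. Expanding each $P(a\cdot)$ in the binomial basis about $y_0 := X/A'$ translates this, for each $0\le k\le d$, into a Diophantine inequality whose principal part is a polynomial expression in $a_1,a_2$ with coefficient $\alpha_k$ (plus contributions from higher $\alpha_j$, $j>k$). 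At the top degree $k = d$ this isolates $\alpha_d(a_1^d-a_2^d)$, and Vinogradov's lemma (\Cref{vin}) in the variable $a_1$ produces an integer $q_{A'} = O_d(\delta^{-O_d(1)})$ witnessing $C^\infty$-smoothness of $q_{A'}\alpha_d a^d$ on $I_{A'}$.

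\textbf{Concatenation and descent.} Since this holds for $\delta X/H$ of the $I_{A'}$, a second pigeonhole fixes a common $q$ on a positive proportion of good scales, and Corollary~\ref{smooth-dilate} applied to the dilations $a\mapsto \alpha_d a^d$ upgrades per-scale smoothness to $C^\infty$-smoothness on the full interval $(A,2A]$. This globally determines $q\alpha_d$ modulo $1$, and I choose $T$ so that $\tau_d \equiv q\alpha_d\pmod 1$, handling the top coefficient. With $T,q$ fixed, I would then descend from $k=d-1$ to $k=0$: at each stage, use the already-matched coefficients $\alpha_j$ ($j>k$) to cancel the $j>k$ contribution in the per-scale Diophantine inequality, isolating a constraint on $\alpha_k$, and re-apply Vinogradov and Corollary~\ref{smooth-dilate} to match $q\alpha_k$ to $\tau_k$. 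The hypothesis $H\geq \delta^{-C_d}\max(A,X/A)$ supplies the Diophantine room at each Vinogradov step and keeps the Taylor-expansion errors arising from the change of base point between $X$ and $X/A'$ under control.

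\textbf{Main obstacle.} The principal difficulty is in making the descent work coherently: the subtraction of higher-order contributions at each level $k$ must introduce only errors of size $\ll\delta^{-O_d(1)}/H^k$, and the successive integer denominators extracted by Vinogradov must be consolidated into a single $q$ of size $\delta^{-O_d(1)}$. The bound $|T|\ll_d \delta^{-O_d(1)}(X/H)^{d+1}$ also requires combining the constraints at all levels $k$ together, since no single level alone bounds $T$ absolutely; this is where the rigidity of $\log$---its essentially unique status among degree $d$ polynomials approximately invariant under dilation---does the crucial work.
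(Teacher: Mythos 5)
There is a genuine gap, and it sits exactly where your ``Main obstacle'' paragraph points: your scheme never produces the \emph{coupled} Diophantine relations between consecutive Taylor coefficients that are the actual content of the proposition. Fixing $T$ by requiring $\tau_d\equiv q\alpha_d\pmod 1$ does not work: the top coefficient only determines $T$ modulo roughly $X^d$ (and says nothing about its size), whereas the conclusion needs a single $T$ with $|T|\ll_d\delta^{-O_d(1)}(X/H)^{d+1}$ that simultaneously matches \emph{all} coefficients $\tau_k$ to accuracy $\delta^{-O_d(1)}/H^k$. An arbitrary lift at level $d$ will generically be off at level $d-1$ by an amount your descent cannot detect or repair, because the information you extract at each level is an \emph{uncoupled} constraint on the single coefficient $\alpha_k$. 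Moreover, that information is quantitatively too weak: within one interval $I_{A'}$ the ratios $a_1/a_2$ are $1+O(H/X)$, so per-scale Vinogradov at the top level only gives something like $\|q_{A'}\alpha_d\|_{\R/\Z}\ll\delta^{-O(1)}(X/H^2)^d$, short of the needed $1/H^d$ by a factor $(X/H)^d$ — exactly the ambiguity in $T$. The decisive step must combine the $\gg\delta X/H$ different scales $A'$ multiplicatively; this is what the paper does by converting \eqref{eq:P1P2} into an almost-constancy statement on $(x,x+\delta^2H]$, summing over $a_1$ to form a bilinear exponential sum $\sum_{a_1}\sum_n\alpha_{a_1}\beta_n e(d!P(a_1n))$ of size $\gg\delta^6H$, and invoking \cite[Proposition 2.2]{matomaki-shao}. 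That input yields precisely the chained relations $\|q(j\alpha_j+X(j+1)\alpha_{j+1})\|_{\R/\Z}\ll\delta^{-O(1)}X/H^{j+1}$ for $1\le j\le d$ (with $\alpha_{d+1}=0$), from which $T$ is defined via a lift of $q\alpha_1$ and bounded by propagating the chain up to $j=d+1$; nothing in your per-level Vinogradov-plus-descent produces the $X$-weighted link between $\alpha_j$ and $\alpha_{j+1}$, which is where the ``rigidity of $\log$'' is actually encoded.

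A secondary, repairable issue: Corollary~\ref{smooth-dilate} is not the right tool for your concatenation step. It upgrades smoothness of the dilates $P(a\cdot)$ on the dilated intervals $\frac1a I$ (for many $a\in[A,2A]$) to smoothness of $P$ on $I$; it does not upgrade smoothness of a fixed polynomial $a\mapsto\alpha_d a^d$ on many disjoint short subintervals $I_{A'}\subset(A,2A]$ to smoothness on all of $(A,2A]$. One can patch this with another application of Lemma~\ref{vin}, but even the patched statement only tightens the uncoupled bound on $\alpha_d$ and does not address the main gap above. If you want a proof along your lines, you would essentially have to reprove the relevant case of \cite[Proposition 2.2]{matomaki-shao} (extraction of the $n^{iT}$-type structure from many scales), at which point you are back to the paper's route.
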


Note that the function $T \log(\cdot)$ is dilation-invariant up to constants:
$$ T \log(a_1 \cdot) - T \log(a_2 \cdot) = T \log \frac{a_1}{a_2}.$$
As a consequence, any polynomial $P$ which is close to $T \log$ in $C^d(X,X+H]$ norm would also obey an estimate similar to \eqref{eq:P1P2} for all $a_1,a_2 \in I_{A'}$, thanks to \eqref{eq:dil}.  The above proposition can be viewed as a sort of converse to this assertion.

\begin{proof}
We may assume that $\delta>0$ is smaller than any fixed absolute constant. We allow implied constants to depend on $d$. By rounding, we may assume that $X$ is an integer.

By the pigeonhole principle, we can find $a_2 \in I_{A'}$ and a subinterval $I'_{A'} = (A'', A'' \left(1+\delta^3 \frac{H}{X}\right)]$ such that \eqref{eq:P1P2} holds for $\gg \delta^4 |I_{A'}|$ choices of $a_1 \in I'_{A'}$.  For each such $a_1$, we write 
\begin{align*}
  P(a_1n)-P(a_2n) - (P(a_1X/A'')-P(a_2X/A'')) = \sum_{j=1}^d \beta_j(a_1,a_2)(n-X/A'')^j
\end{align*}
for some real numbers $\beta_j(a_1,a_2)$. From~\eqref{eq:P1P2} and~\eqref{eq:smoothcomparison} we see that $(H/A')^j\|d! \beta_j(a_1,a_2)\|_{\R/\Z}\ll 1/\delta$ for all $1\leq j\leq d$. 
By the triangle inequality, this implies that for all $n = X/A'' + O( \delta^2H/A' )$ we have
\begin{align*}
\|d! (P(a_1n)-P(a_2 n)-(P(a_1X/A'')-P(a_2X/A'')))\|_{\R/\Z}\ll \frac{1}{\delta}\sum_{j=1}^d \delta^{2j}\ll \delta.     
\end{align*}
We conclude that for any $a_1$ as above we have
\begin{align}\label{eq:ePa}
\left|\sum_{X<a_1 n \leq X+\delta^2 H}e(d!P(a_1n)-d!P(a_2n))\right|\gg \delta^2H/A.    
\end{align}
Summing over the possible choices of $a_1$, this implies that
$$
\left|\sum_{a_1 \in I'_{A'}} \sum_{X<a_1 n \leq X+\delta^2 H} \alpha_{a_1} \beta_n e(d!P(a_1n))\right|\gg \delta^6 H    
$$
for some bounded coefficients $\alpha_{a_1}, \beta_n$. 

We can then invoke \cite[Proposition 2.2]{matomaki-shao} and conclude that if we write
\begin{align*}
P(y)=\sum_{j=0}^{d}\alpha_j(y-X)^j  \mod 1
\end{align*}
with $\alpha_j\in \mathbb{R}/\mathbb{Z}$, then for some integer $1\leq q\ll \delta^{-O(1)}$ we have
\begin{align}\label{eq:alphaj}
\|q(j\alpha_{j}+X(j+1)\alpha_{j+1})\|_{\R/\Z}\ll \delta^{-O(1)}X/H^{j+1},    
\end{align}
for all $1\leq j\leq d$, with the convention that $\alpha_{d+1}=0$. By backward induction on $j$, we may thus lift from $\R/\Z$ to $\R$ and write $(-1)^j qj\alpha_j = \beta_j \mod 1$ for some real numbers $\beta_j$ obeying
$$ \beta_j - X \beta_{j+1} = O( \delta^{-O(1)} X / H^{j+1} )$$
for all $j=1,\dots,d$, with the convention that $\beta_{d+1} = 0$.  If we write $\beta_1 = T/X$ for a real number $T$, then a forward induction in $j$ gives
$$ \beta_j = \frac{T}{X^j} + O( \delta^{-O(1)} / H^j )$$
for all $1 \leq j \leq d+1$; in particular substituting $j=d+1$ we have $T \ll \delta^{-O(1)} (X/H)^{d+1}$.  By construction of $\beta_j$, we now have
$$ qd! \alpha_j = d! T \frac{(-1)^j}{jX^j} + O( \delta^{-O(1)} / H^j ) \mod 1$$
for $j=1,\dots,d$. 

Let $\partial_1 f(h) = f(h)-f(h-1)$ be the differencing operator in the $h$ variable.  Then, by the Taylor series of the logarithm function, for all $h = O(H)$ and $1 \leq i \leq d$ we have
\begin{align*}
\partial_1^i [d!qP(X+h)]&=
\sum_{j=1}^d d! q \alpha_j \partial_1^i h^j \\
&= \sum_{j=1}^d \frac{d! (-1)^{j}}{j} \frac{T}{X^j} \partial_1^i h^j + O( \delta^{-O(1)} / H^i ) \mod 1\\
&= \sum_{j=1}^\infty \frac{d! (-1)^{j}}{j} \frac{T}{X^j} \partial_1^i h^j + O( \delta^{-O(1)} / H^i ) \mod 1\\
&= \partial_1^i [d! T \log(1+h/X)]  + O( \delta^{-O(1)} / H^i ) \mod 1\\
&= \partial_1^i [d! T \log(X+h)]  + O( \delta^{-O(1)} / H^i ) \mod 1.
\end{align*}

Making the substitution $n \coloneqq X+h$, we conclude that
$$ \| d! q P(\cdot) - d! T \log(\cdot) \|_{C^d(X,X+H]} \ll \delta^{-O(1)} $$
and the claim follows after some relabeling.  
\end{proof}

\subsection{Equidistribution on nilmanifolds}\label{nilmanifold-sec}

We now recall some of the basic notation and results from~\cite{green-tao-ratner} on equidistribution of polynomial maps on nilmanifolds.

\begin{definition}[Filtered group]\label{def:FiltGroup}  Let $d \geq 1$. A \emph{filtered group} is a group $G$ (which we express in multiplicative notation $G = (G,\cdot)$ unless explicitly indicated otherwise) together with a filtration $G_\bullet = (G_i)_{i=0}^\infty$ of nested groups $G \geq G_0 \geq G_1 \geq \dots$ such that $[G_i,G_j] \leq G_{i+j}$ for all $i,j \geq 0$. We say that this group has degree at most $d$ if $G_i$ is trivial for all $i>d$.  Given a filtered group of degree at most $d$, a \emph{polynomial map} $g \colon \Z \to G$ from $\Z$ to $G$ is a map of the form $g(n) = g_0 g_1^{\binom{n}{1}} \dots g_d^{\binom{n}{d}}$ where $g_i \in G_i$ for all $0 \leq i \leq d$; the collection of such maps will be denoted by $\Poly(\Z \to G)$.  
\end{definition}

The well known Lazard--Leibman theorem (see e.g.~\cite[Proposition 6.2]{green-tao-ratner}) states that $\Poly(\Z \to G)$ is a group under pointwise multiplication; also, from~\cite[Corollary 6.8]{green-tao-ratner} it follows that if $g \colon \Z \to G$ is a polynomial map then so is $n \mapsto g(an+b)$ for any integers $a,b$.

If $G$ is a simply connected nilpotent Lie group, we write $\log G$ for the Lie algebra.  From the Baker--Campbell--Hausdorff formula\footnote{More precisely, the Baker--Campbell--Hausdorff formula, which has only finitely many non-zero terms in the connected case, provides an associative polynomial multiplication law on $\log G$ that agrees locally with the multiplication law on $G$ after conjugating by the exponential map, thus providing a local isomorphism of Lie groups; see e.g., \cite[\S 1.2, 2.4]{tao-hilbert}.  As $\log G$ and $G$ are both simply connected, one can extend this local isomorphism to a global one by monodromy.}  it follows that the exponential map $\exp \colon \log G \to G$ is a homeomorphism and hence has an inverse $\log \colon G \to \log G$.

\begin{definition}[Filtered nilmanifolds] \label{def:filtNilman} Let $d, D \geq 1$ and $0 < \delta < 1$.  A \emph{filtered nilmanifold} $G/\Gamma$ of degree at most $d$, dimension $D$, and complexity at most $1/\delta$ consists of the following data:
\begin{itemize}
\item A filtered simply connected nilpotent Lie group $G$ of dimension $D$ equipped with a filtration $G_\bullet = (G_i)_{i=0}^\infty$ of degree at most $d$, with $G_0=G_1=G$ and all $G_i$ closed connected subgroups of $G$.
\item A lattice (i.e., a discrete cocompact subgroup $\Gamma$) of $G$, with the property that $\Gamma_i \coloneqq \Gamma \cap G_i$ is a lattice of $G_i$ for all $i \geq 0$. 
\item  A linear basis $X_1,\dots,X_D$ (which we call a \emph{Mal'cev basis}) of $\log G$.
\end{itemize}
Furthermore we assume the following axioms:
\begin{itemize}
\item[(i)] For all $1 \leq i,j \leq D$ we have $[X_i,X_j] = \sum_{i,j < k \leq D} c_{ijk} X_k$ for some rational numbers $c_{ijk}$ of height at most $1/\delta$.
\item[(ii)]  For all $0 \leq i \leq D$, the vector space $G_i$ is spanned by the $X_j$ with $D - \dim G_i < j \leq D$.
\item[(iii)]  We have $\Gamma = \{ \exp(n_1 X_1) \cdots \exp(n_D X_D)\colon n_1,\dots,n_D \in \Z \}$.
\end{itemize}
One easily sees that $G/\Gamma$ has the structure of a smooth compact $D$-dimensional manifold, to which we can associate a probability Haar measure $d\mu_{G/\Gamma}$.  We define the metric $d_G$ on $G$ to be the largest right-invariant metric such that $d_G( \exp(t_1 X_1) \dots \exp(t_D X_D), 1) \leq \sup_{1 \leq i \leq D} |t_i|$ for all $t_1,\dots,t_D \in \R$.  We then define a metric $d_{G/\Gamma}$ on $G/\Gamma$ by the formula $d_{G/\Gamma}(x, y) \coloneqq \inf_{g\Gamma = x, h \Gamma = y} d_G(g,h)$.  The Lipschitz norm of a function $F \colon G/\Gamma \to \C$ is defined as
$$ \sup_{x \in G/\Gamma} |F(x)| + \sup_{x,y \in G/\Gamma\colon x \neq y} \frac{|F(x)-F(y)|}{d_{G/\Gamma}(x,y)}.$$

A \emph{horizontal character} $\eta$ associated to a filtered nilmanifold is a continuous homomorphism\footnote{This is a lift of the convention in \cite{green-tao-ratner}, in which a horizontal character is a continuous homomorphism from $G$ to $\R/\Z$ that annihilates $\Gamma$.} $\eta \colon G \to \R$ that maps $\Gamma$ to the integers. 

An element $\gamma$ of $G$ is said to be \emph{$M$-rational} for some $M \geq 1$ if one has $\gamma^r \in \Gamma$ for some natural number $1 \leq r \leq M$.  A subnilmanifold $G'/\Gamma'$ of $G/\Gamma$ (thus $G'$ is a closed connected subgroup of $G$ with $\Gamma'_i \coloneqq G'_i \cap \Gamma$ cocompact in $G'_i$ for all $i$) is said to be \emph{$M$-rational} if each element $X'_1,\dots,X'_{\dim G'}$ of the Mal'cev basis associated to $G'$ is a linear combination of the $X_i$ with all coefficients being rational numbers of height at most $M$.
\end{definition}

One easily sees that every horizontal character takes the form $\eta(g) = \lambda( \log g)$ for some linear functional $\lambda \colon \log G \to \R$ that annihilates $\log [G,G]$ and maps $\log \Gamma$ to the integers.  From this one can verify that the number of horizontal characters of Lipschitz norm at most $1/\delta$ is at most $O_{d,D}( \delta^{-O_{d,D}(1)} )$. 

\begin{remark}
\label{rem:RZhorchar}
In particular, if $G=\mathbb{R}$ and $\Gamma=\mathbb{Z}$, all horizontal characters are of the form $y\mapsto ry$ with $r\in \Z$. 
\end{remark}

From several applications of Baker--Campbell--Hausdorff formula (see e.g.~\cite[Appendix B]{MRTTZ})
we see that if $G$ has degree at most $d$ and $\gamma_1, \gamma_2$ are $M$-rational, then $\gamma_1 \gamma_2$ is $O_d(M^{O_d(1)})$-rational.

The following theorem gives a basic dichotomy between equidistribution and smoothness:

\begin{theorem}[Quantitative Leibman theorem]\label{qlt}\cite[Theorem 2.7]{MSTT-all}  Let $0 < \delta < 1/2$, let $d,D \geq 1$, let $I$ be an interval with $|I| \geq 1$, and let $G/\Gamma$ be a filtered nilmanifold of degree at most $d$, dimension at most $D$, and complexity at most $1/\delta$.  Let $F \colon G/\Gamma \to \C$ be Lipschitz of norm at most $1/\delta$ and of mean zero (i.e., $\int_{G/\Gamma} F\ d\mu_{G/\Gamma} = 0$).  Suppose that $g \colon \Z \to G$ is a polynomial map with
$$ \left|\sum_{n \in I} F(g(n)\Gamma)\right|^* \geq \delta |I|.$$
Then there exists a non-trivial horizontal character $\eta \colon G \to \R$ of Lipschitz norm $O_{d,D}(\delta^{-O_{d,D}(1)})$ such that
$$ \| \eta \circ g\|_{C^\infty(I)} \ll_{d,D} \delta^{-O_{d,D}(1)}.$$
\end{theorem}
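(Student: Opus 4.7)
The plan is to reduce the maximal version of the estimate to the standard (non-maximal) quantitative Leibman theorem of Green and Tao (\cite[Theorem 2.9]{green-tao-ratner}), which applies to polynomial sequences on intervals of the form $[N]$. By the definition~\eqref{maximal-sum} of the maximal sum, our hypothesis yields an arithmetic progression $P = \{a + qm : 0 \leq m < L\} \subset I \cap \Z$ with
\[
\bigl|\sum_{n \in P} F(g(n)\Gamma)\bigr| \geq \delta |I|.
\]
Since $|F| \leq 1/\delta$, this forces $L \geq \delta^2 |I|$, while $P \subset I$ forces $q \leq \delta^{-2}$. Setting $\tilde g(m) \coloneqq g(a + qm)$, which is a polynomial map of degree at most $d$ by the Lazard--Leibman theorem, we obtain $\bigl|\sum_{m=0}^{L-1} F(\tilde g(m)\Gamma)\bigr| \geq \delta L$.

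Applying the Green--Tao quantitative Leibman theorem to $\tilde g$ on $[0, L]$ then produces a nontrivial horizontal character $\eta$ of Lipschitz norm at most $\delta^{-O_{d,D}(1)}$ such that $\|\eta \circ \tilde g\|_{C^\infty([0, L])} \ll_{d,D} \delta^{-O_{d,D}(1)}$. To transfer this into a $C^\infty$ bound on $I$, write $\tilde P \coloneqq \eta \circ g$ as a polynomial of degree $\leq d$ and expand it in the binomial basis around a point $n_0 \in I$ to get coefficients $\alpha_k$. The coefficients of the reparametrized polynomial $\tilde P(a + q\cdot)$ in the analogous basis on $[0, L]$ are related to the $\alpha_k$ by an integer triangular matrix with diagonal entries $q^k$ and off-diagonal entries of size $O_d(q^k)$. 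Replacing $\eta$ by a suitable integer multiple $\eta' \coloneqq N\eta$ with $N \ll_d \delta^{-O_d(1)}$ and performing a backward induction on the degree (applying Vinogradov's Lemma~\ref{vin} where needed to promote control of $\|\cdot\|_{\R/\Z}$-norms of linear combinations into control of individual terms), we arrive at $\|\eta' \circ g\|_{C^\infty(I)} \ll_{d,D} \delta^{-O_{d,D}(1)}$ for a horizontal character $\eta'$ of Lipschitz norm $\ll \delta^{-O_{d,D}(1)}$.

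The principal technical obstacle is this transfer step. The $\R/\Z$ distance does not interact cleanly with integer multiplication: smallness of $\|q^k \alpha\|_{\R/\Z}$ does not directly yield smallness of $\|\alpha\|_{\R/\Z}$. One must therefore simultaneously choose the integer multiplier $N$ and iterate through degrees in a way that ensures the controlled quantities become integer linear combinations of the target coefficients of $\eta' \circ g$, all while keeping $N$ polynomial in $\delta^{-1}$ so that the Lipschitz norm bound on $\eta'$ survives.
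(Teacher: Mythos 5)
The paper itself contains no proof of this statement --- it is imported verbatim from the prequel as \cite[Theorem 2.7]{MSTT-all} --- and the argument there is essentially the reduction you describe: pass to the arithmetic progression realizing the maximal sum, reparametrize so that the Green--Tao quantitative Leibman theorem \cite[Theorem 2.9]{green-tao-ratner} applies, and transfer the resulting smoothness bound back to $I$ after multiplying the horizontal character by a bounded integer. Your plan is correct; the only cosmetic remarks are that the transfer step needs no appeal to Lemma~\ref{vin} (the change of binomial basis is an integer triangular matrix with diagonal entries $q^k$ and entries $O_d(q^k)$, so a downward induction on the degree with multipliers that are products of powers of $q \ll \delta^{-2}$ already yields $|I|^k \| N \alpha_k \|_{\R/\Z} \ll_d \delta^{-O_d(1)}$ and hence the claim via~\eqref{eq:smoothcomparison}, since the expansion point lies in $I$), and that the degenerate cases --- a single-element progression, or $|I| \ll_d \delta^{-O_d(1)}$ --- should be disposed of first, being trivial because $\| \cdot \|_{\R/\Z} \le 1/2$.
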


Let $G/\Gamma$ be a filtered nilmanifold of dimension $D$ and complexity at most $1/\delta$.  A \emph{rational subgroup} $G'$ of complexity at most $1/\delta$ is a closed connected subgroup of $G$ with the property that $\log G'$ admits a linear basis consisting of $\dim G'$ vectors of the form $\sum_{i=1}^D a_i X_i$, where each $a_i$ is a rational of height at most $1/\delta$.  In~\cite[Proposition A.10]{green-tao-ratner} it is shown that $G'/\Gamma'$ can be equipped with the structure of a filtered nilmanifold of complexity $O_{d,D}(\delta^{-O_{d,D}(1)})$, where $\Gamma' \coloneqq \Gamma \cap G'$, $G'_i \coloneqq G_i \cap G'$, and the metrics $d_G, d_{G'}$ are comparable on $G'$ up to factors of $O_{d,D}(\delta^{-O_{d,D}(1)})$; one can view $G'/\Gamma'$ as a subnilmanifold of $G/\Gamma$.	

One can easily verify from basic linear algebra and the Baker--Campbell--Hausdorff formula that the following groups are rational subgroups of $G$ of complexity $O_{d,D}(\delta^{-O_{d,D}(1)})$:
\begin{itemize}
\item The groups $G_i$ in the filtration for $0 \leq i \leq d$.
\item The kernel $\ker \eta$ of any horizontal character $\eta$ of Lipschitz norm $O_{d,D}(\delta^{-O_{d,D}(1)})$.
\item The center $Z(G) = \{ \exp(X)\colon X \in \log G; [X,Y] = 0\,\, \forall Y \in \log G \}$ of $G$. 
\item The intersection $G' \cap G''$ or commutator $[G',G'']$ of two rational subgroups $G',G''$ of $G$ of complexity $O_{d,D}(\delta^{-O_{d,D}(1)})$.
\item The product $G' N$ of two rational subgroups $G',N$ of $G$ of complexity $O_{d,D}(\delta^{-O_{d,D}(1)})$, with $N$ normal.
\end{itemize}

If we quotient out a filtered nilmanifold by a normal subgroup, we obtain another filtered nilmanifold, with polynomial bounds on complexity:

\begin{lemma}[Quotienting by a normal subgroup]\label{quotient-normal}\cite[Lemma 2.8]{MSTT-all}  Let $G/\Gamma$ be a filtered nilmanifold of degree at most $d$, dimension $D$ and complexity at most $1/\delta$.  Let $N$ be a normal rational subgroup of $G$ of complexity at most $1/\delta$, and let $\pi \colon G \mapsto G/N$ be the quotient map.  Then $\pi(G)/\pi(\Gamma)$ can be given the structure of a filtered nilmanifold of degree at most $d$, dimension $D - \dim N$, and complexity $O_{d,D}(\delta^{-O_{d,D}(1)})$, such that
$$d_{\pi(G)}( \pi(g), \pi(h) ) \asymp_{d,D} \delta^{-O_{d,D}(1)} \inf_{n \in N} d_G(g, nh)$$
for any $g,h \in G$.
\end{lemma}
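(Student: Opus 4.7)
The plan is to build an explicit Mal'cev basis for $\pi(G)$ adapted to $N$, and then verify the axioms of a filtered nilmanifold together with the metric bound by tracking how linear-algebraic data transforms under the quotient.

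First I would perform a change of Mal'cev basis on $G$ that is adapted simultaneously to the filtration $(G_i)_{i=0}^d$ and to the rational subgroup $N$. Because $N$ has complexity at most $1/\delta$, by definition $\log N$ is spanned by vectors of the form $\sum_i a_i X_i$ with each $a_i$ a rational of height at most $1/\delta$; by standard linear algebra (essentially Smith normal form on the integer matrices obtained after clearing denominators) I can produce a new basis $X'_1,\dots,X'_D$ of $\log G$ with the properties that (a) each $X'_j$ is a rational combination of the $X_i$ of height $O_{d,D}(\delta^{-O_{d,D}(1)})$, and conversely; (b) for each $i$ the subspace $\log G_i$ is spanned by an initial (or terminal) segment of the $X'_j$; (c) $\log N$ is spanned by $X'_{D-\dim N+1},\dots,X'_D$; and (d) the lattice $\Gamma$ is generated in Mal'cev coordinates by the $X'_j$ up to a polynomially bounded integer base change. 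The Baker--Campbell--Hausdorff formula, applied with the explicit structure constants $c_{ijk}$, shows that the new structure constants $[X'_i,X'_j]=\sum_k c'_{ijk} X'_k$ are again rationals of height $O_{d,D}(\delta^{-O_{d,D}(1)})$.

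Next I would take as Mal'cev basis for $\pi(G) = G/N$ the projections $\pi(X'_1),\dots,\pi(X'_{D-\dim N})$ to $\log G/\log N$. The filtration $(\pi(G_i))_{i=0}^\infty$ automatically has degree at most $d$, and axiom (ii) of Definition~\ref{def:filtNilman} is inherited directly from property (b) above. The structure constants relative to this basis are a subset of the $c'_{ijk}$, hence still of height $O_{d,D}(\delta^{-O_{d,D}(1)})$. To see that $\pi(\Gamma)$ is a cocompact lattice in $\pi(G)$ and that axiom (iii) holds, I would use the rationality of $N$: this forces $N\Gamma$ to be closed in $G$, and combined with property (d) the standard Mal'cev coordinate description of the lattice descends to $\pi(\Gamma)$.

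Finally, for the metric bound, the quotient metric on the right-hand side, $\inf_{n \in N} d_G(g,nh)$, agrees by definition with the metric one would build from the \emph{original} Mal'cev basis of $G$ modulo $N$. Since the two Mal'cev coordinate systems on $G$ coming from $(X_i)$ and $(X'_i)$ are related by a linear map whose matrix entries and inverse entries have size $O_{d,D}(\delta^{-O_{d,D}(1)})$, the two metrics $d_{\pi(G)}$ and $\inf_{n\in N} d_G(\cdot,n\cdot)$ are comparable up to such a factor, giving the stated $\asymp_{d,D} \delta^{-O_{d,D}(1)}$ equivalence. The main obstacle will be the first step: producing the adapted basis $(X'_i)$ while simultaneously respecting the filtration, the subgroup $N$, and the lattice $\Gamma$, with polynomial control on the base change. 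This is essentially bookkeeping with integer matrices through Smith normal form, but the bounds must be propagated carefully through the iterated applications of Baker--Campbell--Hausdorff needed to re-express structure constants and products of lattice generators.
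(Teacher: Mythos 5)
Your first step, as stated, cannot be carried out in general, and this is where the argument breaks. You ask for a single basis $X'_1,\dots,X'_D$ of $\log G$ in which every $\log G_i$ is spanned by an initial (or terminal) segment while $\log N$ is spanned by the last $\dim N$ vectors. These two demands are incompatible unless $N$ is comparable with each $G_i$. Concretely, take $G=H\times\R$ with $H$ the Heisenberg group, filtration $G_0=G_1=G$, $G_2=[H,H]\times\{0\}$, and $N=\{1\}\times\R$ (central, hence normal, and rational). Definition~\ref{def:filtNilman}(ii) forces the last basis vector to span $\log G_2$, while your property (c) forces the same vector to span $\log N$; since $\log G_2\neq\log N$ this is impossible, and the ``initial segment'' reading fails just as badly (e.g.\ with $N=[H,H]\times\R$). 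This is not a corner case: in the application of Lemma~\ref{quotient-normal} in this paper one quotients by $N=\ker\xi\leq Z(G)$, which can be exactly of this incomparable type. The correct move is to adapt the basis not to the $G_i$ and $N$ separately but to the genuinely nested flag $\log(G_1N)\supseteq\log(G_2N)\supseteq\cdots\supseteq\log(G_dN)\supseteq\log N\supseteq\{0\}$; each $G_iN$ is again a rational subgroup of complexity $\delta^{-O_{d,D}(1)}$ (the paper records that products of rational subgroups with a normal one are rational of polynomially bounded complexity), and it is $\log(G_iN)$, not $\log G_i$, that must be a coordinate subspace, because the quotient filtration is $\pi(G_i)=G_iN/N$. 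With that replacement, dropping the last $\dim N$ coordinates does yield terminal segments downstairs, and the rest of your outline can proceed.

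Two further steps are thinner than they should be. Axiom (iii) for $\pi(\Gamma)$ is not automatic from a basis that is merely $\delta^{-O(1)}$-rational with respect to $\Gamma$: one needs a further integral (triangular) adjustment with polynomially bounded entries to get the exact ordered integer-coordinate description of the lattice, which is precisely the content of the adapted-basis constructions in Green--Tao's appendix that this circle of results rests on. Likewise the metric statement is not ``by definition'': $d_{\pi(G)}$ is defined intrinsically from the quotient Mal'cev basis, so one must show both that $\pi$ is $\delta^{-O_{d,D}(1)}$-Lipschitz for $\inf_{n\in N}d_G(\cdot,n\cdot)$ and, conversely, that if $d_{\pi(G)}(\pi(g),\pi(h))$ is small then some $n\in N$ makes $d_G(g,nh)$ comparably small; this quantitative lifting uses the Baker--Campbell--Hausdorff coordinate comparisons and is a short but genuine argument. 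Finally, note that the present paper does not prove this lemma at all: it is imported from \cite[Lemma 2.8]{MSTT-all}, where the proof runs through exactly this kind of adapted-basis machinery rather than simultaneous adaptation to $(G_i)$ and $N$.
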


A \emph{central frequency} is a continuous homomorphism $\xi \colon Z(G) \to \R$ which maps $Z(G) \cap \Gamma$ to the integers $\Z$ (that is to say, a horizontal character on $Z(G)$, or a Fourier character of the central torus $Z(G) /(Z(G) \cap \Gamma)$).  A function $F \colon G/\Gamma \to \C$ is said to \emph{oscillate with central frequency} $\xi$ if one has the identity
$$ F(zx) = e(\xi(z)) F(x)$$
for all $x \in G/\Gamma$ and $z \in Z(G)$.  Similarly to horizontal characters, the number of central frequencies $\xi$ of Lipschitz norm at most $1/\delta$ is $O_{d,D}(\delta^{-O_{d,D}(1)})$.  If $\xi$ is such a central frequency, one can easily verify that the kernel $\ker \xi \leq Z(G)$ is a rational normal subgroup of $G$ of complexity $O_{d,D}(\delta^{-O_{d,D}(1)})$.

We have the following convenient decomposition:

\begin{proposition}[Central Fourier approximation]\label{central}\cite[Proposition 2.9]{MSTT-all}  Let $d,D \geq 1$ and $0 < \delta < 1$.  Let $G/\Gamma$ be a filtered nilmanifold of degree at most $d$, dimension $D$, and complexity at most $1/\delta$.  Let $F \colon G/\Gamma \to \C$ be a Lipschitz function of norm at most $1/\delta$.  Then we can decompose
$$ F = \sum_\xi F_\xi + O(\delta)$$
where $\xi$ ranges over central frequencies of Lipschitz norm at most $O_{d,D}(\delta^{-O_{d,D}(1)})$, and each $F_\xi$ has Lipschitz norm
$O_{d,D}(\delta^{-O_{d,D}(1)})$ and oscillates with central frequency $\xi$.  Furthermore, if $F$ has mean zero, then the same holds for all of the $F_\xi$.
\end{proposition}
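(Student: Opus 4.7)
The plan is to mollify $F$ along the $Z(G)$-direction to gain smoothness, then perform a Fourier decomposition on the central torus, retaining only low-frequency terms.

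\emph{Setup and mollification.} Since $Z(G)$ is a rational subgroup of complexity $O_{d,D}(\delta^{-O_{d,D}(1)})$, the central torus $T \coloneqq Z(G)/(Z(G) \cap \Gamma)$ has some dimension $D' \leq D$ and can be identified with $\R^{D'}/\Z^{D'}$ via an invertible linear change of variables whose entries are rationals of height $O_{d,D}(\delta^{-O_{d,D}(1)})$. Under this identification, characters of $T$ are parameterized by $n \in \Z^{D'}$, and the Lipschitz norm of the associated central frequency $\xi_n$ is comparable to $|n|$ up to factors of $\delta^{-O_{d,D}(1)}$. I would then fix a smooth non-negative bump $\phi_\epsilon$ on $T$ of integral $1$, supported in a ball of radius $\epsilon$, with $\|\phi_\epsilon\|_{C^k} \ll_{k,D} \epsilon^{-k-D'}$, and set $\tilde F(x) \coloneqq \int_T \phi_\epsilon(z) F(z^{-1} x)\,dz$. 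Because left multiplication by a central element $z$ is an isometry of $(G, d_G)$ (by right-invariance and the identity $z g h^{-1} z^{-1} = gh^{-1}$), $\tilde F$ inherits the Lipschitz norm $\leq 1/\delta$ from $F$, and one has $\|\tilde F - F\|_\infty \leq \epsilon/\delta$ and $\|\tilde F\|_{C^k_T} \ll_{k,D} \delta^{-1} \epsilon^{-k}$. Choosing $\epsilon = c\delta^2$ with $c$ small enough guarantees $\|\tilde F - F\|_\infty \leq \delta/2$.

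\emph{Fourier decomposition and truncation.} For each character $\xi$ of $T$, define
$$F_\xi(x) \coloneqq \int_T \tilde F(zx)\, e(-\xi(z))\, dz.$$
The substitution $z \mapsto z w^{-1}$ gives $F_\xi(wx) = e(\xi(w)) F_\xi(x)$ for all $w \in Z(G)$, so $F_\xi$ oscillates with central frequency $\xi$; Fourier inversion on $T$ yields $\tilde F = \sum_\xi F_\xi$ pointwise. Each $F_\xi$ inherits the Lipschitz bound $\leq 1/\delta$ from $\tilde F$ by the triangle inequality. Integrating by parts $k$ times in the $T$-directions gives $\|F_\xi\|_\infty \ll_{k,D} (1+|\xi|)^{-k} \delta^{-O_k(1)}$. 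Taking $k = D'+1$ and truncating at $|\xi| \leq M \coloneqq \delta^{-C_{d,D}}$ for $C_{d,D}$ large enough makes the tail $\sum_{|\xi|>M}\|F_\xi\|_\infty \leq \delta/2$, so $F = \sum_{|\xi|\leq M} F_\xi + O(\delta)$. The mean-zero assertion is automatic: by Fubini and the $Z(G)$-invariance of $d\mu_{G/\Gamma}$, for $\xi \neq 0$ one has $\int_{G/\Gamma} F_\xi\, d\mu_{G/\Gamma} = \big(\int_T e(-\xi(z))\,dz\big)\int_{G/\Gamma} \tilde F\, d\mu_{G/\Gamma} = 0$, while for $\xi = 0$, $\int F_0 = \int \tilde F = \int F$.

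The most delicate point will be the quantitative bookkeeping: realizing $T$ as a sub-nilmanifold of $G/\Gamma$ with all relevant norms and metrics comparable up to $\delta^{-O_{d,D}(1)}$ factors, controlling the Lipschitz norms (not merely the sup norms) of the retained $F_\xi$, and relating the intrinsic Lipschitz norm of a central character to its integer parameter. These are routine but tedious consequences of the rational-subgroup formalism and the Baker--Campbell--Hausdorff formula on simply connected nilpotent Lie groups, already invoked several times in the excerpt.
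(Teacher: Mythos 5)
Your argument is correct and is essentially the standard central (vertical) Fourier expansion proof: mollify along the central torus, expand in characters of $Z(G)/(Z(G)\cap\Gamma)$, and truncate using the decay gained from the mollification; this matches the approach behind \cite[Proposition 2.9]{MSTT-all}, which the present paper simply imports without reproving. The quantitative points you flag at the end (comparability of metrics on the rational subgroup $Z(G)$, the identification of central frequencies with integer vectors, and the fact that central left-translation is an isometry for the right-invariant metric) are indeed the only bookkeeping needed, and your sketches of them are sound.
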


We now recall some properties of polynomial sequences.  

\begin{definition}[Smoothness, total equidistribution, rationality]  Let $G/\Gamma$ be a filtered nilmanifold, $g \in \Poly(\Z \to G)$ be a polynomial sequence, $I \subset \R$ be an interval of length $|I| \geq 1$, and $M>0$.
\begin{itemize}
\item[(i)] We say that $g$ is \emph{$(M,I)$-smooth} if one has
$$ d_G(g(n), 1_G) \leq M; \quad d_G(g(n), g(n-1)) \leq M/|I|$$
for all $n \in I$.
\item[(ii)]  We say that $g$ is \emph{totally $1/M$-equidistributed} in $G/\Gamma$ if one has
$$ \left| \frac{1}{|P|} \sum_{n \in P} F(g(n)\Gamma) - \int_{G/\Gamma} F \right| \leq \frac{1}{M} \|F\|_{\Lip}$$
whenever $F \colon G/\Gamma \to \C$ is Lipschitz and $P$ is an arithmetic progression in $I$ of cardinality at least $|I|/M$.
\item[(iii)]  We say that $g$ is \emph{$M$-rational} if for some $1 \leq r \leq M$ one has $g(n)^r \in \Gamma$ for each $n \in \Z$.  
\end{itemize}
\end{definition}

From Taylor expansion and the Baker--Campbell--Hausdorff formula it is not difficult to see that if $G/\Gamma$ has degree at most $d$ and $g$ is $M$-rational, then the map $n \mapsto g(n) \Gamma$ is $q$-periodic for some period $1 \leq q \ll_d M^{O_d(1)}$; see, e.g., \cite[Lemma A.12]{green-tao-ratner}.  

We also have the following basic facts about smooth and rational polynomials for any interval $I$:

\begin{lemma}\label{mult}  Suppose that $G/\Gamma$ is a filtered nilmanifold of degree and dimension $O(1)$ and complexity at most $M$ for some $M \geq 1$.
\begin{itemize}
\item[(i)] (Smooth polynomials form an approximate group) The constant polynomial $1$ is $M$-smooth on $I$, and if $g,h$ are $M$-smooth on $I$, then $g^{-1}$ and $gh$ are $M^{O(1)}$-smooth.
\item[(ii)] (Rational polynomials form an approximate group) The constant polynomial $1$ is $M$-rational, and if $g,h$ are $M$-rational, then $g^{-1}$ and $gh$ are $M^{O(1)}$-rational.
\end{itemize}
\end{lemma}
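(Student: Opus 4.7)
The plan is to derive both parts from standard nilpotent Lie group manipulations, using the Mal'cev coordinates and Baker--Campbell--Hausdorff (BCH) formula implicit in the bounded-complexity setup. The two workhorse inputs I would use are: (a) right-invariance and symmetry of $d_G$, which give the identities $d_G(xh, yh) = d_G(x,y)$ and $d_G(a^{-1}, 1_G) = d_G(a, 1_G)$; and (b) a conjugation estimate $d_G(cac^{-1}, 1_G) \ll M^{O(1)} d_G(a, 1_G)$ whenever $d_G(c, 1_G) \leq M$ and the complexity is at most $M$. The estimate (b) is standard in the bounded degree and dimension regime: in Mal'cev coordinates, multiplication and inversion are polynomial maps of bounded degree with coefficients of height $O(M^{O(1)})$, and $d_G$ is comparable to the Mal'cev coordinate sup-norm on bounded sets up to such polynomial factors.

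For part (i), the constant case is immediate since $d_G(1_G, 1_G)=0$. For $g^{-1}$: right-invariance with $h=g(n)$ combined with $d_G(a^{-1}, 1_G) = d_G(a, 1_G)$ gives $d_G(g^{-1}(n), 1_G) = d_G(g(n),1_G) \leq M$. For the derivative bound, right-invariance yields
\[
d_G(g^{-1}(n), g^{-1}(n-1)) = d_G(g^{-1}(n) g(n-1), 1_G) = d_G(g(n-1)^{-1} g(n), 1_G),
\]
and since $g(n-1)^{-1} g(n) = g(n-1)^{-1}(g(n)g(n-1)^{-1})g(n-1)$ is a conjugate of $g(n)g(n-1)^{-1}$ (whose distance to $1_G$ equals $d_G(g(n),g(n-1)) \leq M/|I|$ by right-invariance), input (b) with $c = g(n-1)^{-1}$ upgrades this to $M^{O(1)}/|I|$. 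For $gh$, the pointwise bound is $d_G(g(n)h(n), 1_G) \leq d_G(g(n),1_G)+d_G(h(n),1_G) \leq 2M$ by triangle inequality and right-invariance. For the increment, I would split via triangle inequality as
\[
d_G(g(n)h(n), g(n)h(n-1)) + d_G(g(n)h(n-1), g(n-1)h(n-1));
\]
the second is $d_G(g(n),g(n-1)) \leq M/|I|$ by right-invariance with $h(n-1)$, and the first equals $d_G(g(n)(h(n)h(n-1)^{-1})g(n)^{-1}, 1_G)$ after right-multiplying by $h(n-1)^{-1}g(n)^{-1}$, bounded by $M^{O(1)} \cdot M/|I|$ via (b) with $c=g(n)$.

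For part (ii), the constant case holds with $r=1$. For $g^{-1}$: if $g(n)^r \in \Gamma$ for all $n$ with $r \leq M$, then $(g^{-1}(n))^r = (g(n)^r)^{-1} \in \Gamma$ since $\Gamma$ is a group, so $g^{-1}$ is $M$-rational with the same $r$. For $gh$: at each fixed $n$, the elements $g(n)$ and $h(n)$ are $M$-rational as individual elements of $G$, so the cited fact (from \cite[Appendix B]{MRTTZ}, recorded in the paragraph immediately preceding the lemma) yields $O_d(M^{O_d(1)})$-rationality of the product $g(n)h(n)$. The key point enabling uniformity in $n$ is that the exponent produced by the BCH-based proof of that fact depends only on the rationality exponents of $g(n), h(n)$ (each $\leq M$) and the nilpotent step $d$, since it arises from expanding $(g(n)h(n))^R$ as a word in $g(n), h(n)$ and their iterated commutators with a combinatorial structure that is uniform in $n$. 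Hence a single $R = O_d(M^{O_d(1)})$ works for every $n$.

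The main (mild) obstacle is the quantitative conjugation estimate (b) and the observation that the pointwise rationality exponent for a product is uniform in $n$; both are essentially contained in the Mal'cev basis/BCH framework of \cite{green-tao-ratner}, and everything else in the lemma reduces to bookkeeping with right-invariance and the pointwise rationality fact already cited in the excerpt.
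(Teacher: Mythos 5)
Your proof is correct and follows essentially the same route as the paper, which disposes of (i) by exactly the combination you use — triangle inequality, right-invariance of $d_G$, and the approximate left-invariance (your conjugation estimate (b), which is \cite[Lemma A.5]{green-tao-ratner}) — and of (ii) by citing \cite[Lemma A.11(v)]{green-tao-ratner}. You have simply written out the details that the paper delegates to those citations, including the (correctly flagged) point that the rationality exponent for $gh$ can be taken uniform in $n$.
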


\begin{proof}
Claim (i) follows from the triangle inequality, the right-invariance of the metric $d_G$, and the approximate left-invariance of $d_G$ (see~\cite[Lemma A.5]{green-tao-ratner}). Claim (ii) is~\cite[Lemma A.11(v)]{green-tao-ratner}.    
\end{proof}

\begin{lemma}\label{factor-simple}~\cite[Lemma 2.11]{MSTT-all}
Let $d,D \geq 1$ and $0 < \delta < 1$.  Let $G/\Gamma$ be a filtered nilmanifold of degree at most $d$, dimension $D$, and complexity at most $1/\delta$.  Let $g \in \Poly(\Z \to G)$, and let $I$ be an interval with $|I| \geq 1$.  Suppose that
$$ \|\eta \circ g\mod 1\|_{C^{\infty}(I)} \leq 1/\delta $$
for some non-trivial horizontal character $\eta\colon G \rightarrow \R$ of Lipschitz norm at most $1/\delta$.
Then there is a decomposition $g = \eps g' \gamma$ into polynomial maps $\eps, g', \gamma \in \Poly(\Z \to G)$ such that
\begin{itemize}
\item[(i)] $\eps$ is $(\delta^{-O_{d,D}(1)},I)$-smooth;
\item[(ii)]  $g'$ takes values in $G' = \ker\eta$;
\item[(iii)] $\gamma$ is $\delta^{-O_{d,D}(1)}$-rational.
\end{itemize}
\end{lemma}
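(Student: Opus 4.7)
The plan is to decompose $\eta \circ g$ into a small-coefficient polynomial and an integer-coefficient polynomial, lift each piece to a polynomial map in $G$ using appropriate elements of the filtration $G_\bullet$, and define $g'$ as the remainder, which will then automatically land in $G' = \ker \eta$.

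Writing $g$ in Lazard--Leibman form around a basepoint $n_0 \in I$ as $g(n) = g_0 g_1^{\binom{n-n_0}{1}} \cdots g_d^{\binom{n-n_0}{d}}$ with $g_j \in G_j$, and using that $\eta \colon G \to \R$ is a homomorphism into an abelian group, I obtain $\eta \circ g(n) = \sum_{j=0}^d \alpha_j \binom{n-n_0}{j}$ with $\alpha_j \coloneqq \eta(g_j)$. The smoothness hypothesis combined with \eqref{eq:smoothcomparison} gives $|I|^j \|\alpha_j\|_{\R/\Z} \ll_d 1/\delta$; rounding each $\alpha_j$ to a nearest integer $s_j$ and setting $\eps_j \coloneqq \alpha_j - s_j$ then yields $|\eps_j| \ll_d \delta^{-1} |I|^{-j}$. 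For each $j$ with $\alpha_j \neq 0$ (which forces $\eta|_{G_j} \not\equiv 0$) I select a continuous lift $Z_j \in \log G_j$ with $\eta(\exp Z_j) = 1$ and Mal'cev norm $\|Z_j\| \ll_{d,D} \delta^{-O_{d,D}(1)}$, and a rational lift $\zeta_j \in G_j$ with $\eta(\zeta_j) = 1$ and $\zeta_j^{m_j} \in \Gamma$ for some positive integer $m_j \ll_{d,D} \delta^{-O_{d,D}(1)}$; both exist because $G_j$ is a rational subgroup of $G$ of complexity $\delta^{-O_{d,D}(1)}$ and $\eta|_{G_j}$ has Lipschitz norm at most $1/\delta$ but is nontrivial. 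For the remaining indices I set $\eps_j = s_j = 0$.

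I then define
\begin{align*}
\eps(n) \coloneqq \prod_{j=0}^d \exp(\eps_j Z_j)^{\binom{n-n_0}{j}}, \qquad \gamma(n) \coloneqq \prod_{j=0}^d \zeta_j^{s_j \binom{n-n_0}{j}},
\end{align*}
with factors ordered by increasing $j$, and $g' \coloneqq \eps^{-1} g \gamma^{-1}$. Since each Taylor coefficient $\exp(\eps_j Z_j)$ and $\zeta_j^{s_j}$ lies in $G_j$, both $\eps$ and $\gamma$ are in $\Poly(\Z \to G)$ with respect to the original filtration, and $g' \in \Poly(\Z \to G)$ by the Lazard--Leibman theorem. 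The three conclusions now follow directly: for (ii), $\eta \circ g'(n) = \sum_j (\alpha_j - \eps_j - s_j) \binom{n-n_0}{j} = 0$, placing $g'$ in $\ker \eta = G'$; for (iii), each $\zeta_j^{s_j}$ is $m_j$-rational with $m_j \ll \delta^{-O_{d,D}(1)}$, so by \Cref{mult}(ii) the product $\gamma$ is $\delta^{-O_{d,D}(1)}$-rational; for (i), the bounds $|\eps_j| \ll \delta^{-O(1)} |I|^{-j}$, $\|Z_j\| \ll \delta^{-O(1)}$, and $|\binom{n-n_0}{j}|, |I| \cdot |\partial_1 \binom{n-n_0}{j}| \ll_d |I|^j$ for $n \in I$ combine through the bi-Lipschitz comparison between Mal'cev coordinates and $d_G$, and \Cref{mult}(i), to give $d_G(\eps(n), 1_G) \ll \delta^{-O_{d,D}(1)}$ and $d_G(\eps(n), \eps(n-1)) \ll \delta^{-O_{d,D}(1)}/|I|$.

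The main technical obstacle is producing the rational lift $\zeta_j$ of bounded rationality with $\eta(\zeta_j) = 1$. One first shows that the subgroup $\eta(\Gamma \cap G_j) \leq \Z$ has a positive generator $m_j \ll \delta^{-O_{d,D}(1)}$, which is established by exhibiting Mal'cev generators of $\Gamma \cap G_j$ whose $\eta$-images are integers of magnitude $\leq 1/\delta$; one then picks $\gamma_j \in \Gamma \cap G_j$ with $\eta(\gamma_j) = m_j$, and extracts the unique $m_j$-th root $\zeta_j \coloneqq \exp(\tfrac{1}{m_j} \log \gamma_j) \in G_j$, using that the exponential map is a diffeomorphism on the simply connected nilpotent Lie group $G$. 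By linearity of $\eta$ on the one-parameter subgroup through $\gamma_j$ we have $\eta(\zeta_j) = 1$, while $\zeta_j^{m_j} = \exp(\log \gamma_j) = \gamma_j \in \Gamma$, as required.
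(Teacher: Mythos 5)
Your proof is correct and follows essentially the same route as the proof of the cited result (\cite[Lemma 2.11]{MSTT-all}, which in turn follows Green--Tao's factorization lemma): Taylor-expand $\eta\circ g$, split each coefficient into its signed fractional part and nearest integer, and absorb these respectively into a smooth part and a rational part built from bounded elements of $G_j$ on which $\eta$ equals $1$, leaving a remainder in $\ker\eta$. The supporting constructions of $Z_j$ and $\zeta_j$ (using nontriviality of $\eta|_{G_j}$ together with integrality on $\Gamma\cap G_j$ to get the lower bound needed for the $\delta^{-O_{d,D}(1)}$ norm and rationality controls) are exactly the standard ones.
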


We also rely on the following large sieve inequality for nilsequences, which is a more quantitative variant of the one in~\cite[Proposition 4.11]{MRTTZ} and will be deduced from~\cite[Proposition 2.15]{MSTT-all}.

\begin{proposition}[Large sieve]\label{large-sieve}
Let $d,D \geq 1$ and $0 < \delta < 1$.  Let $G/\Gamma$ be a filtered nilmanifold of degree at most $d$, dimension $D$, and complexity at most $1/\delta$, whose center $Z(G)$ is one-dimensional. Let $I$ be an interval with $|I| \geq 1$, and let $F \colon G/\Gamma \to \C$ be Lipschitz of norm at most $1/\delta$ and having a non-zero central frequency $\xi$.  Let $f \colon \Z \to \C$ be a function with $\sum_{n \in I} |f(n)|^2 \leq \frac{1}{\delta} |I|$. Then there exist $K\ll_{d,D} \delta^{-O_{d,D}(1)}$ and elements $g_1,\ldots, g_K\in \Poly(\mathbb{Z}\to G)$ such that the following holds. Whenever $g\in \Poly(\mathbb{Z}\to G)$ satisfies
\begin{equation}\label{eq:sumf}
 \left|\sum_{n \in I} f(n) \overline{F}(g(n) \Gamma)\right|^* \geq \delta |I|,
\end{equation}
at least one of the following holds.
\begin{itemize}
\item[(i)]  There exists a non-trivial horizontal character $\eta \colon G \to \R$ having  Lipschitz norm $O_{d,D}(\delta^{-O_{d,D}(1)})$ such that $\| \eta \circ g \|_{C^\infty(I)} \ll_{d,D} \delta^{-O_{d,D}(1)}$.
\item[(ii)] There exists $i\in \{1, \dotsc, K\}$ such that we can write
$$ g = \eps g_i \gamma$$
where $\eps$ is $(O_{d,D}(\delta^{-O_{d,D}(1)}),I)$-smooth and $\gamma$ is $O_{d,D}(\delta^{-O_{d,D}(1)})$-rational.
\end{itemize}
\end{proposition}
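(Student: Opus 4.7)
The plan is to prove this via a greedy covering argument combined with the large sieve~\cite[Proposition 2.15]{MSTT-all} from the prequel. Let $\mathcal{G}$ be the set of all $g \in \Poly(\Z \to G)$ for which \eqref{eq:sumf} holds and clause (i) fails. I would construct $g_1, \ldots, g_K$ as a maximal family in $\mathcal{G}$ with the property that no two $g_i, g_j$ in the family admit a representation $g_i = \eps g_j \gamma$ with $\eps$ being $(C_{d,D}\delta^{-C_{d,D}}, I)$-smooth and $\gamma$ being $C_{d,D}\delta^{-C_{d,D}}$-rational, for a suitably large constant $C_{d,D}$. By maximality, every $g \in \mathcal{G}$ is then smooth/rational equivalent to one of the $g_i$, which yields clause (ii).

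Before invoking the large sieve I would reduce the maximal sum in \eqref{eq:sumf} to an ordinary one. By Lemma~\ref{basic-prop}(i), the supremum $|\cdot|^*$ is attained on some arithmetic progression $P \subset I \cap \Z$; dyadically pigeonholing in the common difference $q$ of $P$, we may assume $q \ll \delta^{-O(1)}$, and absorbing $1_P$ into a TV-bounded weight via Lemma~\ref{basic-prop}(ii) converts the hypothesis into a non-maximal inequality of the form $|\sum_{n \in I \cap (a+q\Z)} f(n) \overline{F}(g(n)\Gamma)| \gg \delta^{O(1)} |I|$, at the cost of replacing $\delta$ with a polynomial power of itself. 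If clause (i) fails for the reduced sum (i.e.\ no horizontal character obstruction on the sparsified progression), then by a relabelling together with \eqref{eq:dil} it also fails on $I$ for the original $g$, so no generality is lost.

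The key step is then to bound $K \ll_{d,D} \delta^{-O_{d,D}(1)}$. This is precisely what~\cite[Proposition 2.15]{MSTT-all} provides: a collection of pairwise-inequivalent polynomial sequences, each correlating nontrivially against a function $f$ with bounded $\ell^2$ mass through a Lipschitz $F$ that oscillates with a nontrivial central frequency on a nilmanifold with one-dimensional centre, must have polynomial-in-$\delta^{-1}$ cardinality. Applying it to our family (together with $f$ and $F$) immediately gives the desired bound on $K$. The main obstacle I anticipate is aligning the notion of inequivalence preserved by the greedy construction (namely, no smooth-rational decomposition $g_i = \eps g_j \gamma$) with the notion of inequivalence against which~\cite[Proposition 2.15]{MSTT-all} is stated (which is typically phrased via the absence of a horizontal character obstruction on the ratio $g_i g_j^{-1}$). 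The translation is furnished by Lemma~\ref{factor-simple} applied to $g_i g_j^{-1}$, together with Lemma~\ref{mult} to control the accumulation of smoothness and rationality parameters when composing; the bookkeeping of the $\delta^{-O(1)}$ factors is routine but must be carried out carefully so that the final exponent $O_{d,D}(1)$ does not blow up after finitely many applications of these lemmas.
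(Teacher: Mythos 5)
Your proposal is correct and follows essentially the same route as the paper: a maximal collection of pairwise smooth/rational-inequivalent correlating sequences (each also free of horizontal character obstructions), bounded in size by~\cite[Proposition 2.15]{MSTT-all}, with the dichotomy then falling out of maximality via Lemma~\ref{mult} and a two-tier choice of constants. The preliminary reduction from maximal to ordinary sums is unnecessary (the cited proposition handles $|\cdot|^*$ directly), and the "alignment" issue you anticipate does not arise since that proposition is already phrased in terms of the factorization-type inequivalence.
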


Informally speaking, this proposition asserts that a given function $f$ can only correlate with a bounded number of essentially distinct non-trivial nilsequences $F(g(\cdot)\Gamma)$; it is a higher-order analogue of the classical large sieve inequality,  which asserts (roughly speaking) that $f$ will only have a bounded number of essentially distinct frequencies where the Fourier transform is large.

\begin{proof}
This follows quickly from~\cite[Proposition 2.15]{MSTT-all}. We may assume that $\delta<1/2$. Let $C$ be large enough in terms of $d, D$, and let $C'$ be large enough in terms of $C$. Let $\mathcal{G}=\{g_1,\ldots, g_J\}$ be a maximally large collection of elements of $\Poly(\mathbb{Z}\to G)$ such that
\begin{align*}
\left|\sum_{n\in I}f(n)\overline{F}(g_i(n)\Gamma)\right|^{*}\geq \delta I    
\end{align*}
for all $1\leq j\leq J$ and such that for any $h,h'\in \mathcal{G}$ with $h\neq h'$ we have the
following properties:
\begin{enumerate}
    \item[($P_C$)] For any non-trivial horizontal character $\eta\colon G\to \mathbb{R}$ of Lipschitz norm $\leq \delta^{-C}$ we have $\|\eta\circ h\mod 1\|_{C^{\infty}(I)}> \delta^{-C}$;

    \item[($Q_C$)] If $h=\varepsilon h'\gamma$ with $\varepsilon, \gamma\in \Poly(\mathbb{Z}\to G)$, then either $\varepsilon$ is not $(\delta^{-C},I)$-smooth or $\gamma$ is not $\delta^{-C}$-rational.
\end{enumerate}

We may assume that the collection $\{g_1,\ldots, g_J\}$ is non-empty. If $C$ is large enough, by~\cite[Proposition 2.15]{MSTT-all} we have $J\leq \delta^{-C}$. Now, let $g\in \Poly(\mathbb{Z}\to G)$ be any polynomial sequence satisfying~\eqref{eq:sumf}. If $g$ fails property $P_C$, we are done. If the pair $(g,g_{j'})$ fails property $Q_{C'}$ for some $1\leq j'\leq J$, we are again done. Finally, we are left with the case where $g$ satisfies $P_C$ and $(g,g_{j'})$ satisfies $Q_{C'}$ for any $1\leq j'\leq J$ (and therefore by Lemma~\ref{mult} also $(g_{j},g)$ satisfies $Q_C$ for any $1\leq j\leq J$, thanks to the assumption that $C'$ is large in terms of $C$). But since property $Q_{C'}$ implies property $Q_C$, this contradicts the maximality of the collection $\{g_1,\ldots, g_J\}$. The claim follows. 
\end{proof}

The following proposition from~\cite{MSTT-all} will be an important ingredient in the proof of our type $II$ estimate, although it will be applied in a different context to the one in~\cite{MSTT-all}, namely in order to analyze the outcome of Walsh's contagion argument.  It will play a role in the nonabelian type $II$ theory similar to the role Proposition \ref{prop:logarithm} will play in the abelian type $II$ theory.

\begin{proposition}[Abstract non-abelian type $II$ inverse theorem]~\cite[Proposition 6.1]{MSTT-all}\label{prop:Furstenberg-Weiss} Let $C\geq 1$, $d,D \geq 1$, $X \geq H\geq 2$, $X\geq A \geq 1$, $0 < \delta < \frac{1}{\log X}$, and let $G/\Gamma$ be a filtered nilmanifold of degree at most $d$, dimension at most $D$, and complexity at most $1/\delta$, with $G$ non-abelian.  Let $g\colon \Z \to G$ be a polynomial map.  Cover $(A,2A]$ by at most $C X/H$ intervals $I_{A'} = (A',(1+\frac{H}{X}) A']$ with $A \leq A' \leq 2A$, with each point belonging to at most $C$ of these intervals.  Suppose that for at least $\frac{1}{C} \delta^{C} X/H$ of the intervals $I_{A'}$, there exist at least $\frac{1}{C} \delta^C |I_{A'}|^2$ integer pairs $(a,a') \in I_{A'}^2$ for which there exists a factorization
$$ g(a' \cdot) = \eps_{aa'} g(a \cdot) \gamma_{aa'}$$
where $\eps_{aa'}$ is $(C \delta^{-C}, J_{A'})$-smooth and $\gamma_{aa'}$ is $C\delta^{-C}$-rational, with $J_{A'}$ denoting the interval
$$J_{A'} \coloneqq \left(\left(1-\frac{10 H}{X}\right) \frac{X}{A'}, \left(1+\frac{10H}{X}\right) \frac{X}{A'}\right].$$
Then either
$$ H \ll_{d,D,C} \delta^{-O_{d,D,C}(1)} \max( A, X/ A)$$
or there exists a non-trivial horizontal character $\eta \colon G \to \R$ of Lipschitz norm $O_{d,D,C}(\delta^{-O_{d,D,C}(1)})$ such that 
$$ \| \eta \circ g \|_{C^\infty(X,X+H]} \ll_{d,D,C} \delta^{-O_{d,D,C}(1)}.$$
\end{proposition}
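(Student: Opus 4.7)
The plan is to induct on the nilpotency step $s$ of $G$ (necessarily $s\geq 2$ since $G$ is non-abelian), reducing to the abelian Proposition~\ref{prop:logarithm} in the base case. For each good interval $I_{A'}$, I first pigeonhole in the $\gg \delta^{O(1)} |I_{A'}|^2$ factorizations to fix a pivot $a_0 = a_0(A') \in I_{A'}$ and $\gg \delta^{O(1)} |I_{A'}|$ values $a \in I_{A'}$ for which $g(a \cdot) = \eps_a g(a_0 \cdot) \gamma_a$ holds on $J_{A'}$, with $\eps_a$ being $(\delta^{-O(1)}, J_{A'})$-smooth and $\gamma_a$ being $\delta^{-O(1)}$-rational. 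By Lemma~\ref{mult} the smooth and rational polynomials form approximate groups, so composing two such relations preserves the structure and turns the two-parameter factorization into a one-parameter family of dilation relations relative to the pivot.

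Next, project via $\pi \colon G \to G/Z(G)$ to the quotient by the (non-trivial, rational) center; by Lemma~\ref{quotient-normal}, $\pi(G)/\pi(\Gamma)$ is a filtered nilmanifold of strictly smaller step and dimension, with complexity controlled polynomially in $\delta^{-1}$. Smoothness and rationality are preserved under $\pi$, so $\pi \circ g$ satisfies the same hypothesis. If $\pi(G)$ is still non-abelian, apply the inductive hypothesis to obtain a non-trivial horizontal character $\bar\eta$ on $\pi(G)$ with $\|\bar\eta \circ \pi \circ g\|_{C^\infty(X,X+H]}$ controlled; the pullback $\eta \coloneqq \bar\eta \circ \pi$ is the desired horizontal character on $G$.

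The base case is $s=2$, where $\pi(G)$ is abelian and $[G,G] \leq Z(G)$. For any horizontal character $\bar\eta$ on $\pi(G)$, applying $\bar\eta$ to the factorization gives, modulo $1$, the identity $\bar\eta \circ \pi \circ g(a \cdot) - \bar\eta \circ \pi \circ g(a_0 \cdot) = \bar\eta \circ \pi \circ \eps_a + \bar\eta \circ \pi \circ \gamma_a$, in which the first term has small $C^\infty$ norm and the second is $\tfrac{1}{r}\Z/\Z$-valued for some $r \ll \delta^{-O(1)}$. Multiplying through by $r$, the polynomial $P \coloneqq r\,\bar\eta \circ \pi \circ g \colon \Z \to \R/\Z$ satisfies the dilation-invariance hypothesis~\eqref{eq:P1P2} of Proposition~\ref{prop:logarithm}, which yields either $H \ll \delta^{-O(1)} \max(A, X/A)$ (the first alternative of the target conclusion) or an integer $q$ and a real $T$ with $\|qP - T\log\|_{C^d(X,X+H]}$ small. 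If $T=0$ for some choice of $\bar\eta$, the desired horizontal character is $q r\bar\eta \circ \pi$. Otherwise, one exploits the commutator structure $[G,G] \leq Z(G)$ via the central Fourier decomposition (Proposition~\ref{central}) to produce a non-trivial vertical character $\xi \colon Z(G) \to \R/\Z$; the bilinear identity $[g(a_1 n), g(a_2 n)] \equiv 1 \pmod{\text{smooth}\cdot\text{rational}}$, derived from applying $\xi$ after expanding the factorizations using Baker--Campbell--Hausdorff, forces $\xi$ composed with the central component of $g$ to be smooth on $(X,X+H]$, yielding a genuinely non-trivial horizontal character on $G$.

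The main obstacle is ruling out the ``$T \neq 0$'' alternative from Proposition~\ref{prop:logarithm} in every abelianized direction simultaneously using the non-commutativity of $G$. Here the logarithm is a real obstruction in the abelian setting, so one must genuinely invoke the non-abelian hypothesis; the extra horizontal character supported on $[G,G]$ (accessed via the vertical frequency $\xi$ and Lemma~\ref{factor-simple}) is what distinguishes the non-abelian case. Tracking the compound complexity bounds through the pigeonholing, projection, lifting, and commutator-expansion steps is tedious but routine via the nilmanifold toolbox of Section~\ref{nilmanifold-sec}.
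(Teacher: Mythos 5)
You should first note that this paper does not actually prove Proposition~\ref{prop:Furstenberg-Weiss}: it is imported verbatim from the prequel (\cite[Proposition 6.1]{MSTT-all}), where it is established by a Furstenberg--Weiss-type commutator argument combined with quantitative equidistribution; so the comparison is with that proof. Your overall architecture (induct on the nilpotency step by quotienting by $Z(G)$, so that all the content sits in the two-step case $[G,G]\leq Z(G)$, and handle the abelianized directions by Proposition~\ref{prop:logarithm}) is a reasonable reduction, and the abelian part of your base case mirrors how the present paper itself treats the abelian situation in Proposition~\ref{prop:conclusion}(i). The problem is that the non-abelian crux --- ruling out, or rather exploiting, the ``$T\neq 0$'' alternative --- is exactly the point of the proposition, and your sketch does not supply it.

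Concretely, two steps would fail as written. First, invoking Proposition~\ref{central} is out of place: that proposition decomposes a Lipschitz function $F$ on $G/\Gamma$ into central-frequency pieces, but the statement you are proving is purely about the polynomial map $g$; there is no $F$ here from which to extract a vertical frequency $\xi$, so the ``non-trivial vertical character'' has no source. Second, and more seriously, even granting a central frequency $\xi$ and the asserted (not derived) bilinear identity for $[g(a_1 n),g(a_2 n)]$, the conclusion you draw --- that smoothness of $\xi$ composed with the central component of $g$ ``yields a genuinely non-trivial horizontal character on $G$'' --- is not available: a horizontal character is a continuous homomorphism $G\to\R$ and therefore annihilates $[G,G]$, while in the two-step case one typically has $Z(G)\supseteq[G,G]$ (for the Heisenberg group, $Z(G)=[G,G]$), so central data does not lift or extend to a horizontal character, and Lemma~\ref{factor-simple} gives no such passage. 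The missing idea is precisely the mechanism by which the commutator of two dilation relations converts the logarithm-type horizontal behaviour (which is a genuine obstruction in the abelian quotient) into smoothness of a \emph{horizontal} character composed with $g$; this is the content of the Furstenberg--Weiss argument in \cite{MSTT-all}, and without it your base case, and hence the whole induction, does not close.
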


\subsection{Combinatorial lemmas}

The following lemma is Heath-Brown's identity in the form of~\cite[Lemma 2.16]{MSTT-all}, but with one additional case.

\begin{lemma}\label{hb-identity}
Let $X \geq 2$, and let $L \in \mathbb{N}$ be fixed. We may find a collection $\mathcal{F}$ of $O((\log X)^{O(1)})$ functions $f \colon \mathbb{N} \to \mathbb{R}$, such that
$$ \Lambda(n) = \sum_{f \in \mathcal{F}} f(n) $$
for each $X/2 \leq n \leq 4X$, and each $f \in \mathcal{F}$ takes the form
$$ f = a^{(1)}* \cdots * a^{(\ell)} $$
for some $\ell \leq 2L$, where $a^{(i)}$ is supported on $(N_i, 2N_i]$ for some $N_i \geq 1/2$, and each $a^{(i)}(n)$ is either $1_{(N_i, 2N_i]}(n)$, $(\log n)1_{(N_i, 2N_i]}(n)$, or $\mu(n)1_{(N_i, 2N_i]}$. Further, $N_1N_2\cdots N_{\ell} \asymp X$, and $N_i \ll X^{1/L}$ for each $i$ with $a^{(i)}(n) = \mu(n) 1_{(N_i, 2N_i]}(n)$. The same statement holds with $\mu$ in place of $\Lambda$ (but $(\log n)1_{(N_i, 2N_i]}(n)$ does not appear) and with $d_k$ in place of $\Lambda$ (but $(\log n)1_{(N_i, 2N_i]}(n)$ and $\mu(n)1_{(N_i, 2N_i]}(n)$ do not appear).
\end{lemma}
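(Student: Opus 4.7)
The plan is to use Heath-Brown's identity in the cases of $\Lambda$ and $\mu$, the trivial identity $d_k = 1^{\ast k}$ in the case of $d_k$, and then in each case to localize every variable in the resulting convolutions dyadically.

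For $\Lambda$, I would start from the observation that, setting $z := (4X)^{1/L}$, the identity $\mu \ast 1 = \delta$ combined with the splitting $\mu = \mu_{\leq z} + \mu_{>z}$ yields
\[
(1 \ast \mu_{>z})^{\ast L}(n) = (\delta - 1 \ast \mu_{\leq z})^{\ast L}(n) = 0 \qquad (1 < n \leq z^L = 4X),
\]
since the left-hand side forces at least one factor $m_i > z$, making the product exceed $z^L$. Expanding the middle expression binomially and using $\delta(n) = 0$ for $n > 1$ gives
\[
\delta(n) = \sum_{k=1}^{L} (-1)^{k-1}\binom{L}{k} (1 \ast \mu_{\leq z})^{\ast k}(n) \qquad (n \leq z^L).
\]
Convolving both sides with $\mu \ast \log$ and simplifying via $\mu \ast 1 = \delta$ then gives Heath-Brown's identity in the form
\[
\Lambda(n) = \sum_{k=1}^{L} (-1)^{k-1}\binom{L}{k} \bigl(\log \ast\, 1^{\ast(k-1)} \ast \mu_{\leq z}^{\ast k}\bigr)(n), \qquad X/2 \leq n \leq 4X,
\]
which is a sum of convolutions of exactly $2k$ factors: one $\log$, $k-1$ copies of $1$, and $k$ copies of $\mu_{\leq z}$. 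For $\mu$, the same derivation without the final convolution with $\log$ gives
\[
\mu(n) = \sum_{k=1}^{L}(-1)^{k-1}\binom{L}{k}\bigl(\mu_{\leq z}^{\ast k} \ast 1^{\ast(k-1)}\bigr)(n), \qquad n \leq z^L,
\]
a sum of convolutions of $2k-1$ factors. For $d_k$, no truncation is needed at all: simply use $d_k = \underbrace{1 \ast \cdots \ast 1}_{k}$.

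Next, I would decompose every variable in sight into dyadic blocks $(N, 2N]$ with $N$ a power of $2$ satisfying $N \geq 1/2$. In each case the number of tuples of dyadic parameters $(N_1, \ldots, N_\ell)$ whose product is $\asymp X$ is $O_L((\log X)^{O_L(1)})$, and one absorbs the constants $(-1)^{k-1}\binom{L}{k}$ into the outer family $\mathcal{F}$. Each resulting summand is precisely of the required form $f = a^{(1)} \ast \cdots \ast a^{(\ell)}$ with $\ell \leq 2L$ (more precisely $\ell = 2k \leq 2L$ in the $\Lambda$-case, $\ell = 2k-1$ in the $\mu$-case, and $\ell = k \leq 2L$ for $d_k$, assuming $L$ is taken large enough in terms of $k$), with each $a^{(i)}$ among the three permitted types. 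The constraint $N_1 \cdots N_\ell \asymp X$ is forced by $n \in [X/2, 4X]$, while the constraint $N_i \ll X^{1/L}$ for the $\mu$-supported factors comes directly from the truncation $m_i \leq z = (4X)^{1/L}$.

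There is no genuine obstacle here: the only analytic input is Heath-Brown's classical identity, whose derivation above is routine combinatorics of Dirichlet convolution, and the dyadic decomposition is standard. The only difference with \cite[Lemma 2.16]{MSTT-all} is the appearance of the type $a^{(i)}(n) = (\log n) 1_{(N_i, 2N_i]}(n)$, which arises solely from the $\log$-factor in Heath-Brown's identity for $\Lambda$ and therefore drops out entirely in the $\mu$- and $d_k$-decompositions, as stated.
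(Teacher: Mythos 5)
Your proposal is correct and takes essentially the same approach as the paper, which simply cites~\cite[Lemma 2.15]{MSTT-all} for the $\mu$ and $\Lambda$ cases (that lemma being proved by exactly the Heath--Brown expansion you re-derive) and handles $d_k$ by the same dyadic decomposition of the $k$-fold convolution of $1$. One wording slip worth fixing: $(1\ast\mu_{>z})^{\ast L}(n)$ vanishes for $n\leq z^L$ because \emph{all} $L$ factors $m_i$ with $\mu_{>z}(m_i)\neq 0$ must exceed $z$ (not merely one of them), which is what makes the product exceed $z^L$.
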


\begin{proof}
The cases of $\mu$ and $\Lambda$ are~\cite[Lemma 2.16]{MSTT-all}. For the case of $d_k$, note that $d_k$, which is the $k$-fold convolution of $1$, can be decomposed into a sum of  $O(\log^k X)$ terms, each
of which takes the form
$$f = 1_{(N_1,2N_1]} *\cdots * 1_{(N_k,2N_k]}$$
for some $N_i \geq  1/2$ with $N_1N_2\cdots N_k \asymp X$.
\end{proof}

We shall also frequently use Shiu's bound (see~\cite[Theorem 1]{shiu}).

\begin{lemma}{\cite[Lemma 2.17]{MSTT-all}}\label{le:shiu} Let $A\geq 1$ and $\varepsilon>0$ be fixed. Let $X\geq H\geq X^{\varepsilon}$ and let $1\leq q\leq H^{1-\varepsilon}$ be an integer. Let $f$ be
a non-negative multiplicative function such that $f(p^{\ell})\leq A^{\ell}$ for every prime $p$ and every $\ell\in \mathbb{N}$, and suppose that
$f(n)\ll_c n^{c}$ for every $c > 0$. Then for any integer $a$ coprime to $q$ we have
\begin{align*}
\sum_{\substack{X<n\leq X+H\\n\equiv a\pmod q}}f(n)\ll \frac{H}{\varphi(q)\log X}\exp\Bigg(\sum_{\substack{p\leq 2X\\p\nmid q}}\frac{f(p)}{p}\Bigg).    
\end{align*}
\end{lemma}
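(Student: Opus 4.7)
The plan is to follow the classical approach of Shiu via the Selberg-type upper bound sieve. First I would reduce to essentially squarefree $n$. Writing $n = ms$ with $s$ the powerful part of $n$, multiplicativity gives $f(n)=f(m)f(s)$; the bounds $f(s)\leq A^{\Omega(s)}$, the sparsity of powerful numbers, and the slow-growth hypothesis $f(n)\ll_{c}n^{c}$ together imply that the contribution of $n$ with powerful part $s > H^{\varepsilon^{2}}$ is $o$ of the claimed bound. Thus it suffices to treat the squarefree case.

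Next, by M\"obius inversion write $f=1*g$ with $g=\mu*f$ multiplicative and $g(p^{\ell})=f(p^{\ell})-f(p^{\ell-1})$, so $|g(p^{\ell})|\leq 2A^{\ell}$. Unfolding,
\[
\sum_{\substack{X<n\leq X+H\\ n\equiv a\,(q)}} f(n) \;=\; \sum_{d\geq 1} g(d)\,\#\{\,X<n\leq X+H:\ d\mid n,\ n\equiv a\,(q)\,\}.
\]
Since $(a,q)=1$, the inner count vanishes unless $(d,q)=1$, in which case the Chinese Remainder Theorem makes it equal to $H/(dq)+O(1)$.

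I would then truncate at $D\coloneqq H^{1-\varepsilon/2}$ and treat the two ranges separately. For $d\leq D$ the $O(1)$ remainder aggregates to $O(D(\log D)^{O_{A}(1)}) \ll H^{1-\varepsilon/3}$, which is acceptable since $q\leq H^{1-\varepsilon}$. For $d>D$ apply Rankin's trick with $\sigma \asymp 1/\log X$:
\[
\sum_{\substack{d>D\\ (d,q)=1}}\frac{|g(d)|}{d}\;\ll\; D^{-\sigma}\prod_{p\nmid q}\Bigl(1+\tfrac{|g(p)|}{p^{1-\sigma}}+\cdots\Bigr)\;\ll\;\frac{1}{\log X}\exp\!\Bigl(O_{A}\!\sum_{\substack{p\leq 2X\\ p\nmid q}}\tfrac{f(p)}{p}\Bigr),
\]
using $D^{-\sigma}\asymp 1/\log X$ and estimating the Euler product via $f(p)/p+O(1/p^{2})$.

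For the main term, the key Euler-factor identity
\[
\sum_{\ell\geq 0}\frac{g(p^{\ell})}{p^{\ell}}\;=\;\Bigl(1-\tfrac{1}{p}\Bigr)\sum_{\ell\geq 0}\frac{f(p^{\ell})}{p^{\ell}}\;\leq\;\Bigl(1-\tfrac{1}{p}\Bigr)\exp\!\Bigl(\tfrac{f(p)}{p}+O_{A}\!\bigl(\tfrac{1}{p^{2}}\bigr)\Bigr)
\]
gives, after taking logarithms,
\[
\frac{H}{q}\sum_{\substack{d\leq D\\ (d,q)=1}}\frac{g(d)}{d}\;\ll\;\frac{H}{q}\prod_{\substack{p\leq D\\ p\nmid q}}\Bigl(1-\tfrac{1}{p}\Bigr)\exp\!\Bigl(\sum_{\substack{p\leq 2X\\ p\nmid q}}\tfrac{f(p)}{p}\Bigr).
\]
Mertens' theorem converts $\prod_{p\leq D,\,p\nmid q}(1-1/p)$ into $(\log X)^{-1}\cdot (q/\varphi(q))$ up to constants, producing exactly the prefactor $H/(\varphi(q)\log X)$. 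The main technical obstacle is the bookkeeping in the Rankin truncation (choosing $\sigma$ and $D$ so that both the tail and the $O(1)$-remainder fit under the target bound) together with the careful comparison of the truncated Euler product to the stated Mertens-type product over $p\leq 2X$; these are classical but delicate, and are the steps where one must use the hypothesis $q\leq H^{1-\varepsilon}$ and $H\geq X^{\varepsilon}$ most directly.
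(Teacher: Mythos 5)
Your write-up does not reconstruct the argument the paper relies on: the paper does not prove this lemma at all, but quotes it from the prequel \cite[Lemma 2.17]{MSTT-all}, which is in turn deduced from Shiu's theorem \cite[Theorem 1]{shiu}. Shiu's actual proof is not the convolution unfolding you propose; it decomposes each $n$ according to the sizes of its prime factors (typically $n=ab$ with $b$ having a large prime factor), uses Rankin's trick only on the smooth/pathological part, and gets the crucial factor $H/(\varphi(q)\log X)$ from a Selberg-type sieve bound for integers in a short segment of a progression with a prescribed small divisor. So the comparison here is really between a citation of Shiu's theorem and your attempted direct proof, and your direct proof has genuine gaps.

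Concretely, three steps fail. First, the main term: $g=\mu*f$ is not non-negative (e.g.\ $g(p)=f(p)-1<0$ whenever $f(p)<1$, which the hypotheses allow), so the truncated sum $\sum_{d\leq D,(d,q)=1}g(d)/d$ cannot simply be bounded by the Euler product $\prod_{p\leq D,p\nmid q}\bigl(1-\tfrac1p\bigr)\sum_{\ell}f(p^{\ell})p^{-\ell}$; controlling the truncation of a signed multiplicative function at this level of uniformity is exactly the hard part (bounding instead by $|g|$ destroys the factor $1/\log X$, e.g.\ when $f$ vanishes on many primes). Second, the Rankin step is numerically wrong: with $D\leq H\leq X$ and $\sigma\asymp 1/\log X$ one has $D^{-\sigma}=\exp(-\sigma\log D)\asymp 1$, a constant saving, not $\asymp 1/\log X$; a saving of $1/\log X$ from Rankin would require $D$ of size $X^{c\log\log X}$, far beyond the interval length. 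Third, the accumulated remainders: the target bound is $H/(\varphi(q)\log X)\exp(\sum f(p)/p)$, which for $q$ near $H^{1-\varepsilon}$ and, say, $f$ supported away from small primes can be as small as about $H^{\varepsilon}/\log X$, whereas your error term $O(D(\log D)^{O_A(1)})=O(H^{1-\varepsilon/3})$ is vastly larger; the inequality $q\leq H^{1-\varepsilon}$ makes the target \emph{smaller}, not the error more tolerable. Any correct proof must therefore take $D$ much smaller than $H/q$ and recover the lost range by structural information about $n$ (large prime factors plus a sieve), which is precisely what Shiu's argument does and what the unfolding approach cannot supply on its own.
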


We will also need the fact that the approximants $\Lambda^{\sharp}$ and $d_k^{\sharp}$ are essentially type $I$ sums.

\begin{lemma}\label{le:dkdecompose} Let $k\geq 2$ be a fixed integer, let $\varepsilon>0$ be fixed, and let $X\geq 3$.
\begin{itemize}
    \item[(i)] There exists a sequence $a(n)$ supported on $[1, X^{\varepsilon/5}]$ with $a(n) \ll \log X$ such that, for every $1\leq n\leq 3X$, we have $\Lambda^{\sharp}(n)=(a*1)(n)+E(n)$, where $E$ is negligible in the sense that for any $X^{\varepsilon}\leq H\leq x\leq 2X$ we have
    \begin{align*}
      \sum_{x<n\leq x+H}|E(n)|\ll H\exp(-\log^{1/20} X).   
    \end{align*}
\item[(ii)] There exists $J \ll 1$ and sequences $a_1(n), \dotsc, a_J(n)$ supported on $[1, X^{\varepsilon/5}]$ with $|a_j(m)| \leq d_{k-1}(m)$ such that, for each $1\leq n\leq 3X$, we have $d_k^{\sharp}(n)=\sum_{1\leq j\leq J}(a_j*\psi_j)(n)$, where $\psi_j(n)=(\log^{\ell_j} n)/(\log^{\ell_j} X)$ for some integer $0\leq \ell_j\leq k-1$. 
\end{itemize}
\end{lemma}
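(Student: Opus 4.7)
I'll tackle the two parts in order. For (i), the naive truncation $a(d) = \frac{P(R)}{\varphi(P(R))}\mu(d)\mathbf{1}_{d|P(R)}\mathbf{1}_{d\leq X^{\varepsilon/5}}$ (coming from M\"obius inversion of $\mathbf{1}_{(n,P(R))=1}$) is unsuitable: the resulting error $E(n)$ is nonzero on roughly $X^{1-o(1)}$ integers, and the naive bound $\sum_{x<n\leq x+H}|E(n)| \leq \frac{P(R)}{\varphi(P(R))}\sum_{d|P(R),\,d>X^{\varepsilon/5}}(H/d+1)$ is dominated by the ``$+1$'' contribution, which sums to $\Psi_{\mathrm{sf}}(3X,R) \gg X^{1-o(1)}$. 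My plan is instead to apply the fundamental lemma of sieve theory (for instance, the Iwaniec beta sieve) with truncation $Z = X^{\varepsilon/5}$: this produces upper-bound weights $\lambda^+_d$ supported on $d|P(R)$ with $d\leq Z$, $|\lambda^+_d|\leq 1$, satisfying $\sum_d\lambda^+_d\mathbf{1}_{d|n}\geq \mathbf{1}_{(n,P(R))=1}$ for all $n$, and $\sum_d\lambda^+_d/d = \frac{\varphi(P(R))}{P(R)}(1+O(e^{-s\log s}))$ with $s = \log Z/\log R \asymp (\log X)^{9/10}$; together with matching lower-bound coefficients $\lambda^-_d$. Setting $a(d) := \frac{P(R)}{\varphi(P(R))}\lambda^+_d$ gives the required support on $[1,X^{\varepsilon/5}]$ and the bound $|a(d)| \ll \log R \ll \log X$ by Mertens.

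The key insight is that the sieve upper bound forces $E(n) := \Lambda^{\sharp}(n) - (a*1)(n) \leq 0$ pointwise, so $\sum_{x<n\leq x+H}|E(n)| = -\sum E(n)$ collapses to the signed expression
\begin{equation*}
\frac{P(R)}{\varphi(P(R))}\Big[\sum_d\lambda^+_d \#\{n\in (x,x+H]:d|n\} - \#\{n\in (x,x+H]:(n,P(R))=1\}\Big].
\end{equation*}
Using $\#\{n\in(x,x+H]:d|n\} = H/d + O(1)$ on the first term, and bounding the second below by $H\sum_d\lambda^-_d/d - O(Z)$ via the lower-bound sieve, the main terms $H\varphi(P(R))/P(R)$ cancel to within $O(H(\varphi(P(R))/P(R))e^{-s\log s})$, while the sieve remainders contribute $O(Z)$. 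Multiplying through by $P(R)/\varphi(P(R))\ll \log R$ yields
\begin{equation*}
\sum_{x<n\leq x+H}|E(n)| \ll He^{-s\log s} + X^{\varepsilon/5}\log R \ll H\exp(-(\log X)^{1/20}),
\end{equation*}
since $s\log s \gg (\log X)^{9/10}\log\log X$ and $H \geq X^{\varepsilon}$.

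For part (ii), since $P_m(t)$ has degree at most $k-1$, I would Taylor expand around $t = \log m$, yielding $P_m(\log n) = \sum_{i=0}^{k-1}\frac{P_m^{(i)}(\log m)}{i!}(\log(n/m))^i$. Substituting into the definition of $d_k^{\sharp}$ and interchanging summation gives
\begin{equation*}
d_k^{\sharp}(n) = \sum_{i=0}^{k-1}(a_i * \psi_i)(n), \quad a_i(m) := \frac{(\log X)^i P_m^{(i)}(\log m)}{i!}\mathbf{1}_{m \leq R_k^{2k-2}}, \quad \psi_i(n) := \frac{(\log n)^i}{(\log X)^i}.
\end{equation*}
The support condition holds since $R_k^{2k-2} = X^{(k-1)\varepsilon/(5k)} \leq X^{\varepsilon/5}$. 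To verify $|a_i(m)| \leq d_{k-1}(m)$, I would examine~\eqref{eq:Pmt}: $P_m(t)$ is a sum over the at most $d_{k-1}(m)$ admissible factorizations $m = n_1\cdots n_{k-1}$, and for each such factorization $\log m - \log(n_1\cdots n_j R_k^{k-j}) = \log(n_{j+1}\cdots n_{k-1}) - (k-j)\log R_k = O_k(\log R_k)$ (since $R_k < n_{j+1},\ldots, n_{k-1} \leq R_k^2$). Hence the $i$-th derivative at $t=\log m$ is $O_k((\log R_k)^{-i})$, and multiplication by $(\log X)^i = O_{k,\varepsilon}((\log R_k)^i)$ gives an $O_{k,\varepsilon}(1)$ multiple of $d_{k-1}(m)$; the constant is absorbed by splitting each $a_i$ into $O_{k,\varepsilon}(1)$ pieces paired with the same $\psi_i$, keeping $J \ll 1$. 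The main obstacle overall is the definite-sign reduction in (i); without it, one would need a pointwise or $L^\infty$ bound on the $R$-smooth squarefree divisor function in short intervals, which goes well beyond what the fundamental lemma provides.
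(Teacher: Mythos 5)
Your proof is correct, but part (i) takes a genuinely different route from the paper, and your reason for abandoning the paper's route is based on a too-crude estimate. The paper does use exactly the naive truncation $a(d)=\frac{P(R)}{\varphi(P(R))}\mu(d)1_{d\mid P(R)}1_{d\leq X^{\varepsilon/5}}$, bounding the error by $\frac{P(R)}{\varphi(P(R))}\sum_{x<dn\leq x+H,\, d\mid P(R),\, d>X^{\varepsilon/5}}1$ and invoking the prequel's short-interval lemma \cite{MSTT-all} for the saving $\ll_{\varepsilon}H\exp(-\log^{1/20}X)$. Your objection assumes one must bound this by $\sum_d(H/d+1)$, where the ``$+1$'' terms indeed ruin everything; but no $L^\infty$ control of the smooth-divisor count is needed, only an $L^1$ bound over the short interval, and that is elementary: any $d\mid P(R)$ with $d>X^{\varepsilon/5}$ has a divisor $d'\in (X^{\varepsilon/10}R^{-1},X^{\varepsilon/10}]$ since its prime factors are at most $R=X^{o(1)}$, so one can sum over multiples of such moderate-size $d'$ (all $\leq H$) with a divisor-function weight and finish with Rankin's trick. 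Your alternative construction via the fundamental lemma is nonetheless valid: the upper-bound weights give $E\leq 0$ pointwise, so the $L^1$ norm collapses to a difference of main terms which you control from below with $\lambda^-$; the support, the bound $|a(d)|\leq \frac{P(R)}{\varphi(P(R))}\ll\log X$, and the error $H e^{-s\log s}+X^{\varepsilon/5}\log R\ll H\exp(-\log^{1/20}X)$ with $s\asymp(\log X)^{9/10}$ all check out. What your route buys is independence from the prequel's counting lemma; what it costs is importing sieve machinery where a direct count suffices, and it produces a different sequence $a$ than the explicit truncation $\Lambda^\sharp_I$ in \eqref{lambdasharp-i-def} which the paper later reuses (harmless here, since the lemma is only an existence statement). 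Your part (ii) is essentially the paper's proof: Taylor-expanding $P_m$ about $t=\log m$ is the same computation as the paper's binomial expansion of the inner sum in \eqref{eq:Pmt} followed by merging the variables, and your support and size verifications (absorbing the $O_{k,\varepsilon}(1)$ constants by splitting into $O(1)$ pieces) fill in what the paper leaves implicit.
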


\begin{proof}(i) Recall the definitions $R=\exp((\log X)^{1/10})$ and $P(w)=\prod_{p<w}p$ from~\eqref{eq:Lambdasharpdef}. Defining
$$E(n)=\Lambda^{\sharp}(n)-\frac{P(R)}{\varphi(P(R))}\sum_{\substack{d\mid n\\d\leq X^{\varepsilon/5}\\d\mid P(R)}}\mu(d),$$
the function $\Lambda^{\sharp}(n)-E(n)$ is of the desired form $a*1$, and by M\"obius inversion and~\cite[Lemma 2.18]{MSTT-all} for $X^{\varepsilon}\leq H\leq x\leq 2X$ we have
\begin{align}\label{eq:lambdaI}
\sum_{x<n\leq x+H}|E(n)|\leq \frac{P(R)}{\varphi(P(R))}\sum_{\substack{x<dn\leq x+H\\d>X^{\varepsilon/5}\\d\mid P(R)}}1\ll_{\varepsilon} H\exp(-\log^{1/20}X).  
\end{align}

(ii) By~\eqref{dks-def} and~\eqref{eq:Pmt}, we have
\begin{align*}
d_k^{\sharp}(n)=\sum_{j=0}^{k-1}\frac{\binom{k}{j}}{(k-j-1)!}\sum_{\substack{n=n_1\cdots n_{k-1}d\\n_1,\ldots, n_{j-1}\leq R_k\\R_k<n_{j+1},\ldots, n_{k-1}\leq R_k^2}}\bigg(\frac{\log(R_k^{k-j}n_{j+1}\cdots n_{k-1})+\log d}{\log R_k}\bigg)^{k-j-1}.    
\end{align*}
Applying the binomial formula to the inner sum and merging the variables $n_1,\ldots, n_{k-1}$, the claim follows.
\end{proof}

\section{Major arc estimates}\label{major-arc-sec}

The purpose of this section is to handle the case of Theorem~\ref{discorrelation-thm} where $F(g(n)\Gamma)$ is major arc in the sense that it behaves like $n^{iT}$ in arithmetic progressions to small modulus (the case $T=0$ would correspond to the classical major arcs in the circle method), for some $T$ of polynomial size. To this end, we prove the following theorem.

\begin{theorem}[Major arc estimate]\label{discorrelation-thm-major}  Let $X\geq 3$ and $X^{\theta+\varepsilon} \leq H \leq X$ for some $\theta\in (0,1)$ and some small $\eps\in (0,1)$.  Let $T$ be a real number with $|T| \leq X^C$ for some $C>0$.
\begin{itemize}
\item[(i)]  (Major arc discorrelation) Let $\theta = 1/3$. Then, for any $A>0$, one has
\begin{align*}
\left| \sum_{x < n \leq x+H} \mu(n) n^{-iT} \right|^*\leq H \log^{-A} X
\end{align*}
and
\begin{align*}
\left| \sum_{x < n \leq x+H} (\Lambda(n) - \Lambda^\sharp(n)) n^{-iT} \right|^* \leq H \log^{-A} X
\end{align*}
for all $x\in [X,2X]$ outside of a set of measure $O_{A,\eps, C}( X\log^{-A}X)$. Furthermore,  for any integer $k\geq 2$ and some constant $c_{k,C}>0$ depending only on $k,C$, one has
\begin{equation*}
\left| \sum_{x < n \leq x+H} (d_k(n) - d_k^\sharp(n)) n^{-iT} \right|^* \leq HX^{-c_{k,C} \eps}
\end{equation*}
for all $x\in [X,2X]$ outside of a set of measure $O_{\eps,C}(X^{1-c_{k,C}\eps})$.
\item[(ii)]  (Major arc discorrelation with weight on prime factors)  Let $\theta=0$ and $\kappa \in (0, \varepsilon/20)$. Then, for any $A>0$ and any integer $k\geq 2$, one has
\begin{equation*}
\left| \sum_{x < n \leq x+H} \mu(n)n^{-iT} \sum_{X^{\kappa}<p\leq X^{\varepsilon/10}}1_{p\mid n}  \right|^* \leq H  \log^{-A} X
\end{equation*}
and
\begin{equation*}
\left| \sum_{x < n \leq x+H} (d_k(n) - d_k^\sharp(n))n^{-iT}  \sum_{X^{\kappa}<p\leq X^{\varepsilon/10}}1_{p\mid n} \right|^*\leq H  \log^{-A} X
\end{equation*}
for all  $x\in [X,2X]$ outside of a set of measure $O_{A,\eps,\kappa, C}(X\log^{-A}X)$.
\end{itemize}
\end{theorem}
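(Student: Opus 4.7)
The plan is to first reduce the theorem, via Chebyshev's inequality, to a mean-square estimate of the form
\[
\int_X^{2X} \left|\sum_{x < n \leq x+H} f(n) n^{-iT}\right|^2\, dx \ll \frac{H^2 X}{\log^{2A} X}
\]
(with a power saving $H^2 X^{1-c\varepsilon}$ in the $d_k$ case), where $f \in \{\mu,\, \Lambda-\Lambda^{\sharp},\, d_k-d_k^{\sharp}\}$. To remove the maximal $|\cdot|^*$, I would Fourier-expand indicators of arithmetic progressions $P \subseteq (x,x+H]$ as bounded averages of linear phases $e(\alpha n)1_{(x,x+H]}(n)$; absorbing the linear phase into $T$ via Taylor expansion around $x$ incurs only a bounded perturbation of the frequency. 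Writing $F(s)\coloneqq\sum_{n \asymp X} f(n) n^{-s}$, a Plancherel identity relates the mean-square to a Dirichlet-polynomial integral of the shape $\int_{|u|\ll X/H} |F(iu+iT)|^2\, du$ (plus lower-order tails from the Perron truncation), so the goal becomes to bound this integral.

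The argument now splits on the size of $|T|$. In the \emph{small-$T$ regime} $|T| \leq X^{1+\varepsilon/2}/H$, the twist $n^{-iT}$ varies by $O(X^{-\varepsilon/2})$ over any subinterval of length $HX^{-\varepsilon/2}$; partitioning $(x,x+H]$ into $O(X^{\varepsilon/2})$ such subintervals on which the twist is essentially constant reduces matters to the untwisted bound at scale $HX^{-\varepsilon/2}$. This in turn is known: the Matom\"aki--Radziwi\l{}\l{} theorem~\cite{mr-annals} for $\mu$, the sieve-enhanced variant~\eqref{eq:Lambda1/6} for $\Lambda-\Lambda^{\sharp}$, Jutila~\cite{jutila} for $d_2-d_2^{\sharp}$, and Y.-C.~Sun's~\cite{sun} divisor-bounded extensions for $d_k-d_k^{\sharp}$, $k\geq 3$. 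Lemma~\ref{le:dkdecompose} ensures $\Lambda^\sharp$ and $d_k^\sharp$ are essentially type $I$, which makes these untwisted bounds transfer to the sharp approximants used in the theorem.

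In the \emph{large-$T$ regime} $|T|>X^{1+\varepsilon/2}/H$, I would apply Heath--Brown's identity (Lemma~\ref{hb-identity}) to decompose $F(s)$ into $O(\log^{O(1)}X)$ products $A(s)B(s)$ of Dirichlet polynomials. These fall into type $I$, type $I_2$, and type $II$ contributions; the type $II$ contributions, where $A(s)$ has length in $[X^{\varepsilon/2}, HX^{-\varepsilon/2}]$, I would handle using the Dirichlet-polynomial mean-value / large-value estimates of Baker--Harman--Pintz~\cite{baker-harman-pintz} on the shifted strip $|u-T|\ll X/H$, gaining the required cancellation from the large $|T|$ together with the $L^2$ mean-value theorem. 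The constraint $H\geq X^{1/3+\varepsilon}$ is precisely the threshold at which this decomposition leaves no unhandled type $II$ range. The only residual piece is a \emph{narrow type $I$ contribution} in which only a smooth factor of length $>X^{1-\varepsilon}$ survives; for this I would Taylor-expand $n^{-iT}=\exp(-\tfrac{iT}{2\pi}\cdot 2\pi\log n)$ about a reference point $n_0 \in (x,x+H]$ as a polynomial phase of degree $O(1/\varepsilon)$ and apply a van der Corput / Weyl exponential sum bound, with the key diophantine input being that $T/(2\pi x)$ is not too close to a rational of small height for all but a negligible set of $x$.

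For part~(ii), the weight $\sum_{X^{\kappa}<p\leq X^{\varepsilon/10}} 1_{p\mid n}$ encodes a prime factorization $n=pm$ with $p$ dyadically localized in $(X^{\kappa},X^{\varepsilon/10}]$, which is \emph{automatically} a type $II$ factorization in a window compatible with $H\geq X^\varepsilon$; the inner $m$-sum runs over an interval of length $\geq H/p \geq X^{\varepsilon/2}$ on which the same Parseval--Dirichlet-polynomial strategy applies, and the small-$T$ reduction still connects to the Matom\"aki--Radziwi\l{}\l{} theorem at scale $X^{\varepsilon/2}$. The main obstacle throughout is the narrow type $I$ piece in the large-$T$ regime: establishing the van der Corput / Weyl bound for $\sum_{n \asymp X^{1-\varepsilon}} e(-\tfrac{T}{2\pi}\log n)$ uniformly in $|T|\leq X^C$, while tracking the exceptional $x$-set on which the Taylor coefficients of $\log$ at $x$ approximate rationals of small height, and showing that after the dyadic Heath--Brown bookkeeping the exceptional measure remains $O_{A,C}(X\log^{-A}X)$ with only polynomial dependence on $|T|$.
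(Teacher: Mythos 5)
Your overall architecture matches the paper's: the same split at $|T|\asymp X^{1+\varepsilon/2}/H$, elimination of the twist at scale $HX^{-\varepsilon/2}$ for small $|T|$, Heath--Brown plus Dirichlet-polynomial (Baker--Harman--Pintz) estimates for the type $II$ ranges at large $|T|$, a van der Corput/equidistribution argument in $T/(2\pi x)$ for the remaining type $I$ piece, and the prime-factor weight supplying the type $II$ structure in part (ii). However, there are two genuine gaps. First, your removal of the maximal truncation $|\cdot|^*$ by Fourier-expanding indicators of arithmetic progressions into linear phases and ``absorbing the linear phase into $T$'' does not work. A subprogression of $(x,x+H]$ of modulus $q>1$ produces rational frequencies $e(an/q)$, which are non-Archimedean and cannot be absorbed into the continuous character $n^{-iT}$ at all; and even for the frequencies $e(\alpha n)$ coming from the endpoint truncation, the Taylor comparison $e(\alpha n)\approx c\, n^{i\tau}$ with $\tau\asymp \alpha x$ carries a quadratic error of size $\alpha H^2/X$, which is unbounded once $H>X^{1/2}$ (the theorem allows $H$ up to $X$). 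The paper instead uses Shiu's bound to restrict the supremum to moduli $q\ll\delta^{-O(1)}$ and to a discretized set of lengths, and then keeps the progression structure throughout, detecting residues by multiplicative characters inside the Dirichlet-polynomial lemmas (see the reduction \eqref{eq:H'sum} and the maximal form of Lemma~\ref{le:vin-kor}).

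Second, in the small-$|T|$ regime your plan to quote existing almost-all-intervals results is quantitatively insufficient for the statement being proved. For $d_k-d_k^\sharp$ with $k\geq 3$, part (i) demands a \emph{power saving} $HX^{-c_{k,C}\varepsilon}$ together with an exceptional set of measure $X^{1-c_{k,C}\varepsilon}$, whereas Sun's theorem gives only a qualitative $o(H\log^{k-1}X)$ bound outside a set of measure $o(X)$; and for part (ii) the required saving is $\log^{-A}X$ for \emph{arbitrary} $A$ at scale $H\geq X^{\varepsilon}$, whereas the Matom\"aki--Radziwi\l{}\l{} theorem yields only a fixed $\log^{-c}X$. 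Moreover the cited results are not all available in the maximal-sum form needed here. This is precisely why the paper reproves the untwisted estimates rather than citing them: after removing the twist it compares the short sum at scale $H''$ with a long reference scale $H^*=X^{0.99}$, and runs the Parseval argument (Lemma~\ref{le:perron}) with Vinogradov--Korobov pointwise bounds and the Baker--Harman--Pintz mean-value lemma, where the parameter $W$ is taken to be $\log^{1000A}X$ (for $\mu$, $\Lambda$, and part (ii)) or $X^{c_k}$ (for $d_k$ in part (i)) so that the final savings are of the required strength. Your large-$|T|$ treatment, by contrast, is essentially the paper's (twist absorbed into the coefficients, Vinogradov--Korobov on the shifted frequencies, and the type $I$ piece handled by Taylor expansion, Vinogradov's lemma, van der Corput and Erd\H{o}s--Tur\'an), and would go through once the maximal-sum reduction is repaired.
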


The proof strategy of Theorem~\ref{discorrelation-thm-major} is to apply combinatorial decompositions to $\mu, \Lambda-\Lambda^{\sharp}, d_k-d_k^{\sharp}$ (possibly with an additional weight as in part (ii)) and to prove estimates for the resulting type $I$ and $II$ major arc sums. These sums will be handled in Subsections~\ref{sub:typeImajor} and~\ref{sub:typeIImajor}, respectively, and then we are left with some reductions (involving the removal of maximal summation) and combinatorial considerations in Subsections~\ref{sub:smallT}--\ref{sub:largeT}.  For the logic of the proof, see \Cref{fig-major}.

\begin{figure}[H]
    \centering
\begin{tikzpicture}[node distance=1cm and 1cm]
    \node[draw, rectangle, align=center, fill=blue!20] (main) {Major arc estimate\\ \Cref{discorrelation-thm-major}};
    \node[draw, rectangle, above =of main, align=center] (small) {Small $|T|$ case of \\ \Cref{discorrelation-thm-major}, \S \ref{sub:smallT}};
    \node[draw, rectangle, above left=of main, align=center] (large-i) {Large $|T|$ case of \\ \Cref{discorrelation-thm-major}(i), \S \ref{sub:largeT}};
    \node[draw, rectangle, above right=of main, align=center] (large-ii) {Large $|T|$ case of \\ \Cref{discorrelation-thm-major}(ii), \S \ref{sub:largeT}};
    \node[draw, rectangle, above=of large-i, align=center] (type-i) {Type $I$ estimate \\ \Cref{le:typeImajor}};
    \node[draw, rectangle, above=of small, align=center] (type-ii) {Type $II$ estimate \\ \Cref{le:typeIImajor}};
    \node[draw, rectangle, above left=of type-ii, align=center] (parseval) {Parseval bound \cite{mr-annals}\\ \Cref{le:perron}};
    \node[draw, rectangle, above right=of type-ii, align=center] (bhp) {Parallelogram lemma \cite{baker-harman-pintz}\\ \Cref{le:BHP}};

    \draw[-{Latex[length=4mm, width=2mm]}] (small) -- (main);
    \draw[-{Latex[length=4mm, width=2mm]}] (large-i) -- (main);
    \draw[-{Latex[length=4mm, width=2mm]}] (large-ii) -- (main);
    \draw[-{Latex[length=4mm, width=2mm]}] (type-i) -- (large-i);
    \draw[-{Latex[length=4mm, width=2mm]}] (type-ii) -- (large-i);
    \draw[-{Latex[length=4mm, width=2mm]}] (type-ii) -- (large-ii);
    \draw[-{Latex[length=4mm, width=2mm]}] (type-ii) -- (small);
    \draw[-{Latex[length=4mm, width=2mm]}] (parseval) -- (type-ii);
    \draw[-{Latex[length=4mm, width=2mm]}] (bhp) -- (type-ii);
\end{tikzpicture}
    \caption{Proof of \Cref{discorrelation-thm-major}. The Vinogradov--Korobov bound (\Cref{le:vin-kor}) and  Heath--Brown identity (\Cref{hb-identity}) are also used at several points in the proof, but are not depicted on this diagram to reduce clutter.}
    \label{fig-major}
\end{figure}
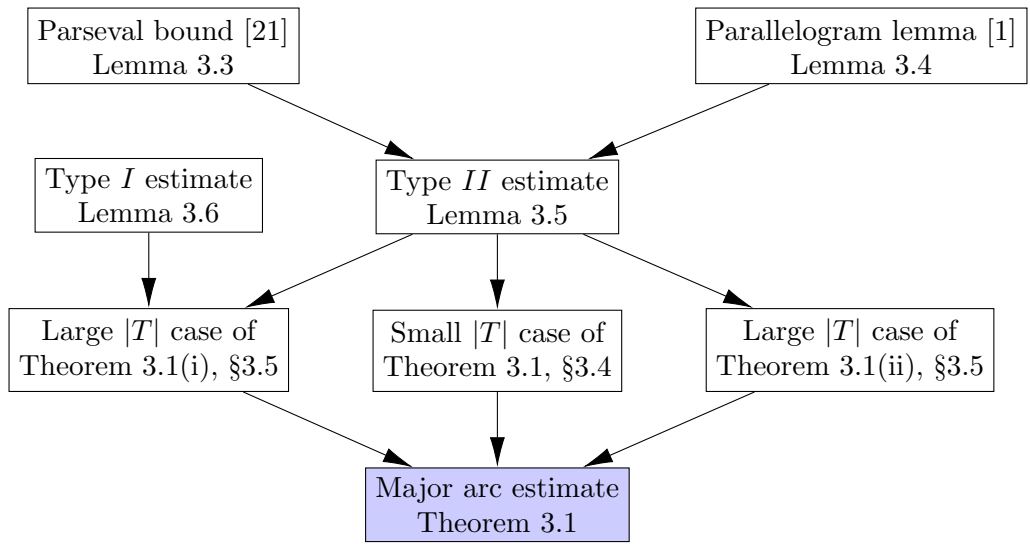

\subsection{Lemmas on Dirichlet polynomials}

We begin with the following standard pointwise bounds for Dirichlet polynomials of $\mu, \Lambda, d_k$.

\begin{lemma}[Vinogradov--Korobov bound]\label{le:vin-kor}
Let $k\in \mathbb{N}$ be fixed. Let  $f\in \{\mu, \Lambda, d_k\}$. Let $X\geq 3$,  $A>0$, and define $W_f$ by the formulae
$$W_{\mu}\coloneqq 0; \quad W_{\Lambda} \coloneqq \log^{A} X; \quad W_{d_k} \coloneqq X^{c_k},$$
where $c_k>0$ is a small enough constant. Then we have
\begin{align*}
\sup_{W_f\leq |T|\leq X^{A}}\bigg|\sum_{X<n\leq 2X}\frac{f(n)}{n^{1+iT}}\bigg|^{*}\ll_A \frac{\log^{O(1)}X}{(W_f+\log^{A}X)^{1/2}}.  \end{align*}
\end{lemma}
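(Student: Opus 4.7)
The plan is to prove the bound using three classical tools: orthogonality of Dirichlet characters to remove the supremum over arithmetic progressions, Perron's formula to express each resulting Dirichlet polynomial as a contour integral, and the Vinogradov--Korobov zero-free region to shift the contour past the critical line.

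First I would dispose of the supremum over arithmetic progressions $P \subset (X,2X]$. Writing $P = \{a\leq n\leq b : n\equiv r \pmod q\}$, I split according to the size of $q$. If $q > \log^{B}X$ for a sufficiently large $B = B(A,k)$, then Shiu's bound (\Cref{le:shiu}) together with $|f(n)| \ll n^{o(1)}\log X$ for $f \in \{\mu, \Lambda, d_k\}$ gives $\sum_{n\in P}|f(n)|/n \ll \log^{O(1)}X / q$, which already beats the target. If $q \leq \log^{B}X$, then after factoring out the common divisor $(r,q)$ (which reduces to the coprime case after passage to a dilated variable $n = dm$), I use
\begin{equation*}
\mathbf{1}_{n\equiv r\pmod q} = \frac{1}{\varphi(q)}\sum_{\chi\pmod q}\overline{\chi}(r)\chi(n)
\end{equation*}
to reduce matters to bounding $O(\log^{O(1)}X)$ character-twisted Dirichlet polynomials $D_\chi(T) \coloneqq \sum_{X<n\leq 2X} f(n)\chi(n)/n^{1+iT}$ with $\chi$ of modulus $q \leq \log^{B}X$.

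Next I would estimate $D_\chi(T)$ via a truncated Perron formula applied to both $\sum_{n\leq 2X}$ and $\sum_{n\leq X}$ and subtracting, yielding
\begin{equation*}
D_\chi(T) = \frac{1}{2\pi i}\int_{c-iW}^{c+iW} G_\chi(s+1+iT)\,\frac{(2X)^{s}-X^{s}}{s}\,ds + O\bigl(\tfrac{\log^{O(1)}X}{W}\bigr),
\end{equation*}
where $c = 1/\log X$, $W = X^{A+1}$, and $G_\chi(s) = L(s,\chi)^{k}$, $-L'(s,\chi)/L(s,\chi)$, or $1/L(s,\chi)$ depending on whether $f=d_k$, $\Lambda$, or $\mu$. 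I then shift to the line $\operatorname{Re}(s) = -\sigma$, with $\sigma \asymp 1/(\log(|T|+W+q))^{2/3}(\log\log(\cdots))^{1/3}$ placed inside the Vinogradov--Korobov zero-free region for $L(\cdot,\chi)$. The poles crossed are $s=0$ (from the factor $1/s$) and, when $\chi$ is principal, $s=-iT$ (from the pole of $\zeta$). The residue at $s=0$ equals $G_\chi(1+iT)$ times a polynomial in $\log N$ of degree $\leq k-1$, and one checks directly that the leading terms cancel in the difference between $N=2X$ and $N=X$, leaving a contribution of size $O(\log^{O(1)}X / |T|)$. The residue at $s=-iT$ similarly produces a quantity of size $O(\log^{k-1}X / |T|)$, using the Laurent expansion of $\zeta(u+1)^{k}$ near $u=0$ and the triangle inequality on $(2X)^{-iT}Q_k(\log 2X) - X^{-iT}Q_k(\log X)$. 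For $f = \Lambda$ or $\mu$ the residues disappear or give $O(1/|T|)$; for $f=d_k$ we get $O(\log^{k-1}X/|T|)$, which is $\ll \log^{O(1)}X / X^{c_k/2}$ whenever $|T| \geq X^{c_k}$. Finally, the remaining contour integral on $\operatorname{Re}(s)=-\sigma$ is bounded using the Vinogradov--Korobov estimates $|L^{\pm 1}(s,\chi)|, |L'/L(s,\chi)| \ll \log^{O(1)}(|\operatorname{Im}s|+q)$ inside the zero-free region, producing a contribution $\ll X^{-\sigma}\log^{O(1)}X \ll \exp(-c(\log X)^{1/3})$, which dominates every required saving by a wide margin.

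The main technical obstacle is the bookkeeping for the residue at $s = -iT$ in the $d_k$ case: here $G_\chi$ has a pole of order $k$, so the residue is a polynomial of degree $k-1$ in $\log N$ with coefficients depending on derivatives of $(1-p^{-s-1-iT})^k$ for $p\mid q$ and on $1/(iT)$, and one must verify that the difference between $N=2X$ and $N=X$ retains the $1/|T|$ factor rather than collapsing to something of size $\log^{k-1}X$. A secondary obstacle is handling potential Siegel zeros of $L(s,\chi)$ for exceptional characters of small modulus; since we only require a $\log$-power saving and are free to let implied constants depend on $A$, these can be absorbed by invoking Siegel's theorem (with ineffective dependence on $A$), which forces any exceptional real zero to lie at distance $\gg_A q^{-o(1)}/\log X$ from $s=1$ and thus outside the relevant neighborhood of the critical line.
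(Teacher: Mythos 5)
Your overall route (orthogonality of characters to remove the progression, truncated Perron, contour shift into the Vinogradov--Korobov region) is essentially the paper's: the paper likewise reduces the maximal sum to character-twisted Dirichlet polynomials and then quotes a Vinogradov--Korobov-type lemma from the prequel, the only real difference being that you treat $d_k$ directly through $L(s,\chi)^k$ while the paper writes $d_k(n)=\sum_{n_1\cdots n_k=n}1$, pulls out the largest variable, and applies the twisted bound with partial summation.

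There is, however, a genuine gap in your treatment of $f=d_k$: the modulus split is miscalibrated. For $d_k$ one has $W_{d_k}=X^{c_k}$, so the target is $\log^{O(1)}X\cdot X^{-c_k/2}$, a \emph{power} saving; for $\log^{B}X<q\le X^{c_k/2}$ your Shiu estimate only gives $\log^{O(1)}X/q$, so the assertion that it ``already beats the target'' fails in that range, and your character analysis (including the Siegel-zero discussion, which explicitly relies on $q\le\log^{B}X$) does not cover such $q$. The repair is to cut at $q\gg(W_f+\log^{A}X)^{1/2}$, exactly as the paper does, and to note that for $d_k$ the contour-shift argument extends to moduli as large as $X^{c_k/2}$, since it only needs hybrid upper bounds for $|L(1-\sigma+it,\chi)|$ in the Vinogradov--Korobov range and no zero-free region (there is no $1/L$ or $L'/L$, hence no exceptional-zero issue). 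Two smaller slips: the residues at $s=0$ are $N$-independent (equal to $G_\chi(1+iT)$ whenever $1+iT$ is not a pole of $G_\chi$, which is the case in all relevant ranges) and cancel exactly in the difference, so there is no leftover $O(\log^{O(1)}X/|T|)$ term --- which is important, since for $\mu$ the range includes $|T|$ near $0$ where such a term would be useless; and your quantitative form of Siegel's theorem, a gap $\gg_A q^{-o(1)}/\log X$, is not Siegel's theorem and, taken literally, is smaller than the contour width $\asymp(\log X)^{-2/3+o(1)}$; you need (and it suffices, for $q\le\log^{B}X$) the correct form $1-\beta\gg_\varepsilon q^{-\varepsilon}$.
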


\begin{proof}
For some interval $I\subset [X,2X]$ and some integers $q\geq r\geq 1$, we have
\begin{align}\label{eq:maxsum}
\bigg|\sum_{X<n\leq 2X}\frac{f(n)}{n^{1+iT}}\bigg|^{*}=\Bigg|\sum_{\substack{n\in I\\n\equiv r\pmod q}} \frac{f(n)}{n^{1+iT}}\Bigg|.
\end{align}
We may assume that $q\ll (W_f+\log^{A}X)^{1/2}$, as otherwise the claimed estimate follows directly from the triangle inequality (and Lemma~\ref{le:shiu} in the case $f=d_k$). Let $d=(r,q)$ and $r'=r/d, q'=q/d$. Then using the orthogonality of characters, the right-hand side of~\eqref{eq:maxsum} becomes
\begin{align}\label{eq:fdn}
\leq \frac{1}{\varphi(q')}\sum_{\chi\pmod{q'}} \bigg|\sum_{\substack{m\in d^{-1}I\\}} \frac{f(dm)\chi(m)}{m^{1+iT}}\bigg|.
\end{align}
Now the claim follows in the case of $f\in \{\mu, \Lambda\}$ from~\cite[Lemma 3.9]{MSTT-all}(ii)--(iii) (which is an application of the Vinogradov--Korobov zero free region) and partial summation. The case $f=d_k$ follows from~\cite[Lemma 3.9]{MSTT-all}(i) and partial summation after first writing $d_k(n)=\sum_{n=n_1\cdots n_k}1$ in~\eqref{eq:fdn} and pulling out all but the largest of the variables $n_i$ outside the absolute values.
\end{proof}

We will also need the following standard estimate, which relates the variance of a sequence in short intervals to the mean square of its Dirichlet polynomial.

\begin{lemma}[Parseval-type bound for variance]\label{le:perron} Let $X\geq 3$ and $1\leq W\leq X^{1/10}$. Let $C>0$ and $|a(n)|\leq d_2(n)^{C}$. Let\footnote{For some applications where $a$ is a convolution of two dyadically supported sequences, it is convenient that $A(s)$ involves a sum over an interval somewhat longer than $[X,2X]$; therefore we use the interval $[X/8,8X]$ here.} $A$ denote the Dirichlet polynomial
$$A(s) \coloneqq \sum_{X/8< n\leq 8X}a(n)n^{-s}.$$
\begin{itemize}
    \item[(i)] For $1\leq H_1\leq H_2\leq X/W^3$, one has
    \begin{align*}
    &\frac{1}{X}\int_{X}^{2X}\left|\frac{1}{H_1}\sum_{x<n\leq x+H_1}a(n)-\frac{1}{H_2}\sum_{x< n\leq x+H_2}a(n)\right|^2\, dx\\
    \ll_C& \left(\frac{\log^{2^C-1}X}{W}\right)^2+\max_{T\geq \frac{X}{H_1}}\frac{X/H_1}{T}\int_{W\leq |t|\leq T}|A(1+it)|^2\, dt.   \end{align*}

    \item[(ii)] For $1\leq H_1\leq X$, one has
     \begin{align*}
    \frac{1}{X}\int_{X}^{2X}\bigg|\frac{1}{H_1}\sum_{x<n\leq x+H_1}a(n)\bigg|^2\, dx\ll \max_{T\geq \frac{X}{H_1}}\frac{X/H_1}{T}\int_{|t|\leq T}|A(1+it)|^2\, dt.
    \end{align*}
\end{itemize}

\end{lemma}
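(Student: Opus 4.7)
The proof will follow the classical Perron--Plancherel scheme used extensively in~\cite{mr-annals}. Both parts flow from a single analytic representation of the short sum.

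First, I would apply a truncated Perron formula at height $T_0 \coloneqq X^{100}$ to express, for each $H\in\{H_1,H_2\}$,
$$\frac{1}{H}\sum_{x<n\leq x+H}a(n) = \frac{1}{2\pi i}\int_{1-iT_0}^{1+iT_0} A(s)\,\frac{(x+H)^s-x^s}{sH}\,ds + O(X^{-10}),$$
the Perron error being negligible thanks to the crude bound $|a(n)|\ll_\eta n^{\eta}$ inherited from $|a(n)|\leq d_2(n)^C$. Parametrising $s=1+it$ and factoring out $x^{it}$ gives the kernel
$$K_H(t,x) \coloneqq \frac{(x+H)^{1+it}-x^{1+it}}{(1+it)H} = x^{it}\,\psi(H/x,t), \qquad \psi(u,t)\coloneqq \frac{(1+u)^{1+it}-1}{(1+it)u}.$$

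Next I would record the two regimes of $\psi$: a Taylor expansion gives $\psi(u,t)=1+O((1+|t|)u)$ when $(1+|t|)u$ is small, while the trivial bound gives $|\psi(u,t)|\ll \min(1,((1+|t|)u)^{-1})$. Hence $|K_H(t,x)|\ll \min(1,X/(|t|H))$, and for the difference kernel appearing in (i) the leading-order terms cancel to produce
$$|K_{H_1}(t,x)-K_{H_2}(t,x)| \ll \min\bigl((1+|t|)H_2/X,\ X/(|t|H_1)\bigr).$$
I would then apply a Plancherel-type bound in $x$: after the substitution $x=Xe^u$ for $u\in[0,\log 2]$, extending the indicator of $[0,\log 2]$ to a Schwartz bump and invoking Plancherel in $u$ yields
$$\frac{1}{X}\int_X^{2X}\left|\int_{-T_0}^{T_0} A(1+it)K(t,x)\,dt\right|^2\,dx \ll \int_{-T_0}^{T_0}|A(1+it)|^2\,|K(t,X)|^2\,dt,$$
up to a harmless $O(1)$-scale convolution in $t$.

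For part (ii) I then plug in $|K_{H_1}(t,X)|^2\ll \min(1,(X/(|t|H_1))^2)$ and decompose $|t|\geq X/H_1$ dyadically into windows $(T,2T]$; each contributes at most $\tfrac{X/H_1}{T}\int_{|t|\leq 2T}|A(1+it)|^2\,dt$, and the $|t|\leq X/H_1$ region is absorbed by the $T=X/H_1$ case of the supremum. For part (i) the same dyadic decomposition over $|t|\geq W$ produces the claimed main term (note $X/H_1\geq W^3\geq W$); the remaining low-frequency contribution $|t|<W$ is handled by the enhanced cancellation $|K_{H_1}-K_{H_2}|\ll W H_2/X\leq 1/W^2$ together with the Shiu-type pointwise bound $|A(1+it)|\leq \sum_{X/8<n\leq 8X}d_2(n)^C/n\ll \log^{2^C-1}X$ (see Lemma~\ref{le:shiu}). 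A direct integration in $t$, using $H_2\leq X/W^3$, yields the stated error $O(\log^{2^C-1}X/W)$.

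The main technical obstacle is the application of Plancherel on the bounded range $[X,2X]$ rather than all of $\R$: one must smoothly majorise the indicator of this interval and track the resulting $O(1)$-scale smearing of $K$ in $t$, without degrading the two kernel regimes from Step~2. A second, minor bookkeeping issue is ensuring that the dyadic decomposition in part~(i) only uses windows $(T,2T]$ with $T\geq X/H_1$, which is the hypothesis needed for the geometric series summing the $(X/(TH_1))^2$ factors to collapse into the supremum on the right-hand side.
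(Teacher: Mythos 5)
Your proposal is correct and follows essentially the same route as the paper: Perron's formula to express the short averages via $A(1+it)$, a split at $|t|=W$, Taylor expansion of the difference kernel plus the pointwise divisor-sum bound $|A(1+it)|\ll \log^{2^C-1}X$ for the low frequencies, and a smoothed mean-square (Plancherel/open-the-square) bound in $x$ followed by dyadic decomposition in $t$ for the high frequencies. The only cosmetic differences are that you truncate Perron at $X^{100}$ where the paper uses the exact formula, and that you sketch the mean-square step directly where the paper outsources it to the proof of Lemma~14 of~\cite{mr-annals}.
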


\begin{proof}
This is very similar to~\cite[Lemma 14]{mr-annals}. Let us give some details for part (i); part (ii) is similar but easier.

For any  $x\geq 1$ such that $x,x+H_1,x+H_2$ are not integers, Perron's formula gives
\begin{align*}
&\frac{1}{H_1}\sum_{x<n\leq x+H_1}a(n)-\frac{1}{H_2}\sum_{x<n\leq x+H_2}a(n)\\
=&\frac{1}{2\pi}\int_{\mathbb{R}}A(1+it)\left(\frac{(x+H_1)^{1+it}-x^{1+it}}{H_1(1+it)}-\frac{(x+H_2)^{1+it}-x^{1+it}}{H_2(1+it)}\right)\,dt\\
\coloneqq &I_1(x)+I_2(x),
\end{align*}
where $I_1(x)$ is the integral over $|t|\leq W$ and $I_2(x)$ is the integral over $|t|>W$. By Taylor approximation, for any real number $|y|\leq 1$ and complex number $s$, we have $(1+y)^{s}=1+sy+O((|s|^2+1)y^2)$. Substituting this into the integrand in $I_1(x)$ and recalling that $H_2\leq X/W^3$, we obtain
\begin{align*}
 |I_1(x)|^2&\ll \left(\int_{|t|\leq W}|A(1+it)|(|t|+1)\frac{H_2}{x}\,dt\right)^2 \ll \left(W^2\frac{H_2}{X}\max_{|t|\leq W}|A(1+it)|\right)^2\\
 &\ll \left(W^{-1}\sum_{X/8<n\leq 8X}\frac{|a(n)|}{n}\right)^2 \ll \left(\frac{\log^{2^C-1}X}{W}\right)^2,
\end{align*}
using in the last step the standard divisor sum bound $\sum_{X/8< n\leq  8X}d_2(n)^{C}/n\ll \log^{2^C-1} X$.

To handle the contribution of $I_2(x)$, it suffices to show for $j\in \{1,2\}$ that
\begin{align*}
\int_{X}^{2X}\left|\int_{|t|>W}A(1+it)\frac{(x+H_j)^{1+it}-x^{1+it}}{H_j(1+it)}\, dt\right|^2\, dx\ll \max_{T\geq \frac{X}{H_1}}\frac{X^2/H_1}{T}\int_{W\leq |t|\leq T}|A(1+it)|^2\, dt.
\end{align*}
This estimate is contained in the proof of~\cite[Lemma 14]{mr-annals} (and is proven by inserting a smooth majorant for $1_{[X,2X]}(x)$ into the integral over $x$ and opening the square).
\end{proof}

For proving type $II$ major arc estimates, we need a lemma of Baker, Harman and Pintz~\cite{baker-harman-pintz} on mean values of products of Dirichlet polynomials.

\begin{lemma}[Baker--Harman--Pintz parallelogram lemma]\label{le:BHP} Let $\theta\in (1/2,1)$, $\varepsilon>0$ and $K\geq 2$ be fixed. Let $X\geq 3$. Let $|a(n)|,|b(n)|,|c(n)|\leq d_2(n)^{K}$, and define $$A(s)=\sum_{M<m\leq KM}a(m)m^{-s},\quad  B(s)=\sum_{N<n\leq KN}b(n)n^{-s},\quad C(s)=\sum_{R<r\leq KR}c(r)r^{-s},$$
where $M,N,R\geq 1$ satisfy $MNR\asymp X$. Suppose that $M=X^{\alpha_1}$, $N=X^{\alpha_2}$, where $\alpha_1,\alpha_2\in [0,1]$ satisfy
\begin{align*}
|\alpha_1-\alpha_2|<2\theta-1-\frac{\varepsilon}{10},\quad \alpha_1+\alpha_2>1-\min\left\{4\theta-2,\frac{4\theta-1}{3},\frac{24\theta-13}{3}\right\}+\frac{\varepsilon}{10}.  \end{align*}
Suppose also that for some $0\leq T_0\leq X^{1-\theta}$ and $1\leq W\leq X^{\varepsilon/1000}$ we have
\begin{align}\label{eq:Cbound}
\max_{T_0\leq |t|\leq X^{1-\theta}}|C(1+it)|\leq \frac{1}{W^{1/3}}.  \end{align}
Then we have
\begin{align}\label{eq:ABCintegral}
\int_{T_0}^{X^{1-\theta}}|A(1+it)||B(1+it)||C(1+it)|\, dt\ll \frac{\log^{O(1)} X}{W^{1/3}}.
\end{align}
\end{lemma}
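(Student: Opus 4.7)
The plan is to first isolate $C$ from the integrand: using the pointwise hypothesis $|C(1+it)| \leq W^{-1/3}$ throughout $[T_0, X^{1-\theta}]$, the target integral on the left-hand side of \eqref{eq:ABCintegral} is bounded by
\[
W^{-1/3} \int_{T_0}^{X^{1-\theta}} |A(1+it)||B(1+it)|\, dt.
\]
So it suffices to establish $\int_{T_0}^{X^{1-\theta}} |A(1+it)||B(1+it)|\, dt \ll \log^{O(1)} X$ under the hypotheses on $\alpha_1, \alpha_2$. This reduces the three-polynomial problem to a two-polynomial mean value estimate, which is the natural setting for classical Dirichlet polynomial moment technology.

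For the two-polynomial integral, I would decompose $[T_0, X^{1-\theta}]$ dyadically into ranges $|t| \asymp T$ and estimate each dyadic piece by H\"older's inequality combined with moment bounds of varying powers for $A(1+it)$ and $B(1+it)$. The three expressions $4\theta-2$, $(4\theta-1)/3$, and $(24\theta-13)/3$ in the hypothesis on $\alpha_1+\alpha_2$ correspond to three complementary Dirichlet polynomial moment inputs, namely the standard second (mean value) moment theorem, Huxley's sixth moment bound for $\zeta(1/2+it)$, and Heath--Brown's twelfth moment estimate. Each of these gives an acceptable saving in a specific sub-range of the parameters $(\alpha_1, \alpha_2, T)$, and the auxiliary constraint $|\alpha_1 - \alpha_2| < 2\theta - 1 - \varepsilon/10$ controls how asymmetric the two polynomials can be, which is precisely what is needed so that the union of the three regions of applicability covers the full parallelogram of admissible exponents.

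The main obstacle, I expect, is not any individual moment bound but rather the bookkeeping in the case analysis, i.e.\ verifying that for every admissible triple $(\alpha_1, \alpha_2, T)$ at least one of the three moment estimates yields a bound of $\log^{O(1)} X$. Fortunately, exactly this parallelogram-shaped analysis is precisely the technology developed by Baker, Harman, and Pintz in~\cite{baker-harman-pintz} to prove primes in short intervals of length $X^{0.525}$; their Lemma~1 (and subsequent variants) handles sums of products of three Dirichlet polynomials in essentially the form we require. My proof would therefore proceed by carefully matching our hypotheses to theirs and then citing the relevant step, the only real modification being that we use the pointwise hypothesis $|C(1+it)| \leq W^{-1/3}$ in place of their large-values count for the third polynomial, a reformulation which is routine once the first step above has been performed.
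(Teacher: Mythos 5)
There is a genuine gap, and it is in your very first step. Pulling $C$ out via $|C(1+it)|\leq W^{-1/3}$ and reducing to the claim $\int_{T_0}^{X^{1-\theta}}|A(1+it)||B(1+it)|\,dt\ll \log^{O(1)}X$ does not work: that reduced claim is false in the admissible exponent range. Concretely, take $\theta=2/3$ (the value actually used in the paper), $\alpha_1=\alpha_2=2/9+\varepsilon/10$ (allowed, since $|\alpha_1-\alpha_2|=0<2\theta-1-\varepsilon/10$ and $\alpha_1+\alpha_2>4/9+\varepsilon/10$), $a\equiv b\equiv 1$ and $T_0=0$. Then the diagonal term of the mean value theorem already gives $\int_0^{X^{1/3}}|A(1+it)|^2\,dt\gg X^{1/3}\sum_{m\sim M}m^{-2}\gg X^{1/3}/M=X^{1/9-\varepsilon/10}$, so $\int_0^{X^{1/3}}|A||B|\,dt=\int_0^{X^{1/3}}|A|^2\,dt$ is a positive power of $X$, not $\log^{O(1)}X$. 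The lemma is nevertheless true because $C$ must be kept inside the integral: since $MNR\asymp X$, when $\alpha_1+\alpha_2$ is near its lower limit the third polynomial has length $R=X^{1-\alpha_1-\alpha_2}$ much larger than $X^{1-\theta}$, and its $1$-line normalization supplies exactly the extra average saving (via mean values of products such as $AC$ or $BC$, and via large-value estimates) that your reduction throws away. Relatedly, your plan to apply Huxley's sixth and Heath--Brown's twelfth moment ``for $A(1+it)$ and $B(1+it)$'' cannot be carried out as stated: those are moment bounds for $\zeta$, not for Dirichlet polynomials with arbitrary divisor-bounded coefficients; in Baker--Harman--Pintz they enter through large-value theorems (Huxley, Heath--Brown/Jutila) in which all three polynomial lengths, including $R$, participate, and the thresholds $4\theta-2$, $(4\theta-1)/3$, $(24\theta-13)/3$ arise only from that three-polynomial analysis.

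For comparison, the paper's proof makes no reduction at all: it observes that the statement is essentially Lemma~7.3 of Harman's book with $g=1$ (equivalently Lemma~9 of Baker--Harman--Pintz), notes that the only changes are the values of $T_0$ and $W$ and the fact that the integral is on the $1$-line rather than the $\tfrac12$-line, and cites that result, whose hypotheses already have the third factor controlled pointwise exactly as in \eqref{eq:Cbound}. Your instinct to match hypotheses and cite BHP is the right one, but it must be done directly, without first discarding $C$; the ``routine'' modification you describe is precisely the step that fails.
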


\begin{proof}
This is essentially~\cite[Lemma 7.3]{harman-book} with $g=1$ (alternatively, see~\cite[Lemma 9]{baker-harman-pintz}), the proof of which is based on mean and large value estimates for Dirichlet polynomials. One minor difference is that in that lemma one had $T_0=\exp(\log^{1/3} X)$ and $W=\log^{-A} X$ (and $\varepsilon=o(1)$), but the same proof works with the more general choices above. Another minor difference is that in~\cite[Lemma 7.3]{harman-book} the integral in~\eqref{eq:ABCintegral} was over the $1/2$-line rather than the $1$-line (and similarly~\eqref{eq:Cbound} was replaced with an analogous estimate on the $1/2$-line), but this also makes no difference in the proof.
\end{proof}

\subsection{Type \texorpdfstring{$II$}{II} major arc estimates}\label{sub:typeIImajor}

In this subsection, we consider the kind of type $II$ major arc sums that arise in the proof of Theorem~\ref{discorrelation-thm-major}.
The following lemma is used in the proof of Theorem~\ref{discorrelation-thm-major} to control type $II$ sums.

\begin{lemma}[Type $II$ major arc estimate]\label{le:typeIImajor}
 Let $\varepsilon>0$ be small and let  $C>0$.   Also let $|a(m)|,|b(m)|\leq d_2(m)^{C}$, $X\geq 3$ and  $1 \leq W\leq X^{\varepsilon/1000}$.
\begin{itemize}
\item[(i)] If $X^{\eps}\leq M\leq X^{1/2}$, $X/W^4 \geq H_2 \geq H_1 \geq X^{1/3+\varepsilon}$, and
\begin{align}\label{eq:supbound}
  \sup_{W\leq |t|\leq XW/H_1}\bigg|\sum_{m\sim M}\frac{a(m)}{m^{1+it}}\bigg|\leq \frac{1}{W^{1/3}},\quad  \sup_{W\leq |t|\leq XW/H_1}\bigg|\sum_{X/(3M)<n\leq 3X/M}\frac{b(n)}{n^{1+it}}\bigg|  \leq \frac{1}{W^{1/3}},
 \end{align}
we have
\begin{align*}
\bigg|\sum_{\substack{x<mn\leq x+H_1\\m\sim M}} a(m)b(n)-\frac{H_1}{H_2}\sum_{\substack{x<mn\leq x+H_2\\m\sim M}} a(m)b(n)\bigg|\leq \frac{H_1}{W^{1/10}}  \end{align*}
for all $x\in [X,2X]$ outside of a set of measure $O_{\eps,C}( X(\log^{O_C(1)} X)/W^{1/10})$. 

\item[(ii)] The same conclusion also holds assuming $X/W^4 \geq H_2 \geq H_1 \geq X^{2\varepsilon}$, and $X^{\varepsilon}\leq M\leq H_1X^{-\varepsilon}$, and the first estimate in~\eqref{eq:supbound} (without assuming $H_1 \geq X^{1/3+\eps}$ or the second estimate in~\eqref{eq:supbound}).

\item[(iii)] If $X^{\eps}\leq M\leq X^{1/2}$, $X\geq H_1\geq X^{1/3+\varepsilon}$, and
\begin{align}\label{eq:supbound2}
  \sup_{|t|\leq XW/H_1}\bigg|\sum_{m\sim M}\frac{a(m)}{m^{1+it}}\bigg| \leq \frac{1}{W^{1/3}},\quad   \sup_{ |t|\leq XW/H_1}\bigg|\sum_{X/(3M)<n\leq 3X/M}\frac{b(n)}{n^{1+it}}\bigg|  \leq \frac{1}{W^{1/3}},
 \end{align}
then we have
\begin{align*}
\bigg|\sum_{\substack{x<mn\leq x+H_1\\m\sim M}} a(m)b(n)\bigg|\leq \frac{H_1}{W^{1/10}}
\end{align*}
for all $x\in [X,2X]$ outside of a set of measure $O_{\varepsilon,C}( X(\log^{O_C(1)} X)/W^{1/10})$. 

\item[(iv)]The same conclusion also holds assuming $X \geq H_1 \geq X^{2\varepsilon}$, and $X^{\varepsilon}\leq M\leq H_1X^{-\varepsilon}$, and the first estimate in~\eqref{eq:supbound2} (without assuming $H_1 \geq X^{1/3+\eps}$ or the second estimate in~\eqref{eq:supbound2}).
\end{itemize}
\end{lemma}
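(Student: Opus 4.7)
The plan is to apply the Parseval-type bound of Lemma~\ref{le:perron} to the convolution $f = a\ast b$, whose associated Dirichlet polynomial is $F(s)=A(s)B(s)$. For part (i), Lemma~\ref{le:perron}(i) bounds the mean-square over $x\in[X,2X]$ of the inner quantity (after normalising by $1/H_1$ and subtracting the $H_2$-scale average) by
\[
\frac{\log^{O(1)}X}{W} \;+\; \max_{T\geq X/H_1}\,\frac{X/H_1}{T}\int_{W\leq|t|\leq T}|A(1+it)|^2|B(1+it)|^2\,dt,
\]
and Chebyshev's inequality then converts any bound of size $O(W^{-1/5}\log^{O(1)}X)$ on the supremum into the claimed exceptional set estimate. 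Part (ii) is handled analogously using Lemma~\ref{le:perron}(ii), where the sup bound is assumed in the full range $|t|\leq XW/H_1$ (including near $t=0$), so the absence of a lower cutoff at $W$ causes no difficulty.

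The heart of the argument is then to bound the supremum over $T$ of this Dirichlet polynomial integral. The plan is to split the range of integration at $|t|=XW/H_1$. On the low-frequency piece $W\leq|t|\leq XW/H_1$, the hypothesised sup bounds on $A$ and $B$ combine to give $|A(1+it)B(1+it)|\leq W^{-2/3}$; peeling off one factor,
\[
\int_{W\leq|t|\leq XW/H_1}|AB|^2\,dt \;\leq\; \frac{1}{W^{2/3}}\int_{W\leq|t|\leq XW/H_1}|AB|\,dt,
\]
after which one invokes the Baker--Harman--Pintz parallelogram lemma (Lemma~\ref{le:BHP}) to bound the remaining $L^1$-integral. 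The three factors required by that lemma are taken to be $A$, $B$, and a third factor (either $A$ or $B$) playing the role of the sup-bounded factor $C$, after a suitable dyadic/smooth decomposition of one of the coefficient sequences so that the lengths fit. Combined with the $W^{-2/3}$ prefactor, this gives a contribution of order $W^{-1}\log^{O(1)}X$. On the complementary high-frequency piece $XW/H_1\leq|t|\leq T$, one applies the mean-value theorem for $F$ (of length $\asymp X$), which yields $\int|F|^2\,dt\ll (T/X+1)\log^{O(1)}X$; combined with the decay factor $(X/H_1)/T\leq 1$ and the hypothesis $H_1\geq X^{1/3+\varepsilon}$, this gives a contribution of order $H_1^{-1}\log^{O(1)}X\ll X^{-c\varepsilon}\log^{O(1)}X$, which is negligible.

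The main obstacle will be verifying that the parallelogram conditions of Lemma~\ref{le:BHP} can be satisfied. Writing $M=X^{\alpha_1}$ and $X/M=X^{\alpha_2}$, one needs a choice of $\theta\in(1/2,1)$ close to $2/3$ (so that $X^{1-\theta}$ covers the relevant range) for which both $|\alpha_1-\alpha_2|<2\theta-1-\varepsilon/10$ and $\alpha_1+\alpha_2>1-\min\{4\theta-2,(4\theta-1)/3,(24\theta-13)/3\}+\varepsilon/10$ hold uniformly over $X^\varepsilon\leq M\leq X^{1/2}$. This is precisely where the exponent $1/3+\varepsilon$ in $H_1$ enters: it provides just enough slack in the parallelogram inequalities, and a smaller exponent would leave no admissible $\theta$. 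In the alternative clauses (with $M\leq H_1 X^{-\varepsilon}$ and only the sup bound on $A$ assumed), the argument simplifies considerably: Cauchy--Schwarz combined with the sup bound on $A$ and the mean-value theorem applied directly to $B$ suffices, since the condition $M\leq H_1X^{-\varepsilon}$ guarantees that $N=X/M\geq X^{1+\varepsilon}/H_1$ is large enough that $\int_{|t|\leq XW/H_1}|B|^2\,dt\ll \log^{O(1)}X$ via mean-value, bypassing Lemma~\ref{le:BHP} entirely.
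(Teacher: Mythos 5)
Your overall skeleton matches the paper's: Chebyshev plus Lemma~\ref{le:perron} to reduce to mean-square Dirichlet polynomial bounds, the mean value theorem for the high-frequency range $|t|\gg XW^{c}/H_1$, the pointwise-bound-on-$A$ argument for the easier clauses with $M\leq H_1X^{-\varepsilon}$, and Lemma~\ref{le:BHP} for the main range. However, there is a genuine gap in how you propose to feed $|AB|^2$ into Lemma~\ref{le:BHP}.

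First, the bookkeeping does not close: after peeling off $|AB|\leq W^{-2/3}$ you are left with $\int|A||B|\,dt$, which has only two factors whose lengths multiply to $X$; inserting "a third factor (either $A$ or $B$)" changes the integrand to $|A|^2|B|$ or $|A||B|^2$, and no "dyadic/smooth decomposition" of the coefficient sequences can alter the product of the lengths, which Lemma~\ref{le:BHP} requires to be $\asymp X$ (or $X^2$ after rescaling). Second, and more seriously, in any arrangement where the pair entering the parallelogram condition is $(A,B)$, the hypothesis $|\alpha_1-\alpha_2|<2\theta-1-\varepsilon/10$ fails for most of the allowed range of $M$: with $M=X^{\alpha_1}$, $N=X/M=X^{\alpha_2}$ and $\theta$ near $2/3$ one needs $|\alpha_1-\alpha_2|<1/3$, i.e. $M>X^{1/3-O(\varepsilon)}$, whereas $M$ is only assumed $\geq X^{\varepsilon}$. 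The slack from $H_1\geq X^{1/3+\varepsilon}$ only controls the admissible $\theta$ (it keeps the relevant $t$-range inside $|t|\leq X^{2/3}$); it does nothing about the imbalance between $M$ and $N$, so your claimed uniform verification of the parallelogram inequalities over $X^{\varepsilon}\leq M\leq X^{1/2}$ is false.

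The paper's resolution is to write $|AB|^2=|A|\cdot|A|\cdot|B^2|$ and apply Lemma~\ref{le:BHP} with $X^2$ in place of $X$ to the triple $(A,A,B^2)$, taking $C=B^2$ as the sup-bounded factor (the hypothesis $|B|\leq W^{-1/3}$ gives $|B^2|\leq W^{-2/3}\leq W^{-1/3}$). The parallelogram pair is then $(A,A)$, so the difference condition is trivial, and the only remaining condition is a lower bound on $\alpha_1$; when that fails (i.e.\ $M$ is too small) one switches to the triple $(B,B,A^2)$, for which the condition holds automatically since $\alpha_1+\alpha_2=1$. This symmetric case split is the missing idea; the rest of your outline is sound.
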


\begin{proof}[Proof of Lemma~\ref{le:typeIImajor}]
Let $K_C>0$ be a large enough constant which may depend on $C$. We may assume that $W\geq \log^{K_C} X$, since otherwise, by adjusting the $O_C(1)$ constants in the lemma, we may take the exceptional set there to contain all of $[X,2X]$. Applying Chebyshev's inequality, it suffices to show in the case of Lemma~\ref{le:typeIImajor}(i)--(ii) that
\begin{align}\label{eq:perrongoal1}\begin{split}
\frac{1}{X}\int_{X}^{2X}\bigg|\sum_{\substack{x<mn\leq x+H_1\\m\sim M}} a(m)b(n)-\frac{H_1}{H_2}\sum_{\substack{x<mn\leq x+H_2\\m\sim M}} a(m)b(n)\bigg|^2\, dx\ll H_1^2\frac{\log^{O_C(1)}X}{W^{3/10}},\quad
\end{split}
\end{align}
and in the case of Lemma~\ref{le:typeIImajor}(iii)--(iv) that
\begin{align}\label{eq:perrongoal2}
\frac{1}{X}\int_{X}^{2X}\bigg|\sum_{\substack{x<mn\leq x+H_1\\m\sim M}} a(m)b(n)\bigg|^2\, dx\ll H_1^2\frac{\log^{O_C(1)}X}{W^{3/10}}.
\end{align}

Let $N \coloneqq X/M$ and
\begin{align*}
F(s)=\sum_{\substack{X/8<mn\leq 8X\\m\sim M\\N/3<n\leq 3N}}a(m)b(n)(mn)^{-s}.
\end{align*}
Then $F(s)=A(s)B(s)$, where
\begin{align*}
A(s)\coloneqq \sum_{m\sim M}a(m)m^{-s},\quad B(s)\coloneqq \sum_{N/3<n\leq 3N}b(n)n^{-s}.
\end{align*}
By Lemma~\ref{le:perron}(i),  the claim~\eqref{eq:perrongoal1} reduces to showing that
\begin{align}\label{eq:integralbound1}
\int_{W\leq |t|\leq T}|A(1+it)B(1+it)|^2\, dt\ll \frac{TH_1}{X}\cdot \frac{\log^{O_C(1)} X}{W^{3/10}}
\end{align}
uniformly for all $T\geq X/H_1$. Similarly, by Lemma~\ref{le:perron}(ii), the claim~\eqref{eq:perrongoal2} reduces to showing that
\begin{align}\label{eq:integralbound2}
\int_{-T}^{T}|A(1+it)B(1+it)|^2\, dt\ll \frac{TH_1}{X}\cdot \frac{\log^{O_C(1)} X}{W^{3/10}}
\end{align}
uniformly for all $T\geq X/H_1$.

 Note that by the mean value theorem for Dirichlet polynomials (see e.g. \cite[Theorem 9.1]{ik}) and a standard bound for moments of the divisor function we have
\begin{align*}
\int_{-T}^{T}|A(1+it)B(1+it)|^2\, dt&\ll(T+X)\sum_{X/8<\ell\leq 8X}\frac{|\sum_{\ell=mn}a(m)b(n)|^2}{\ell^2}\\
&\ll (T+X)\sum_{X/8<\ell\leq 8X}\frac{d_2(\ell)^{4C+2}}{\ell^2} \ll \left(1+\frac{T}{X}\right) \log^{O_C(1)} X.
\end{align*}
Since $W\leq H_1^{1/20}$, it follows that~\eqref{eq:integralbound1} and~\eqref{eq:integralbound2} hold in the case $T\geq XW^{3/10}/H_1$. We may therefore assume from now on that
\begin{align}\label{eq:Tupperbound}
T< XW^{3/10}/H_1.
\end{align}

If $M\leq H_1/W^{3/10}$, we can use the pointwise bound from~\eqref{eq:supbound} on $|A(1+it)|$ and the mean value theorem for Dirichlet polynomials to bound
\begin{align*}
\int_{W \leq |t| \leq T}|A(1+it)B(1+it)|^2\, dt&\ll \frac{1}{W^{2/3}}\int_{-T}^{T}|B(1+it)|^2\, dt\\
&\ll \frac{T+\frac{X}{M}}{W^{2/3}}\sum_{X/(8M)<n\leq 8X/M}\frac{|b(n)|^2}{n^2} \ll \frac{\log^{O_C(1)} X}{W^{2/3}},
\end{align*}
so~\eqref{eq:integralbound1} holds. We obtain~\eqref{eq:integralbound2} similarly. Hence parts (ii) and (iv) of the lemma follow (since there we assume that $X^{\varepsilon}\leq M\leq H_1X^{-\varepsilon}$).

We are left with proving parts (i) and (iii) (where we assume $X\geq H_1\geq X^{1/3+\varepsilon}$ and $X^{\varepsilon}\leq M\leq X^{1/2}$). By~\eqref{eq:Tupperbound} we have $T<X^{2/3}$. It now suffices to prove that
\begin{align}\label{eq:F2}
\int_{W\leq |t|\leq X^{2/3}}|A(1+it)B(1+it)|^2\, dt\ll \frac{\log^{O_C(1)} X}{W^{3/10}}
\end{align}
in the case of Lemma~\ref{le:typeIImajor}(i) and
\begin{align}\label{eq:F2b}
\int_{|t|\leq X^{2/3}}|A(1+it)B(1+it)|^2\, dt\ll \frac{\log^{O_C(1)} X}{W^{3/10}}
\end{align}
in the case of Lemma~\ref{le:typeIImajor}(iii).

In what follows, denote $M=X^{\alpha_1}, N=X^{\alpha_2}$, so that the coefficients of $A(s)$ are supported on $[X^{\alpha_1},2X^{\alpha_1}]$ and the coefficients of $B(s)$ are supported on $[X^{\alpha_2}/3,3X^{\alpha_2}]$. The estimates~\eqref{eq:F2} and~\eqref{eq:F2b} follow from\footnote{Alternatively, one could use~\cite[(9.2.3) in Lemma 9.4]{harman-book}. We have chosen to reduce matters to Lemma~\ref{le:BHP} as that lemma is better known.}
Lemma~\ref{le:BHP}. Indeed, we can apply that lemma with $X^2$ in place of $X$, and with $\theta=2/3$ and $T_0\in \{W,0\}$. Applying it to the Dirichlet polynomials $A(s), A(s), B(s)^2$, and recalling~\eqref{eq:supbound},~\eqref{eq:supbound2}, we obtain the claim if
\begin{align*}
2\alpha_1>1-\frac{4\cdot \frac{2}{3}-1}{3}+\frac{\varepsilon}{10}=\frac{4}{9}+\frac{\varepsilon}{10}.
\end{align*}
We are left with the case $\alpha_1\leq 2/9+\varepsilon/20$. But in this case applying Lemma~\ref{le:BHP} to the Dirichlet polynomials $B(s), B(s), A(s)^2$ instead, we obtain the claim if
\begin{align*}
2\alpha_2>1-\frac{4\cdot \frac{2}{3}-1}{3}+\frac{\varepsilon}{10}=\frac{4}{9}+\frac{\varepsilon}{10},
\end{align*}
which must now hold since $\alpha_1+\alpha_2=1$. This completes the proof.
\end{proof}

\subsection{Type \texorpdfstring{$I$}{I} major arc estimates}\label{sub:typeImajor}

Next we shall prove some type $I$ major arc estimates that are useful in the case when $|T|$ is large.

\subsubsection{Type $I$ major arc estimates without weights}

The following lemma asserts, roughly speaking, that type $I$ major arc sums on short intervals are small as soon as $n^{iT}$ exhibits non-trivial oscillation on the interval.  

\begin{lemma}[Type $I$ major arc estimate]\label{le:typeImajor} Let $C>0$ and $\varepsilon>0$.   Let $2\leq X^{\varepsilon}\leq H\leq X^{1-\varepsilon}$, $0<\delta<1/\log X$ and $1\leq M\leq HX^{-\varepsilon/2}$.  Let $|a(m)|\leq d_2(m)^{C}$, and suppose that $a$ is supported on $[M,2M]$. Suppose that for some real number $T \in [-X^C,X^C]$ we have
\begin{align*}
\bigg|\sum_{x<n\leq x+H}(a*1)(n)n^{-iT}\bigg|^*\geq \delta H
\end{align*}
for all $x\in [X,2X]$ in a set of measure $\geq \delta X$.
Then we have
\begin{align}\label{eq:OC1}
|T|\ll_{\varepsilon,C} \delta^{-O_{\varepsilon,C}(1)}\frac{X}{H}.
\end{align}
\end{lemma}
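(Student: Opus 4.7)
The strategy, following the outline in the introduction, is to Taylor-expand $n^{-iT}$ on $(x,x+H]$ as a polynomial exponential phase of bounded degree $d = O_{\varepsilon, C}(1)$ and then apply classical equidistribution methods to extract a Diophantine condition that, combined with a direct Lebesgue measure count on $x\in[X,2X]$, forces $|T|\ll \delta^{-O(1)} X/H$.

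Concretely, one writes $n^{-iT} = x^{-iT}\,e(R_x(n))\cdot(1 + O(X^{-10}))$ where $R_x$ is the polynomial
$$ R_x(n) = -\frac{T}{2\pi} \sum_{j=1}^{d} \frac{(-1)^{j+1}}{j}\bigl((n-x)/x\bigr)^j $$
with linear coefficient $\alpha_x = -T/(2\pi x)$, choosing $d$ large enough in terms of $\varepsilon, C$ that the Taylor remainder is $\leq X^{-10}$. Unpacking $(a*1)(n)=\sum_{m\sim M,\, m\mid n} a(m)$ and substituting $n=mk$ reduces the maximal sum hypothesis (modulo $\log^{O(1)} X$ losses from pigeonholing the maximizing AP's common difference and the divisor $m$) to a polynomial-phase sum in $k$ over an AP of length $\asymp H/m$ inside $(x/m,(x+H)/m]$. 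Applying the Quantitative Leibman Theorem (\Cref{qlt}) on the nilmanifold $\mathbb{R}/\mathbb{Z}$ (with horizontal characters being integer multiplications, \Cref{rem:RZhorchar}), and pigeonholing the resulting horizontal character $r^*\leq \delta^{-O(1)}$ and the divisor $m^*\sim M$, yields for $x$ in a subset $\mathcal{B}' \subseteq \mathcal{B}$ of measure $\gg \delta^{O(1)} X/M$ the family of Diophantine conditions
$$ \bigl\|\, r^* (j-1)!\, T H^j / (2\pi x^j) \bigr\|_{\mathbb{R}/\mathbb{Z}} \ll_d \delta^{-O(1)}, \qquad 1\leq j\leq d, $$
in which the $m^*$-dependence cancels between the smoothness scale $|I'|^j = (H/m^*)^j$ in the $C^\infty$-norm and the $m^{*j}$-dependence of the $j$-th monomial coefficient of $R_x(m^*\cdot)$ expanded around $x/m^*$.

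A direct Lebesgue measure computation of the set $\{x\in[X,2X]:\|c_j/x^j\|_{\mathbb{R}/\mathbb{Z}}\leq \epsilon_j\}$ (using that $x\mapsto c_j/x^j$ has derivative $\asymp j|c_j|/X^{j+1}$, so its level sets have measure $\ll \epsilon_j X + X^{j+1}/|c_j|$) combined with $|\mathcal{B}'|\gg \delta^{O(1)} X/M$ and the algebraic constraint $c_j = r^*(j-1)!\,TH^j/(2\pi)$ yields the desired bound $|T|\ll \delta^{-O_{\varepsilon,C}(1)} X/H$ after choosing $j$ appropriately in terms of $M$. The main obstacle is in this last step: the $j=1$ condition alone yields only the weaker $|T|\ll \delta^{-O(1)} X$, and one has to combine information from multiple values of $j$ (exploiting the specific algebraic shape of the $c_j$'s) to close the gap to the sharp exponent $1$ on $X/H$ uniformly in $M\leq HX^{-\varepsilon/2}$; an alternative Parseval-based argument via \Cref{le:perron}(ii), using the pointwise bound $|\sum_{k\asymp X/M} k^{-1-iu}|\ll 1/|u|$ on the partial geometric series, provides a complementary route but only controls the regime where $M$ is not too small.
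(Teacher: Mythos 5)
There is a genuine gap, and in fact two. First, your treatment of the type $I$ convolution is not sound as stated: after pigeonholing a single divisor $m^*\sim M$ you only retain a set of $x$ of measure $\gg \delta^{O(1)}X/M$, which is far too sparse when $M$ is close to $HX^{-\varepsilon/2}$ for any subsequent counting over $x\in[X,2X]$ to produce a contradiction; and the claimed ``cancellation'' of $m^*$ between the smoothness scale $(H/m^*)^j$ and the coefficient $m^{*j}T/x^j$ is not a legitimate manipulation of $\|\cdot\|_{\R/\Z}$ — from $\|r^* m^{*j}T/(2\pi j x^j)\|_{\R/\Z}\ll \delta^{-O(1)}(m^*/H)^j$ one cannot divide by $m^{*j}$ inside the norm. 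The paper sidesteps both problems by quoting the prequel's type $I$ inverse theorem \cite[Theorem 4.2(i)]{MSTT-all} (whose proof removes the $m$-dependence by using \emph{many} values of $m$ via Corollary~\ref{smooth-dilate}, not one pigeonholed value), so that for \emph{every} $x$ in the full set $E$ of measure $\geq\delta X$ one gets an integer $k_x\ll\delta^{-O(1)}$ with $\|k_xT/(2\pi x)\|_{\R/\Z}\ll\delta^{-O(1)}/H$ (this is \eqref{eq:rxj} with $j=1$), and only then pigeonholes $k_x$.

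Second, and more importantly, the endgame you describe does not close, as you yourself flag. Only the $j=1$ condition is needed, but the obstruction is not sharpened by combining several $j$ (they are all Taylor coefficients of the same $T\log$ and carry essentially the same information), nor is it a Lebesgue level-set computation: the relevant $x$ are integers, $|T|$ may be as large as $X^C$, and for such large frequencies the set $\{x:\|kT/(2\pi x)\|_{\R/\Z}\leq\delta^{-O(1)}/H\}$ consists of $\asymp k|T|/X$ tiny intervals, so a measure bound says nothing about how many integers it contains. The paper's missing ingredient is an equidistribution argument: if $k|T|/(2\pi X)\geq 1/2$, amplify by $h=\lfloor\sqrt H\rfloor$ (so $X^{1+\varepsilon/4}\ll hk|T|\ll X^{O_C(1)}$ while the approximation quality degrades only to $\delta^{-O(1)}/H^{1/2}$), then van der Corput's exponential sum bound for $\sum_{X<n\leq 2X}e(mhkT/(2\pi n))$ together with the Erd\H{o}s--Tur\'an inequality shows that $\|hkT/(2\pi x)\|_{\R/\Z}\ll\delta^{-O(1)}/H^{1/2}$ can hold for at most $\delta^{-O(1)}X/H^{1/2}+X^{1-c}\log X$ integers $x\in[X,2X]$, contradicting the density $\gg\delta^{O(1)}X$; hence $k|T|/(2\pi X)<1/2$, and then the $j=1$ bound directly gives $|T|\ll\delta^{-O(1)}X/H$. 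Your alternative Parseval route via Lemma~\ref{le:perron}(ii) likewise does not cover all $M\leq HX^{-\varepsilon/2}$, so as written the proposal proves the lemma only in restricted ranges of $M$ and $|T|$.
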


 In fact, only the case $M\leq X^{\varepsilon}$ of this lemma will be needed, since the type $II$ region in Lemma~\ref{le:typeIImajor} is very wide.

\begin{proof}
    We allow all implied constants in this proof to depend on $\eps,C$.  On increasing the $O_{\varepsilon,C}(1)$ constant in~\eqref{eq:OC1} if necessary, we may assume that  $X$ is large enough in terms of $\varepsilon,C$ and that
\begin{equation}\label{delta-inv}
    \delta^{-1}\leq X^{c_{C,\eps}}
\end{equation}
 for a suitably small constant $c_{C,\eps}>0$.

By Taylor approximation, for $y\in [x,x+H]$ and for any $x\in [X,2X]$ and integer $J\geq 1$, we can write
\begin{align}\label{eq:Taylor-yiT}
y^{iT}=e\left(\frac{T}{2\pi}\log y\right)=e(P_x(y))+O_J\bigg(T\bigg(\frac{H}{X}\bigg)^{J+1}\bigg)
\end{align}
where
\begin{align*}
    P_x(y) \coloneqq \frac{T}{2\pi} \log x + \frac{T}{2\pi}\sum_{j=1}^{J}\frac{(-1)^{j-1}}{jx^j}(y-x)^j
\end{align*}
is the Taylor polynomial of degree $J$ of the function $\frac{T}{2\pi}\log(\cdot)$ at $x$.
We choose $J = O(1)$ to be large enough so that the error term in~\eqref{eq:Taylor-yiT} becomes $O(X^{-1})$.
Then, for all $x\in [X,2X]$ in a set $E$ of measure $\geq \delta X$, we have
\begin{align*}
\bigg|\sum_{x<n\leq x+H} (a*1)(n)e(-P_x(n))\bigg|^{*}\gg \delta H.
\end{align*}
Since $H\in \mathbb{N}$, without loss of generality we may assume that $E$ is a union of $\gg \delta X$ intervals of the form $[m,m+1)$ with $m\in \mathbb{N}$. By assumptions, the sum above is a $(\delta^{O(1)}, 2M)$ type $I$ sum in the notation of~\cite[Definition 4.1(i)]{MSTT-all}. Since $M \leq HX^{-\eps/2}$, it follows from the type $I$ estimate of~\cite[Theorem 4.2(i)]{MSTT-all} (together with Remark~\ref{rem:RZhorchar}) that for all $x\in E$ there is some integer $1\leq k_x\ll \delta^{-O(1)}$ such that
\begin{align}\label{eq:rxj}
\left\|k_x\frac{T}{2\pi jx^j}\right\|_{\R/\Z} \ll \frac{\delta^{-O(1)}}{H^j}    \end{align}
for all $1\leq j\leq J$. Actually we will only need the $j=1$ case of this bound.  By the pigeonhole principle, we can find an integer $1\leq k\ll \delta^{-O(1)}$ such that
\begin{equation}\label{eq:kt}
    \left\|\frac{kT}{2\pi x}\right\|_{\R/\Z}\ll \frac{\delta^{-O(1)}}{H}
\end{equation}
for $\gg \delta^{O(1)} X$ integers $x$ in $[X,2X]$.

If we have
$$ \frac{k|T|}{2\pi X} < \frac{1}{2},$$
then \eqref{eq:kt} simplifies to
$$\frac{k|T|}{2\pi x} \ll \frac{\delta^{-O(1)}}{H},$$
and \eqref{eq:OC1} follows. Thus we may assume that
$$ \frac{k|T|}{2\pi X} \geq \frac{1}{2}.$$
If we then define $h \coloneqq \lfloor \sqrt{H} \rfloor$, then $X^{1+\eps/4} \ll hk|T| \ll X^{O_C(1)}$ and
\begin{equation}\label{eq:hkt}
    \left\|\frac{hkT}{2\pi x}\right\|_{\R/\Z}\ll \frac{\delta^{-O(1)}}{H^{1/2}}
\end{equation}
for $\gg \delta^{O(1)} X$ integers $x$ in $[X,2X]$.

From van der Corput's exponential sum bound~\cite[Theorem 8.20]{ik} we have
\begin{align}\label{eq:vdc} \sum_{X < n \leq 2X} e \left( \frac{mhkT}{2\pi n} \right) \ll X^{1-c'_{C,\eps}}
\end{align}
for all $1 \leq m \leq X^{c'_{C,\eps}}$ and some $c'_{C,\eps} > 0$.  Applying the Erd\H{o}s--Tur\'an inequality~\cite[Corollary 1.1 in Section 1]{Montgomery} and~\eqref{eq:vdc}, we conclude that the bound \eqref{eq:hkt} can only hold for at most
$$ \ll \frac{\delta^{-O(1)}}{H^{1/2}} X + X^{1-c'_{C,\eps}} \log X$$
integers $x \in [X,2X]$. But this is a contradiction if the constant $c_{C,\eps}$ in \eqref{delta-inv} is small enough.
\end{proof}

\subsection{Theorem~\ref{discorrelation-thm-major} in the case of small \texorpdfstring{$|T|$}{|T|}}\label{sub:smallT} The proof of Theorem~\ref{discorrelation-thm-major} splits into the ``small $|T|$ case''
\begin{align}\label{eq:smallT}
    |T|\leq \frac{X}{H} X^{\varepsilon/2}
\end{align}
and the ``large $|T|$ case''
\begin{align}\label{eq:largeT}
    |T| > \frac{X}{H} X^{\varepsilon/2}.
\end{align}

In this section we treat the case of small $|T|$; the large $|T|$ case will be handled in \Cref{sub:largeT}.

Let $\varepsilon,\kappa>0$ be small, let $A,C>0$ be large, and let $c_{k,C}>0$ be small enough. To have a unified notation to treat all cases, we set $l \coloneqq 1$ when proving \Cref{discorrelation-thm-major}(i) and $l \coloneqq 2$ when proving \Cref{discorrelation-thm-major}(ii), fix $f\in \{\mu, \Lambda,d_k\}$ for $l=1$ and $f \in \{\mu,d_k\}$ for $l=2$, and introduce the parameters and weights
\begin{align}\label{eq:deltachoice}
\delta&\coloneqq \begin{cases}
 \log^{-A}X\quad  &\textnormal{if } f\in \{\mu, \Lambda\} \textnormal{ or } l=2,\\
 X^{-c_{k,C}\varepsilon} \quad  &\textnormal{if } f=d_k \textnormal{ and } l=1,
\end{cases}\\
g(n)&\coloneqq \begin{cases}
1\quad   &\textnormal{if } l=1,\\\nonumber
\sum_{X^{\kappa}<p\leq X^{\varepsilon/10}}1_{p\mid n}\quad   &\textnormal{if } l=2,
\end{cases}\\\nonumber
H'&\coloneqq H / X^{\eps/2} \geq \begin{cases}
    X^{1/3+\varepsilon/2} & \textnormal{if } l=1, \\
    X^{\varepsilon/2} & \textnormal{if } l=2,
\end{cases}\\\nonumber
H^{*}& \coloneqq X^{0.99}.
\end{align}
The condition \eqref{eq:smallT} can now be written as
\begin{align}\label{eq:smallT-alt}
|T|\leq \frac{X}{H'}.
\end{align}

By Chebyshev's inequality, it suffices to show that
\begin{align}\label{eq:majorarcgoal}
\frac{1}{X}\int_{X}^{2X}\bigg(\bigg|\sum_{x<n\leq x+H}(f(n)-f^{\sharp}(n))g(n)n^{-iT}\bigg|^{*}\bigg)^2 \, dx\ll_{A,\eps,\kappa}  \delta^{3}H^2.
\end{align}
 From~\eqref{eq:smallT-alt} we see that the function $n\mapsto n^{-iT}$ has total variation $O(1)$ on any interval $I\subset [X,4X]$ of length $H'$.  Thus, by covering $(x,x+H]$ by $O(H/H')$ intervals of length $H'$ and using \Cref{basic-prop}(i)--(ii), we have
$$
\bigg|\sum_{x<n\leq x+H}(f(n)-f^{\sharp}(n))g(n)n^{-iT}\bigg|^{*}
\ll \sum_{0 \leq \ell \leq \frac{H}{H'}}
\bigg|\sum_{x+\ell H'<n\leq x+(\ell+1) H'}(f(n)-f^{\sharp}(n))g(n)\bigg|^{*}
$$
and hence by Cauchy--Schwarz
$$
\bigg(\bigg|\sum_{x<n\leq x+H}(f(n)-f^{\sharp}(n))g(n)n^{-iT}\bigg|^{*}\bigg)^2
\ll \frac{H}{H'} \sum_{0 \leq \ell \leq \frac{H}{H'}}
\bigg(\bigg|\sum_{x+\ell H'<n\leq x+(\ell+1) H'}(f(n)-f^{\sharp}(n))g(n)\bigg|^{*}\bigg)^2.
$$
Integrating and applying a translation, it thus suffices to show that
\begin{align}\label{eq:goal}
\frac{1}{X}\int_{X}^{3X}\bigg(\bigg|\sum_{x<n\leq x+H'}(f(n)-f^{\sharp}(n))g(n)\bigg|^{*}\bigg)^2 \, dx\ll_{A,\eps,\kappa}  \delta^{3}(H')^2. 
\end{align}
Thus we have eliminated the $n^{-iT}$ factor, at the slight cost of worsening $H$ to $H'$.

Now we deal with the maximal truncation $|\cdot|^*$.  Observe the bounds $g(n)\ll_{\kappa} 1$ for $n\leq 4X$ and $d_k^{\sharp}(n)\ll d_k(n)\ll n^{o(1)}$ (see~\cite[(3.14)]{MSTT-all}). Hence, by Shiu's bound (Lemma~\ref{le:shiu}), we have
\begin{align}\label{eq:shiubound}
\sum_{\substack{x<n \leq x+H' \\ n\equiv r \pmod q}} |f(n) - f^\sharp(n)|g(n) \ll \left(\frac{H'}{\varphi(q)}+(H')^{1/100}\right) \log^{O(1)} X 
\end{align}
(say) for any $1 \leq r \leq q$.  Thus by \eqref{maximal-sum} we have
\begin{align*}
& \bigg|\sum_{x<n\leq x+H'}(f(n)-f^{\sharp}(n))g(n)\bigg|^{*}\\
&\quad \ll \sum_{1 \leq r \leq q \leq \delta^{-4}} \sup_{0 \leq H'' \leq H'}
\bigg|\sum_{\substack{x<n \leq x+H'' \\ n\equiv r \pmod q}} (f(n) - f^\sharp(n)) g(n)\bigg| + \delta^2 H' \log^{O(1)} X.
\end{align*}
By a further application of Shiu's bound, one can round $H''$ to the nearest multiple of $\delta^{10} H'$ without significantly worsening the error term.  Thus we have
\begin{align}\label{eq:H'sum}\begin{split}
    & \bigg|\sum_{x<n\leq x+H'}(f(n)-f^{\sharp}(n))g(n)\bigg|^{*}\\
&\quad \ll \sum_{1 \leq r \leq q \leq \delta^{-4}} \sum_{\substack{0 \leq H'' \leq H' \\ H'' \in (\delta^{10} H') \cdot \mathbb{Z}}}
\bigg|\sum_{\substack{x<n \leq x+H'' \\ n\equiv r \pmod q}} (f(n) - f^\sharp(n)) g(n)\bigg| \\
&\quad\quad + \delta^2 H' \log^{O(1)} X.
\end{split}
\end{align}

The contribution of the $\delta^2 H' \log^{O(1)} X$ error to~\eqref{eq:goal} is acceptable if $A$ is large enough.  As there are $O(\delta^{-18})$ summands in~\eqref{eq:H'sum}, it will suffice by the Cauchy--Schwarz inequality to show that
\begin{equation}\label{eq:majorarcgoal2}
 \frac{1}{X}\int_{X}^{3X} \bigg|\sum_{\substack{x<n\leq x+H'' \\ n\equiv r \pmod q}}(f(n)-f^{\sharp}(n))g(n)\bigg|^2 \, dx\ll_{A,\eps,\kappa}  \delta^{39} (H')^2
\end{equation}
for all $1 \leq r \leq q \leq \delta^{-4}$ and all $H'' \in [0,H']$.  By~\eqref{eq:shiubound} we may assume that $H'' \geq \delta^{100}H'\gg H/X^{2\varepsilon/3}$.

Fix $r,q,H''$.
From~\cite[Theorem 3.1 with $\varepsilon = 1/200$]{MSTT-all} (and the fact that $g(n)$ is a sum of $O(X^{\varepsilon/10})$ indicators of arithmetic progressions, and we assume that our $\varepsilon$ is small), we have the long interval estimate 
\begin{align*}
 \bigg|\sum_{x < n \leq x+H^{*}} (f(n)-f^{\sharp}(n)) g(n)\bigg|^{*}\ll_{A,\varepsilon} \delta^{100}H^{*}.
 \end{align*}
for all $x \in [X,3X]$. Hence, by the triangle inequality,~\eqref{eq:majorarcgoal2} follows if we show that
\begin{align}\label{eq:majorarcgoal3}\begin{split}
&\frac{1}{X}\int_X^{3X} \Bigg| \sum_{\substack{x < n \leq x+H''\\n\equiv r \pmod q}} (f(n)-f^{\sharp}(n))g(n)-\frac{H''}{H^{*}}\sum_{\substack{x< n \leq x+H^{*}\\n\equiv r \pmod q}} (f(n)-f^{\sharp}(n))g(n) \Bigg|^2\ dx\\ \ll&_{A,\eps,\kappa} \delta^{39} (H'')^2.
\end{split}
\end{align}

We now claim the estimate
\begin{align}\label{eq:fsharp1}
 \bigg|\sum_{\substack{x<n\leq x+H''\\n\equiv r \pmod q}}f^{\sharp}(n) g(n)-\frac{H''}{H^{*}}\sum_{\substack{x< n \leq x+H^{*}\\n\equiv r \pmod q}} f^{\sharp}(n) g(n)\bigg|\ll_{A,\varepsilon} \delta^{100}H''
 \end{align}
 for all $x \in [X,3X]$. In the case $f=\mu$, this bound is trivial since $\mu^{\sharp}$ vanishes.  In the case $f\in \{\Lambda, d_k\}$, $l=1$ (so that $g=1$), this follows from~\cite[Lemma 3.3]{MSTT-all}, noting that $H'' \geq X^{1/4}$ when $l=1$.  The only remaining case to verify is when $f=d_k$, $l=2$ (note that we do not permit $f=\Lambda$ when $l=2$). In that case, by Lemma~\ref{le:dkdecompose}(ii), $d_k^{\sharp}$ is a sum of $O(1)$ type $I$ sums $a_j*\psi$ where $\psi=\log^{\ell_j}$ for some integers $0\leq \ell_j\ll 1$, some divisor-bounded sequences $a_j$ supported on $[1,X^{\varepsilon/5}]$. Moreover,  $g$ is a sum of $O(X^{\varepsilon/10})$ indicators of arithmetic progressions. Hence,  \eqref{eq:fsharp1} follows in this case by using 
 \begin{align*}
\bigg| \sum_{\substack{n\in I\\n\equiv r \pmod q}}1_{m\mid n}-\frac{|I|}{|J|}\sum_{\substack{n\in J\\n\equiv r \pmod q}}1_{m\mid n}\bigg|\ll 1
 \end{align*}
 for any nonempty intervals $I,J$ with $|I|\leq |J|$, and for various integers $1\leq m\leq X^{\varepsilon/5+\varepsilon/10}$, and applying partial summation. 
 
In view of \eqref{eq:fsharp1} and the triangle inequality, we have reduced~\eqref{eq:majorarcgoal3} to showing that
\begin{align}\label{eq:majorarcgoal3a}\begin{split}
\frac{1}{X}\int_X^{3X} \Bigg| \sum_{\substack{x < n \leq x+H''\\n\equiv r \pmod q}} f(n) g(n)-\frac{H''}{H^{*}}\sum_{\substack{x< n \leq x+H^{*}\\n\equiv r \pmod q}} f(n) g(n) \Bigg|^2\ dx\ll_{A,\eps,\kappa} \delta^{39} (H'')^2
\end{split}
\end{align}
We now treat the $l=1,2$ cases separately.

\subsubsection{Proof of~\eqref{eq:majorarcgoal3a} for $l=1$}\label{sub:HB}
By Lemma~\ref{hb-identity} with $L=3$, for $X/2\leq n\leq 4X$ we can decompose $f$ into the sum of $O(\log^{O(1)} X)$ functions $h$, each of which is bounded by $d_2^{O(1)}$ and is of one of the following forms:

\begin{itemize}
    \item[Type $I$:] $h=a*\psi$, with $a$ supported on a dyadic subinterval of $[1,X^{\varepsilon}]$, and $\psi\equiv 1$ or $\psi\equiv \log^j$ with $0\leq j\ll 1$.
    \item[Type $II$:] $h=a*b$, with each of $a,b$ a convolution of at most five functions, each of which is a restriction of one of the functions $1,\log,\mu$ to an interval, with $a$ and $b$ each being supported on some dyadic subinterval of $[c_0X^{\varepsilon},c_0^{-1}X^{1-\varepsilon}]$ for some constant $c_0>0$.
\end{itemize}

By the triangle inequality, to prove~\eqref{eq:majorarcgoal3a}, it will suffice to show that
\begin{align}\label{eq:majorarcgoal3b}\begin{split}
    \frac{1}{X}\int_X^{3X} \Bigg| \sum_{\substack{x < n \leq x+H''\\n\equiv r \pmod q}} h(n)-\frac{H''}{H^{*}}\sum_{\substack{x< n \leq x+H^{*}\\n\equiv r \pmod q}} h(n) \Bigg|^2\ dx\ll_{A,\eps,\kappa} \delta^{40} (H'')^2.
    \end{split}
    \end{align}

Recall that by the choice of $\delta$ in~\eqref{eq:deltachoice}, the bound in~\eqref{eq:majorarcgoal3b} amounts to a power-saving if $f=d_k$ and $l=1$, and to an arbitrary power of logarithm saving otherwise. By Lemma~\ref{le:vin-kor}, if $h=a*b$ and we are in the type $II$ case, the sequences $a$ and $b$ satisfy~\eqref{eq:supbound} with $W=\log^{1000A}X$ in the case $f\in \{\Lambda,\mu\}$ and with $W=X^{c_k}$ for some sufficiently small $c_k$ in the case $f=d_k$. Hence, if $h$ is of type $II$, then the claim~\eqref{eq:majorarcgoal3b} follows from Lemma~\ref{le:typeIImajor}(i). 

Consider then the case where $h=a*\psi$ is of type $I$ with $a$ supported on $[M,2M]$ with $1\leq M\leq X^{\varepsilon}$ and $\psi \equiv \log^j$ for some $j\ll 1$. Note that if $j\geq 1$, we have
\begin{align*}
\psi(\ell)=j\int_{1}^{X}\frac{(\log u)^{j-1}}{u}1_{u\leq m}\,d u.    
\end{align*}
Using this decomposition and the estimate $\sum_{\ell\in I\cap P}1=|I\cap P|+O(1)$ for any interval $I$ and arithmetic progression $P$,  for any $M\in [1,X^{\varepsilon}]$ we have
\begin{align*}
\sum_{\substack{x<n\leq x+H''\\n\equiv r \pmod q}}h(n)&=\sum_{m\sim M}a(m)\sum_{\substack{x/m<\ell\leq (x+H'')/m\\\ell m\equiv r \pmod q}}\psi(\ell)\\
=&\sum_{m\sim M}a(m)\frac{H''}{H^{*}}\sum_{\substack{x/m<\ell\leq (x+H^{*})/m\\\ell m\equiv r \pmod q}}\psi(\ell)+O\left(\left(\frac{H''}{H^{*}}+1\right)MX^{o(1)}\right)\\
=&\frac{H''}{H^{*}} \sum_{\substack{x<n\leq x+H^{*}\\n\equiv r \pmod q}}h(n)+O\left(\left(\frac{H''}{H^{*}}+1\right)MX^{o(1)}\right).
\end{align*}
This implies~\eqref{eq:majorarcgoal3b} and hence concludes the proof of~\eqref{eq:majorarcgoal3a} in the $l=1$ case.

\subsubsection{Proof of~\eqref{eq:majorarcgoal3a} for $l=2$}

Now let $f\in\{\mu, d_k\}$. Since $f$ is multiplicative, we have the decomposition
\begin{align}\label{eq:ramare1}\begin{split}
f(n)g(n)&=\sum_{\substack{n=pm\\X^{\kappa}<p\leq X^{\varepsilon/10}}}f(p)f(m)+O(d_k(n)(\log X)1_{p^2\mid n\textnormal{ for some } p>X^{\kappa}})\\
&\coloneqq (a*b)(n)+O(d_k(n)(\log X)1_{p^2\mid n\textnormal{ for some } p>X^{\kappa}}),
\end{split}
\end{align}
where $a(n)=f(n)1_{n\in \mathbb{P}\cap (X^{\kappa},X^{\varepsilon/10}]}$.
We substitute this into~\eqref{eq:majorarcgoal3a}, noting that the $O(\cdot)$ error term contributes $\ll (H''X^{-\kappa/3})^2 \log^{O(1)} X$ by the triangle inequality and Shiu's bound (\Cref{le:shiu}). Hence, it suffices to prove~\eqref{eq:majorarcgoal3a} with $f\cdot g$ replaced by the type $II$ sum $a*b$. Now~\eqref{eq:majorarcgoal3a} follows from Lemma~\ref{le:typeIImajor}(ii) (after decomposing $a$ into $O(\log X)$ dyadically supported sequences), using Lemma~\ref{le:vin-kor} to verify the first estimate in~\eqref{eq:supbound}.

\subsection{Theorem~\ref{discorrelation-thm-major} in the case of large \texorpdfstring{$|T|$}{|T|}}\label{sub:largeT}

Now we handle the large $T$ case $|T|  \geq \frac{X}{H} X^{\varepsilon/2}$.

 We recall the notions of type $I$ and type $II$ sums from Subsection~\ref{sub:HB}.
We begin by noting that $d_k^{\sharp}$ and $\Lambda^{\sharp}$ can be approximated by suitable type $I$ sums.  By Lemma~\ref{le:dkdecompose}(i), for $n\in [1,3X]$ we can approximate $$\Lambda^{\sharp}(n)=(a*1)(n)+E(n),$$
 where $|a(n)|\ll \log X$, $a$ is supported on $[1,X^{\varepsilon/5}]$, and for $X^{\varepsilon}\leq H\leq x\leq 2X$ we have $\sum_{x<n\leq x+H}|E(n)|\ll_{\varepsilon} H\exp(-\log^{1/20}X)$. By Lemma~\ref{le:dkdecompose}(ii), for $n\in [1,3X]$ we can write as a sum of $J = O(1)$ type $I$ sums in the form
\begin{align}\label{eq:dksharp}
d_k^{\sharp}(n)=\sum_{j=1}^{J}(a_j*\psi_j)(n),
\end{align}
where $a_j$ are divisor-bounded sequences supported on $[1,X^{\varepsilon/5}]$, and $\psi_j(n)=\log^{j} n$.

We begin with Theorem~\ref{discorrelation-thm-major}(i). We apply 
Lemma~\ref{hb-identity} and the triangle inequality to reduce to proving Theorem~\ref{discorrelation-thm-major}(i) with $f-f^{\sharp}$ for $f\in \{\mu,\Lambda,d_k\}$ replaced by a type $I$ sum $a*\psi$ or by a type $II$ sum $a*b$ (adjusting $A$ or $c_{k,C}$ if necessary). 

 Now, in the case of type $I$ sums, the claim follows from Lemma~\ref{le:typeImajor} (we can get rid of the possible $\log$ factor in the type $I$ convolution by writing 
 $$a*\log^{m}=m\int_{1}^{3X}(a*1_{[1,\cdot]}\log^{m-1})(t)\frac{dt}{t}$$
 and applying the triangle inequality.). In the case of type $II$ sums, if $f=d_k$, we can apply Lemma~\ref{le:typeIImajor}(iii) with $W=X^{\varepsilon/4}$ (recalling that $H\geq X^{1/3+\varepsilon}$), since the assumption~\eqref{eq:supbound} with $W=X^{\varepsilon/4}$ holds  for the sequences $a(n)n^{iT}$ and $b(n)n^{iT}$ by Lemma~\ref{le:vin-kor}. If instead $f\in \{\mu, \Lambda\}$, we can similarly apply Lemma~\ref{le:typeIImajor}(iii) but with $W=\log^{100A} X$, again using Lemma~\ref{le:vin-kor} to verify the assumption~\eqref{eq:supbound}.  This completes the proof of Theorem~\ref{discorrelation-thm-major}(i).

We then turn to Theorem~\ref{discorrelation-thm-major}(ii). Recall that $g(n)=\sum_{X^{\kappa}<p\leq X^{\varepsilon/10}}1_{p\mid n}$  Applying~\eqref{eq:ramare1} and Shiu's bound (Lemma~\ref{le:shiu}), it suffices to prove Theorem~\ref{discorrelation-thm-major}(ii) with $f\cdot g$ for $f\in \{\mu,d_k\}$ replaced by a type $II$ sum $a*b$, where $a(n)=f(n)1_{n\in \mathbb{P}\cap (X^{\kappa},X^{\varepsilon/10}]}$ and $|b(n)|\ll d_2(n)^{O(1)}$. We also want to show that the term $d_k^{\sharp}\cdot g$ arising in the case $f=d_k$ is a linear combination of type $I$ sums. To this end, note that by~\eqref{eq:dksharp} we can write
\begin{align}\label{eq:dksharpg}\begin{split}
d_k^{\sharp}(n)\sum_{X^{\kappa}<p\leq X^{\varepsilon/10}}1_{p\mid n}&=\sum_{X^{\kappa}<p\leq X^{\varepsilon/10}}\sum_{j=1}^{J}\sum_{n=m\ell}a_j(m)\psi_j(\ell)(1_{p\mid m}+1_{p\mid \ell})\\
&+O(X^{o(1)}1_{p^2\mid n\textnormal{ for some } X^{\kappa}<p\leq X^{\varepsilon/10}})\\
&=\sum_{j=1}^{J}\sum_{X^{\kappa}<p\leq X^{\varepsilon/10}}\left(\sum_{n=pm'\ell}a_j(pm')\psi_j(\ell)+\sum_{n=pm\ell'}a_j(m)\psi_j(p\ell')\right)\\
&+O(X^{o(1)}1_{p^2\mid n\textnormal{ for some } X^{\kappa}<p\leq X^{\varepsilon/10}}),   
\end{split}
\end{align}
where $a_j$ is a divisor-bounded sequence supported on $[1,X^{\varepsilon/5}]$ and $\psi_j(n)=\log^j n$. The contribution of the $O(\cdot)$ error term here is negligible by Lemma~\ref{le:shiu}. Also note that by the binomial formula $\psi_j(p\ell')$ is a linear combination of functions of the form $\psi_r(p)\psi_{j-r}(\ell')$ with $0\leq r\leq j$. We conclude that $d_k^{\sharp}\cdot g$ is a sum of $O(1)$ type $I$ sums.

We then use Lemma~\ref{le:typeIImajor}(ii) to handle the type $II$ sums arising from $f\cdot g$  (noting that the first estimate in~\eqref{eq:supbound} holds with $W=\log^{100A} X$ for $a(n)n^{-iT}$ by the assumption $|T|\geq \frac{X}{H}X^{\varepsilon/2}$, Lemma~\ref{le:vin-kor} and partial summation). Furthermore, we use Lemma~\ref{le:typeImajor} for the type $I$ sums arising from $d_k^\sharp\cdot g$. This concludes the proof of Theorem~\ref{discorrelation-thm-major}(ii).

\section{Reduction to type \texorpdfstring{$II$}{II} estimates}

To complement the major arc estimates in Theorem~\ref{discorrelation-thm-major}, we will establish some ``inverse theorems'' that yield discorrelation between an arithmetic function $f$ and a nilsequence $F(g(n)\Gamma)$ assuming that $f$ is of ``type $I$'', ``type $II$'', or ``type $I_2$'', and the nilsequence is ``minor arc'' in a suitable sense.  To make this precise, we recall some definitions from~\cite{MSTT-all}:

\begin{definition}[Type $I$, $II$, $I_2$ sums]\label{struct-sum}  Let $0 < \delta < 1$ and $A_I, A_{II}^-, A_{II}^+, A_{I_2} \geq 1$.  
\begin{itemize}
\item[(i)]  (Type $I$ sum) A \emph{$(\delta,A_I)$ type $I$ sum} is an arithmetic function of the form $f = \alpha *\beta$, where $\alpha$ is supported on $[1,A_I]$, and one has
\begin{equation}\label{abound}
\sum_{n \leq A} |\alpha(n)|^2 \leq \frac{1}{\delta} A
\end{equation}
and
\begin{equation}\label{btv}
\| \beta \|_{\TV(\N; q)} \leq \frac{1}{\delta}
\end{equation}
for all $A \geq 1$ and some $1 \leq q \leq \frac{1}{\delta}$.
\item[(ii)]  (Type $II$ sum) A \emph{$(\delta, A_{II}^-, A_{II}^+)$ type $II$ sum} is an arithmetic function of the form $f = \alpha * \beta$, where $\alpha$ is supported on $[A_{II}^-,A_{II}^+]$, and one has the bound~\eqref{abound} and
\begin{equation*}
\sum_{n \leq B} |\beta(n)|^2 \leq \frac{1}{\delta} B \quad \text{and} \quad \sum_{n \leq B} |\beta(n)|^4 \leq \frac{1}{\delta^2} B
\end{equation*}
for all $A,B \geq 1$.  
(The type $II$ sums become vacuous if $A_{II}^- > A_{II}^+$.)
\item[(iii)]  (Type $I_2$ sum)  A \emph{$(\delta, A_{I_2})$ type $I_2$ sum} is an arithmetic function of the form $f = \alpha * \beta_1 * \beta_2$, where $\alpha$ is supported on $[1,A_{I_2}]$ and satisfies~\eqref{abound} for all $A \geq 1$, and $\beta_1, \beta_2$ satisfy the bound~\eqref{btv} for some $1 \leq q \leq \frac{1}{\delta}$.
\end{itemize}
\end{definition}

We are now ready to state the inverse theorem.

\begin{theorem}[Inverse theorem]\label{inverse}  Let $d,D \geq 1$, $3 \leq H \leq X$, $0 < \delta < \frac{1}{\log X}$, and let $G/\Gamma$ be a filtered nilmanifold of degree at most $d$, dimension at most $D$, and complexity at most $1/\delta$.  Let $F \colon G/\Gamma \to \C$ be Lipschitz of norm at most $1/\delta$ and mean zero.  Let $f \colon \N \to \C$ be an arithmetic function such that
\begin{equation*}
\left| \sum_{x < n \leq x+H} f(n) F(g_x(n) \Gamma) \right|^* \geq \delta H
\end{equation*}
for all $x$ in a subset $E$ of $[X,2X]$ of measure at least $\delta X$ where, for each $x \in E$, $g_x \colon \Z \to G$ is a polynomial map.
\begin{itemize}
\item[(i)]  (Type $I$ inverse theorem)  If $f$ is a $(\delta,A_I)$ type $I$ sum for some $A_I \geq 1$, then either
\begin{equation}
\label{eq:TypeIcond} 
H \ll_{d,D} \delta^{-O_{d,D}(1)} A_I
\end{equation}
or else there exists a non-trivial horizontal character $\eta \colon G \to \R$ of Lipschitz norm $O_{d,D}( \delta^{-O_{d,D}(1)})$ such that
$$ \| \eta \circ g_x \|_{C^\infty(x, x+H]} \ll_{d,D} \delta^{-O_{d,D}(1)}$$
for all $x$ in a subset of $E$ of measure $\gg_{d,D} \delta^{O_{d,D}(1)} X$.
\item[(ii)]  (Type $II$ inverse theorem, non-abelian case)  If $\varepsilon > 0$, $f$ is a $(\delta,A_{II}^-, A_{II}^+)$ type $II$ sum for some $A_{II}^+ \geq A_{II}^- \geq X^\eps$, $G$ is non-abelian with one-dimensional center, and $F$ oscillates with a central frequency $\xi$ of Lipschitz norm at most $1/\delta$, then either
\begin{equation}\label{H-bound}
 H \ll_{d,D,\eps} \delta^{-O_{d,D,\eps}(1)} A_{II}^+
\end{equation}
or else there exists a non-trivial horizontal character $\eta \colon G \to \R$ of Lipschitz norm $O_{d,D,\eps}( \delta^{-O_{d,D,\eps}(1)})$ such that
\begin{equation*}
 \| \eta \circ g_x \|_{C^\infty(x, x+H]} \ll_{d,D,\eps} \delta^{-O_{d,D,\eps}(1)}
\end{equation*}
for all $x$ in a subset of $E$ of measure $\gg_{d,D,\eps} \delta^{O_{d,D,\eps}(1)} X$.
\item[(iii)]  (Type $II$ inverse theorem, abelian case)  If $\varepsilon > 0$, $f$ is a $(\delta,A_{II}^-, A_{II}^+)$ type $II$ sum for some $A_{II}^+ \geq A_{II}^- \geq X^\eps$, and for every $x \in E$ one has $F(g_x(n)\Gamma) = e(P_x(n))$ for some polynomial $P_x \colon \Z \to \R$ of degree at most $d$, then either~\eqref{H-bound} holds,
or else there exists a real number $|T| \ll_{d,\eps} \delta^{-O_{d,\eps}(1)} (X/H)^{d+1}$ such that
$$ \| e(P_x(n)) n^{-iT} \|_{\TV( (x, x+H] \cap \Z; q)} \ll_{d,\eps} \delta^{-O_{d,\eps}(1)} $$
for some integer $1 \leq q \ll_{d,\eps} \delta^{-O_{d,\eps}(1)}$ and
for all $x$ in a subset of $E$ of measure $\gg_{d,\eps} \delta^{O_{d,\eps}(1)} X$.
\item[(iv)] (Type $I_2$ inverse theorem) If $f$ is a $(\delta,A_{I_2})$ type $I_2$ sum for some $A_{I_2} \geq 1$, then either
\begin{equation*}
 H \ll_{d,D} \delta^{-O_{d,D}(1)} X^{1/3} A_{I_2}^{2/3}
\end{equation*}
or else there exists a non-trivial horizontal character $\eta \colon G \to \R$ of Lipschitz norm $O_{d,D}( \delta^{-O_{d,D}(1)})$ such that
$$ \| \eta \circ g_x \|_{C^\infty(x, x+H]} \ll_{d,D} \delta^{-O_{d,D}(1)}$$
for all $x$ in a subset of $E$ of measure $\gg_{d,D} \delta^{O_{d,D}(1)} X$.
\end{itemize} 
\end{theorem}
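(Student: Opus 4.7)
The plan is to handle the four parts of Theorem \ref{inverse} separately. Parts (i) and (iv), the type $I$ and type $I_2$ inverse theorems, follow from the corresponding results of the prequel \cite{MSTT-all}; the almost-all averaging over $x$ does not improve these bounds, so the arguments there apply verbatim with $g_x$ in the role of $g$. All the new work therefore goes into parts (ii) and (iii), which I would handle simultaneously by a common Cauchy--Schwarz step before bifurcating into an abelian route (iii) and a non-abelian route (ii).

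Fix $x \in E$ and, after a dyadic decomposition, take $\alpha$ supported in $[A, 2A]$ for some $A \in [A_{II}^-, A_{II}^+]$. The maximal truncation $|\cdot|^*$ can be removed at a $\delta^{O(1)}$ cost by partitioning $E$ according to the optimal arithmetic progression (Lemma \ref{basic-prop}). Cauchy--Schwarz in the longer variable $b \sim X/A$ using the $L^2$ bound on $\beta$, followed by expansion of the square and another Cauchy--Schwarz (absorbing the $L^4$ bound on $\beta$), leads to the conclusion that after covering $(A, 2A]$ by overlapping intervals $I_{A'} = (A', (1+H/X)A']$, there are $\gg \delta^{O(1)} X/H$ choices of $A'$ for which $\gg \delta^{O(1)} |I_{A'}|^2$ pairs $(a, a') \in I_{A'}^2$ satisfy
\[
\left|\sum_{b \in J_{A'}} F(g_x(ab)\Gamma) \overline{F(g_x(a'b)\Gamma)}\right| \gg \delta^{O(1)} H/A,
\]
with $J_{A'}$ essentially the interval of $b$ on which both of $ab, a'b$ lie in $(x, x+H]$. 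Crucially, only pairs with $|a-a'| \ll AH/X$ can contribute, which is precisely why the covering by $I_{A'}$ appears.

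For the abelian case (iii), $F(g_x(n)\Gamma) = e(P_x(n))$, so the inner sum is $\sum_b e(P_x(ab) - P_x(a'b))$. An application of the Vinogradov lemma (Lemma \ref{vin}) together with \eqref{eq:smoothcomparison} converts the bound above into the smoothness estimate
\[
\|P_x(a\cdot) - P_x(a'\cdot)\|_{C^\infty(X/A', X/A' + H/A']} \ll \delta^{-O(1)},
\]
for many pairs $(a,a')\in I_{A'}^2$ and many $A'$, which is precisely the hypothesis of Proposition \ref{prop:logarithm}. Its conclusion $\|qP_x - T\log\|_{C^d(X, X+H]} \ll \delta^{-O(1)}$, for an integer $q \ll \delta^{-O(1)}$ and a real number $|T| \ll \delta^{-O(1)}(X/H)^{d+1}$, translates via summation by parts on residue classes modulo $q$ into the total variation bound claimed in (iii). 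For the non-abelian case (ii), I would instead apply the large sieve Proposition \ref{large-sieve} to the function $\beta$ with the varying polynomial sequence $b \mapsto g_x(ab)$. This gives, for each $a$ in a large subset of $[A, 2A]$, either a horizontal character annihilating $g_x(a\cdot)$ (leading directly, via Corollary \ref{smooth-dilate} and \eqref{eq:dil}, to the desired bound $\|\eta \circ g_x\|_{C^\infty(x,x+H]} \ll \delta^{-O(1)}$) or a factorisation $g_x(a\cdot) = \eps_a g_{i_a} \gamma_a$ through a bounded family $\{g_1, \ldots, g_K\}$. Pigeonholing on the index $i_a$ and then on pairs $(a, a') \in I_{A'}^2$ sharing the same $i$, the identity
\[
g_x(a'\cdot) = (\eps_{a'}\eps_a^{-1})\, g_x(a\cdot)\, (\gamma_a^{-1}\gamma_{a'})
\]
together with Lemma \ref{mult} produces exactly the factorisation hypothesis of Proposition \ref{prop:Furstenberg-Weiss}. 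That proposition's conclusion is either $H \ll \delta^{-O(1)} \max(A, X/A) \ll \delta^{-O(1)} A_{II}^+$ (using $A_{II}^- \geq X^\eps$ to control the $X/A$ side), matching \eqref{H-bound}, or the desired horizontal character smoothness for $g_x$ on $(x, x+H]$.

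The main obstacle I expect is not any single step in the above chain, but rather the cumulative bookkeeping of polynomial losses in $\delta^{-1}$ across Cauchy--Schwarz, pigeonhole, large sieve, and Lemmas \ref{mult}/\ref{basic-prop}, together with the need to quantify how many $x \in E$ survive each step to yield the final $\gg \delta^{O(1)} X$-measure subset. A subtler technical point is matching the interval on which the large sieve produces smoothness/rationality (essentially $J_{A'}$) with the interval $J_{A'}$ required by Proposition \ref{prop:Furstenberg-Weiss}, and ensuring the dyadic scale $A$ is chosen so that the $|a - a'| \ll AH/X$ constraint is compatible with the intervals $I_{A'}$; this is arranged by choosing dyadic scales with care, but the interplay between the four scales $A$, $X/A$, $H$, $H/A$ is the most delicate aspect of the argument.
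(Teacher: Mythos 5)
There is a genuine gap in your treatment of parts (ii)--(iii): your argument is essentially the fixed-$x$ type $II$ argument of the prequel, and it founders exactly where that argument does. Both Proposition~\ref{prop:logarithm} and Proposition~\ref{prop:Furstenberg-Weiss} carry the hypothesis (equivalently, the escape clause) $H \gg \delta^{-C}\max(A, X/A)$, so applying them directly at scale $X$ with $A \in [A_{II}^-, A_{II}^+]$ only yields a conclusion when $H \gg \delta^{-O(1)} X/A$. Your claim that the non-abelian alternative $H \ll \delta^{-O(1)}\max(A, X/A)$ can be bounded by $\delta^{-O(1)}A_{II}^+$ ``using $A_{II}^- \geq X^\eps$ to control the $X/A$ side'' is false: $X/A$ can be as large as $X/A_{II}^- \leq X^{1-\eps}$, which in the regimes the theorem is designed for ($H = X^{1/3+\eps}$ or even $H = X^\eps$, with $A_{II}^+ \ll X^{1/3}$) vastly exceeds both $A_{II}^+$ and $H$. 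In other words, in the new range $H \leq X^{1/2}$ the alternative \eqref{H-bound}-with-$\max(A,X/A)$ is vacuously true and your chain gives no information; this is precisely why the paper states that parts (ii)--(iii) do \emph{not} follow from \cite[Theorem 4.2]{MSTT-all}. The same objection kills your abelian route, since Proposition~\ref{prop:logarithm} likewise needs $H \geq \delta^{-C_d}\max(A, X/A)$.

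The missing idea is that one must exploit the averaging over $x \in E$, which your proposal never uses in the type $II$ part. The paper's proof first ``scales down'' (Proposition~\ref{prop:scaledown}), applying the nilsequence large sieve (Proposition~\ref{large-sieve}) in the $b$-variable to show that the family $(g_x)_{x\in E}$ is governed, up to smooth and rational factors, by a bounded family of sequences at scale $X/A$; it then iterates the ``scaling up'' step (Proposition~\ref{prop:iterate}), whose engine is the nilsequence contagion lemma (Theorem~\ref{nil-contagion}, built on the monomial contagion Theorem~\ref{thm_contagious} extending Walsh's lemma), together with transitivity (Lemma~\ref{le:transitive}), to transport the approximate functional equation to a scale $A^K X$ with $A^K \geq X$. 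Only at that enormous scale are the hypotheses of Propositions~\ref{prop:logarithm} and~\ref{prop:Furstenberg-Weiss} satisfied (they are invoked with $(z, M, MH)$, $M \geq X$, in place of $(X, A, H)$ in Proposition~\ref{prop:conclusion}), after which the conclusion is pulled back to the original intervals $(x, x+H]$. Your Cauchy--Schwarz setup and your identification of the two terminal propositions are the right ingredients for the endgame, but without the contagion/rescaling mechanism the argument cannot reach the stated range $A_{II}^- \geq X^\eps$, $H \leq X^{1/2}$.
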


\begin{remark}\label{rmk:measurability}
In the proof of Theorem~\ref{inverse}, we may clearly assume that $H\in \mathbb{N}$ and then that the map $x\mapsto g_x$ is constant on all intervals of the form $[m,m+1)$ with $m\in \mathbb{N}$. This makes it easy to check the measurability of various subsets of $E$ defined in terms of $g_x$. 
\end{remark}

We observe that parts (i), (iv) of Theorem~\ref{inverse} follow easily from parts (i), (iv) of~\cite[Theorem 4.2]{MSTT-all}. For instance, if the hypotheses of Theorem~\ref{inverse}(i) hold and~\eqref{eq:TypeIcond} does not hold, then from~\cite[Theorem 4.2(i)]{MSTT-all}, we see that for each $x \in E$ there exists a non-trivial horizontal character $\eta_x \colon G \to \R$ of Lipschitz norm $O_{d,D}(\delta^{-O_{d,D}(1)})$ such that
$$ \| \eta_x \circ g_x \|_{C^\infty(x, x+H]} \ll_{d,D} \delta^{-O_{d,D}(1)}.$$
There are only $O_{d,D}(\delta^{-O_{d,D}(1)})$ possible choices of $\eta_x$, so Theorem~\ref{inverse}(i) follows from the pigeonhole principle and Remark~\ref{rmk:measurability}, which guarantees the measurability of those subsets of the set of $x\in E$ for which $\eta_x$ takes a given value.  Theorem~\ref{inverse}(iv) follows similarly from~\cite[Theorem 4.2(iv)]{MSTT-all}. 

Thus it only remains to establish parts (ii) and (iii) of Theorem~\ref{inverse}, which do not follow from their counterparts in~\cite[Theorem 4.2]{MSTT-all}, since the conclusion 
$$ H \ll_{d,D} \delta^{-O_{d,D}(1)} \max(A_{II}^+, X/A_{II}^-)$$
in~\cite[Theorem 4.2]{MSTT-all}(ii)--(iii) is always satisfied when $H \leq X^{1/2}$. We shall prove Theorem~\ref{inverse}(ii)--(iii) in Section~\ref{type-ii}, based on work in Section~\ref{contagion-sec}.

In this section we show how Theorem~\ref{inverse}, when combined with the major arc estimates from the previous section, implies Theorem~\ref{discorrelation-thm}.

\subsection{Combinatorial decompositions}

We start by describing the combinatorial decompositions (Lemmas~\ref{comb-lambda} and~\ref{comb-mu} below) that allow us to reduce sums involving $\mu,\Lambda,d_k$ to type $I$, type $II$, and type $I_2$ sums. Lemma~\ref{comb-lambda} will be used to prove Theorem~\ref{discorrelation-thm}(i)--(iii) and Lemma~\ref{comb-mu} will be used to prove Theorem~\ref{discorrelation-thm}(iv)--(v).

As in~\cite{MSTT-all}, we notice that while the model function $\Lambda^{\sharp}$ is not a type $I$ sum, we can approximate it well by the type $I$ sum
\begin{equation}\label{lambdasharp-i-def}
\Lambda_I^\sharp(n) \coloneqq  \frac{P(R)}{\varphi(P(R))}\sum_{\substack{d \leq X^{\varepsilon/5}\\d\mid (n, P(R))}} \mu(d),
\end{equation}
where we recall that $R=\exp((\log X)^{1/10})$.
Indeed, by the proof of Lemma~\ref{le:dkdecompose}(i) we see that for $x\in [X,2X]$ we have
\begin{equation}\label{eq:lambdasharp2}
 \sum_{x < n \leq x+H} |\Lambda^\sharp_I(n)-\Lambda^{\sharp}(n)|\ll H\exp(-(\log X)^{1/20}).
\end{equation}
In practice, this bound allows us to substitute $\Lambda^\sharp$ with the type $I$ sum $\Lambda^\sharp_I$ with negligible cost.

We state two combinatorial decompositions, one that applies to all of our functions of interest and another one that is more flexible (in the sense of allowing a more restricted type $II$ range) but applies only to the functions $\mu$ and $d_k$ (with an additional weight). 

\begin{lemma}[Combinatorial decompositions of $\mu,\Lambda,\Lambda^\sharp_I, d_k$ and $d_k^\sharp$]\label{comb-lambda}
 Let $\eps > 0$ and $k \geq 2$ be fixed. Let $g \in \{\mu, \Lambda, \Lambda^\sharp_I, d_k, d_k^\sharp\}$. There is a set $\mathcal{F}$ of size $O((\log X)^{O(1)})$ consisting of functions $f \colon \mathbb{N} \to \mathbb{R}$ such that, for each $n \in [X/2, 3X]$, we have
\[
g(n) = \sum_{f \in \mathcal{F}} f(n),
\]
and each component $f \in \mathcal{F}$ satisfies one of the following:
\begin{itemize}
\item[(i)] $f$ is a $(\log^{-O(1)} X, O(X^{1/3+\varepsilon/2}))$ type $I$ sum;
\item[(ii)] $f$ is a $(\log^{-O(1)} X, O(X^{\varepsilon/2})$ type $I_2$ sum.
\item[(iii)] $f$ is a $(\log^{-O(1)} X, X^{\varepsilon/10}, O(X^{1/3}))$ type $II$ sum.
\end{itemize}
\end{lemma}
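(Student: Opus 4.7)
My plan is to apply Heath-Brown's identity (\Cref{hb-identity}) to the ``rough'' functions $\mu, \Lambda, d_k$ and to reorganize each resulting dyadic convolution into one of the three desired sum types by a case analysis on the scales, while handling the approximants $\Lambda^\sharp_I$ and $d_k^\sharp$ directly. Specifically, $\Lambda_I^\sharp = \frac{P(R)}{\varphi(P(R))}(\alpha * 1)$ with $\alpha(d) = \mu(d) 1_{d \leq X^{\varepsilon/5}} 1_{d \mid P(R)}$ is manifestly a type $I$ sum with support parameter $X^{\varepsilon/5}$, and \Cref{le:dkdecompose}(ii) decomposes $d_k^\sharp$ into $O(1)$ type $I$ sums with the same support parameter; both fit within the type $I$ threshold $X^{\theta+\varepsilon/2}$ since $\theta \geq 1/3$.

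For $f \in \{\mu, \Lambda, d_k\}$, I would apply \Cref{hb-identity} with a large parameter $L = \lceil 10/\varepsilon \rceil$, obtaining $O(\log^{O(1)} X)$ convolutions $a^{(1)} * \cdots * a^{(\ell)}$ with $\ell \leq 2L$, each $a^{(i)}$ of type $1, \log$, or $\mu$ supported on a dyadic interval $(N_i, 2N_i]$; the scales satisfy $\prod_i N_i \asymp X$ and any $\mu$-type factor is constrained to $N_i \ll X^{1/L} \leq X^{\varepsilon/10}$. After sorting $N_1 \geq \cdots \geq N_\ell$, I would split into cases based on $N_1$. The main combinatorial tool is the following elementary greedy fact: if all scales are bounded by $Y$, then by adding elements one at a time (in any order), one can produce a subset $U \subseteq \{1, \ldots, \ell\}$ with $\prod_{i \in U} N_i$ in any prescribed interval of the form $[Z_0, Y Z_0] \subseteq [1, \prod_i N_i]$.

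The case analysis runs as follows. If $N_1 \geq X^{1-\theta-\varepsilon/2}$, then since $L \geq 2/\varepsilon$ ensures $N_1 > X^{1/L}$, the factor $a^{(1)}$ must be of $1$- or $\log$-type (hence has bounded TV on small-modulus APs); setting $\beta = a^{(1)}$ and $\alpha = *_{i \geq 2} a^{(i)}$ yields a type $I$ sum with support $\leq O(X/N_1) \leq O(X^{\theta+\varepsilon/2})$. If $N_1 \leq X^{1/3}$, then all scales are $\leq X^{1/3}$, and I either take $\alpha = a^{(i_0)}$ for some $N_{i_0} \in [X^{\varepsilon/10}, X^{1/3}]$ if such exists, or else use the greedy argument on the (necessarily below-$X^{\varepsilon/10}$) scales to find a subset with product in $[X^{\varepsilon/10}, X^{2\varepsilon/10}]$, giving a type $II$ sum. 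In the intermediate regime $N_1 \in (X^{1/3}, X^{1-\theta-\varepsilon/2})$ (non-vacuous only when $\theta < 2/3 - \varepsilon/2$), I split on $N_2$: if $N_2 \leq X^{1/3}$, the greedy argument applied to $\{N_i : i \geq 2\}$ (total $\gg X^{\varepsilon/10}$, all scales $\leq X^{1/3}$) produces a type $II$ subproduct; if $N_2 > X^{1/3}$, then at most two scales can exceed $X^{1/3}$ (else $\prod N_i \gg X$), and I compare $\prod_{i \geq 3} N_i$ to $A_{I_2} = X^{(3\theta-1)/2+\varepsilon/2}$: if the former is at most $A_{I_2}$, set $\beta_1 = a^{(1)}, \beta_2 = a^{(2)}, \alpha = *_{i \geq 3} a^{(i)}$ for type $I_2$ (both $\beta_j$ being of $1$- or $\log$-type by the choice of $L$); otherwise the greedy argument applied to $\{N_i : i \geq 3\}$ produces a type $II$ subproduct.

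It remains to verify the $\ell^2$, $\ell^4$, and TV bounds from \Cref{struct-sum}. Since each $a^{(i)}$ is pointwise $O(\log X)$-bounded, any convolution of $O_\varepsilon(1)$ such factors is majorized by $\log^{O(1)} X \cdot d_{O(1)}(n)$, and standard divisor moment estimates then give the required $\ell^2$ and $\ell^4$ bounds with parameter $\delta = \log^{-O(1)} X$; the TV estimates on $\beta$-factors of $1$- or $\log$-type on a dyadic interval are immediate with $q = 1$. The main obstacle I anticipate is the bookkeeping around the $\mu$-type factors: in every branch that produces a $\beta$ (type $I$) or $\beta_1, \beta_2$ (type $I_2$) component, one must check that the corresponding scale exceeds the $\mu$-support threshold $X^{1/L}$, which relies on the initial choice of $L$ being large enough in terms of $\varepsilon$ (and on $\theta \leq 1-\varepsilon$ ensuring $X^{1-\theta-\varepsilon/2} \geq X^{\varepsilon/2} > X^{1/L}$).
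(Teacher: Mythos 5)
Your proposal is correct and follows essentially the same route as the paper: handle $\Lambda^\sharp_I$ and $d_k^\sharp$ directly as type $I$ sums (via \Cref{le:dkdecompose}), apply Heath--Brown's identity (\Cref{hb-identity}) with $L=\lceil 10/\varepsilon\rceil$ to $\mu,\Lambda,d_k$, order the scales, and sort each convolution into type $I$ (one factor $\geq X^{1-\theta-\varepsilon/2}$), type $I_2$ (two factors $>X^{1/3}$ with $\prod_{i\geq 3}N_i\leq X^{(3\theta-1)/2+\varepsilon/2}$, which is exactly the paper's condition $N_1N_2>X^{\frac32(1-\theta)-\varepsilon/2}$), or type $II$, with the TV/$\ell^2$/$\ell^4$ bounds checked as in the paper. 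The only point to state carefully is the greedy step: as literally formulated (window $[Z_0,YZ_0]$ with $Y=X^{1/3}$) it could produce a block of size up to $X^{1/3+\varepsilon/10}$, so one should use the two-case form you already spell out in the $N_1\leq X^{1/3}$ branch (either a single scale lies in $[X^{\varepsilon/10},X^{1/3}]$, or all relevant scales are below $X^{\varepsilon/10}$ and the running product stops below $X^{\varepsilon/5}$) in every type $II$ branch, which keeps the upper support at $O(X^{1/3})$ as the lemma requires.
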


\begin{lemma}[Flexible combinatorial decompositions of $\mu$ and $d_k$]\label{comb-mu}
Let $k \geq 1$ be fixed. Let $A>0$, $\varepsilon>0$, and $\eta\in (0,\varepsilon/(10k)]$. Let $X \geq H \geq X^\eps$ and $g\in\{\mu, d_k\}$, and write $c_g=0$ if $g=\mu$ and $c_g=k-1$ if $g=d_k$. Also let $$Y_g=\sum_{X^{\exp(-\eta^{-2})}<p\leq X^{\varepsilon/10}}\frac{c_g+1}{p}.$$ 

There exist $K\ll 1$ and functions $f_j\colon \mathbb{N}\to \mathbb{R}$ for $1\leq j\leq K$ such that each $f_j$ is either a $(\log^{-O(1)} X, X^{\varepsilon/2})$ type $I$ sum or a $(\log^{-O(1)} X, X^{\exp(-\eta^{-2})}, X^{\varepsilon/10})$ type $II$ sum, and such that the following holds. For any sequence $\{\omega_n\}$ with $|\omega_n| \leq 1$ and any $x \in [X, 2X]$, we have
$$
\sum_{x < n \leq x+H}(g(n)-g^{\sharp}(n))\omega_n=\frac{1}{Y_g}\sum_{x < n \leq x+H}(g(n)-g^{\sharp}(n))\sum_{X^{\exp(-\eta^{-2})}<p\leq X^{\varepsilon/10}}1_{p\mid n}\omega_n+O(\eta H\log^{c_g}X) 
$$
for all $x\in [X,2X]$ outside of a set of measure $O_{A,\varepsilon,\eta}(X\log^{-A}X)$, 
and additionally 
$$ 
\sum_{x < n \leq x+H}(g(n)-g^{\sharp}(n))\sum_{X^{\exp(-\eta^{-2})}<p\leq X^{\varepsilon/10}}1_{p\mid n}\omega_n = \sum_{j=1}^K\sum_{x < n \leq x+H} f_j(n) \omega_n + O(HX^{-\exp(-\eta^{-2})/2}) 
$$
for all $x\in [X,2X]$.
\end{lemma}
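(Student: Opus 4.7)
The proof proceeds in two steps corresponding to the two claimed identities, combining a Turán--Kubilius reduction with Ramaré's identity.

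Let $J \coloneqq (X^{\exp(-\eta^{-2})}, X^{\varepsilon/10}]$, $\Omega_J(n) \coloneqq \sum_{p\in J}1_{p\mid n}$, and $M \coloneqq \sum_{p\in J} 1/p$, so by Mertens' theorem $M = \eta^{-2} + O(1) \asymp \eta^{-2}$. For the first identity, the plan is to apply Cauchy--Schwarz to the rewrite
\[
\Bigl|\sum_{x<n\leq x+H}(g-g^{\sharp})(n)\omega_n\bigl(\tfrac{1}{M}\Omega_J(n)-1\bigr)\Bigr|^2 \leq \Bigl(\sum_{x<n\leq x+H}|g-g^{\sharp}|^2\Bigr)\cdot\Bigl(\sum_{x<n\leq x+H}\bigl(\tfrac{1}{M}\Omega_J-1\bigr)^2\Bigr).
\]
The first factor is $O(H\log^{2c_g}X)$ by Shiu's bound (\Cref{le:shiu}). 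The second factor is the short-interval Turán--Kubilius variance: the long-interval bound $\sum_{n\leq N}(\Omega_J(n)-M)^2 \ll NM$ is classical, and passage to almost all short intervals is obtained by applying \Cref{le:perron}(ii) to the Dirichlet polynomial of the function $n\mapsto \Omega_J(n)-M$ and using Markov's inequality, yielding the bound $\sum_{x<n\leq x+H}(\Omega_J-M)^2\ll HM$ for $x\in[X,2X]$ outside an exceptional set of measure $O_{A,\varepsilon,\eta}(X\log^{-A}X)$. Hence the product is $O(\eta^2 H^2\log^{2c_g}X)$, giving the claimed $O(\eta H\log^{c_g}X)$ bound. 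The implicit normalization factor $1/M$ will be absorbed into the coefficients of the $f_j$ constructed in the next step.

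For the second identity, I apply Ramaré's identity to the $g$ and $g^{\sharp}$ contributions separately. For $g\in\{\mu,d_k\}$, multiplicativity gives, for $n$ squarefree in $J$,
\[
\tfrac{1}{M}g(n)\Omega_J(n) = \tfrac{1}{M}\sum_{p\in J,\, p\|n} g(p)g(n/p) = (\alpha*g)(n),
\]
with $\alpha(p)\coloneqq \tfrac{g(p)}{M}1_{p\in J\cap\mathbb{P}}$ supported on $J\subseteq[X^{\exp(-\eta^{-2})}, X^{\varepsilon/10}]$, producing a $(\log^{-O(1)}X, X^{\exp(-\eta^{-2})}, X^{\varepsilon/10})$ type $II$ sum. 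The error from $n$ with $p^2\mid n$ for some $p\in J$ is bounded via Shiu by $H\sum_{p>X^{\exp(-\eta^{-2})}}1/p^2 \ll HX^{-\exp(-\eta^{-2})/2}$. For $g^{\sharp}$: if $g=\mu$, nothing is needed; if $g=d_k$, apply \Cref{le:dkdecompose}(ii) to decompose $d_k^{\sharp}=\sum_j a_j*\psi_j$ with $a_j$ supported on $[1, X^{\varepsilon/5}]$ and $\psi_j=\log^{\ell_j}/\log^{\ell_j}X$. Expanding $(a_j*\psi_j)(n)\Omega_J(n)$ via Ramaré splits the sum by whether $p\in J$ divides the $a$-variable, the $b$-variable, or both. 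The first case yields the type $I$ sum $(a_j\Omega_J)*\psi_j$ (since $a_j\Omega_J$ remains supported on $[1, X^{\varepsilon/5}]\subseteq[1, X^{\varepsilon/2}]$); the second case, after writing $b=pc$ and expanding $\log^{\ell_j}(pc)=\sum_k\binom{\ell_j}{k}\log^k(p)\log^{\ell_j-k}(c)$, produces $O(1)$ type $II$ sums with short variable $p\in J$; the third case ($p^2\mid n$) again contributes to the $O(HX^{-\exp(-\eta^{-2})/2})$ error.

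The main obstacle is primarily bookkeeping: verifying that each $f_j$ genuinely satisfies the required $(\log^{-O(1)}X,\cdot)$ type $I$ or type $II$ parameters. The $L^2$ bounds on short coefficients follow from divisor-bounded estimates together with $M\geq 1$; the $\TV$ bound on the long factor $\psi_j=\log^{\ell_j}/\log^{\ell_j}X$ follows from partial summation. Handling the implicit coprimality constraint $p\nmid a$ in the Ramaré identity for $g^{\sharp}$ is done by including its failure in the squared-prime exceptional set. The total number $K$ of components is bounded by a constant depending on $k$ and the number of terms in \Cref{le:dkdecompose}(ii), hence $K\ll 1$.
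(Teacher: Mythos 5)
Your treatment of the second identity (Ramar\'e-type splitting of $(g-g^\sharp)\Omega_J$, with Lemma~\ref{le:dkdecompose}(ii) and a binomial expansion of $\log^{\ell_j}(pc)$ to produce the type $I$ and type $II$ pieces, and the $p^2\mid n$ overcount absorbed into the $O(HX^{-\exp(-\eta^{-2})/2})$ error) is essentially the paper's argument and is fine. The genuine gap is in the first identity, in the case $g=d_k$ with $k\geq 2$. First, your Cauchy--Schwarz splitting puts $\sum_{x<n\leq x+H}|d_k-d_k^\sharp|^2$ in one factor and you assert it is $O(H\log^{2c_g}X)$; but Shiu's bound (Lemma~\ref{le:shiu}) applied to $d_k^2$ gives $H\log^{k^2-1}X$, not $H\log^{2(k-1)}X$ (already for $k=2$: $\log^3 X$ versus $\log^2X$). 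With the unweighted variance bound $\sum(\Omega_J/M-1)^2\ll \eta^2 H$ this yields an error $\eta H\log^{(k^2-1)/2}X$, which exceeds the claimed $\eta H\log^{k-1}X$ by a positive power of $\log X$ and is larger than the main term in the intended application, so the conclusion is lost. Second, your centering is wrong for $d_k$: under a $d_k$-type weight the expected value of $\Omega_J(n)$ is $\sum_{p\in J}k/p$, not $M=\sum_{p\in J}1/p$, so $\Omega_J/M-1$ has weighted mean about $k-1$ rather than $0$; taking $\omega_n=\operatorname{sgn}\bigl((d_k-d_k^\sharp)(n)(\Omega_J(n)/M-1)\bigr)$ shows that $\sum_{x<n\leq x+H}(d_k-d_k^\sharp)\omega_n(\Omega_J/M-1)$ is genuinely of size $\asymp H\log^{k-1}X$, so the identity you are implicitly proving (with the $1/M$ normalization you plan to absorb into the $f_j$) is false for $k\geq2$, independently of the Cauchy--Schwarz loss. (For $g=\mu$ both issues disappear and your argument does work.) A smaller but real loose end: Lemma~\ref{le:perron}(ii) plus Markov controls the $x$-average of $\bigl|\sum_{x<n\leq x+H}(\Omega_J(n)-M)\bigr|^2$, not the in-interval second moment $\sum_{x<n\leq x+H}(\Omega_J(n)-M)^2$ that your Cauchy--Schwarz requires; the latter should instead be obtained by expanding the square (which is elementary here since $p_1p_2\leq X^{\varepsilon/5}$ and $H\geq X^\varepsilon$).

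The paper's route is designed precisely to avoid the $\log$-power loss: it proves a $d_k$-weighted $L^1$ Tur\'an--Kubilius inequality (Lemma~\ref{le:turan}), namely $\sum_{x<n\leq x+H}d_k(n)\bigl|\Omega_J(n)-\sum_{p\in J}k/p\bigr|\ll \sqrt{k}\,H\log^{k-1}X\,(\log(\varepsilon_2/\varepsilon_1))^{1/2}$ for almost all $x$, with the correct centering $Y=\sum_{p\in J}k/p\asymp\eta^{-2}$; dividing by $Y$ gives exactly the $O(\eta H\log^{k-1}X)$ error. Establishing this weighted inequality is the real work: it requires controlling sums of $d_k$ (and of $d_k(n/p)$, $d_k(n/(p_1p_2))$) in almost all short intervals with $\log^{-A}$ exceptional sets (Lemma~\ref{le:dkshort}), which in turn uses the type $II$ major-arc Dirichlet-polynomial machinery (Lemma~\ref{le:typeIImajor}), plus a careful treatment of the large primes via an intermediate cutoff and a weighted Cauchy--Schwarz. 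To repair your proof you would need to replace the unweighted splitting by this weighted second/first-moment estimate with centering $\sum_{p\in J}k/p$; as written, that key ingredient is missing.
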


Lemma~\ref{comb-lambda} is proved below, and the proof of Lemma~\ref{comb-mu} is given in the next subsection.

Unlike in~\cite[Section 4.1]{MSTT-all}, we do not need to impose in our combinatorial decompositions conditions on discorrelation of type $II$ sums with $n^{iT}$. This is thanks to our major arc estimate (Theorem~\ref{discorrelation-thm-major}) which shows that functions such as $\Lambda(n)-\Lambda^\sharp(n)$ are discorrelated with $n^{iT}$. For further discussion on this matter, see~\cite[Remark 4.6]{MSTT-all}.

We will prove Lemma~\ref{comb-lambda} by first decomposing the relevant functions into certain Dirichlet convolutions (using Lemma~\ref{hb-identity} in the proof of Lemma~\ref{comb-lambda} for $\mu$ and $\Lambda$. It is then easy to see that we get appropriate sums.

\begin{proof}[Proof of Lemma~\ref{comb-lambda}]
The function $\Lambda^\sharp_I$ is  clearly a $(\log^{-O(1)} X, X^{\eps/5})$ type $I$ sum by its definition~\eqref{lambdasharp-i-def}. Similarly, the function $d_k^{\sharp}$ is a sum of $O(1)$ such sums by Lemma~\ref{le:dkdecompose}(ii).

For $\Lambda$, $\mu$, and $d_k$, we apply Lemma~\ref{hb-identity} with $L = \lceil 10/\varepsilon \rceil$. Each of the $O((\log X)^{O(1)})$ components $f \in \mathcal{F}$ takes the form
\begin{equation*}
f = a^{(1)}* \cdots * a^{(\ell)} 
\end{equation*}
for some $\ell \leq 2L$, where each $a^{(i)}$ is supported on $(N_i, 2N_i]$ for some $N_i \geq 1/2$, and each $a^{(i)}(n)$ is either $1_{(N_i, 2N_i]}(n)$, $(\log n)1_{(N_i, 2N_i]}(n)$, or $\mu(n)1_{(N_i, 2N_i]}(n)$. Moreover, 
\begin{equation}
\label{eq:N1prod}
N_1N_2\cdots N_{\ell} \asymp X,
\end{equation}
and $N_i \leq X^{\varepsilon/10}$ for each $i$ with $a^{(i)}(n) = \mu(n) 1_{(N_i, 2N_i]}(n)$. Consequently whenever $N_i > X^{\eps/10}$, we have $\|a^{(i)}\|_{\TV(\N)} \ll \log X$.

Without loss of generality we can assume that
\begin{equation}
\label{eq:Niorder}
N_1 \geq N_2 \geq \dotsb \geq N_\ell.
\end{equation}

Consider first the case $N_1 > X^{2/3-\eps/2}$. Since $2/3-\eps/2 > \eps/10$ and thus $\|a^{(1)}\|_{\TV(\N)} \ll \log X$, from~\eqref{eq:N1prod} we see that $f$ is a $(\log^{-O(1)} X, O(X^{1/3+\eps/2}))$ type $I$ sum of the form $\alpha*\beta$ with $\beta = a^{(1)}$ and $\alpha = a^{(2)}*\cdots*a^{(\ell)}$. 

Henceforth we may assume that $N_j \leq X^{2/3-\eps/2}$ for each $j$. Next consider the case that $N_1 N_2 > X^{1-\varepsilon/2}$. Since $N_1, N_2 \leq X^{2/3-\eps/2}$, this implies that $N_1, N_2 > X^{1/3} > X^{\varepsilon/10}$ and thus $\|a^{(1)}\|_{\TV(\N)}, \|a^{(2)}\|_{\TV(\N)}  \ll \log X$. Hence, by~\eqref{eq:N1prod}, the function $f$ is a $(\log^{-O(1)} X,\allowbreak O(X^{\eps/2}))$ type $I_2$ sum of the form $f = \alpha*\beta_1*\beta_2$, with $\beta_1 = a^{(1)}$, $\beta_2 = a^{(2)}$.

In the remaining case, we have $N_1 N_2 \leq X^{1-\varepsilon/2}$ and hence necessarily $\ell \geq 3$ and $X/(N_1 N_2) \gg X^{\varepsilon/2}$. By~\eqref{eq:Niorder} we have $N_3 \ll X^{1/3}$ and so there exists $j \in \{3, \dotsc, \ell\}$ such that $X^{\eps/10}\leq N_3 \dotsm N_j \ll  X^{1/3}$ and thus we have a $(\log^{-O(1)} X, X^{\eps/10}, O(X^{1/3}))$  type $II$ sum of the form $f = \alpha * \beta$, where $\alpha = a^{(3)} * \dotsb * a^{(j)}$.
\end{proof}

\subsection{Divisor sums in short intervals}\label{subsec:divsum}

In this subsection, we prove Lemma~\ref{comb-mu}. We first need an auxiliary result about sums of $d_k$ in almost all short intervals, which will also be needed later in Section~\ref{gowers-sec}. 

\begin{lemma}[Divisor functions in almost all short intervals]\label{le:dkshort}
Let $X\geq 3$, $\varepsilon,\eta>0$, $k\in \mathbb{N}$, and $A>0$.  Let $H\in [X^{\varepsilon},X^{1-\varepsilon}]$. Then
\begin{align}\label{eq:dkmaximum}
\max_{\substack{a,q\in \mathbb{N}\\a, q \leq \log X \\ (a,q)=1}}\left|\sum_{x<n\leq x+H}\left(\frac{q}{\varphi(q)}\right)^{k-1}d_k(qn+a)-\frac{1}{(k-1)!}H\log^{k-1}X\right|\leq \eta H\log^{k-1}X    \end{align}
for all $x\in [X,2X]$ outside of a set of measure $O_{A,\varepsilon,\eta,k}(X\log^{-A}X)$.
\end{lemma}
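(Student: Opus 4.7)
The plan is to combine a union bound over arithmetic progressions with Chebyshev's inequality and the Parseval identity (Lemma~\ref{le:perron}), reducing the problem to mean square estimates for a Dirichlet polynomial associated with $d_k$ on arithmetic progressions to small modulus.

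First, I would apply a union bound over the $O(\log^2 X)$ pairs $(a,q)$ with $a,q\le \log X$ and $(a,q)=1$, losing only a factor of $\log^{O(1)} X$ in the exceptional set measure; it suffices to prove the bound for a single such pair. Fixing $(a,q)$, set
\[
g(n) := \Big(\tfrac{q}{\varphi(q)}\Big)^{k-1} d_k(qn+a), \qquad T_k := \tfrac{(\log X)^{k-1}}{(k-1)!},
\]
and split the deviation
$\sum_{x<n\le x+H} g(n) - HT_k = D(x) + R(x)$,
where $R(x):=\tfrac{H}{H_2}\sum_{x<n\le x+H_2} g(n) - HT_k$ compares the short sum to a long one of length $H_2 := X/\log^{3A_0} X$ (for $A_0$ large in terms of $A$), and $D(x)$ is the short-versus-long fluctuation.

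The deterministic error $R(x)$ is handled by the classical asymptotic for $d_k$ on APs to modulus $q\le \log X$, namely
\[
\sum_{\substack{m\le Y\\ m\equiv a\,(q)}} d_k(m) \;=\; \tfrac{Y}{q}\, Q_{k,q}(\log Y) \;+\; O_{A_0,k}\!\big(Y(\log Y)^{-A_0}\big),
\]
obtained by contour integration of $\tfrac{1}{\varphi(q)}\sum_\chi \bar\chi(a) L(s,\chi)^k$ using the Vinogradov--Korobov/Siegel--Walfisz zero-free region, where $Q_{k,q}$ is a polynomial of degree $k-1$ with leading coefficient $(\varphi(q)/q)^{k-1}/(k-1)!$. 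Substituting the asymptotic at the endpoints of the long interval, Taylor-expanding in $H_2\le X$, and using $\log(qx)=\log X + O(\log\log X)$ for $q\le \log X$, one gets pointwise
$|R(x)| = O\!\big(H(\log X)^{k-2}(\log\log X)^{O(1)}\big) + O\!\big(H(\log X)^{3A_0 - A_0+O(1)}\big)$,
both of which are at most $\eta HT_k/4$ for $A_0$ and $X$ sufficiently large in terms of $\eta,k,A$.

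For the fluctuation $D(x)$, I would apply Chebyshev and Parseval (Lemma~\ref{le:perron}(i) with $W=(\log X)^{A_1}$ for $A_1$ large), which reduces to bounding $\int_{W\le |t|\le T} |G(1+it)|^2\, dt$, where $G(s):=\sum_{X/8<n\le 8X} g(n) n^{-s}$. By character orthogonality, $G(s)$ decomposes (up to a small ``shift error'' from the change of variables $m=qn+a$) into $O_k(1)$ truncations of $L(s,\chi)^k$ over $\chi\pmod q$. Combining the mean value theorem for Dirichlet polynomials with the pointwise Vinogradov--Korobov bounds of Lemma~\ref{le:vin-kor} in the regime $|t|\ge X^{c_k}$, together with moment bounds (fourth moment/divisor-type estimates) for $L(s,\chi)^k$ in the intermediate range $|t|\in [\log^{A_1} X, X^{c_k}]$, yields a variance bound of Jutila type, $\frac{1}{X}\int_X^{2X}|D(x)|^2\, dx \ll_k H(\log X)^{O_k(1)}$. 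Chebyshev then gives an exceptional set of measure
\[
\ll_{k,\eta} \; \frac{X \cdot H(\log X)^{O_k(1)}}{\eta^2 H^2 T_k^2} \;=\; \frac{X(\log X)^{O_k(1)-2(k-1)}}{\eta^2 H},
\]
and since $H\ge X^\varepsilon$ dominates any fixed power of $\log X$ as $X\to\infty$, this is at most $O_{A,\eta,\varepsilon,k}(X(\log X)^{-A})$.

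The main obstacle will be establishing the Jutila-type variance bound with an explicit factor of $H$ rather than the cruder $H^2$ that a naive application of Parseval plus the mean value theorem yields; this requires combining mean value bounds with the pole structure of $L(s,\chi)^k$ at $s=1$ (via a Perron-with-kernel analysis or a Voronoi-style decomposition for $d_k$), and constitutes the technical heart of the proof for $k\ge 3$.
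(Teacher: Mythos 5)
Your skeleton (union bound over the $O(\log^2X)$ pairs $(a,q)$, evaluation of a long reference sum via the $d_k$-in-progressions asymptotic, then Chebyshev plus the Parseval bound of Lemma~\ref{le:perron} for the short-versus-long fluctuation) matches the paper's reduction. The gap is in the step you yourself flag as the ``technical heart'': after Parseval you must bound $\int_{W\le|t|\le T}|G(1+it)|^2\,dt$ with a saving of an arbitrary power of $\log X$, where $G$ is essentially the Dirichlet polynomial of $d_k\chi$ and $T$ ranges up to $X/H'$. In the intermediate range $\log^{A_1}X\le |t|\le X^{c_k}$ you have no tool: the mean value theorem gives only $\ll\log^{O(1)}X$ with no saving; the pointwise Vinogradov--Korobov bound of Lemma~\ref{le:vin-kor} applies to $d_k$ only for $|t|\ge X^{c_k}$; and the ``fourth moment/divisor-type'' estimates you invoke amount to a $2k$-th moment of $L(\tfrac12+it,\chi)$, which is known only for $k\le 2$. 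For $k\ge 3$ and $H=X^{\varepsilon}$ the Jutila-type variance bound $\ll H\log^{O_k(1)}X$ you assume is not available unconditionally, so the proposed route proves the lemma only for $k=2$ (this direct $L$-function moment argument is exactly what the paper does, but only for $d_2$ and with a fixed phase, in the proof of Theorem~\ref{thm_d2}).

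The paper circumvents this by never estimating the full $d_k$ polynomial in mean square. It first discards, via Shiu's bound (Lemma~\ref{le:shiu}), the integers without a prime factor in $(H^{\eta^2},H^{1/2}]$, and on the remaining set writes $d_k(n)\chi(n)1_{\mathcal S}(n)=(\alpha*\beta)(n)+{}$small error by a Ramar\'e-type identity, where $\alpha$ is supported on primes in $(H^{\eta^2},H^{1/2}]$. This bilinear (type $II$) structure is what restores log-power savings: the prime-supported factor satisfies Vinogradov--Korobov pointwise bounds already for $|t|\ge\log^{B}X$, and the short-versus-long comparison then follows from the type $II$ major arc estimate, Lemma~\ref{le:typeIImajor}(i), whose proof rests on the Baker--Harman--Pintz parallelogram lemma (Lemma~\ref{le:BHP}) rather than on unknown high moments of $L$-functions. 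If you want to salvage your write-up, you need to insert this (or an equivalent) bilinear decomposition before applying Parseval; as stated, the intermediate $t$-range for $k\ge3$ is a genuine unproved step, not a technicality.
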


 We remark that the strong bound on the exceptional set in this lemma is important in what follows, and hence we cannot apply the results of~\cite{sun},~\cite{mangerel} on short sums of $d_k$ that work in much shorter intervals. We also note that in this section we only need the $q=1$ case of the lemma, whereas in Section~\ref{gowers-sec} we need the general case.   

\begin{proof}[Proof of Lemma~\ref{le:dkshort}]
We can clearly assume that $\eta$ is fixed and small in terms of $\varepsilon$. By the union bound, it suffices to show that~\eqref{eq:dkmaximum} holds without the maximum for any given coprime $a,q$ in this range.

Fix large $A\geq 1$ and small $\varepsilon>0$. Let $H_1=X/\log^{1000A} X$. 
We first claim that for $x\in [qX,3qX]$ we have
\begin{align}\label{eq:dkclaim}
\left|\sum_{\substack{x<n\leq x+qH_1\\n\equiv a\pmod q}}\left(\left(\frac{q}{\varphi(q)}\right)^{k-1}d_k(n)-\frac{\log^{k-1}X}{(k-1)!}\right)\right|\leq \frac{\eta}{100}H_1\log^{k-2+\varepsilon}X.   
\end{align}
This estimate follows from standard estimates for the $d_k$ divisor function in arithmetic progressions; for example~\cite{FI-divisor} gives for coprime $1\leq a\leq q\leq x^{\delta_k}$ the estimate 
\begin{align} \label{eq:twopoints}
\sum_{\substack{n\leq x\\n\equiv a\pmod q}} d_k(n)=\frac{x}{\varphi(q)} \sum_{\substack{n \leq x \\ (n,q) = 1}} d_k(n) + O(x^{1 - \delta_k}) = \frac{x}{\varphi(q)} P_{k,q}(\log x) + O(x^{1 - \delta_k})
\end{align}
for some $\delta_k>0$ and $P_{k,q}$ given by, 
$$
P_{k,q}(t) = \text{Res}_{s = 1} \Big ( \prod_{p | q} \Big (1 - \frac{1}{p^s} \Big )^k \zeta(s)^k e^{(s - 1) t} \Big ).
$$
Since $\zeta(s) =(s - 1)^{-1} + O(1)$ in the neighborhood of $s = 1$, using Cauchy's formula on the circle $|z - 1| = 1 / t$ we conclude that $P_{k,q}$ is a polynomial of degree $k-1$, and uniformly in $1 \leq q \leq x$ and $t \geq 1$, 
$$
P_{k,q}(t) = \Big ( \frac{\varphi(q)}{q} \Big )^{k} \frac{t^{k - 1}}{(k - 1)!} + \sum_{j = 0}^{k - 2} c_{j,q} t^{j} \ , \ |c_{j,q}| \ll \Big ( \frac{q}{\varphi(q)} \Big )^{10 k}. 
$$
The claim~\eqref{eq:dkclaim} follows by subtracting \eqref{eq:twopoints} at two points, and using the above formula for $P_{k,q}(t)$ together with the fact that $q / \varphi(q) \ll \log\log x$ for any $1 \leq q \leq x$. 

Now, by~\eqref{eq:dkclaim}, it suffices to show that 
\begin{align*}
\left|\sum_{\substack{x<n\leq x+H\\n\equiv a\pmod q}}d_k(n)-\frac{H}{H_1}\sum_{\substack{x<n\leq x+H_1\\n\equiv a\pmod q}}d_k(n)\right|\leq \frac{\eta}{2} H\log^{k-1}X    
\end{align*}
for all $x\in [X,2X]$ outside of a set of measure $O_{A,\varepsilon,\eta,k}(X\log^{-A}X)$. Let $\mathcal{S}$ be the set of integers having at least one prime factor in $(H^{\eta^2},H^{1/2}]$. 
 Applying Shiu's bound (Lemma~\ref{le:shiu}) to the multiplicative function $d_k\cdot (1-1_{\mathcal{S}})$ (recalling that we can assume that $\eta$ is small in terms of $\varepsilon$ and thus small compared to the implied constant in Shiu's bound), we reduce to showing that 
\begin{align*}
\left|\sum_{\substack{x<n\leq x+H\\n\equiv a\pmod q}}d_k(n)1_{\mathcal{S}}(n)-\frac{H}{H_1}\sum_{\substack{x<n\leq x+H_1\\n\equiv a\pmod q}}d_k(n)1_{\mathcal{S}}(n)\right|\leq \frac{\eta}{3} H\log^{k-1}X    
\end{align*}
with the same exceptional set bound. Using the orthogonality of characters, it is enough to show the estimate
\begin{align}\label{eq:dkshort}
\left|\sum_{x<n\leq x+H}d_k(n)\chi(n)1_{\mathcal{S}}(n)-\frac{H}{H_1}\sum_{x<n\leq x+H_1}d_k(n)\chi(n)1_{\mathcal{S}}(n)\right|\leq \frac{\eta}{3} H\log^{k-1}X    
\end{align}
for all Dirichlet characters $\chi\pmod q$.

Denoting by $\omega_{(P_1,P_2]}(n)$ the number of prime factors of $n$ from $(P_1,P_2]$, we can write
\begin{align}\label{eq:dk1S2}\begin{split}
d_k(n)\chi(n)1_{\mathcal{S}}(n)&=\sum_{\substack{n=pm\\H^{\eta^2}<p\leq H^{1/2}}}\frac{k\chi(p)d_k(m)\chi(m)}{1+\omega_{(H^{\eta^2},H^{1/2}]}(m)}+O(d_k(n)1_{p^2\mid n\,\, \textnormal{for some}\,\, p\in (H^{\eta^2},H^{1/2}]})\\
&\coloneqq (\alpha*\beta)(n)+O(d_k(n)1_{p^2\mid n\,\, \textnormal{for some}\,\, p\in (H^{\eta^2},H^{1/2}]}), 
\end{split}
\end{align}
where $\alpha(n)=k\chi(n)1_{\mathbb{P}\cap (H^{\eta^2},H^{1/2}]}$ and $|\beta(n)|\ll d_k(n)$. The error term in~\eqref{eq:dk1S2} has a negligible contribution to the left-hand side of~\eqref{eq:dkshort} by the divisor bound, so it suffices to show that
\begin{align*}
\left|\sum_{x<n\leq x+H}(\alpha*\beta)(n)-\frac{H}{H_1}\sum_{x<n\leq x+H_1}(\alpha*\beta)(n)\right|\leq \frac{\eta}{4} H\log^{k-1}X    
\end{align*}
for all $x\in [X,2X]$ outside of a set of measure $O_{A,\varepsilon,\eta,k}(X\log^{-A}X)$.
But this follows immediately from our type $II$ major arc estimate (Lemma~\ref{le:typeIImajor}(i)), using Lemma~\ref{le:vin-kor} and partial summation to verify the assumption there.  
\end{proof}

Utilizing Lemma~\ref{le:dkshort}, we can prove the following $d_k$-weighted (dual) Tur\'an--Kubilius inequality in short intervals, which quickly leads to Lemma~\ref{comb-mu}. 

\begin{lemma}[Tur\'an--Kubilius in short intervals with divisor function weight]\label{le:turan} Let $X\geq 3$,   $k\in \mathbb{N}$, $A>0$ and $\varepsilon,\varepsilon_1,\varepsilon_2\in (0,1/10)$. Also let $X^{\varepsilon}\leq H\leq X^{1-\varepsilon}$ and suppose that $\varepsilon_1\leq \varepsilon_2 e^{-2k}$. Then we have
\begin{align}
\label{eq:T-KClaim}
\sum_{x<n\leq x+H}d_k(n)\left|\sum_{X^{\varepsilon_1}<p\leq X^{\varepsilon_2}}1_{p\mid n}-\sum_{X^{\varepsilon_1}<p\leq X^{\varepsilon_2}}\frac{k}{p}\right|\leq 20\sqrt{k}H(\log^{k-1} X)\left(\log \frac{\varepsilon_2}{\varepsilon_1}\right)^{1/2}
\end{align}
for all $x\in [X,2X]$ outside of a set of measure $O_{A,\varepsilon,\varepsilon_1,\varepsilon_2,k}(X\log^{-A}X)$.    
\end{lemma}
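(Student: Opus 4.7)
The plan is to prove this weighted Turán--Kubilius inequality via a Cauchy--Schwarz reduction to a second-moment estimate, which is then evaluated using the combination of Lemma~\ref{le:dkshort} (long-interval averages of $d_k$) and Lemma~\ref{le:typeIImajor}(i) (comparing short-interval to long-interval sums of type $II$ convolutions).

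Setting $S = (X^{\varepsilon_1}, X^{\varepsilon_2}] \cap \mathbb{P}$, $\omega_S(n) = \sum_{p \in S} 1_{p\mid n}$ and $\mu_S = k \sum_{p \in S} 1/p$, I would first apply the weighted Cauchy--Schwarz inequality $\sum d_k(n)|\omega_S - \mu_S| \leq (\sum d_k(n))^{1/2} (\sum d_k(n)(\omega_S - \mu_S)^2)^{1/2}$ to reduce matters to controlling both the weight total and the weighted variance. The first factor, $\sum_{x<n\leq x+H} d_k(n)$, is bounded by $(1+o(1)) H \log^{k-1} X/(k-1)!$ for almost all $x$ by Lemma~\ref{le:dkshort} with $q=1$.

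For the second factor I would expand $(\omega_S - \mu_S)^2 = \omega_S^2 - 2\mu_S \omega_S + \mu_S^2$ and write the sum as $F_2(x) - 2\mu_S F_1(x) + \mu_S^2 F_0(x)$, where $F_j(x) = \sum_{x<n\leq x+H} d_k(n) \omega_S(n)^j$. Since $F_0$ is handled as above, the task is to evaluate $F_1$ and $F_2$. By swapping summation and writing $n=pm$ or $n=p_1p_2 m$, one uses $d_k(pm) = k\, d_k(m)$ when $p \nmid m$ (with the exceptional $p^2\mid n$ contribution negligible via Shiu's bound since $p > X^{\varepsilon_1}$) to identify $F_1$ with $k \sum_{x<n\leq x+H}(1_{S\cap \mathbb{P}} * d_k)(n)$, and the ``off-diagonal'' part of $F_2$ with $k^2 \sum_{x<n\leq x+H}(1_{S_2} * d_k)(n)$ where $S_2 = \{p_1 p_2 : p_1\neq p_2 \in S\}$. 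These are type $II$ convolutions to which I would apply the second variant of Lemma~\ref{le:typeIImajor}(i) (which only requires the first estimate in~\eqref{eq:supbound} and allows $M \leq HX^{-\varepsilon/2}$), after splitting the support of $a(m) = 1_{S\cap\mathbb{P}}(m)$ (respectively $1_{S_2}(m)$) into dyadic ranges, using Lemma~\ref{le:vin-kor} with $f=\Lambda$ together with partial summation to verify the Dirichlet polynomial hypothesis on $a$. This compares $F_1(x)$, $F_2(x)$ to $(H/H^*)$ times the corresponding sums over an auxiliary long interval $H^* = X^{0.99}$ (or similar), where a routine Dirichlet series / partial summation evaluation yields the expected asymptotics $F_1(x) \approx \mu_S H \log^{k-1} X/(k-1)!$ and $F_2(x) \approx (\mu_S + \mu_S^2) H \log^{k-1} X/(k-1)!$. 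Substituting, the $\mu_S^2$ contributions cancel and the weighted variance is bounded by $\leq (1+o(1)) \mu_S H \log^{k-1} X /(k-1)! \leq 2 k\log(\varepsilon_2/\varepsilon_1) H \log^{k-1} X$ for almost all $x$ (the condition $\varepsilon_1 \leq \varepsilon_2 e^{-2k}$ is precisely what ensures $\mu_S$ is non-negligible compared to the cross-term corrections of order $k^2 \sum 1/p^2$).

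Combining the two factors via Cauchy--Schwarz yields the bound $2\sqrt{k \log(\varepsilon_2/\varepsilon_1)} H \log^{k-1} X$, which is well within the claimed $20\sqrt{k}(\log(\varepsilon_2/\varepsilon_1))^{1/2} H \log^{k-1} X$; the exceptional set arises as the union of the exceptional sets from Lemma~\ref{le:dkshort} and from the applications of Lemma~\ref{le:typeIImajor}(i), each of measure $O(X\log^{-A'}X)$ with $A'$ chosen large enough in terms of $A$, so the total measure remains $O_{A,\varepsilon,\varepsilon_1,\varepsilon_2,k}(X\log^{-A}X)$. The main obstacle is ensuring that the short-to-long comparison of the type $II$ sums appearing in $F_1$ and $F_2$ can be carried out with a single exceptional set rather than a separate one for each prime $p$ (since a naive union bound over the $\gg X^{\varepsilon_2}$ primes would be ruinous); this is exactly the role played by the Parseval-based formulation of Lemma~\ref{le:typeIImajor}(i), which packages all primes in the dyadic range into one Dirichlet polynomial mean square and therefore yields a single good exceptional set for the entire sum at once.
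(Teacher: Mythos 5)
Your overall architecture (Cauchy--Schwarz to a weighted second moment, single exceptional set via almost-all information rather than a union bound over primes) is in the right spirit, but the proposal has a genuine gap at its central step: the claim that in $F_2-2\mu_S F_1+\mu_S^2 F_0$ ``the $\mu_S^2$ contributions cancel'' and the variance is $(1+o(1))\mu_S H\log^{k-1}X/(k-1)!$. The main terms of these three quantities are not multiples of the same power of the same logarithm: the off-diagonal part of $F_2$ has main term proportional to $\log^{k-1}(X/(p_1p_2))$, the cross term to $\log^{k-1}(X/p)$, and $\mu_S^2F_0$ to $\log^{k-1}X$, so the cancellation is not termwise. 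Bounding the mismatch by the triangle inequality gives an error of order $k(k-1)\varepsilon_2\,\mu_S^2\,H\log^{k-1}X/(k-1)!$, and since $\mu_S\asymp k\log(\varepsilon_2/\varepsilon_1)$ can be arbitrarily large (this is exactly the situation in the application, where $\varepsilon_1=\exp(-\eta^{-2})$ while $\varepsilon_2\asymp\varepsilon$ is fixed), this error swamps the claimed main term $\mu_S$. To salvage your route you would need either a genuine second-order expansion showing that the first-order logarithmic shifts cancel among the three terms, leaving only $O_k(\varepsilon_2^2)$ (which, together with the $(k-1)!$ slack, would still meet the stated constant), or the device the paper actually uses: truncate the prime range at $X^{\varepsilon_2'}$ with $\varepsilon_2'=\varepsilon_2\sqrt{k}(\log(\varepsilon_2/\varepsilon_1))^{-1/2}$, handle the primes in $(X^{\varepsilon_2'},X^{\varepsilon_2}]$ by a first-moment (Mertens) argument, and apply Cauchy--Schwarz only below $X^{\varepsilon_2'}$, where the telescoped quantity $\log^{k-1}\frac{X}{p_1p_2}-\log^{k-1}\frac{X}{p_1}-\log^{k-1}\frac{X}{p_2}+\log^{k-1}X$ is controlled by the mean value theorem. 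As written, your argument does neither, and your parenthetical explanation of the hypothesis $\varepsilon_1\leq\varepsilon_2e^{-2k}$ (guarding against ``cross-term corrections of order $k^2\sum 1/p^2$'') addresses a harmless error term rather than this one; in the paper that hypothesis serves to make $\varepsilon_2'\leq\varepsilon_2$ and to control the constants.

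A secondary issue is the evaluation mechanism itself. You propose to estimate $F_1,F_2$ via the second variant of Lemma~\ref{le:typeIImajor}(i), which requires the dyadic support $M$ of the short factor to satisfy $M\leq H_1X^{-\varepsilon'}$; for $F_2$ the short factor runs up to $X^{2\varepsilon_2}$, so this needs roughly $2\varepsilon_2<\varepsilon$, a relation not assumed in the lemma (which allows $\varepsilon_2$ up to $1/10$ while $H=X^{\varepsilon}$ with $\varepsilon$ tiny), and the first variant is unavailable since $H$ need not exceed $X^{1/3+\varepsilon}$. The paper sidesteps this by never forming a type $II$ convolution against the interval of length $H$: it rescales, writing $d_k(n)1_{p_1p_2\mid n}\approx k^2d_k(n/(p_1p_2))$ and invoking Lemma~\ref{le:dkshort} at the locations $x/p$ and $x/(p_1p_2)$, with the pruned exceptional set $\mathcal{E}$ (quantified over all dyadic moduli $r\leq H^{0.9}$) packaging all primes and prime pairs into one bad set; you would need an analogous device, or an explicit restriction on $\varepsilon_2$ relative to $\varepsilon$, to make the type $II$ comparison legitimate in the stated generality.
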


\begin{proof}
We may assume that $X$ is large enough in terms of $A,\varepsilon,\varepsilon_1,\varepsilon_2,k$ and that $A$ is large enough in terms of $k$.

For any $Y\geq H'\geq 2$, let $\mathcal{E}(Y,H')$ be the set of $x\in (Y,2Y]$ for which
\begin{align*}
\left|\sum_{x<n\leq x+H'}d_k(n)-\frac{1}{(k-1)!}H'\log^{k-1}x\right|\geq \frac{1}{100k^2}\left(\log \frac{\varepsilon_2}{\varepsilon_1}\right)^{-1} H'\log^{k-1}Y. \end{align*}
Then, by Lemma~\ref{le:dkshort} with $q=1$, for any $Y \geq 2$ and $H'\in [Y^{\varepsilon/10},Y]$ we have
\begin{align}
\label{eq:EY}\meas(\mathcal{E}(Y,H'))\ll_{A,\varepsilon,\varepsilon_1,\varepsilon_2,k}Y\log^{-100A} Y.     
\end{align}
Define
\begin{align*}
\mathcal{E}=\{x\in [X,2X]\colon \max_{\frac{1}{2}\leq M\leq H^{0.9}}\frac{1}{M}|\{r\in (M,2M]\cap \mathbb{N}\colon \frac{x}{r}\in \mathcal{E}(X/r,H/r)\}|>\log^{-10A}X\}.     
\end{align*}
By the union bound, Markov's inequality and~\eqref{eq:EY}, we have
\begin{align*}
\meas(\mathcal{E})&\ll (\log X)\max_{M\in [\frac{1}{2},H^{0.9}]}\frac{\log^{10A}X}{M}\sum_{M<r\leq 2M}\meas(\{x\in [X,2X]\colon x/r\in \mathcal{E}(X/r,H/r)\})\\
&\ll (\log^{10A+1} X)\max_{r\in [1,H^{0.9}]}r\cdot \meas\left(\mathcal{E}\left(\frac{X}{r},\frac{H}{r}\right)\right)\\
&\ll_{A,\varepsilon,\varepsilon_1,\varepsilon_2,k}X\log^{-A}X.\end{align*}

Now we work with $x \in [X,2X]\setminus \mathcal{E}$. Let us first dispose of the large primes on the left hand side of~\eqref{eq:T-KClaim}. We let 
\[
\varepsilon_2' = \frac{k\varepsilon_2}{\log \frac{\varepsilon_2}{\varepsilon_1}}. 
\]
Note that since $\varepsilon_1 \leq \varepsilon_2 e^{-2k},$ we have $\varepsilon_2' \leq \varepsilon_2$. We have chosen $\varepsilon_2'$ in such a way that (using the inequality $\log t \leq t^{1/2}$ for $t>0$ on the second line)
\begin{align}
\label{eq:vareps2b1} 
\varepsilon_2' &= \frac{k\varepsilon_2}{\log \frac{\varepsilon_2}{\varepsilon_1}} \leq \frac{k}{10\log \frac{\varepsilon_2}{\varepsilon_1}}, \\
\label{eq:vareps2b2} 
\log \frac{\varepsilon_2}{\varepsilon_2'} &= \log\frac{\log \frac{\varepsilon_2}{\varepsilon_1}}{k} \leq \frac{1}{k^{1/2}} \left(\log \frac{\varepsilon_2}{\varepsilon_1}\right)^{1/2}.
\end{align}
Now, using that $x \in [X, 2X] \setminus \mathcal{E}$, Mertens' theorem and~\eqref{eq:vareps2b2} as well as Shiu's bound for $p$ such that $x/p \in \mathcal{E}(X/p, H/p)$, we see that the contribution of primes $p \in (X^{\varepsilon_2'}, X^{\varepsilon_2}]$ to the left hand side of~\eqref{eq:T-KClaim} is at most
\begin{align*}
&3k \sum_{\substack{X^{\varepsilon_2'} < p \leq X^{\varepsilon_2}}} \frac{H}{p} \log^{k-1} X + O_\varepsilon\left(k \sum_{\substack{X^{\varepsilon_2'} < p \leq X^{\varepsilon_2} \\ x/p \in \mathcal{E}(X/p, H/p)}} \frac{H}{p} \log^{k-1} X\right) \\
&\leq 4kH\log^{k-1} X \log \frac{\varepsilon_2}{\varepsilon_2'} \leq 4\sqrt{k}H\log^{k-1} X \left(\log \frac{\varepsilon_2}{\varepsilon_1}\right)^{1/2}.
\end{align*}
Notice that we have to be careful when we apply Shiu's bound since the implied constant depends on $\varepsilon$, but this is not a problem when we save powers of $\log X$ as we can assume that $X$ is large in terms of $\varepsilon$.

Now we turn to the contribution of primes $p \in (X^{\varepsilon_1}, X^{\varepsilon_2'}]$ to~\eqref{eq:T-KClaim}. We apply the Cauchy--Schwarz inequality (using again that $x \in [X, 2X] \setminus{\mathcal{E}}$) to reduce matters to proving
\begin{align*}
\sum_{x<n\leq x+H}d_k(n)\left|\sum_{X^{\varepsilon_1}<p\leq X^{\varepsilon_2'}}1_{p\mid n}-\sum_{X^{\varepsilon_1}<p\leq X^{\varepsilon_2'}}\frac{k}{p}\right|^2\leq 100kH(\log^{k-1} X)\log \frac{\varepsilon_2'}{\varepsilon_1}.
\end{align*}
Opening the square, the claimed estimate becomes
\begin{align*}
\sum_{X^{\varepsilon_1}<p_1,p_2\leq X^{\varepsilon_2'}}\sum_{x<n\leq x+H}d_k(n)\left(1_{p_1\mid n}-\frac{k}{p_1}\right)\left(1_{p_2\mid n}-\frac{k}{p_2}\right)\leq 100 kH(\log^{k-1} X)\log \frac{\varepsilon_2'}{\varepsilon_1}.
\end{align*}
For any prime $p$ dividing $n$, we have $d_k(n)=kd_k(n/p)+O(d_k(n)1_{p^2\mid n})$. Using this together with the inequality $(a-b)^2\leq 2(a^2+b^2)$ and the assumption $x\in [X,2X]\setminus \mathcal{E}$, the contribution of $p_1=p_2$ to the sum above is crudely
\begin{align*}
&\leq 2\sum_{X^{\varepsilon_1}<p\leq X^{\varepsilon_2'}}\sum_{x<n\leq x+H}d_k(n)\left(1_{p\mid n}+\frac{k^2}{p^2}\right)\\
&=2k\sum_{X^{\varepsilon_1}<p\leq X^{\varepsilon_2'}}\sum_{x/p<m\leq (x+H)/p}d_k(m)+O_k\left(HX^{-\varepsilon_1+o(1)}\right)\\
&\leq 3kH(\log^{k-1}X)\sum_{X^{\varepsilon_1}<p\leq X^{\varepsilon_2'}}\frac{1}{p}+O_\varepsilon\left(k \sum_{\substack{X^{\varepsilon_1} < p \leq X^{\varepsilon_2'} \\ x/p \in \mathcal{E}(X/p, H/p)}} \frac{H}{p} \log^{k-1}X\right)+O_k\left(HX^{-\varepsilon_1+o(1)}\right)\\
&\leq 4kH(\log^{k-1}X) \log\frac{\varepsilon_2'}{\varepsilon_1}.    
\end{align*}

In view of this and Mertens' theorem, it suffices to show that, for any $P_1,P_2\in [X^{\varepsilon_1}, X^{\varepsilon_2'}]$, we have have for all but $\ll P_1P_2\log^{-A}X$ pairs of distinct primes $p_1\sim P_1, p_2\sim P_2$ the estimate 
\begin{align}\label{eq:dkp1p2}
\sum_{x<n\leq x+H}d_k(n)\left(1_{p_1\mid n}-\frac{k}{p_1}\right)\left(1_{p_2\mid n}-\frac{k}{p_2}\right)\leq 95\left(\log \frac{\varepsilon_2'}{\varepsilon_1}\right)^{-1}\frac{H}{p_1p_2}\log^{k-1} X,
\end{align}
say. Using again $d_k(n)=kd_k(n/p)+O(d_k(n)1_{p^2\mid n})$ for a prime $p\mid n$, we see that the left-hand side of~\eqref{eq:dkp1p2} is
\begin{align*}
&k^2\sum_{x/(p_1p_2)<m\leq (x+H)/(p_1p_2)}d_k(m)-\frac{k^2}{p_1}\sum_{x/p_2<m\leq (x+H)/p_2}d_k(m)-\frac{k^2}{p_2}\sum_{x/p_1<m\leq (x+H)/p_1}d_k(m)\\
&+\frac{k^2}{p_1p_2}\sum_{x<m\leq x+H}d_k(m)+O(HX^{o(1)-\varepsilon_1}). \end{align*}
Since $x\in [X,2X]\setminus \mathcal{E}$, for all but $\ll P_1P_2\log^{-A}X$ pairs of distinct primes $p_1\sim P_1, p_2\sim P_2$ the main term is
\begin{align*}
\leq k^2 \frac{H}{(k-1)!p_1p_2} \left(\log^{k-1} \frac{X}{p_1 p_2} - \log^{k-1} \frac{X}{p_2} - \log^{k-1} \frac{X}{p_1} + \log^{k-1} X\right) + \left(\log \frac{\varepsilon_2'}{\varepsilon_1}\right)^{-1} \frac{H}{p_1p_2}\log^{k-1}X. \end{align*}
By the mean value theorem and~\eqref{eq:vareps2b1}, here
\begin{align*}
&\log^{k-1} \frac{X}{p_1 p_2} - \log^{k-1} \frac{X}{p_2} - \log^{k-1} \frac{X}{p_1} + \log^{k-1} X \leq 2(k-1) \log X^{\varepsilon_2'} \log^{k-2} X \\
&\leq 2k^{2} \left(\log \frac{\varepsilon_2'}{\varepsilon_1}\right)^{-1} \log^{k-1} X,
\end{align*}
and the desired bound follows since $\frac{2k^{2+2}}{(k-1)!} \leq 86$ for all $k \in \mathbb{N}$.
\end{proof}

We are now ready to prove Lemma~\ref{comb-mu}.

\begin{proof}[Proof of Lemma~\ref{comb-mu}] Let $\kappa=\exp(-\eta^{-2})$, and let $c_g$ and $Y_g$ be as in Lemma~\ref{comb-mu}. Note that $Y_g\asymp \eta^{-2}$. Then, by Lemma~\ref{le:turan}, for any complex numbers $\omega_n$ with $|\omega_n|\leq 1$ we have
\begin{align*}
\sum_{x<n\leq x+H}(g(n)-g^{\sharp}(n))\omega_n=\frac{1}{Y_g}\sum_{x<n\leq x+H}(g(n)-g^{\sharp}(n))\sum_{X^{\kappa}<p\leq X^{\varepsilon/10}}1_{p\mid n}\omega_n+O(\eta H\log^{c_g}X)   
\end{align*}
for all $x\in [X,2X]$ outside of an exceptional set of measure $O_{A,\varepsilon,\eta, k}(X\log^{-A}X)$, proving the first claim of the lemma.

From~\eqref{eq:ramare1} and the divisor bound, we have
\begin{align*}
\sum_{x<n\leq x+H}g(n)\sum_{X^{\kappa}<p\leq X^{\varepsilon/10}}1_{p\mid n}=\sum_{x<n\leq x+H}(a*b)(n)+O(HX^{-\kappa+o(1)}),    
\end{align*}
where $a,b$ are divisor-bounded sequences and $a$ is supported on $[X^{\kappa},X^{\varepsilon/10}]$. On the other hand, from~\eqref{eq:dksharpg} we have
\begin{align*}
\sum_{x<n\leq x+H}d_k^{\sharp}(n)\sum_{X^{\kappa}<p\leq X^{\varepsilon/10}}1_{p\mid n}=\sum_{j=1}^K\sum_{x<n\leq x+H}(a_j*b_j)(n)+O(HX^{-\kappa+o(1)}),  \end{align*}
where $K\ll 1$ and each $a_j, b_j$ is divisor-bounded, $a_j$ is supported on $[1,X^{\varepsilon/5+\varepsilon/10}]$ for all $1\leq j\leq K$, and $b_j = \log^j n$. This gives the second claim of the lemma.
\end{proof}

\subsection{Deduction of Theorem~\ref{discorrelation-thm}}

In this subsection we deduce Theorem~\ref{discorrelation-thm} from Theorem~\ref{inverse}. We present this deduction in a somewhat more general framework, in order to treat all the cases at once and ease potential applications to other sequences of arithmetic interest.

\begin{theorem}[Reduction to discorrelation with Archimedean characters]\label{thm_generalseq}
Let $X \geq 3$ and $X^{\theta+\varepsilon} \leq H \leq X^{1-\varepsilon}$ for some $0 \leq \theta < 1$ and let $\eps > 0$. Let $\kappa>0$, $A\geq 1$, and $\delta \in (0, \frac{1}{(\log X)^{A}})$. Let $G/\Gamma$ be a filtered nilmanifold of some degree $d$ and dimension $D$, and complexity at most $1/\delta$, and let $F \colon G/\Gamma \to \C$ be a Lipschitz function of norm at most $1/\delta$. Let $f\colon \mathbb{N}\to \mathbb{R}$ be a function with $|f(n)|\ll (\log n)^{O(1)}d_2(n)^{O(1)}$, and suppose that one of the following holds.
\begin{itemize}
\item[(i)] $\theta\geq  1/3$ and $f$ is a sum of $O((\log X)^{A})$ functions, each of which is a $(\delta^{-1},X^{\theta+\varepsilon/2})$ type $I$ sum, a $(\delta
^{-1},X^{(3\theta-1)/2+\varepsilon/2})$ type $I_2$ sum, or a $(\delta^{-1},X^{\kappa},X^{\theta+\varepsilon/2})$ type $II$ sum.
\item[(ii)] $\theta=0$ and $f$ is a sum of $O((\log X)^{A})$ functions, each of which is a $(\delta^
{-1},X^{\theta+\varepsilon/2})$ type $I$ sum or a $(\delta^
{-1},X^{\kappa},X^{\theta+\varepsilon/2})$ type $II$ sum.
\end{itemize}
Then we have
\begin{align}\label{eq:deltaA}\begin{split}
&\sup_{g \in \Poly(\Z \to G)} \left| \sum_{x < n \leq x+H} f(n) \overline{F}(g(n)\Gamma) \right|^*\\
&\quad \ll_{A,d,D,\varepsilon,\kappa} \delta^{-O_{d,D,\kappa}(1)} \sup_{|t|\leq \delta^{-O_{d,D,\kappa}(1)}(X/H)^{d+1}}\left|\sum_{x<n\leq x+H}f(n)n^{it}\right|^{*}+\delta H
\end{split}
\end{align}
for all $x\in [X,2X]$ outside of a set of measure $O_{A,d,D,\varepsilon,\kappa}(\delta X)$.
\end{theorem}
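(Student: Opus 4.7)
The proof proceeds by induction on the dimension $D$ of $G/\Gamma$ (with $d$ fixed). The base case $D = 0$ is immediate: after subtracting the mean of $F$ (whose contribution is captured by the $t=0$ term on the right-hand side of \eqref{eq:deltaA}), the nilsequence vanishes identically. For the inductive step, we argue by contradiction: suppose \eqref{eq:deltaA} fails on a set $E \subset [X, 2X]$ of measure exceeding the claimed exceptional bound. Apply central Fourier approximation (Proposition~\ref{central}) at threshold $\delta^{100}$ to write $F = \sum_\xi F_\xi + O(\delta^{100})$; the error contributes $O(\delta H)$ via Shiu's bound (Lemma~\ref{le:shiu}) and is absorbed in the additive $\delta H$. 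Pigeonhole over the $O(\delta^{-O(1)})$ choices of central frequency $\xi$ (losing a $\delta^{O(1)}$ factor of measure) to assume $F$ oscillates with a single central frequency $\xi$. If $\xi = 0$, then $F$ factors through the lower-dimensional nilmanifold $G/Z(G)$ via Lemma~\ref{quotient-normal}, and we invoke the inductive hypothesis. Otherwise, quotient by $\ker \xi \cap Z(G)$ (a normal rational subgroup of complexity $O(\delta^{-O(1)})$) to reduce to $\dim Z(G) = 1$.

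Suppose first $G$ is nonabelian. Decompose $f = \sum_j f_j$ into its $O((\log X)^A)$ type $I$/$I_2$/$II$ components, and apply the matching parts of Theorem~\ref{inverse} — parts (i), (iv), (ii) respectively. The parameter ranges $A_I \leq X^{\theta + \varepsilon/2}$, $A_{I_2} \leq X^{(3\theta-1)/2 + \varepsilon/2}$, $A_{II}^+ \leq X^{\theta + \varepsilon/2}$, combined with $H \geq X^{\theta + \varepsilon}$ and $\delta \geq (\log X)^{-A}$, exclude the small-$H$ alternatives of the inverse theorems. Hence each $f_j$ produces (on a subset of $E$ of measure $\gg \delta^{O(1)} X$) a nontrivial horizontal character $\eta_j$ of $G$, of Lipschitz norm $\ll \delta^{-O(1)}$, satisfying $\|\eta_j \circ g_x\|_{C^\infty((x,x+H])} \ll \delta^{-O(1)}$. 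Pigeonholing on $j$ and over the $O(\delta^{-O(1)})$ choices of $\eta_j$ yields a single $\eta$. By Lemma~\ref{factor-simple}, factorize $g_x = \epsilon_x g'_x \gamma_x$ with $g'_x \in \ker \eta$, $\epsilon_x$ smooth, $\gamma_x$ rational. Pigeonhole on the $O(\delta^{-O(1)})$ rational classes of $\gamma_x \Gamma$ (splitting the sum into arithmetic progressions modulo the period), and absorb $\epsilon_x$ into a modified Lipschitz function on $\ker\eta/(\ker\eta\cap\Gamma)$ by a standard approximation using smoothness of $\epsilon_x$ (as in the proof of \cite[Theorem 4.2]{MSTT-all}). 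We have reduced to a correlation on a nilmanifold of dimension $D - 1$, and the inductive hypothesis closes the argument.

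Suppose now $G$ is abelian, so $G/\Gamma$ is a torus and $F_\xi(g_x(n)\Gamma) = c_\xi e(P_x(n))$ with $P_x \coloneqq \xi \circ g_x$. For the type $II$ components of $f$, Theorem~\ref{inverse}(iii) supplies $|T| \ll \delta^{-O(1)}(X/H)^{d+1}$ and $q \ll \delta^{-O(1)}$ with $\|e(P_x(n)) n^{-iT}\|_{\TV((x,x+H]\cap\Z;q)} \ll \delta^{-O(1)}$; summation by parts (Lemma~\ref{basic-prop}(ii)) then bounds the correlation by $\delta^{-O(1)} |\sum_{n} f(n) n^{iT}|^*$, matching the right-hand side of \eqref{eq:deltaA}. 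For the type $I$ and $I_2$ components, Theorem~\ref{inverse}(i),(iv) yields a horizontal character $\eta$ with $\|\eta \circ g_x\|_{C^\infty} \ll \delta^{-O(1)}$. If $\eta$ is not a scalar multiple of $\xi$, dimension reduction via $\ker \eta$ (on which $\xi$ restricts to a possibly-trivial character) together with induction handles this case. If $\eta = k\xi$ for some nonzero integer $k$, the factorization $g_x = \epsilon_x g'_x \gamma_x$ (with $g'_x \in \ker\eta = \ker\xi$, using abelianness) gives
\[
e(P_x(n)) = e(\xi(\epsilon_x(n))) \cdot e(\xi(\gamma_x(n))),
\]
where the first factor has $\TV$-norm $\ll \delta^{-O(1)}$ (by smoothness of $\epsilon_x$ and Lipschitz control of $\xi$) and the second is periodic in $n$ of period $\ll \delta^{-O(1)}$. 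Splitting the sum over arithmetic progressions on which the periodic factor is constant, and applying Lemma~\ref{basic-prop}(ii), bounds the correlation by $\delta^{-O(1)} |\sum_n f(n)|^* = \delta^{-O(1)} |\sum_n f(n) n^{i \cdot 0}|^*$, again fitting the right-hand side.

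The main obstacle is the careful tracking of exceptional set measures and the constants $\delta^{-O_{d,D,\kappa}(1)}$ through the $O_{d,D}(1)$ nested inductive reductions and the repeated $O(\delta^{-O(1)})$ pigeonholing steps, ensuring that the cumulative losses remain compatible with the theorem's stated bounds. The most delicate sub-case is the abelian type $I$/$I_2$ case with $\eta$ a multiple of $\xi$: here one cannot invoke Theorem~\ref{inverse}(iii) directly, and the Archimedean character on the right-hand side (with $t = 0$) must instead be extracted via the algebraic factorization of $e(P_x(n))$ into a bounded-TV perturbation of a Dirichlet-character-type function, which is then absorbed into the definition \eqref{maximal-sum} of $|\cdot|^*$.
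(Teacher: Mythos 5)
Your proposal follows essentially the same route as the paper's proof: induction on the dimension of $G/\Gamma$, central Fourier reduction to a one-dimensional center with nonzero central frequency, pigeonholing to a single combinatorial component, extracting the Archimedean character $n^{iT}$ from the abelian type $II$ case via Theorem~\ref{inverse}(iii), and in all other cases obtaining a horizontal character and performing the smooth/rational factorization of $g_x$ to reduce the dimension and invoke the inductive hypothesis. One small caveat: your stated reason for excluding the small-$H$ alternatives of Theorem~\ref{inverse} invokes ``$\delta \geq (\log X)^{-A}$'', but the hypothesis is $\delta \leq (\log X)^{-A}$; ruling out $H \ll \delta^{-O(1)} X^{\theta+\varepsilon/2}$ actually requires $\delta^{-O(1)} \leq X^{\varepsilon/2}$, which holds in all of the theorem's applications (where $\delta$ is a negative power of $\log X$ or $X^{-c}$ with $c$ chosen small) and is left implicit in the paper as well.
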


\begin{proof}[Proof of Theorem~\ref{discorrelation-thm}] We may assume that $A>0$ is large in terms of $\varepsilon,d,D$ in Theorem~\ref{discorrelation-thm}. Now, parts (i)--(iii) of the theorem  follow by applying Theorem~\ref{thm_generalseq}(i) (with $\delta=\log^{-A} X$ or $\delta=X^{-c_{k,d,D,\varepsilon}}$), Lemma~\ref{comb-lambda},~\eqref{eq:lambdasharp2}, and Theorem~\ref{discorrelation-thm-major}(i). Parts (iv)--(v), in turn, follow by  applying Theorem~\ref{thm_generalseq}(ii) (with $\delta=\log^{-A} X$), Lemma~\ref{comb-mu}, and Theorem~\ref{discorrelation-thm-major}(ii).
\end{proof}

\begin{proof}[Proof of Theorem~\ref{thm_generalseq}]
Let $X^{\theta + \eps} \leq H \leq X^{1-\eps}$ and $\eps > 0$, and let $A>0$ be large in terms of $d,D,\varepsilon,\kappa$. For brevity, we allow all implied constants in $O(1)$ notation to depend on $d,D,\kappa$, and in $\gg$ notation to depend on $A,d,D,\varepsilon,\kappa$. We may assume that $H$ is an integer.  

We use induction on the dimension $D$ of $G/\Gamma$. In view of the triangle inequality, we may assume that $F$ has mean zero (by decomposing $F=(F-\int F)+\int F$). In particular, this shows that the case $D=0$ of the theorem holds, so we now assume that $D>0$ and that the theorem has already been established for smaller values of $D$. In view of Proposition~\ref{central} and Lemma~\ref{le:shiu}, we may assume that $F$ oscillates with a central frequency $\xi \colon Z(G) \rightarrow \R$ of Lipschitz norm at most $\delta^{-O(1)}$. If the center $Z(G)$ has dimension larger than $1$, or $\xi$ vanishes, then the conclusion follows from induction hypothesis applied to $G/\ker\xi$ (via Lemma~\ref{quotient-normal}). Henceforth we assume that $G$ has one-dimensional center and that $\xi$ is non-zero. (A zero-dimensional center is not possible since $G$ is nilpotent and non-trivial.) 

After these reductions (and adjusting the value of $A$), we may assume for the sake of contradiction that the following holds:  There exists a nilmanifold $G/\Gamma$ of complexity at most $1/\delta$ and $F$ of Lipschitz norm at most $1/\delta$ such that
\begin{equation}\label{eq:discorLamClaimRev}
\left| \sum_{x < n \leq x+H} f(n) F(g_x(n) \Gamma) \right|^*\gg \delta H
\end{equation}
for all $x$ in a subset $E$ of $[X,2X]$ of measure $\gg \delta X$ where, for each $x \in E$, $g_x \colon \Z \to G$ is a polynomial map. Thanks to Remark~\ref{rmk:measurability}, we may assume that $x\mapsto g_x$ is constant on intervals of the form $[m,m+1)$ with $m\in \mathbb{N}$.  

Using the pigeonhole principle, for some type $I$, type $I_2$ or type $II$ sum $h$ as in the theorem, one has the bound
\begin{equation}\label{eq:discorLamClaim-3}
\left| \sum_{x < n \leq x+H} h(n) F(g_x(n) \Gamma) \right|^* \gg \delta^{2} H
\end{equation}
for all $x$ in a subset $E'$ of $E$ of measure $\gg \delta^{2} X$.

Consider first the case when $h$ is a $(\delta^{-1}, X^{\kappa}, O(X^{\theta+\varepsilon/2}))$ type $II$ sum and $G$ is abelian, hence one-dimensional since $G=Z(G)$.  Then we may identify $G/\Gamma$ with the standard circle $\R/\Z$ (increasing the Lipschitz constants for $F$, $\xi$ by $O(\delta^{-O(1)})$ if necessary) and $\xi$ with an element of $\Z$ of magnitude $O(\delta^{-O(1)})$, and we can write
$$ F(y) = C e(\xi y)$$
for some $C = O(\delta^{-O(1)})$ and all $y \in \R/\Z$.  We can write $\xi \cdot g_x(n) \Gamma = P_x(n) \hbox{ mod } 1$ for some polynomial $P_x \colon \Z \rightarrow \R$ of degree at most $d$, thus by~\eqref{eq:discorLamClaimRev},~\eqref{eq:discorLamClaim-3} we have
\begin{equation}\label{fepn}
\left| \sum_{x < n \leq x+H} f(n) e(-P_x(n)) \right|^* \gg \delta^{O(1)} H
\end{equation}
and
\begin{equation}\label{fepn-2}
\left| \sum_{x < n \leq x+H} h(n) e(-P_x(n)) \right|^* \gg \delta^{O(1)} H
\end{equation}
for all $x$ in $E'$.
Theorem~\ref{inverse}(iii) and~\eqref{fepn-2} imply that there exists a real number $T \ll \delta^{-O(1)} (X/H)^{d+1}$ and an integer $1\leq q \ll \delta^{-O(1)}$ such that
\begin{equation*}
\| e(P_x(n)) n^{-iT} \|_{\TV((x, x+H] \cap \Z; q)} \ll\delta^{-O(1)}
\end{equation*}
for all $x$ in a subset of $E'$ of measure $\gg \delta^{O(1)} X$. By Lemma~\ref{basic-prop}(ii) and \eqref{fepn}, we thus obtain
\begin{equation*}
\left|\sum_{x < n \leq x+H} f(n) n^{-iT} \right|^* \gg \delta^{O(1)} H
\end{equation*}
for all such $x$. But now (after adjusting the value of $A$), from~\eqref{eq:discorLamClaimRev} we obtain the desired bound~\eqref{eq:deltaA}. 

Now, in all the remaining cases, $h$ is either a type $I$ sum, a type $I_2$ sum, or a type $II$ sum (with the relevant parameters), and in the last case $G$ is non-abelian with one-dimensional center. Hence, Theorem~\ref{inverse} and~\eqref{eq:discorLamClaim-3} imply that there exists a non-trivial horizontal character $\eta \colon G \rightarrow \R$ of Lipschitz norm $\delta^{-O(1)}$ such that
\begin{equation}\label{g-smooth}
\|\eta \circ g_x\mod 1\|_{C^{\infty}(x, x+H]} \ll \delta^{-O(1)}
\end{equation}
for all $x$ in a subset of $E$ of measure at least $\delta^{O(1)} X$.

Now that we have~\eqref{g-smooth}, we can reduce the dimension and apply the induction hypothesis to contradict~\eqref{eq:discorLamClaimRev} and hence conclude the proof as follows. By~\eqref{g-smooth} and Lemma~\ref{factor-simple}, we have a decomposition $g_x = \eps_x g'_x\gamma_x$ for some $\eps_x, g'_x, \gamma_x \in \Poly(\Z \to G)$ such that
\begin{itemize}
\item[(i)] $\eps_x$ is $(\delta^{-O(1)}, (x, x+H])$-smooth;
\item[(ii)]  There is a $\delta^{-O(1)}$-rational proper subnilmanifold $G'_x/\Gamma'_x$ of  $G/\Gamma$ such that $g'_x$ takes values in $G'_x$; and
\item[(iii)] $\gamma_x$ is $\delta^{-O(1)}$-rational.
\end{itemize}

At present, the objects $\eps_x, G'_x, \Gamma'_x, \gamma_x$ in the above decomposition depend on $x$, which is undesirable.  However, we can improve this situation by repeated application of the pigeonhole principle as follows (refining the set of $x$ as necessary).  
\begin{itemize}
\item Observe from Lemma~\ref{factor-simple} that $G'_x$ is the kernel of $\eta$ and is thus automatically independent of $x$, thus we may write $G'_x = G'$. 
\item Since $G'_x/\Gamma'_x$ is a $\delta^{-O(1)}$-rational subnilmanifold of $G/\Gamma$, the generators of $\Gamma'_x$ are $\delta^{-O(1)}$-rational in $G$, thus there are only $O(\delta^{-O(1)})$ many possible choices for $\Gamma'_x$.  Applying the pigeonhole principle, and refining $x$ to a smaller subset of $E$ of measure $\delta^{O(1)} X$, we may assume that $\Gamma'_x = \Gamma'$ is independent of $x$.
\item Since $\gamma_x$ is $\delta^{-O(1)}$-rational, $\gamma_x \Gamma$ has a period $1 \leq q_x \leq \delta^{-O(1)}$.  By the pigeonhole principle as before, we may refine the set of $x$ and assume that $q_x = q$ is independent of $x$.  
\end{itemize}
In particular we now have
\begin{equation}\label{gx-decomp}
g_x \Gamma = \eps_x g'_x \gamma_x \Gamma.
\end{equation}

We can form a partition $(x, x+H] = P_{x,1} \cup \cdots \cup P_{x,r}$ for some $1\leq r \leq \delta^{-O(1)}$, where each $P_{x,i}$ is an arithmetic progression of modulus $q$ and, for each $x$, $d_G(\eps_x(n), \eps_x(n')) \leq \delta^4$ whenever $n,n' \in P_{x,i}$ (which can be ensured by the smoothness of $\eps_x$ as long as $|P_{x,i}| \leq \delta^CH$ for some sufficiently large constant $C=C_{d,D,\kappa}$).  By the triangle inequality for maximal sums (Lemma~\ref{basic-prop}(i)), we have
$$ 
\left| \sum_{x < n \leq x+H} f(n) F(g_x(n)\Gamma) \right|^* \leq \sum_{i=1}^r \left| \sum_{n \in P_{x,i}} f(n) F(g_x(n)\Gamma) \right|^*.
$$
Thus, by the pigeonhole principle, there exists $i$ such that 
$$ 
\left| \sum_{x < n \leq x+H} f(n) F(g_x(n)\Gamma) \right|^* \ll \delta^{-O(1)} \left| \sum_{n \in P_{x,i}} f(n) F(g_x(n)\Gamma) \right|^*.
$$
for all $x$ in a subset $E'\subset E$ of measure at least $\delta^{O(1)} X$.

Fix such $i$.  The function $n\mapsto \gamma_x(n) \Gamma$ is constant on $P_{x,i}$ and $\delta^{-O(1)}$-rational, hence we may write $\gamma_x(n) \Gamma = \gamma_{x,i} \Gamma$ for all $n \in P_{x,i}$ and some $\gamma_{x,i} \in G$ which is $\delta^{-O(1)}$-rational.  There are only $O(\delta^{-O(1)})$ possibilities for $\gamma_{x,i}$, so by pigeonholing in $x$ once again we may assume that $\gamma_{x,i} = \gamma_i$ for some $\delta^{-O(1)}$-rational $\gamma_i \in G$ independent of $x$.

On $P_{x,i}$, the function $\eps_x$ ranges in a $\delta^4$-ball in $G$ at distance $\delta^{-O(1)}$ to the origin.  Since the ball of radius $\delta^{-O(1)}$ around the origin can be covered by $O(\delta^{-O(1)})$ balls of radius $\delta^4$, we may pigeonhole to conclude (after once again refining the set of $x$) that there exists $\eps_{i,0}$ at distance $O(\delta^{-O(1)})$ from the origin such that $d_G(\eps_x(n), \eps_{i,0}) \ll \delta^4$ for all $n \in P_{x,i}$ and all $x$ in a subset of $E$ of measure at least $\delta^{O(1)} X$.

Let $g_{x, i} \in \Poly(\Z\to G)$ be the polynomial sequence defined by
$$ g_{x,i}(n) = \gamma_i^{-1} g'_x(n) \gamma_i, $$
which takes values in $\gamma_i^{-1}G'\gamma_i$.  For each $n \in P_{x,i}$ and $x\in E$ we have from~\eqref{gx-decomp} and the right-invariance of $d_G$ that
\begin{align*} 
|F(g_x(n)\Gamma) - F(\eps_{i,0} \gamma_i g_{x,i}(n)\Gamma)|&\leq \|F\|_{\Lip} \cdot d_G(\eps_x(n) g_x'(n)\gamma_i, \eps_{i,0} g'_x(n) \gamma_i) \\ 
&= \|F\|_{\Lip} \cdot d_G(\eps_x(n), \eps_{i,0}) \ll \delta^3. 
\end{align*}
It follows from this and Lemma~\ref{le:shiu} that
\begin{align}
\label{eq:LamF-Fi}
\left| \sum_{x < n \leq x+H} f(n) F(g_x(n)\Gamma) \right|^* &\leq \delta^{-O(1)} \left| \sum_{n \in P_{x,i}} f(n)  F(\eps_{i,0} \gamma_i g_{x,i}(n)\Gamma) \right|^* + O(\delta^2H).
\end{align}
 Note that since $F$ is $\delta^{-1}$-Lipschitz, the function $F(\varepsilon_{i,0}\gamma_i\cdot)$ is $\delta^{-O(1)}$-Lipschitz.  Let $B$ be large in terms of $d,D,\kappa$. Then, by Lemma~\ref{basic-prop}(i) and the induction hypothesis, we have, for each $i = 1, \dotsc, r$,
\begin{align*}
\left| \sum_{n \in P_{x,i}} f(n)  F(\eps_{i,0} \gamma_i g_{x,i}(n)\Gamma) \right|^* &\leq \sup_{g \in \Poly(\Z \to G')} \left| \sum_{x < n \leq x+H} f(n) F(\eps_{i,0} \gamma_i g(n)\Gamma) \right|^* \\
&\ll \delta^B H
\end{align*}
for $x \in E'$ with $E'\subset E$ having measure $\gg \delta^{O(1)}X$. Combining this with~\eqref{eq:LamF-Fi}, we obtain
$$ \left| \sum_{x< n \leq x+H} f(n) F(g(n)\Gamma) \right|^* \ll \delta^{2} H $$
for $x\in E'$, contradicting our assumption~\eqref{eq:discorLamClaimRev}. This completes the proof of Theorem~\ref{thm_generalseq}.
\end{proof}

\section{Contagion lemmas}\label{contagion-sec}

In this section we develop the theory of ``contagiousness'',  introduced recently by Walsh~\cite{walsh}.  This will culminate in a ``nilsequence contagion lemma'', \Cref{nil-contagion}, which will be a key ingredient in the proof of the type $II$ inverse theorems (Theorem~\ref{inverse}(ii)--(iii)).

\begin{figure}[H]
    \centering
\begin{tikzpicture}[node distance=1cm and 1cm]
    \node[draw, rectangle, align=center,fill=gray!20] (nil-contagion) {Nilsequence contagion \\ \Cref{nil-contagion}};
    \node[draw, rectangle, above=of nil-contagion, align=center] (mono) {Monomial contagion \\ \Cref{thm_contagious}};
    \node[draw, rectangle, above left=of nil-contagion, align=center] (qg) {Coefficient extraction \\ \Cref{qg}};
    \node[draw, rectangle, above right=of mono, align=center] (contag-2) {Making $\alpha'_1$ smaller\\ \Cref{le_contagious2}};
    \node[draw, rectangle, above left=of mono, align=center] (contag-1) {Making $\alpha'_1$ small\\ \Cref{le_contagious1}};
    \node[draw, rectangle, above left=of contag-2, align=center] (cube) {Finding $s$-cubes\\ \Cref{le_comb_cube}};
    \node[draw, rectangle, above =of cube, align=center] (sfold) {$s$-fold differencing\\ \Cref{s-fold}};

    \draw[-{Latex[length=4mm, width=2mm]}] (mono) -- (nil-contagion);
    \draw[-{Latex[length=4mm, width=2mm]}] (qg) -- (nil-contagion);
    \draw[-{Latex[length=4mm, width=2mm]}] (contag-2) -- (mono);
    \draw[-{Latex[length=4mm, width=2mm]}] (contag-1) -- (mono);
    \draw[-{Latex[length=4mm, width=2mm]}] (sfold) -- (contag-1);
    \draw[-{Latex[length=4mm, width=2mm]}] (cube) -- (contag-1);
    \draw[-{Latex[length=4mm, width=2mm]}] (sfold) -- (contag-2);
    \draw[-{Latex[length=4mm, width=2mm]}] (cube) -- (contag-2);
\end{tikzpicture}
    \caption{The proof of \Cref{nil-contagion}. This result will then be used to prove the type $II$ inverse theorems (Theorem~\ref{inverse}(ii)--(iii)); see \Cref{fig-inv}.}
    \label{fig-contagion}
\end{figure}
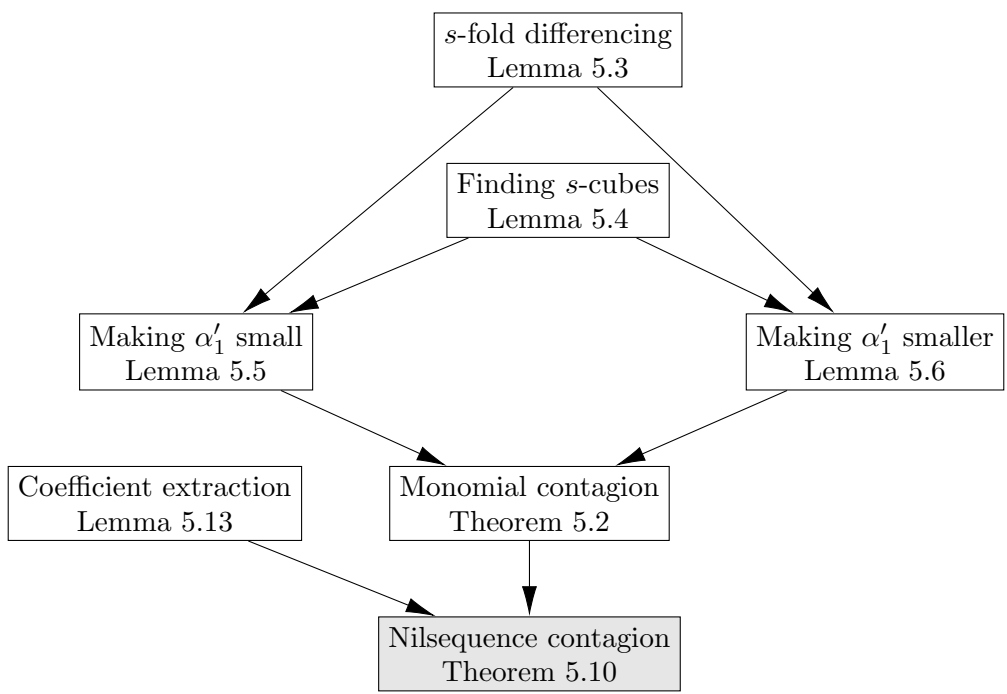

\subsection{Walsh's contagion lemma}

Let $P \geq 1$, let $S$ be a set of integers in $[P,2P]$, and suppose that $\alpha \colon S \to \R$ is a function obeying the bounds
$$ \| \alpha(n) - \alpha_* n \|_{\R/\Z} \leq \eps / P$$
for every $n \in S$, some real number $\alpha_*$, and some $\eps>0$.  Then from the triangle inequality we have
\begin{align}\label{eq:contagious} \| n_1 \alpha(n_2) - n_2 \alpha(n_1) \|_{\R/\Z} \ll \eps
\end{align}
for all $n_1,n_2 \in S$.  

In the recent paper~\cite{walsh}, Walsh established the following partial converse to this observation, encapsulating a phenomenon referred to in that paper as ``contagiousness''\footnote{The name for the terminology relates to the fact, demonstrated by Lemma~\ref{Walsh}, that if we have the relation~\eqref{eq:contagious} for ``many'' pairs $(n_1,n_2)\in S^2$, then it holds for \emph{all} $(n_1,n_2)\in S'^2$ for a ``large'' subset $S' \subseteq S$, so the relations~\eqref{eq:contagious} are in a sense contagious.}:

\begin{lemma}[Walsh's contagion lemma]\label{Walsh}~\cite[Lemma 2.4]{walsh} Let $\eta \in (0, 1/2)$, $P \geq 1$, and $\eps \in (0, c/P^2)$ for a sufficiently small absolute constant $c$. Let $S$ be a set of primes in $[P,2P]$, and let $\alpha \colon S \to \R$ be a function satisfying
$$ \| p_1 \alpha(p_2) - p_2 \alpha(p_1) \|_{\R/\Z} \leq \eps$$
for $\geq \eta |S|^2$ pairs $(p_1,p_2) \in S^2$. Then there is a real number $\alpha_*$ such that
$$ \| \alpha(p) - \alpha_* p \|_{\R/\Z} \ll \eps / P$$
for $\gg_\eta |S|$ primes $p \in S$.
\end{lemma}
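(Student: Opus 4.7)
The plan is to pick a pivot prime $p_1 \in S$ and to express $\alpha(p)$, for each good partner $p$ of $p_1$, in terms of $\alpha(p_1)$ modulo a residual integer ambiguity mod $p_1$; a triangle argument will then pin this ambiguity down to an integer linear in $p$, which can be absorbed into the definition of $\alpha_*$. Let $R \subseteq S \times S$ denote the set of good pairs, so $|R| \geq \eta|S|^2$. For each $p$ with $(p_1,p) \in R$, the hypothesis supplies a unique integer $k_p \in \Z$ (using $\eps < 1/2$) with $p_1\alpha(p) - p\alpha(p_1) = k_p + O(\eps)$, and hence
\[
\alpha(p) = (p/p_1)\alpha(p_1) + k_p/p_1 + O(\eps/p_1).
\]
The target is to find $\lambda_* \in \{0,\ldots,p_1-1\}$ such that $k_p \equiv \lambda_* p \pmod{p_1}$ for $\gg_\eta |S|$ partners $p$; the choice $\alpha_* \coloneqq (\alpha(p_1)+\lambda_*)/p_1$ then gives $\alpha(p) - p\alpha_* \in \Z + O(\eps/p_1)$, hence $\|\alpha(p) - \alpha_* p\|_{\R/\Z} \ll \eps/P$.

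The combinatorial setup requires a pivot $p_1 \in S$ with large $R$-neighborhood $S_1 \coloneqq \{p \in S : (p_1,p) \in R,\, p \neq p_1\}$, $|S_1| \geq \eta|S|$, through which many good triangles pass, i.e.\ $\gg \eta^{O(1)} |S_1|^2$ good pairs $(p_2,p_3)$ lie in $S_1 \times S_1$. This is supplied by a Cauchy--Schwarz/popularity argument exploiting the specific algebraic structure of $R$: good pairs encode an approximate dilation identity on $\alpha$, and via second-moment arguments the restriction of $R$ to $S_1 \times S_1$ inherits an edge-density $\gg \eta^{O(1)}$. For any such good triangle $(p_1,p_2,p_3)$, write $p_2\alpha(p_3) - p_3\alpha(p_2) = m + O(\eps)$ with $m \in \Z$ and substitute the displayed formula for $\alpha(p_2),\alpha(p_3)$ on both sides. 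After multiplying through by $p_1$ (the error $p_2\cdot O(\eps/p_1) + p_3\cdot O(\eps/p_1)$ becomes $O(\eps)$, which combines with $m + O(\eps)$ to yield $m + O(\eps)$), this becomes
\[
p_2 k_{p_3} - p_3 k_{p_2} = p_1 m + O(P\eps).
\]
The hypothesis $\eps < c/P^2$ renders the $O(P\eps)$ error strictly less than $1$, and since the left-hand side is an integer we conclude $p_2 k_{p_3} \equiv p_3 k_{p_2} \pmod{p_1}$. Equivalently, the function $\lambda(p) \coloneqq k_p \cdot p^{-1} \pmod{p_1}$ (well-defined since $p_1$ is prime and distinct from $p \in S_1$) satisfies $\lambda(p_2) = \lambda(p_3)$ for $\gg \eta^{O(1)}|S_1|^2$ pairs in $S_1^2$.

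A final pigeonhole on the fibers of $\lambda$ (using that $\sum_\lambda n_\lambda^2 \gg \eta^{O(1)}|S_1|^2$ implies $\max_\lambda n_\lambda \gg \eta^{O(1)}|S_1|$) produces a subset $S_2 \subseteq S_1$ of size $\gg_\eta |S|$ on which $\lambda$ takes a single value $\lambda_*$. Setting $\alpha_* \coloneqq (\alpha(p_1)+\lambda_*)/p_1$ and observing that $k_p - \lambda_* p \in p_1 \Z$ for $p \in S_2$ gives $\alpha(p) - p\alpha_* \in \Z + O(\eps/p_1)$ and hence $\|\alpha(p) - \alpha_* p\|_{\R/\Z} \ll \eps/P$ for $p \in S_2$, completing the proof.

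The main obstacle is the transition from the approximate identity $p_2 k_{p_3} - p_3 k_{p_2} = p_1 m + O(P\eps)$ to the exact congruence mod $p_1$, which requires $P\eps \ll 1$ and is precisely where the strong hypothesis $\eps < c/P^2$ enters. A secondary (and more combinatorially delicate) subtlety is guaranteeing the triangle density in $R$ at the pivot $p_1$, since edge-density alone in an arbitrary graph need not force triangle density; what saves us is the algebraic structure of $R$, namely that membership $(p_2,p_3) \in R$ restricted to $S_1 \times S_1$ essentially coincides with the equivalence relation $\lambda(p_2) = \lambda(p_3) \pmod{p_1}$, and a second-moment argument then extracts a popular fiber.
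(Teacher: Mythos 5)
Your reduction is sound in most places: the extraction of the integers $k_p$, the use of $\eps < c/P^2$ to upgrade the approximate identity $p_2 k_{p_3} - p_3 k_{p_2} = p_1 m + O(P\eps)$ to an exact congruence $p_2 k_{p_3} \equiv p_3 k_{p_2} \pmod{p_1}$, and the final choice $\alpha_* = (\alpha(p_1)+\lambda_*)/p_1$ are all correct. The genuine gap is the triangle-density step. You need, for some pivot $p_1$, not only a large neighborhood $S_1$ but also $\gg \eta^{O(1)}|S_1|^2$ good pairs \emph{inside} $S_1 \times S_1$, and this does not follow from the hypothesis. The hypothesis only provides a graph of edge density $\eta$ on $S$, and for small $\eta$ such a graph can be triangle-free (bipartite, say), so no purely graph-theoretic Cauchy--Schwarz or popularity argument can manufacture the required pairs. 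The algebraic structure you invoke cuts the wrong way: your computation shows that a pair in $S_1 \times S_1$ which \emph{is} good must satisfy $\lambda(p_2)=\lambda(p_3)$ (and conversely equal $\lambda$-values force the pair to be $O(\eps)$-good), but nothing bounds from below how many pairs of $S_1\times S_1$ are good in the first place. Since $p_1 \asymp P$ is far larger than $|S_1| \ll P/\log P$, the map $\lambda$ could a priori be injective on $S_1$, in which case the pigeonhole on fibers yields nothing; producing a popular fiber of $\lambda$ is essentially the content of the lemma, so this step cannot be dispatched by a generic second-moment argument.

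For comparison, the paper does not prove Lemma~\ref{Walsh} directly (it is quoted from Walsh) but proves the stronger Theorem~\ref{thm_contagious}, and its argument is built precisely to avoid the need for triangles. Rather than asking that the two partners of a pivot be joined by an edge, one fixes the first coordinate and uses the Hilbert-cube/Gowers-box argument of Lemma~\ref{le_comb_cube} to find a \emph{bounded} popular shift $h \ll \eta^{-O(1)}$ such that many pairs $(n_1,n_2)$ and $(n_1,n_2+h)$ are simultaneously good; differencing in the second variable (Lemma~\ref{s-fold} with $s=1$) then gives $\|h\alpha_1(n_1) - \beta n_1\|_{\R/\Z} \leq 2\eps$ with a single unknown $\beta$ and a small multiplier $h$ that can be divided out after restricting to residue classes (Lemma~\ref{le_contagious1}), and an iteration (Lemma~\ref{le_contagious2}) together with Vinogradov's lemma finishes. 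The existence of a popular bounded difference \emph{is} a genuine consequence of density via Gowers--Cauchy--Schwarz, unlike the existence of triangles; that, or some substitute for it, is the ingredient your proposal is missing.
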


This lemma was then used in~\cite{walsh} to obtain a simplified proof of the local Fourier uniformity estimate 
\begin{align*}
\int_{X}^{2X}\sup_{\alpha\in \mathbb{R}}\left|\sum_{x<n\leq x+H}\mu(n)e(-\alpha n)\right|\, dx=o(HX),    
\end{align*}
where $H=X^{\theta}$ for fixed $\theta>0$.

In order to adapt Lemma~\ref{Walsh} to our current applications, we will extend and modify it in several ways, which are essential for our application to Theorem~\ref{inverse}(ii)--(iii):

\begin{itemize}
\item[(i)] We will remove the restriction that $S$ consists solely of primes.
\item[(ii)] We will make the dependence on the parameter $\eta$ polynomial in nature.
\item[(iii)] We will relax the bound $\eps < c/P^2$ to $\eps < c/P^{\kappa}$ for arbitrarily small exponents $\kappa>0$.
\item[(iv)] We will replace the linear polynomials $p_1,p_2,p$ in Lemma~\ref{Walsh} with monomials $p_1^s, p_2^s, p^s$ of arbitrary degree $s$.
\item[(v)] We will then replace the frequency functions $\alpha$ by nilsequences.
\end{itemize}

In addition, for technical reasons and to facilitate an induction argument, we will replace the frequency function $\alpha$ with a pair $\alpha_1, \alpha_2$ of frequency functions.

\subsection{Monomial contagion}

We begin by establishing a ``monomial contagion lemma'' that achieves the first four (i)--(iv) of the five objectives stated above.

\begin{theorem}[Monomial contagion lemma]\label{thm_contagious}
Let $s\in \N$ and $\kappa>0$. There exists $C_{s,\kappa}\geq 1$ such that the following holds. Let $0 < \eta < 1/2$, $P\geq \eta^{-C_{s,\kappa}}$ and $0<\eps<\eta^{C_{s,\kappa}}/P^{\kappa}$. Let $S_1,S_2$ be subsets of $[P,2P] \cap \Z$ and let $\alpha_1 \colon S_1 \to \R$, $\alpha_2 \colon S_2 \to \R$  be two functions satisfying 
\begin{align}\label{eq0}
\|n_1^s \alpha_2(n_2)-n_2^s\alpha_1(n_1)\|_{\R/\Z} \leq \eps    
\end{align} 
for all $(n_1,n_2)$ in a subset ${\mathcal N}_0$ of $S_1 \times S_2$ of cardinality at least $\eta P^2$. Then there is $\alpha_* \in \R$ and an integer $1 \leq q \ll \eta^{-O_{s,\kappa}(1)}$ such that, for $\gg \eta^{O_{s,\kappa}(1)} P^2$ pairs $(n_1,n_2) \in {\mathcal N}_0$, one has
\begin{equation}\label{q-factor}
\|q(\alpha_i(n_i)-\alpha_* n_i^s)\|_{\R/\Z} \ll \eta^{-O_{s,\kappa}(1)}\eps/P^s
\end{equation}
for $i=1,2$.
\end{theorem}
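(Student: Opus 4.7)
My plan is to prove this by a bootstrapping/iterative refinement argument in the spirit of Walsh~\cite{walsh}, extended to handle the monomial setting, general (not necessarily prime) integer sets, the two separate functions $\alpha_1,\alpha_2$, and to achieve polynomial dependence on $\eta$. Following the diagram \Cref{fig-contagion}, the proof will factor through two intermediate claims — one that produces a candidate $\alpha_*$ with a fairly crude (``small'') error, and a refinement step that produces the ``smaller'' error claimed in \eqref{q-factor}. Both use \Cref{le_comb_cube} and \Cref{s-fold}.

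First I pigeonhole on the row variable: since $|\mathcal N_0|\geq \eta P^2$, there is some $n_1^\ast\in S_1$ such that the fiber $T := \{n_2 \in S_2 : (n_1^\ast,n_2)\in \mathcal N_0\}$ has cardinality $\gg \eta P$. Setting $\alpha_\ast := \alpha_1(n_1^\ast)/(n_1^\ast)^s$, \eqref{eq0} gives $\|(n_1^\ast)^s(\alpha_2(n_2)-\alpha_\ast n_2^s)\|_{\R/\Z}\leq \eps$ for all $n_2\in T$, so
\[
\alpha_2(n_2) - \alpha_\ast n_2^s = k(n_2)/(n_1^\ast)^s + O(\eps/(n_1^\ast)^s) \pmod 1
\]
for integers $k(n_2)$. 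The ``granularity'' $1/(n_1^\ast)^s \asymp 1/P^s$ is still much larger than the target $\eps/(qP^s)$, so the job is to absorb the residues $k(n_2)/(n_1^\ast)^s$ into an integer multiplier $q$. By a second pigeonholing, I fix a popular $n_2^\ast \in T$ and use \eqref{eq0} at pairs $(n_1,n_2^\ast)$ with $n_1 \in S_1$ to transfer the relation to $\alpha_1$: for many $n_1\in S_1$, one obtains $\alpha_1(n_1)-\alpha_\ast n_1^s$ approximately equal (mod $1$) to a rational with denominator $(n_1^\ast)^s$, up to an error $O(\eps)$.

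The heart of the argument is then the iterative refinement to replace these large denominators with a single small one. Here I would build $s$-dimensional arithmetic cubes $\{n_1+\omega_1 h_1+\cdots+\omega_s h_s : \omega\in\{0,1\}^s\}\subset S_1$ using \Cref{le_comb_cube} (which I expect to provide $\gg \eta^{O_s(1)} P^{s+1}$ such cubes given the density assumptions), and then apply the $s$-fold differencing lemma \Cref{s-fold} to the relation. Since $\alpha_\ast n_1^s$ has exact degree $s$, the $s$-fold mixed difference $\partial_{h_1}\cdots\partial_{h_s}[\alpha_\ast n_1^s]$ is precisely $s!\,h_1\cdots h_s\,\alpha_\ast$; the $s$-fold differencing thus converts our relation into a Diophantine constraint of the form $\|s!\,h_1\cdots h_s\,\alpha_\ast\|_{\R/\Z} \ll \eta^{-O_{s,\kappa}(1)}\eps$ satisfied for $\gg \eta^{O_{s,\kappa}(1)}P^s$ choices of $(h_1,\dots,h_s)$. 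An application of the Vinogradov lemma \Cref{vin} (repeatedly, one variable at a time) then produces the integer $1\leq q\ll \eta^{-O_{s,\kappa}(1)}$ with $\|q\alpha_\ast\|_{\R/\Z}\ll \eta^{-O_{s,\kappa}(1)}\eps/P^s$ needed to clear the denominators. Feeding this $q$ back into the mod-$1/(n_1^\ast)^s$ descriptions of $\alpha_1$ and $\alpha_2$ then yields \eqref{q-factor} on a pigeonholed subset of $\mathcal N_0$ of size $\gg \eta^{O_{s,\kappa}(1)}P^2$.

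The main obstacle will be tracking the polynomial (rather than super-polynomial) dependence on $\eta$. Each pigeonholing, Cauchy--Schwarz, or $s$-cube-extraction step naturally loses a factor $\eta^{O_s(1)}$, and after the $s$-fold differencing we still need the Vinogradov lemma to convert a Diophantine relation holding on an $\eta^{O_{s,\kappa}(1)}$-fraction of cubes into a single small $q$; the polynomial dependence in \Cref{vin} is crucial here. The assumptions $P\geq \eta^{-C_{s,\kappa}}$ and $\eps < \eta^{C_{s,\kappa}}/P^\kappa$ are engineered precisely to provide the ``headroom'' that the Vinogradov lemma (and the $s$-cube count in \Cref{le_comb_cube}) needs; quantitatively they dictate how large $C_{s,\kappa}$ must be so that, after the bounded-depth iteration, the final error $\eta^{-O_{s,\kappa}(1)}\eps/P^s$ is indeed smaller than $1/q$ and the conclusion is non-vacuous.
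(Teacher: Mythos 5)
Your proposal assembles the right ingredients (Hilbert cubes via \Cref{le_comb_cube}, $s$-fold differencing via \Cref{s-fold}, the Vinogradov lemma, and a bounded-depth iteration whose termination is powered by $\eps<\eta^{C_{s,\kappa}}/P^\kappa$), but the mechanism you describe has two genuine gaps. The first is the opening move: fixing $\alpha_*:=\alpha_1(n_1^*)/(n_1^*)^s$ at the outset cannot work. With that choice, $\alpha_2(n_2)-\alpha_* n_2^s$ equals $k(n_2)/(n_1^*)^s+O(\eps/P^s)$ modulo $1$, and since $\eta^{-O(1)}\eps/P^s<1/(n_1^*)^s$, the conclusion \eqref{q-factor} would force $(n_1^*)^s\mid q\,k(n_2)$ for many $n_2$ — impossible for $q\ll\eta^{-O(1)}$ when the denominators are $\asymp P^s$, unless $k(n_2)$ happens to vanish. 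A concrete failure: take $s=1$, $S_1=S_2=\{n\equiv 3\ (9)\}$, $\alpha_1\equiv\alpha_2\equiv 1/3$, $\eps=0$; the correct choice is $\alpha_*=1/9$, $q=1$, whereas your $\alpha_*'=1/(3n_1^*)$ leaves a residue $(n_1^*-n_2)/(3n_1^*)$ that no small $q$ can clear. The point is that the correct $\alpha_*$ generically differs from $\alpha_1(n_1^*)/(n_1^*)^s$ by a rational with denominator comparable to $P^s$, so $\alpha_*$ must be \emph{constructed}, not read off from one value; there is no way to "absorb the residues $k(n_2)/(n_1^*)^s$ into an integer multiplier $q$", and your proposal never actually supplies a mechanism for doing so.

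The second gap is the differencing step. Differencing the relation "$\alpha_1(n_1)-\alpha_* n_1^s\approx$ rational" over a cube in $n_1$ does not yield $\|s!h_1\cdots h_s\alpha_*\|_{\R/\Z}\ll\eps$: the alternating sum of the $\alpha_1$-values over the cube is an uncontrolled real number, and the alternating sum of the rationals is a rational with denominator $\asymp P^s$, so nothing isolates $\alpha_*$. What the differencing actually produces (this is \Cref{s-fold}, applied in the $n_2$ variable directly to \eqref{eq0}) is a relation $\|h\alpha_1(n_1)-\beta n_1^s\|_{\R/\Z}\leq 2^s\eps$ with $h=s!h_1\cdots h_s$ and $\beta$ an \emph{unknown} alternating sum of $\alpha_2$-values; one must then divide by $h$, which introduces a $\Z/h\Z$ ambiguity $a/h$, and the crux (handled in \Cref{le_contagious1} by restricting $n_1$ to a residue class modulo $h$ and to a fixed gcd pattern with $h$, then shifting $\alpha_*$ by $\beta/h+k/(q^sh)$ with $kb^s\equiv a\ (h)$) is to absorb both $\beta/h$ and $a/h$ into a \emph{redefinition} of $\alpha_*$ rather than into $q$. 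This only yields $|\alpha_1-\alpha_* n_1^s|\leq 2^s\eps$ after the first pass; reaching the target $\eta^{-O(1)}\eps/P^s$ requires the separate refinement step (\Cref{le_contagious2}), in which the cube side length is chosen adaptively as $\asymp\eta^{O(1)}\mu^{-1/s}$ so that $h\alpha_1'$ stays below $1/2$ and the mod-$1$ relation lifts to $\R$, improving the bound from $\mu$ to $\eta^{-O(1)}\eps(\mu+P^{-s})$ per step; iterating $O_{s,\kappa}(1)$ times and using $\eps^j\leq P^{-s}$ then finishes. Your write-up collapses these two distinct stages into one and omits the residue-class bookkeeping that makes the factor $q$ (rather than a denominator of size $P^s$) suffice.
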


In fact, in our proof the factor of $q$ in~\eqref{q-factor} is only needed for $i=2$, but we will not exploit this refinement of the theorem here.  Because $n_1,n_2$ are no longer restricted to be prime, the factor of $q$ cannot be eliminated entirely; a simple counterexample is when $S_1=S_2=[P,2P]\cap \mathbb{Z}$, $s=1$, $\alpha_1(n_1)=0$, $\alpha_2(n_2) = 1/2$, $\eta \asymp 1$, $P$ is large and $\eps$ is infinitesimal, and ${\mathcal N}_0$ is the set of pairs $(n_1,n_2) \in S_1 \times S_2$ with $n_1,n_2 = 2 \hbox{ mod } 4$. In this example, there is no way to subtract off a common linear factor $\alpha_* n_i$ from $\alpha_i(n_i)$ for $i=1,2$ to make them both close to zero for many $(n_1,n_2) \in {\mathcal N}_0$.

\begin{remark} Strictly speaking, Theorem \ref{thm_contagious} is not a strengthening of Lemma \ref{Walsh}, both because of the introduction of the new quantity $q$ discussed above, and also because the density parameter $\eta$ is relative to the integers in $[P,2P]$ rather than the primes in $[P,2P]$, thus in principle incurring additional losses of $\log P$.  However, these losses turn out to not be significant for our current application.    
\end{remark}

In the rest of this subsection we prove Theorem~\ref{thm_contagious}.  Let $s, \kappa$ be fixed, and assume $C_{s,\kappa}$ is sufficiently large.  We allow implied constants to depend on $s,\kappa$, but not on $C_{s,\kappa}$.  Let $\eta, \eps, P, S_1, S_2, \alpha_1, \alpha_2, \mathcal{N}_0$ be as in the statement of the theorem. In all the lemmas in this subsection, the assumptions are as in Theorem~\ref{thm_contagious}.

We will make a number of modifications to the pair $(\alpha_1,\alpha_2)$ in order to improve the estimates on the individual functions $\alpha_1,\alpha_2$.  To keep track of these modifications, it is convenient to introduce the set ${\mathcal A}$ of all pairs $(\alpha'_1, \alpha'_2)$ of functions $\alpha'_1, \alpha'_2 \colon S \to \R$ which are ``equivalent'' to the original pair $(\alpha_1,\alpha_2)$ in the sense that there exists a real number $\alpha_*$ such that
$$ \alpha'_i(n_i) = \alpha_i(n_i) - \alpha_* n_i^s \hbox{ mod } 1 $$
for all $n_i \in S_i$ and $i=1,2$.  Clearly $(\alpha_1,\alpha_2)$ lies in ${\mathcal A}$.  Also, if $(\alpha'_1, \alpha'_2)$ lies in ${\mathcal A}$, and $\beta$ is a real number, then the pair $(\alpha''_1, \alpha''_2)$ defined by
$$ \alpha''_i(n_i) \coloneqq \{ \alpha'_i(n_i) - \beta n_i^s \}$$
also lies in ${\mathcal A}$ (recall that $\{x\} \in (-1/2,1/2]$ is the signed fractional part of $x$).  Finally, observe that if $(\alpha'_1, \alpha'_2)$ lies in ${\mathcal A}$, we have
$$ n_1^s \alpha'_2(n_2)-n_2^s\alpha'_1(n_1) = n_1^s \alpha_2(n_2)-n_2^s\alpha_1(n_1) \hbox{ mod } 1$$
for any $n_1 \in S_1, n_2 \in S_2$, and hence from~\eqref{eq0} we have
\begin{equation}\label{eq0-alt}
 \| n_1^s \alpha'_2(n_2)-n_2^s\alpha'_1(n_1)\|_{\R/\Z} \leq \eps
\end{equation}
for all $(n_1,n_2) \in {\mathcal N}_0$.

Our strategy will be to locate pairs $(\alpha'_1, \alpha'_2) \in {\mathcal A}$ for which one has good bounds on the magnitude of $\alpha'_1$; our first attempt to do so will only locate such a pair obeying a relatively weak bound
$$ |\alpha'_1(n_1)| \leq 2^s \eps$$
for many $n_1$, but by an iterative process we will eventually be able to locate a pair obeying the stronger bound
$$ |\alpha'_1(n_1)| \ll \frac{\eta^{-O(1)}}{P^s} \eps$$
for many $n_1$.  Once we have obtained this control on $\alpha'_1(n_1)$, we will be able to obtain similar control on $\alpha'_2(n_2)$ for many $n_2$ (after multiplying by a small integer $q$), at which point we can establish the theorem.

We turn to the details. To gain control on $\alpha_1$ we will ``differentiate'' the inequality~\eqref{eq0} (or~\eqref{eq0-alt}) $s$ times in the $n_2$ variable in order to mostly eliminate the $n_2^s$ term.  More precisely, we can take $s$-fold differences via the following lemma (where, as per the convention above, the assumptions are as in Theorem~\ref{thm_contagious}).

\begin{lemma}[Taking $s$-fold differences]\label{s-fold} Let $(\alpha'_1,\alpha'_2) \in {\mathcal A}$, and let ${\mathcal N}$ be a subset of ${\mathcal N}_0$. Let $n_{2,0}$ be an integer, and let $\vec h = (h_1,\dots,h_s)$ be a tuple of integers, with the property that
\begin{equation}\label{n1n2}
 (n_1, n_{2,0} + \vec \omega \cdot \vec h) \in {\mathcal N}_0
\end{equation}
whenever $(n_1,n_{2,0}) \in {\mathcal N}$ and $\vec \omega \in \{0,1\}^s$, with $\cdot$ denoting the usual inner product.  Then one has
$$ \| h \alpha'_1(n_1) - \beta n_1^s \|_{\R/\Z}\leq 2^s \eps$$
whenever $(n_1,n_{2,0}) \in {\mathcal N}$, where $h$ is the integer
$$ h \coloneqq s! h_1 \cdots h_s$$
and $\beta$ is the real number
\begin{equation}\label{beta-def}
 \beta \coloneqq \sum_{\vec \omega\in \{0,1\}^s}(-1)^{|\vec \omega|} \alpha'_2(n_{2,0} + \vec \omega \cdot \vec{h}).
\end{equation}
\end{lemma}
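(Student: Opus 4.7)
My plan is to carry out a direct $s$-fold finite-differencing of the relation~\eqref{eq0-alt} in the $n_2$ variable, with the aim of cancelling the $n_2^s$ coefficient of $\alpha_1'(n_1)$ down to a constant (namely $\pm h$). Concretely, for each $\vec\omega\in\{0,1\}^s$, the hypothesis~\eqref{n1n2} places the pair $(n_1,n_{2,0}+\vec\omega\cdot\vec h)$ in $\mathcal{N}_0$, so applying~\eqref{eq0-alt} (valid for any $(\alpha_1',\alpha_2')\in\mathcal{A}$) gives
\[
\|n_1^s\alpha_2'(n_{2,0}+\vec\omega\cdot\vec h)-(n_{2,0}+\vec\omega\cdot\vec h)^s\alpha_1'(n_1)\|_{\R/\Z}\le\eps
\]
for every $\vec\omega\in\{0,1\}^s$. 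I then form the alternating combination $\sum_{\vec\omega\in\{0,1\}^s}(-1)^{|\vec\omega|}(\cdots)$ of these $2^s$ relations and apply the triangle inequality in $\R/\Z$ to bound its norm by $2^s\eps$.

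The resulting alternating sum splits cleanly into two pieces. The contribution from $n_1^s\alpha_2'(\cdot)$ telescopes into $\beta n_1^s$ by the very definition~\eqref{beta-def}. The contribution from $(n_{2,0}+\vec\omega\cdot\vec h)^s\alpha_1'(n_1)$ factors as $\alpha_1'(n_1)$ times the alternating sum $\sum_{\vec\omega}(-1)^{|\vec\omega|}(n_{2,0}+\vec\omega\cdot\vec h)^s$, which by the classical formula for the $s$-fold discrete derivative of the monomial $x\mapsto x^s$ evaluates to $(-1)^s s!h_1\cdots h_s=(-1)^s h$ (all lower-degree terms in $n_{2,0}$ being annihilated by the $s$-fold difference). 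Combining these two computations, the alternating sum equals $\beta n_1^s-(-1)^s h\alpha_1'(n_1)$ modulo $1$, whose $\R/\Z$-norm is at most $2^s\eps$. Since $\|\cdot\|_{\R/\Z}$ is invariant under negation, this yields $\|h\alpha_1'(n_1)-\beta n_1^s\|_{\R/\Z}\le 2^s\eps$ as required, after absorbing the parity sign $(-1)^s$ into the conventions for $h$ or $\beta$.

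The argument is essentially a single telescoping identity paired with the triangle inequality, so no real obstacle is expected; the only point requiring care is the bookkeeping of the global sign $(-1)^s$ produced by the convention $(-1)^{|\vec\omega|}$ versus the standard finite-difference convention $(-1)^{s-|\vec\omega|}$, which is harmless for the $\R/\Z$-norm estimate being proved.
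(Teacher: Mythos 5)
Your proof is correct and follows essentially the same route as the paper: apply \eqref{eq0-alt} at each shifted point $n_{2,0}+\vec\omega\cdot\vec h$ (justified by \eqref{n1n2}), take the alternating combination, bound it by $2^s\eps$ via the triangle inequality, and evaluate $\sum_{\vec\omega}(-1)^{|\vec\omega|}(n_{2,0}+\vec\omega\cdot\vec h)^s$ by the $s$-fold finite-difference identity for the monomial $x^s$. Your explicit bookkeeping of the global sign $(-1)^s$ is, if anything, slightly more careful than the paper's (which states the identity without that sign), and as you note it is immaterial for the $\R/\Z$-norm conclusion.
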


\begin{proof}
By~\eqref{n1n2} and~\eqref{eq0-alt}, we have
$$ \|n_1^s \alpha'_2(n_{2,0} + \vec \omega \cdot \vec h)-(n_{2,0} + \vec \omega \cdot \vec h)^s\alpha'_1(n_1)\|_{\R/\Z} \leq \eps $$
whenever $(n_1,n_{2,0}) \in {\mathcal N}$ and $\vec \omega \in \{0,1\}^s$. Hence by the triangle inequality and~\eqref{beta-def}
$$ \left\|\beta n_1^s -\sum_{\vec \omega\in \{0,1\}^s}(-1)^{|\vec \omega|} (n_{2,0} + \vec \omega \cdot \vec h)^s\alpha'_1(n_1)\right\|_{\R/\Z} \leq 2^s \eps.$$
From binomial expansion of $(n_{2,0}+\vec \omega\cdot \vec{h})^s$ (paying particular attention to the $\omega_1 \cdots \omega_s$ terms) we observe the identity
\begin{align*}
\sum_{\vec \omega\in \{0,1\}^s}(-1)^{|\vec \omega|} (n_{2,0}+\vec \omega\cdot \vec{h})^s = s! h_1 \cdots h_s = h
\end{align*}
and the claim follows.
\end{proof}

In order to use the above lemma, we need to be able to ensure a plentiful supply of pairs $(n_1,n_{2,0})$ obeying~\eqref{n1n2}, with a value of $h$ that we can ``tune'' to be of a specified size.  To achieve this we establish the following version of the Hilbert cube lemma~\cite{hilbert-cube},~\cite[Exercise 1.3.2]{tao-higher}. 

\begin{lemma}[Finding combinatorial cubes in dense sets] \label{le_comb_cube} Let $s\geq 1$ be fixed and let $\eta>0$. 
Let ${\mathcal N}$ be a subset of ${\mathcal N}_0$ of cardinality $\gg \eta^{O(1)} P^2$, and let $1 \leq H \leq P$. Then either $H \leq \eta^{-O(1)}$, or else (if the constant $C_{s,\kappa}$ is large enough depending on the implied constants in the preceding hypothesis) there exists a tuple $\vec h = (h_1,\dots,h_s)$ of positive integers $\eta^{O(1)} H\ll h_1,\dots,h_s \leq H$ such that one has the containment
\begin{equation}\label{e7}
(n_1,n_2+\vec \omega \cdot \vec h)_{\vec \omega \in \{0,1\}^s} \subset {\mathcal N}^{\{0,1\}^s} 
\end{equation}
for $\gg \eta^{O(1)} P^2$ choices of pairs $(n_1,n_2) \in {\mathcal N}_0$, where $\cdot$ denotes the usual dot product.
\end{lemma}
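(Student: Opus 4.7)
The strategy is a two-step reduction: peel off the $n_1$-fibres of $\mathcal{N}$ to obtain a family of dense subsets of $[P,2P]$, construct a combinatorial cube inside each such fibre via an iterative Hilbert-cube argument, and pigeonhole at the end to pick a single direction $\vec h$ that works for many fibres simultaneously. For each $n_1 \in S_1$ set $\mathcal{N}_{n_1} := \{n_2 \in S_2 : (n_1,n_2) \in \mathcal{N}\}$; since $\sum_{n_1} |\mathcal{N}_{n_1}| = |\mathcal{N}| \gg \eta^{O(1)} P^2$ and each fibre has cardinality at most $P+1$, averaging produces a set $S_1^{\ast} \subset S_1$ of size $\gg \eta^{O(1)} P$ on which $|\mathcal{N}_{n_1}| \geq \delta P$ for some $\delta = \eta^{O(1)}$.

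The heart of the proof is the following quantitative Hilbert-cube estimate: there exist constants $c_s>0$ and $C_s \geq 1$, depending only on $s$, such that whenever $A \subset [P,2P]$ satisfies $|A| \geq \delta P$, $H \leq P$, and $h_0$ is an integer with $1 \leq h_0 \leq c_s \delta^{C_s} H$, one has
\[
V_s(A) := \#\bigl\{(n,\vec h) \in \mathbb{Z} \times [h_0,H]^s : n + \vec\omega \cdot \vec h \in A \text{ for all } \vec\omega \in \{0,1\}^s\bigr\} \gg_s \delta^{C_s} P H^s.
\]
This is proved by induction on $s$. For $s=1$, partitioning $[P,2P]$ into $\lceil P/H \rceil$ intervals of length $H$ and applying Cauchy--Schwarz to the sum $\sum_{j} |A \cap I_j|^2 \geq |A|^2/\lceil P/H\rceil$ yields $\sum_{h=1}^{H-1} |A \cap (A-h)| \gg \delta^2 PH$, after which one subtracts the $O(h_0 P)$ contribution from $h < h_0$. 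For the inductive step, write $V_s(A) = \sum_{h_s \in [h_0,H]} V_{s-1}\bigl(A \cap (A-h_s)\bigr)$; the $s=1$ estimate shows that the average density of $A \cap (A-h_s)$ in $[P,2P]$ is $\gg \delta^2$, so by thresholding there are $\gg \delta^2 H$ values of $h_s$ with $|A \cap (A-h_s)| \gg \delta^2 P$, and for each such $h_s$ the induction hypothesis (applied at density $\delta^2$) gives $V_{s-1}(A \cap (A-h_s)) \gg_s \delta^{O_s(1)} P H^{s-1}$. Summing over these $h_s$ produces the claim with an appropriately enlarged $C_s$.

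Applying the core estimate with $A = \mathcal{N}_{n_1}$ for each $n_1 \in S_1^{\ast}$ and summing,
\[
\sum_{\vec h \in [h_0,H]^s} \#\bigl\{(n_1,n_2) : (n_1, n_2 + \vec\omega \cdot \vec h) \in \mathcal{N} \text{ for all } \vec\omega\bigr\} = \sum_{n_1 \in S_1^{\ast}} V_s(\mathcal{N}_{n_1}) \gg \eta^{O(1)} P^2 H^s.
\]
Since the $\vec h$-range has cardinality at most $H^s$, pigeonhole supplies a single $\vec h$ with $h_0 \leq h_i \leq H$ for which the inner count is $\gg \eta^{O(1)} P^2$; any such pair $(n_1,n_2)$ lies in $\mathcal{N} \subset \mathcal{N}_0$ by specialising to $\vec\omega = \vec 0$. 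Choosing $h_0 := \lfloor c_s \delta^{C_s} H \rfloor$ simultaneously yields $h_i \gg \eta^{O(1)} H$ as required, while the constraint $h_0 \geq 1$ is equivalent to $H \geq c_s^{-1} \delta^{-C_s} = \eta^{-O(1)}$, which is exactly the complementary case to the first alternative. The main delicacy is propagating the admissible threshold for $h_0$ through the induction, since applying the hypothesis at density $\delta^2$ doubles the exponent of $\delta$ in the admissible range at each level; since $s$ is fixed and we only need $\eta^{O(1)}$ losses in the final bound, this exponential growth in $s$ is absorbed harmlessly into the constants $c_s$ and $C_s$.
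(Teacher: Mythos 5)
Your argument is correct, but it takes a genuinely different route from the paper. The paper works with both variables $(n_1,n_2)$ at once: it introduces a Gowers box inner product over $n_1,n_2$ and two shift tuples $\vec h^0,\vec h^1\in[1,H]^s$, applies the Gowers--Cauchy--Schwarz inequality with $1_{\mathcal N}$ in one slot and the indicator of a slightly enlarged box in the remaining $2^s-1$ slots, deduces $\|1_{\mathcal N}\|_{\Box^s}^{2^s}\gg \eta^{O(1)}H^{2s}P^2$, then passes from $(\vec h^0,\vec h^1)$ to the single difference $\vec h=\vec h^1-\vec h^0$, discards the tuples having a coordinate below $\eta^C H$, and pigeonholes in $\vec h$. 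You instead slice $\mathcal N$ into its $n_1$-fibres, prove a one-dimensional quantitative Hilbert-cube count $V_s(A)\gg_s\delta^{C_s}PH^s$ for dense $A\subset[P,2P]$ by induction on $s$ (iterated Cauchy--Schwarz via the identity $V_s(A)=\sum_{h_s}V_{s-1}(A\cap(A-h_s))$), with the lower bound $h_i\ge h_0\asymp\delta^{C_s}H$ built into the counting range from the start, and then sum over fibres and pigeonhole. Both arguments incur only $\eta^{O(1)}$ losses and use the dichotomy $H\le\eta^{-O(1)}$ in the same way (your $h_0\ge 1$ constraint, and the absorption of the $h=0$ and $h<h_0$ terms in the base case, are exactly where this is needed). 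The paper's version is a single clean application of GCS with no induction, and fits the Gowers-norm machinery used elsewhere; yours is more elementary and self-contained, at the cost of the bookkeeping you correctly flag, namely that applying the inductive hypothesis at density $\delta^2$ doubles the exponent governing the admissible threshold for $h_0$ at each level --- harmless for fixed $s$, as you note.
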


\begin{proof} Let $C$ be a large constant (depending on $s,\kappa$, and the implied constants in the hypotheses) to be chosen later.  We may assume that $H \geq \eta^{-C}$, since otherwise the conclusion follows trivially. Given a tuple $(f_{\vec \omega})_{\vec \omega \in \{0,1\}^s}$ of functions $f_{\vec \omega} \colon \Z^2 \to \R$ supported on a finite set, we define the Gowers box inner product
$$ \langle (f_{\vec \omega})_{\vec \omega \in \{0,1\}^s} \rangle_{\Box^s} 
\coloneqq \sum_{n_1,n_2 \in \Z}\, \sum_{\vec h^0, \vec h^1 \in [1,H]^s}\,
\prod_{\vec \omega \in \{0,1\}^s} f_{\vec \omega}( n_1, n_2 + h^{\omega_1}_1 + \cdots + h^{\omega_s}_s )$$
where $\vec \omega = (\omega_1,\dots,\omega_s)$ and $\vec h^0 = (h^0_1,\dots,h^0_s)$, $\vec h^1 = (h^1_1,\dots,h^1_s)$ are understood to have integer components.  We then define the associated Gowers box norm
$$ \|f\|_{\Box^s} \coloneqq \langle (f)_{\vec \omega \in \{0,1\}^s} \rangle_{\Box^s}^{1/2^s}.$$
From the Gowers--Cauchy--Schwarz inequality (see e.g.,~\cite[Lemma B.2]{green-tao}) we have
\begin{equation}\label{gcz}
 |\langle (f_{\vec \omega})_{\vec \omega \in \{0,1\}^s} \rangle_{\Box^s}| \leq \prod_{\vec \omega' \in \{0,1\}^s} \|f_{\vec \omega'}\|_{\Box^s}.
\end{equation}

Let ${\mathcal Q} \subset \N$ be the set of pairs $(n_1,n_2) \in \Z^2$ with $n_1 \in [P,2P]$ and $|n_2| \leq (s+2) P$. Then a routine computation gives
$$ \| 1_{\mathcal Q} \|_{\Box^s} \ll (H^{2s} P^2)^{1/2^s}.$$
We apply~\eqref{gcz} with $f_{0,\dots,0} \coloneqq 1_{\mathcal N}$ and $f_{\vec \omega} \coloneqq 1_{\mathcal Q}$ for all other $\vec \omega\in \{0,1\}^s$, to obtain
$$ \langle (f_{\vec \omega})_{\vec \omega \in \{0,1\}^s} \rangle_{\Box^s} \ll \| 1_{\mathcal N} \|_{\Box^s} (H^{2s} P^2)^{1-1/2^s}.$$
On the other hand, if $(n_1,n_2 + h^0_1 + \cdots + h^0_s) \in {\mathcal N}$ for some $n_1,n_2 \in \Z$ and $\vec h^0, \vec h^1 \in [1,H]^s$, then $( n_1, n_2 + h^{\omega_1}_1 + \cdots + h^{\omega_s}_s ) \in {\mathcal Q}$ for all $\vec \omega \in \{0,1\}^s$.  Thus we can expand
\begin{align*}
\langle (f_{\vec \omega})_{\vec \omega \in \{0,1\}^s} \rangle_{\Box^s}  &= 
 \sum_{n_1,n_2 \in \Z} \sum_{\vec h^0, \vec h^1 \in [1,H]^s} 1_{\mathcal N}(n_1,n_2 + h^0_1 + \cdots + h^0_s) \\
 &= 
 \sum_{n_1,n_2 \in \Z} \sum_{\vec h^0, \vec h^1 \in [1,H]^s} 1_{\mathcal N}(n_1,n_2)\\ 
&\gg H^{2s} |\mathcal{N}| \\
&\gg \eta^{O(1)} H^{2s} P^2.
\end{align*}
Combining these inequalities, we conclude that
$$ \| 1_{\mathcal N} \|_{\Box^s}^{2^s} \gg \eta^{O(1)} H^{2s} P^2$$
and thus
$$ \sum_{n_1,n_2 \in \Z} \sum_{\vec h^0, \vec h^1 \in [1,H]^s}
\prod_{\vec \omega \in \{0,1\}^s} 1_{\mathcal N}( n_1, n_2 + h^{\omega_1}_1 + \cdots + h^{\omega_s}_s ) \gg \eta^{O(1)} H^{2s} P^2.$$
By swapping coefficients of $\vec h^0$ and $\vec h^1$ as necessary and using symmetry, we may reduce to the contribution in which $h^1_i \geq h^0_i$ for all $i=1,\dots,s$.  Replacing $n_2$ by $n_2 + h^0_1 + \cdots + h^0_s$ and writing $\vec h^1 - \vec h^0 = \vec h$, we conclude
$$ \sum_{n_1,n_2 \in \Z} \sum_{\vec h \in [0,H]^s}
\prod_{\vec \omega \in \{0,1\}^s} 1_{\mathcal N}( n_1, n_2 + \vec \omega \cdot \vec h ) \gg \eta^{O(1)} H^{s} P^2.$$
The contribution of those $\vec h$ with at least one coefficient less than $\eta^C H$ can be easily bounded by $O( \eta^C H^s P^2)$.  Thus for $C$ large enough we have
$$ \sum_{n_1,n_2 \in \Z} \sum_{\vec h \in [\eta^C H,H]^s}
\prod_{\vec \omega \in \{0,1\}^s} 1_{\mathcal N}( n_1, n_2 + \vec \omega \cdot \vec h ) \gg \eta^{O(1)} H^{s} P^2.$$
By the pigeonhole principle, there thus exists $\vec h \in [\eta^C H,H]^s$ such that
$$ \sum_{n_1,n_2 \in \Z} 
\prod_{\vec \omega \in \{0,1\}^s} 1_{\mathcal N}( n_1, n_2 + \vec \omega \cdot \vec h ) \gg \eta^{O(1)} P^2$$
and the claim follows.
\end{proof}

As a first application of the above two lemmas, we locate a pair $(\alpha'_1,\alpha'_2) \in {\mathcal A}$ with $\alpha'_1$ somewhat small.

\begin{lemma}[Making $\alpha'_1(n)$ somewhat small]\label{le_contagious1}  There exists $(\alpha'_1, \alpha'_2) \in {\mathcal A}$ such that
\begin{align*}
|\alpha_1'(n_1)| \leq 2^s \eps
\end{align*}
for $\gg \eta^{O(1)}P^2$ pairs $(n_1,n_2) \in {\mathcal N}_0$.
\end{lemma}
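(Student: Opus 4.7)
The plan is to combine Lemmas~\ref{le_comb_cube} and~\ref{s-fold} with several pigeonholing steps. First, apply Lemma~\ref{le_comb_cube} to $\mathcal{N}_0 \subseteq S_1 \times S_2$ with $H$ chosen slightly above the $\eta^{-O(1)}$ threshold of that lemma (so both $h_i$ and $h := s!\, h_1 \cdots h_s$ lie in $[\eta^{O(1)}, \eta^{-O(1)}]$), producing a tuple $\vec h = (h_1, \dots, h_s)$ and a sub-collection $\mathcal{M} \subseteq \mathcal{N}_0$ of cardinality $\gg \eta^{O(1)} P^2$ such that $(n_1, n_{2,0} + \vec\omega \cdot \vec h) \in \mathcal{N}_0$ for every $(n_1, n_{2,0}) \in \mathcal{M}$ and $\vec \omega \in \{0,1\}^s$.

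For each $(n_1, n_{2,0}) \in \mathcal{M}$, Lemma~\ref{s-fold} (applied to $(\alpha_1, \alpha_2) \in \mathcal{A}$) then collapses the hypothesis~\eqref{eq0} into the one-variable inequality
\[
\|h \alpha_1(n_1) - \beta(n_{2,0}) n_1^s\|_{\R/\Z} \leq 2^s \eps,
\]
where $\beta(n_{2,0}) = \sum_{\vec \omega} (-1)^{|\vec \omega|} \alpha_2(n_{2,0} + \vec\omega \cdot \vec h)$. A pigeonhole on $n_{2,0}$ isolates some $n_{2,0}^* \in S_2$ for which this bound holds for $\gg \eta^{O(1)} P$ values of $n_1$; restricting further to those $n_1$ whose $n_2$-fibres in $\mathcal{N}_0$ are of density $\gg \eta$ (a positive proportion, since $|\mathcal{N}_0| \geq \eta P^2$) will ensure that the number of pairs in $\mathcal{N}_0$ eventually delivered is $\gg \eta^{O(1)} P^2$. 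Setting $\alpha_* := \beta(n_{2,0}^*)/h$ produces a pair $(\alpha'_1, \alpha'_2) \in \mathcal{A}$ with $\|h\alpha'_1(n_1)\|_{\R/\Z} \leq 2^s \eps$ for these $n_1$'s.

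To upgrade this to $|\alpha'_1(n_1)| \leq 2^s \eps$, I pigeonhole on the nearest integer $K_{n_1} \in (-h/2, h/2]$ to $h \alpha'_1(n_1)$: since $h \leq \eta^{-O(1)}$, a single value $K_0$ is shared by $\gg \eta^{O(1)} P$ of the $n_1$'s. A further pigeonhole on $n_1 \bmod h$ freezes $n_1^s \bmod h$ at a single value, after which replacing $\alpha_*$ by $\alpha_* + c$ for an appropriate rational $c$ of denominator $h$ (chosen so that $c n_1^s \equiv K_0/h \pmod 1$ on the frozen residue class) cancels the $K_0/h$ offset uniformly, delivering $|\alpha'_1(n_1)| \leq 2^s \eps$ on the resulting subset. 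Combined with the $n_2$-fibre density from the previous step, this yields the required $\gg \eta^{O(1)} P^2$ pairs.

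The main obstacle is precisely this last passage: Lemma~\ref{s-fold} only constrains $\alpha'_1(n_1) \pmod{1/h}$, while shifting $\alpha_*$ by a constant $c$ modifies $\alpha'_1(n_1)$ by the $n_1$-dependent quantity $-c n_1^s \pmod 1$. Arranging uniform cancellation therefore requires simultaneously pigeonholing on $K_{n_1} \bmod h$ and on $n_1 \bmod h$; the polynomial-in-$\eta$ losses from both pigeonholes are absorbed into the final $\gg \eta^{O(1)} P^2$ count thanks to the bound $h \leq \eta^{-O(1)}$ secured in the first step.
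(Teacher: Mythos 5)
Your overall architecture (cube lemma with $H=\eta^{-O(1)}$, then $s$-fold differencing, then pigeonholes on $n_{2,0}$, on the integer offset, and on $n_1 \bmod h$) matches the paper's proof up to the last step, but that last step has a genuine gap. After freezing $n_1 \equiv b \pmod h$ and the offset $K_0$, you want a rational $c = j/h$ with $c\, n_1^s \equiv K_0/h \pmod 1$, i.e.\ $j b^s \equiv K_0 \pmod h$. This congruence is solvable in $j$ only when $\gcd(b^s,h) \mid K_0$, and nothing in the construction guarantees this: the elements of $S_1$ are arbitrary integers (this is precisely the point of removing Walsh's primality restriction), so the pigeonholed residue $b$ may share factors with $h$, e.g.\ $h=4$, $b=2$, $s=1$, $K_0=1$ gives $2j \equiv 1 \pmod 4$, which has no solution. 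The paper resolves exactly this obstruction by a further decomposition: writing $n_1 = q n_1'$ where $q$ is the largest divisor of $n_1$ lying in the multiplicative semigroup $\mathcal{H}$ generated by the primes dividing $h$ (an estimate on $\sum_{q \in \mathcal{H}} q^{-1/2}$ shows one can pigeonhole to a single $q \ll \eta^{-O(1)}$ while keeping $\gg \eta^{O(1)} P$ values of $n_1$), restricting $n_1'$ to a \emph{primitive} residue class $b \bmod h$, and then using a shift of denominator $q^s h$ rather than $h$: one has $\frac{k}{q^s h} n_1^s = \frac{k (n_1')^s}{h} \equiv \frac{k b^s}{h} \pmod 1$ with $b$ now invertible mod $h$, so $k b^s \equiv a \pmod h$ is always solvable. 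Your proposal needs this extra ingredient; as written, the "appropriate rational $c$ of denominator $h$" may not exist.

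A secondary, fixable point: you restrict to $n_1$ with dense $n_2$-fibres only \emph{after} pigeonholing on $n_{2,0}$, and justify that a positive proportion survive merely from $|\mathcal{N}_0| \geq \eta P^2$; that inference is not valid, since the $\gg \eta^{O(1)} P$ values of $n_1$ produced by the pigeonhole could a priori all have sparse fibres. The paper avoids this by pruning the sparse columns from $\mathcal{N}$ \emph{before} selecting $n_2^0$, so that every surviving $n_1$ automatically has $\gg \eta^{O(1)} P$ partners; you should reorder your argument accordingly.
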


\begin{proof}
Applying Lemma~\ref{le_comb_cube} with $H=\eta^{-C}$ for some sufficiently large constant $C$, we can find a tuple $\vec h = (h_1,\dots,h_s)$ of positive integers $h_1,\dots,h_s \ll \eta^{-O(1)}$ and a subset ${\mathcal N}$ of ${\mathcal N}_0$ of cardinality $\gg \eta^{O(1)} P^2$ such that
$$ (n_1, n_2 + \vec \omega \cdot \vec h) \in {\mathcal N}_0$$
for all $(n_1,n_2) \in {\mathcal N}$ and $\vec \omega \in \{0,1\}^s$. 

By pruning all sparse columns from ${\mathcal N}$, we may assume that whenever $(n_1,n_2) \in {\mathcal N}$, then one also has $(n_1,n'_2) \in {\mathcal N}$ for $\gg \eta^{O(1)} P$ choices of $n'_2 \in S_2$.

By the pigeonhole principle, we may find $n_2^0 \in S_2$ such that $(n_1,n_2^0) \in {\mathcal N}$ for $\gg \eta^{O(1)} P$ choices of $n_1 \in S_1$.
Applying Lemma~\ref{s-fold}, we conclude that
\begin{align}\label{eq10}
 \|h\alpha_1(n_1)-\beta n_1^s\|_{\R/\Z} \leq 2^s \eps
\end{align}
whenever $(n_1,n^0_2)\in \mathcal{N}$, for some real number $\beta$ (not depending on $n_1$) and with
$$
h \coloneqq s!h_1\cdots h_s.$$
In particular $1 \leq h \ll \eta^{-O(1)}$.  From~\eqref{eq10} we then see that whenever $(n_1,n^0_2) \in {\mathcal N}$, we have
\begin{equation}\label{alphab}
 \left\|\alpha_1(n_1)-\frac{\beta}{h} n_1^s - \frac{a}{h} \right\|_{\R/\Z} \leq \frac{2^s \eps}{h}
\end{equation}
for some integer $1 \leq a \leq h$. By the pigeonhole principle, we can then find a single $1 \leq a \leq h$ and a subset $S'_1$ of $S_1$ of size $\gg \eta^{O(1)} P$ such that for all $n_1 \in S'_1$, one has $(n_1,n_2^0) \in {\mathcal N}$, and~\eqref{alphab} holds for this fixed choice of $a$.

For technical reasons we need to analyze how $n_1$ shares primes in common with $h$.  Let ${\mathcal H}$ be the multiplicative semigroup generated by the primes dividing $h$.  We observe that
\begin{align*}
\sum_{q \in {\mathcal H}\cap \mathbb{N}} \frac{1}{q^{1/2}} &= \prod_{p|h} \left(1 - \frac{1}{p^{1/2}}\right)^{-1}\ll \exp\left( \sum_{p|h} \frac{2}{p^{1/2}}  \right) \\
&\ll \exp\left(\sum_{p|h} \log p \right) \ll \exp(\log h ) \ll \eta^{-O(1)}.
\end{align*}
In particular, for any $Q \geq  1$, we have
$$ \sum_{\substack{q \in {\mathcal H}\cap \mathbb{N}\\ q \geq Q}} \frac{1}{q} \ll \eta^{-O(1)} Q^{-1/2}.$$
From this and the union bound, we see that if $1 \leq Q \leq P$, then the number of those $n_1 \in S'_1$ that are divisible by some $q \in {\mathcal H}\cap \mathbb{N}$ with $q \geq Q$ is at most $O( \eta^{-O(1)} Q^{-1/2} P)$.  Thus, for a suitable choice of $Q = \eta^{-O(1)}$, such $n_1$ only occupy at most half (say) of the set $S'_1$.  By the pigeonhole principle, we may thus find $q \in {\mathcal H}\cap\mathbb{N}$ with $1 \leq q < Q$ and a subset $S''_1$ of $S'_1$ of cardinality $\gg \eta^{O(1)} P$ with the property that for all $n_1 \in S'_1$, $q$ is the largest element of ${\mathcal H}\cap \mathbb{N}$ that divides $n_1$, thus $n_1 = q n'_1$ where $n'_1$ is coprime to $h$.  By further application of the pigeonhole principle, we may also restrict $n'_1$ to a single primitive residue class $n'_1 = b \hbox{ mod } h$ of $h$.  Letting $k$ be a positive integer that solves $kb^s\equiv a\pmod h$, we then have
$$ \frac{k}{q^s h} n_1^s = \frac{k (n'_1)^s}{h} = \frac{a}{h} \hbox{ mod } 1$$
for all $n_1 \in S''_1$.  If we then define
$$ \beta' \coloneqq \frac{\beta}{h} + \frac{k}{q^s h}$$
we conclude from~\eqref{alphab} that
$$ \|\alpha_1(n_1)-\beta' n_1^s  \|_{\R/\Z} \leq \frac{2^s \eps}{h}$$
for all $n_1 \in S''_1$.  Because we have previously pruned all sparse columns from ${\mathcal N}$, we conclude that
$$ \|\alpha_1(n_1)-\beta' n_1^s  \|_{\R/\Z} \leq \frac{2^s \eps}{h}$$
for $\gg \eta^{O(1)} P^2$ of the pairs $(n_1,n_2) \in {\mathcal N}_0$.  Setting $\alpha'_i(n_i) \coloneqq \{ \alpha_i(n_1) - \beta' n_i^s\}$ for $i=1,2$, we obtain the claim.
\end{proof}

Next, we show that if we have already found a pair $(\alpha'_1,\alpha'_2) \in {\mathcal A}$ in which $\alpha'_1$ is often of some small size $O(\mu)$, we can find a new pair $(\alpha''_1,\alpha''_2)$ in which $\alpha''_1$ is often as small as $O\left(\eta^{-O(1)} \eps \left( \mu + \frac{1}{P^s} \right) \right)$.

\begin{lemma}[Making $\alpha'_1(n)$ even smaller]\label{le_contagious2}  Let $\mu > 0$, and suppose that there is $(\alpha'_1, \alpha'_2) \in {\mathcal A}$ such that
\begin{equation}\label{alpha-in}
|\alpha'_1(n_1)|\leq \mu    
\end{equation} 
for $\gg \eta^{O(1)} P^2$ of the pairs $(n_1,n_2) \in {\mathcal N}_0$.  Then (if $C_{s,\kappa}$ is sufficiently large depending on the implied constants in the hypothesis) there exists $(\alpha''_1, \alpha''_2) \in {\mathcal A}$ such that
\begin{equation*}
|\alpha''_1(n_1)| \ll \eta^{-O(1)} \eps \left( \mu + \frac{1}{P^s} \right )
\end{equation*}
for $\gg \eta^{O(1)}P^2$ pairs $(n_1,n_2)\in {\mathcal N}_0$.
\end{lemma}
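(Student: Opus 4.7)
The plan is to mirror the structure of the proof of Lemma~\ref{le_contagious1}, but to exploit the hypothesis $|\alpha'_1(n_1)| \leq \mu$ in order to gain a factor of $\eps$ in the final bound. First I would set $S'_1 \coloneqq \{n_1 \in S_1 : |\alpha'_1(n_1)| \leq \mu\}$ and consider the restricted set $\mathcal{N} \coloneqq \{(n_1,n_2) \in \mathcal{N}_0 : n_1 \in S'_1\}$, which has $\gg \eta^{O(1)} P^2$ pairs by hypothesis; after pruning sparse columns (as in the proof of Lemma~\ref{le_contagious1}) I may assume that every $n_1$ appearing in $\mathcal{N}$ has $\gg \eta^{O(1)} P$ companions $n'_2$ with $(n_1,n'_2) \in \mathcal{N}$.

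Next I would apply Lemma~\ref{le_comb_cube} to $\mathcal{N}$ with $H = \eta^{-C}$ for a sufficiently large constant $C$, producing a tuple $\vec h = (h_1,\ldots,h_s)$ of positive integers $h_i \ll \eta^{-O(1)}$ such that $(n_1, n_{2,0} + \vec\omega\cdot\vec h) \in \mathcal{N}_0$ for all $\vec\omega \in \{0,1\}^s$ on a subset of $\mathcal{N}$ of cardinality $\gg \eta^{O(1)} P^2$. Lemma~\ref{s-fold} then yields $\|h\alpha'_1(n_1) - \beta(n_{2,0}) n_1^s\|_{\R/\Z} \leq 2^s \eps$ with $h = s!\,h_1\cdots h_s \ll \eta^{-O(1)}$ and $\beta(n_{2,0}) \coloneqq \sum_{\vec\omega}(-1)^{|\vec\omega|} \alpha'_2(n_{2,0} + \vec\omega\cdot\vec h)$. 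Since $n_1 \in S'_1$ gives $|h\alpha'_1(n_1)| \leq h\mu$, pigeonholing a single $n_{2,0}$ for which this relation holds for $\gg \eta^{O(1)} P$ values of $n_1 \in [P,2P]$ and applying the triangle inequality yields $\|\beta n_1^s\|_{\R/\Z} \ll \eta^{-O(1)}(\mu + \eps)$ for this many $n_1$. An application of Vinogradov's lemma (Lemma~\ref{vin}) to the polynomial $n \mapsto \beta n^s$ on $[P,2P]$ then produces $1 \leq q \ll \eta^{-O(1)}$ and an integer $k$ with $|k| \leq q\cdot 2^s$ such that
\[
\beta = k/q + O\bigl(\eta^{-O(1)}(\mu + \eps)/(qP^s)\bigr).
\]

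To conclude I would set $\alpha_{**} \coloneqq k/(qh)$, adjusted by a rational correction $c/(qh)$ that absorbs the remaining integer-valued ambiguity, and define the candidate pair $\alpha''_i(n_i) \coloneqq \{\alpha'_i(n_i) - \alpha_{**} n_i^s\}$. Following the endgame of Lemma~\ref{le_contagious1}, I would pigeonhole the integer $M_0 \coloneqq \mathrm{round}(hq\alpha'_1(n_1) - kn_1^s) \pmod{qh}$ to a single residue and restrict $n_1$ to a primitive residue class modulo $qh$; this costs only an $\eta^{O(1)}$ density factor because $qh \ll \eta^{-O(1)}$, so the multiplicative semigroup bound of Lemma~\ref{le_contagious1} remains effective. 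On this restricted set the final estimate reduces to combining the $2^s\eps$ error from Lemma~\ref{s-fold} with the $\eta^{-O(1)}(\mu + \eps)/P^s$ error from Vinogradov.

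The main obstacle is verifying that these two error sources combine \emph{multiplicatively} to yield the target bound $\eta^{-O(1)}\eps(\mu + 1/P^s)$, rather than merely additively as $\eta^{-O(1)}(\mu + \eps)$. The multiplicative structure is expected to arise because, after the residue-class restriction absorbs the integer-valued $k/(qh)$-term, the Vinogradov error enters $\alpha''_1$ only through its coupling with the polynomial $n_1^s$, which collapses back to the $1/P^s$-scale, while the $2^s\eps$ factor from Lemma~\ref{s-fold} then multiplies this residual $\mu + 1/P^s$ quantity. Tracking this interaction precisely, rather than settling for the looser additive bound, is the delicate heart of the argument; should the multiplicative interaction fail to materialise directly, a fallback would be to first run the symmetric argument in the $n_1$-direction to extract a comparable bound on $\alpha''_2$, and then combine with a further application of Lemma~\ref{s-fold} in the $n_2$-direction to return to $\alpha''_1$.
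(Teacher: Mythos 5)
There is a genuine gap, and you have in fact put your finger on it yourself: with your choice of parameters the two error sources really do only combine additively, and no amount of residue-class pigeonholing will rescue this. The problem is that you apply Lemma~\ref{le_comb_cube} with $H=\eta^{-C}$, exactly as in Lemma~\ref{le_contagious1}, so the integer $h=s!h_1\cdots h_s$ is bounded by $\eta^{-O(1)}$. At the end you must divide the $s$-fold-difference error $2^s\eps$ by $qh$ to pass from a bound on $qh\alpha'_1(n_1)-\{q\beta\}n_1^s$ to a bound on $\alpha''_1(n_1)$, and $\eps/(qh)\gg\eta^{O(1)}\eps$ is hopelessly larger than the target $\eta^{-O(1)}\eps(\mu+P^{-s})$ whenever $\mu+P^{-s}=o(\eta^{O(1)})$. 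The rational correction $k/(qh)$ and the restriction to residue classes modulo $qh$ only handle the integer ambiguity; they cannot shrink the analytic error. The fallback of running the argument in the other variable has the same defect.

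The missing idea is to make $h$ \emph{large}, of size comparable to $\min(\mu^{-1},P^s)$, so that the division by $h$ itself produces the multiplicative factor $\mu+P^{-s}$. Concretely: first reduce to the case $\mu\leq\eta^{C_*}$ (otherwise Lemma~\ref{le_contagious1} already gives the conclusion), then apply Lemma~\ref{le_comb_cube} with $H=\eta^{C_*/(2s)}\min(\mu^{-1/s},P)$, which yields $h\asymp\eta^{O(1)}Y$ with $Y=\eta^{C_*/2}\min(\mu^{-1},P^s)$. The hypothesis $|\alpha'_1(n_1)|\leq\mu$ then gives $|h\alpha'_1(n_1)|\ll\eta^{C_*/2}$, hence $\|\beta n_1^s\|_{\R/\Z}\ll\eta^{C_*/2}$ for many $n_1$, and Vinogradov produces $1\leq q\ll\eta^{-O(1)}$ with $\|q\beta\|_{\R/\Z}\ll\eta^{C_*/2-O(1)}/P^s$. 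Since $qh\alpha'_1(n_1)-\{q\beta\}n_1^s$ then has magnitude $\ll\eta^{C_*/2-O(1)}<1/2$ for $C_*$ large, the mod-$1$ bound $\|qh\alpha'_1(n_1)-\{q\beta\}n_1^s\|_{\R/\Z}\ll\eta^{-O(1)}\eps$ upgrades to a genuine real-number bound, and dividing by $qh$ gives $|\alpha'_1(n_1)-\tfrac{\{q\beta\}}{qh}n_1^s|\ll\eta^{-O(1)}\eps/h\ll\eta^{-C_*/2-O(1)}\eps(\mu+P^{-s})$, which is the claim after subtracting $\tfrac{\{q\beta\}}{qh}n_i^s$ from both $\alpha'_i$. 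Note also that no semigroup or primitive-residue-class analysis is needed here, unlike in Lemma~\ref{le_contagious1}.
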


\begin{proof}  
Let $C_*$ be a large constant depending only on $s, \kappa$ to be chosen later; until this constant is selected, we do \emph{not} allow implied constants to depend on $C_*$. The constant $C_{s, \kappa}$ will be large in terms of $C_*$.  We may assume that $\mu \leq \eta^{C_*}$, since otherwise the claim follows from Lemma~\ref{le_contagious1}.

Applying Lemma~\ref{le_comb_cube} with $H= \eta^{C_*/(2s)} \min(\mu^{-1/s},P)$, we can find a tuple $\vec h = (h_1,\dots,h_s)$ of positive integers $\eta^{O(1)}H\ll h_1,\dots,h_s \leq H$ and a subset ${\mathcal N}$ of ${\mathcal N}_0$ of cardinality $\gg \eta^{O(1)} P^2$ such that
$$ (n_1, n_2 + \vec \omega \cdot \vec h) \in {\mathcal N}_0$$
for all $(n_1,n_2) \in {\mathcal N}$ and $\vec \omega \in \{0,1\}^s$.  By pruning as in the proof of Lemma~\ref{le_contagious1}, we may assume that whenever $(n_1,n_2) \in {\mathcal N}$, then one also has $(n_1,n'_2) \in {\mathcal N}$ for $\gg \eta^{O(1)} P$ choices of $n'_2 \in S_2$.

By the pigeonhole principle, we may find $n_2^0 \in S_2$ such that $(n_1,n_2^0) \in {\mathcal N}$ for $\gg \eta^{O(1)} P$ choices of $n_1 \in S_1$.
Applying Lemma~\ref{s-fold} as before, we have
\begin{equation}\label{ho}
 \|h\alpha'_1(n_1)-\beta n_1^s\|_{\R/\Z} \leq 2^s \eps
\end{equation}
whenever $(n_1,n^0_2)\in \mathcal{N}$, where $\beta$ is a real number and $h$ is a positive integer of size
\begin{equation*}
\eta^{O(1)}Y\ll h\ll Y,\quad \textnormal{where}\quad Y\coloneqq  (\eta^{C_*/(2s)} \min(\mu^{-1/s},P))^s = \eta^{C_*/2 } \min(\mu^{-1},P^s).
\end{equation*}
From~\eqref{alpha-in} we have
\begin{equation}\label{hi}
 |h\alpha'_1(n_1)| \ll \eta^{C_*/2}
\end{equation}
and hence
$$ \| \beta n_1^s \|_{\R/\Z} \ll \eta^{C_*/2}$$
for $\gg \eta^{O(1)} P$ choices of $n_1 \in S_1$.  Applying Lemma~\ref{vin}, we may find a positive integer $1 \leq q \ll \eta^{-O(1)}$ such that
\begin{equation}\label{qbeta}
 \| q\beta  \|_{\R/\Z} \ll \eta^{C_*/2-O(1)} / P^s.
\end{equation}
From~\eqref{ho} we have
\begin{equation}\label{qha}
 \| qh \alpha'_1(n_1) - \{q\beta\} n_1^s \|_{\R/\Z} \ll \eta^{-O(1)} \eps
\end{equation}
whenever $(n_1,n_2^0) \in {\mathcal N}$.
On the other hand from~\eqref{hi} and~\eqref{qbeta} we have
$$ |qh \alpha'_1(n_1) - \{q\beta\} n_1^s| \ll \eta^{C_*/2-O(1)}.$$
For $C_*$ large enough, this implies that $qh \alpha'_1(n_1) - \{q\beta\} n_1^s$ has magnitude less than $1/2$, and so from~\eqref{qha} we now have
$$|qh \alpha'_1(n_1) - \{q\beta\} n_1^s| \ll \eta^{-O(1)} \eps$$
whenever $(n_1,n_2^0) \in {\mathcal N}$. Thus
$$ \left|\alpha'_1(n_1) - \frac{\{q\beta\}}{qh} n_1^s\right| \ll \eta^{-O(1)} \frac{\eps}{qh} \ll \eta^{-C_{*}/2-O(1)} \eps \left( \mu + \frac{1}{P^s} \right).$$
Setting
$$ \alpha''_i(n_i) \coloneqq \alpha'_i(n_i) - \frac{\{q\beta\}}{qh} n_i^s$$
for $i=1,2$, we obtain the claim. 
\end{proof}

If we apply  Lemma~\ref{le_contagious1} followed by $j$ applications of Lemma~\ref{le_contagious2}, we see that for any fixed $j$ (depending on $s,\kappa$) we can find $(\alpha'_1,\alpha'_2) \in {\mathcal A}$ such that
$$ |\alpha'_1(n_1)| \ll \eta^{-O(1)} \eps \left( \eps^{j} + \frac{1}{P^s} \right )
$$
for $\gg \eta^{O(1)}P^2$ pairs $(n_1,n_2)\in {\mathcal N}_0$.  Since by hypothesis $0<\eps<\eta^{C_{s,\kappa}}/P^{\kappa}$, we can choose $j$ large enough depending on $\kappa$ so that the $\eps^{j}\leq \frac{1}{P^s}$, and thus we now have
\begin{equation}\label{alpha1-small}
 |\alpha'_1(n_1)| \ll \frac{\eta^{-O(1)}}{P^s} \eps
\end{equation}
for all pairs $(n_1,n_2)$ in a subset ${\mathcal N}$ of ${\mathcal N}_0$ of size $\gg \eta^{O(1)} P^2$.  

Having obtained good control on $\alpha'_1$, we turn to $\alpha'_2$.  From~\eqref{eq0-alt} and~\eqref{alpha1-small} we have
\begin{equation}\label{n1s}
\| n_1^s \alpha'_2(n_2)\|_{\R/\Z} \ll \eta^{-O(1)} \eps
\end{equation}
for all $(n_1,n_2) \in {\mathcal N}$.  Thus we can find a subset $S'_2$ of $S_2$ of cardinality $\gg \eta^{O(1)} P$ such that for every $n_2 \in S'_2$,~\eqref{n1s} holds for $\gg \eta^{O(1)} P$ choices of $n_1$ with $(n_1,n_2) \in {\mathcal N}$.  Applying Lemma~\ref{vin}, we thus see that for each $n_2 \in S'_2$ there is an integer $1 \leq q \ll \eta^{-O(1)}$ such that
$$ \| q\alpha'_2(n_2)\|_{\R/\Z} \ll \eta^{-O(1)} \eps / P^s.$$
By pigeonholing and refining $S'_2$ as necessary, we may assume without loss of generality that $q$ is independent of $n_2$.  We conclude that for $\gg \eta^{O(1)} P^2$ pairs $(n_1,n_2) \in {\mathcal N}_0$, we have
$$ \| q\alpha'_i(n_i)\|_{\R/\Z} \ll \eta^{-O(1)} \eps / P^s$$
for $i=1,2$, and Theorem~\ref{thm_contagious} follows.

\subsection{Nilsequence contagion}

In this subsection we bootstrap the monomial contagion lemma (Theorem~\ref{thm_contagious}) to prove a nilsequence contagion lemma. 

Given a filtered nilmanifold $G/\Gamma$, we introduce the following relation between polynomial sequences that are equivalent up to smooth and rational components. 

\begin{definition}\label{def:nilseqequiv}
Let $G/\Gamma$ be a filtered nilmanifold, $I\subset\mathbb{R}$ be an interval with $|I|\geq 1$, and let $\eta\in (0,1)$. Let $g, g' \in  \Poly(\mathbb{Z} \to G)$. We write $g \sim_{I,\eta} g'$ if 
\begin{align*}
g = \varepsilon g'\gamma,    
\end{align*}
for some  $\varepsilon, \gamma \in \Poly(\mathbb{Z} \to G)$, with $\varepsilon$ being $(\eta^{-1},I)$-smooth and $\gamma$ being $\eta^{-1}$-rational.    
\end{definition}

\begin{remark}
Note that the relation $\sim_{I,\eta}$ depends also on the choice of the filtered nilmanifold $G/\Gamma$. However, as the choice of the filtered nilmanifold will always be clear from context, we omit this data from the notation.     
\end{remark}

Note that while the relation $\sim_{I,\eta}$ is not quite an equivalence relation, it is an equivalence relation up to polynomial losses in $\eta$. It is also scalable with respect to the interval $I$. These facts are summarized in the following lemma.

\begin{lemma}[Basic properties of the $\sim_{I,\eta}$ relation]\label{le:equivalence} 
Let $B, d,D\geq 0$. Let $\eta \in (0,1/2)$, and let $G/\Gamma$ be a filtered nilmanifold of dimension $D$, degree $d$ and complexity $\leq \eta^{-B}$. Let $I\subset \mathbb{R}$ be an interval of length $\geq 1$, and let $g_1,g_2,g_3\in \Poly(\mathbb{Z}\to G)$.  
\begin{enumerate}
    \item[(i)] We have 
    \begin{align*}
g_1&\sim_{I,\eta}g_1,\\
g_1&\sim_{I,\eta} g_2 \implies g_2\sim_{I,\eta^{O_{B,d,D}(1)}}g_1,\\
g_1&\sim_{I,\eta}g_2\quad \textnormal{ and }\quad g_2\sim_{I,\eta}g_3\implies g_1\sim_{I,\eta^{O_{B,d,D}(1)}} g_3.  
\end{align*}
\item[(ii)] Let $I_1, I_2, I_3\subset \mathbb{R}$ be intervals of length at least $d+1$ with $I_1\subset I_2\subset I_3$ and $|I_3|\leq \eta^{-B}|I_2|$. Then 
\begin{align*}
g_1&\sim_{I_2,\eta}g_2\implies g_1\sim_{I_1,\eta} g_2,\\
g_1&\sim_{I_2,\eta}g_2\implies g_1\sim_{I_3,\eta^{O_{B,d,D}(1)}} g_2.    
\end{align*}

\end{enumerate}

\end{lemma}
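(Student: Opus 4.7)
The plan is to reduce both parts to the approximate-group behavior of smooth and rational polynomial sequences established in Lemma~\ref{mult}, combined with the polynomial scaling of smoothness under interval changes; each of the five assertions then becomes a direct manipulation of the decomposition $g = \varepsilon g' \gamma$ from Definition~\ref{def:nilseqequiv}.

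For part (i), I would dispatch reflexivity via the trivial decomposition $g_1 = 1_G \cdot g_1 \cdot 1_G$, noting that $1_G$ is trivially $(\eta^{-1}, I)$-smooth and $\eta^{-1}$-rational. For symmetry, I would invert: if $g_1 = \varepsilon g_2 \gamma$ then $g_2 = \varepsilon^{-1} g_1 \gamma^{-1}$, and Lemma~\ref{mult}(i)--(ii) (applied with complexity parameter $M \asymp \eta^{-B-1}$ to accommodate the complexity of $G/\Gamma$) shows that $\varepsilon^{-1}$ is $(\eta^{-O_{B,d,D}(1)}, I)$-smooth and $\gamma^{-1}$ is $\eta^{-O_{B,d,D}(1)}$-rational. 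For transitivity, I would chain the decompositions, writing
\[
g_1 = \varepsilon_1 g_2 \gamma_1 = (\varepsilon_1 \varepsilon_2) g_3 (\gamma_2 \gamma_1),
\]
and invoke Lemma~\ref{mult}(i)--(ii) once more to see that the products retain smoothness and rationality with only a polynomial loss in the constant; the Lazard--Leibman theorem ensures that each factor is still a polynomial sequence.

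For part (ii), the restriction property $g_1 \sim_{I_2,\eta} g_2 \implies g_1 \sim_{I_1,\eta} g_2$ is immediate from the definition of $(\eta^{-1}, I)$-smoothness, since both defining inequalities (the pointwise bound $d_G(\varepsilon(n), 1_G) \leq \eta^{-1}$ and the step bound, which on $I_1$ satisfies $d_G(\varepsilon(n), \varepsilon(n-1)) \leq \eta^{-1}/|I_2| \leq \eta^{-1}/|I_1|$) persist or improve when restricting $n \in I_2$ to $n \in I_1$, while rationality of $\gamma$ is interval-independent. The enlargement property is the main obstacle: given $\varepsilon \in \Poly(\Z \to G)$ of degree $\leq d$ that is $(\eta^{-1}, I_2)$-smooth, I must show it is $(\eta^{-O_{B,d,D}(1)}, I_3)$-smooth for $I_3 \supset I_2$ with $|I_3| \leq \eta^{-B} |I_2|$. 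My plan is to pass to Mal'cev coordinates, writing $\varepsilon(n) = \exp(t_1(n) X_1) \cdots \exp(t_D(n) X_D)$ with each $t_j$ a scalar polynomial of degree $\leq d$, and then reduce to the scalar polynomial-scaling estimate \eqref{eq:smoothness}, which yields growth of each coordinate by at most a factor $(|I_3|/|I_2|)^d \leq \eta^{-Bd}$. The Baker--Campbell--Hausdorff formula converts coordinatewise bounds to bounds on $d_G$ at the cost of further complexity-dependent polynomial factors, giving $(\eta^{-O_{B,d,D}(1)}, I_3)$-smoothness. Everything else in the proof is bookkeeping via Lemma~\ref{mult}.
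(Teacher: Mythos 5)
Your proposal is correct and follows essentially the same route as the paper: part (i) and the restriction half of part (ii) are handled exactly as in the paper via Lemma~\ref{mult} and the trivial decomposition, and the enlargement half of part (ii) is reduced, as in the paper, to polynomial extrapolation with loss $(|I_3|/|I_2|)^d \leq \eta^{-Bd}$. The only (cosmetic) difference is that the paper extrapolates the single $\log G$-valued polynomial $P(n)=\log(\varepsilon(n)\varepsilon(n-1)^{-1})$ by Lagrange interpolation and then recovers the pointwise bound by telescoping, whereas you work coordinatewise on $\varepsilon$ itself and glue with Baker--Campbell--Hausdorff; note only that \eqref{eq:smoothness} as stated concerns $\R/\Z$-valued polynomials, so you should invoke its real-valued (Lagrange interpolation) analogue rather than that equation literally.
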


\begin{proof}  We allow implied constants to depend on $B,d,D$.

The first claim in part (i) is trivial. The second part follows from the fact that if $\varepsilon$ is $(\eta^{-1},I)$-smooth and $\gamma$ is $\eta^{-1}$-rational, then by Lemma~\ref{mult} also $\varepsilon^{-1}$ is $(\eta^{-O(1)},I)$-smooth and $\gamma^{-1}$ is $\eta^{-O(1)}$-rational. The third part follows by noting that if $g_1=\varepsilon_1 g_2\gamma_1$ and $g_2=\varepsilon_2 g_3\gamma_2$ with $\varepsilon_i$ being $(\eta^{-1},I)$-smooth and $\gamma_i$ being $\eta^{-1}$-rational for $i=1, 2$, then  $g_1=\varepsilon_1\varepsilon_2g_3\gamma_2\gamma_1$ and by Lemma~\ref{mult} we know that $\varepsilon_1\varepsilon_2$ is $(\eta^{-O(1)},I)$-smooth and $\gamma_2\gamma_1$ is $\eta^{-O(1)}$-rational.

The first claim in part (ii) is trivial. We proceed to prove the second part. It suffices to show that if $\varepsilon\in \Poly(\mathbb{Z}\to G)$ is $(\eta^{-1},I_2)$-smooth, then it is $(\eta^{-O(1)},I_3)$-smooth. This amounts to showing that  for all $n\in I_3$ we have $d_G(\varepsilon(n),1_G)\leq \eta^{-O(1)}$ and \begin{align}\label{eq:dG}
d_G(\varepsilon(n),\varepsilon(n-1))\leq \frac{\eta^{-O(1)}}{|I_3|}.  \end{align}
Note that if we have~\eqref{eq:dG}, and if $n_0\in I_2$ is chosen arbitrarily, then by repeated application of the triangle inequality, for any $n\in I_3$, we have 
\begin{align*}
 d_G(\varepsilon(n),1_G)\leq d_G(\varepsilon(n_0),1_G)+|n-n_0|\frac{\eta^{-O(1)}}{|I_3|}\ll  \eta^{-O(1)}  
\end{align*}
by the fact that $\varepsilon$ is $(\eta^{-1},I_2)$-smooth. Therefore, it suffices to prove~\eqref{eq:dG}. 

Let $V=\log G$. Then $V$ is a vector space of dimension $D=\dim G$, so it is isomorphic to $\mathbb{R}^D$. Equip $V$ with a Euclidean norm $\|\cdot\|$. Consider the function $P(n)=\log(\varepsilon(n)\varepsilon(n-1)^{-1})$. The sequence $n \mapsto \varepsilon(n) \varepsilon(n-1)^{-1}$ is a polynomial map from $\Z$ to $G$, hence by the Baker--Campbell--Hausdorff formula, $P$ is a polynomial of degree at most $d$ from $\Z$ to $\log G\cong\mathbb{R}^{D}$.  By hypothesis, $\|P(n)\| = O(\eta^{-O(1)}/|I_2|)$ for all $n \in I_2$, hence by Lagrange interpolation (and the assumption $|I_2|\geq d+1$) we have $\|P(n)\| = O(\eta^{-O(1)}/|I_3|)$ for all $n \in I_3$, and \eqref{eq:dG} follows.
\end{proof}

With the above notation, we are ready to state the nilsequence contagion lemma.

\begin{theorem}[Nilsequence contagion lemma]\label{nil-contagion} Let $\kappa \in (0, 1)$ and $\eta \in (0, 1/2)$.  Let $G/\Gamma$ be a filtered nilmanifold of dimension $D$, degree $d$ and complexity at most $1/\eta$. 
Let $C$ be a constant that is sufficiently large depending on $\kappa, D, d$.  Let $P \geq \eta^{-C}$ and let $I$ be an interval with $|I| \geq \eta^{-C} P^{2+\kappa}$. Let $S_1, S_2$ be subsets of $[P,2P]$, and suppose that we have polynomial maps $g_n, g'_{n'} \colon \Z \to G$ for $n \in S_1, n' \in S_2$ such that
\begin{equation*}
 g_n( n' \cdot) \sim_{\frac{1}{nn'} I,\eta} g'_{n'}(n \cdot)
\end{equation*}
for $\geq \eta P^2$ pairs $(n,n') \in S_1 \times S_2$.  Then there is a polynomial map $g_*\colon \Z \to G$ such that
$$ g_n \sim_{\frac{1}{n} I,\eta^{O_{\kappa,D,d}(1)}} g_*(n \cdot)$$ 
for $\gg \eta^{O_{\kappa,D,d}(1)} P$ values of $n \in S_1$ and
$$ g'_{n'} \sim_{\frac{1}{n'} I,\eta^{O_{\kappa,D,d}(1)}} g_*(n' \cdot)$$ 
for $\gg \eta^{O_{\kappa,D,d}(1)} P$ values of $ n' \in S_2$. 
\end{theorem}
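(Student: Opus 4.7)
The plan is to perform a double induction on a nested family of rational subgroups/subspaces of $G$, driven by the filtration $G_\bullet$, with the inductive step fed by the monomial contagion lemma (Theorem~\ref{thm_contagious}) applied to scalar ``Taylor coefficients'' extracted from the nilsequence. Concretely, I would write each polynomial map in the Taylor-like form $g_n(y) = \prod_{i=0}^d g_{n,i}^{\binom{y}{i}}$ with $g_{n,i}\in G_i$, and similarly for $g'_{n'}$. Using the Lazard--Leibman theorem and the identity $\binom{n'y}{i} = \frac{(n')^i}{i!} y^i + (\text{lower order in } y)$, the hypothesis $g_n(n'\cdot)\sim_{\frac{1}{nn'}I,\eta} g'_{n'}(n\cdot)$ translates, after absorbing the lower-order-in-$y$ tail into the smooth/rational factors (using the dilation estimate~\eqref{eq:dil}, the smoothness concatenation Corollary~\ref{smooth-dilate}, and Lemma~\ref{le:equivalence}(ii)), into an approximate matching between the top-degree pieces that behaves like $n^d g'_{n',d}$ vs.\ $(n')^d g_{n,d}$ modulo $\Gamma_d$.

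For the top level, $G_d$ is abelian and its Mal'cev coordinates identify $G_d/\Gamma_d$ with a torus of dimension $\dim G_d$. Applying Proposition~\ref{central} (central Fourier decomposition) on $G_d$ one coordinate at a time, the matching relation becomes an estimate of the form $\|n^d \alpha'(n') - (n')^d \alpha(n)\|_{\R/\Z}\leq \eta^{-O(1)}/|I|$ on $\geq\eta^{O(1)} P^2$ pairs $(n,n')\in S_1\times S_2$, where $\alpha,\alpha'$ are real-valued functions encoding the relevant coordinate of $g_{n,d}$, $g'_{n',d}$. The length hypothesis $|I|\geq\eta^{-C}P^{2+\kappa}$ is exactly what is needed so that the $\eta^{-O(1)}/|I|$ error slots into Theorem~\ref{thm_contagious} with slack $\kappa$ (after rescaling $y$ to live at scale $P$). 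The monomial contagion lemma then delivers a universal scalar $\alpha_*$ and small $q\ll\eta^{-O(1)}$ such that $q(\alpha(n)-\alpha_* n^d)$ is $O(\eta^{-O(1)}/P^d)$ on $\gg\eta^{O(1)} P$ values of $n$, and likewise for $\alpha'$. Assembling the coordinates recovers a universal top-coefficient $g_{*,d}\in G_d$.

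Having identified $g_{*,d}$, I would apply a coefficient extraction step (the role of Lemma~\ref{qg} in the figure): replace $g_n$ by $\tilde g_n(y)\coloneqq g_n(y)\cdot (g_{*,d}^{\binom{y}{d}})^{-1}$, which is a polynomial map of degree $\leq d$ whose level-$d$ Taylor coefficient is trivial; in effect, we have quotiented by the rational normal subgroup $\langle g_{*,d}\rangle\leq G_d$ and, via Lemma~\ref{quotient-normal}, may regard the remaining data as living in a filtered nilmanifold of strictly smaller dimension in that layer. After a pigeonholing to keep a positive-density subset of $(n,n')$ sharing a common factorization, the hypothesis $\tilde g_n(n'\cdot)\sim_{\frac{1}{nn'}I,\eta^{O(1)}}\tilde g'_{n'}(n\cdot)$ persists on $\gg\eta^{O(1)}P^2$ pairs. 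I would then iterate: strip off each remaining Mal'cev direction in $G_d$, then move to level $d-1$, and so forth down the filtration, producing the full $g_*$ by concatenating the universal coefficients $g_{*,d},g_{*,d-1},\ldots,g_{*,0}$.

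The main technical obstacle, as foreshadowed in Remark~\ref{rmk:quantitative}, is the bookkeeping of smoothness and rationality losses across the induction: each Baker--Campbell--Hausdorff expansion, each application of Lemma~\ref{le:equivalence}, each pigeonholing to fix a common period $q$, and each passage from $G/G_i$ to $G/G_{i-1}$ incurs polynomial losses in $\eta$, and the accumulated loss must still fit under the hypothesis $|I|\geq\eta^{-C}P^{2+\kappa}$. This forces the double induction to be organized carefully --- the outer loop on the level $i$ of the filtration and the inner loop on the Mal'cev coordinate within $\log G_i/\log G_{i+1}$, with the trivial base case being that $G$ is already reduced to a point. Showing that at every step the lower-order-in-$y$ terms produced by $\binom{n'y}{i}-\frac{(n')^iy^i}{i!}$ can be absorbed into a common smooth/rational factor shared by a density-$\eta^{O(1)}$ subfamily of $(n,n')$ --- rather than degenerating into pair-dependent noise --- is the delicate heart of the argument, and is where the scaling identities in Section~\ref{sec:lemmas} (particularly \eqref{eq:smoothness}, \eqref{eq:dil}, and Corollary~\ref{smooth-dilate}) are indispensable.
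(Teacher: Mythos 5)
Your high-level strategy (iterate a coefficient extraction and feed each extracted scalar relation into Theorem~\ref{thm_contagious}) is the same as the paper's, but the order in which you extract coefficients makes the key step fail. You propose to read off the \emph{top-degree} coefficients first, claiming the hypothesis "translates into an approximate matching between $n^d g'_{n',d}$ and $(n')^d g_{n,d}$ modulo $\Gamma_d$" after absorbing lower-order tails into the smooth/rational factors. But the map sending a polynomial sequence to its degree-$d$ Taylor coefficient in $G_d$ is not a homomorphism on $\Poly(\Z\to G)$: in the identity $g_n(n'y)=\eps(y)\,g'_{n'}(ny)\,\gamma(y)$, rearranging into canonical form produces commutators $[G_i,G_j]\subseteq G_{i+j}$ with $i+j=d$, so the degree-$d$ coefficient of the right-hand side is contaminated by bilinear terms such as $[\eps_1,(g'_{n'})_{d-1}]$, in which the lower-degree coefficients of $g'_{n'}$ are completely uncontrolled at the first step (already in the Heisenberg group, $e_1^{\alpha n}e_2^{\beta n}$ has degree-$2$ coefficient $[e_1,e_2]^{\alpha\beta}$ even though both factors have trivial top coefficient). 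These are not "lower order in $y$" tails that \eqref{eq:dil} or Corollary~\ref{smooth-dilate} can absorb. This is precisely why the paper introduces the two-parameter filtrations $G_{[j,k],\bullet}$ and the subnormal series $\Poly(\Z\to G)_{[j,k]}$, and extracts at each stage the \emph{lowest-order unknown} coefficient via the homomorphism $\log_{[j,k]}$: Lemma~\ref{qg} shows (by an internal induction) that the smooth and rational factors themselves lie in $\Poly(\Z\to G)_{[j,k]}$, so all commutator corrections fall into the deeper subgroup $\Poly(\Z\to G)_{[j,k+1]}$ and are annihilated. Your top-down ordering has no analogous mechanism, and the "delicate heart" you flag is in fact an obstruction rather than bookkeeping.

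A second genuine gap is the reduction step after each extraction. The element $g_{*,d}$ produced by monomial contagion involves an arbitrary real $\alpha_*$, so the one-parameter subgroup $\exp(\R\log g_{*,d})$ (let alone a single Mal'cev direction inside $\log G_i/\log G_{i+1}$) is in general neither rational nor normal, and Lemma~\ref{quotient-normal} does not apply; you cannot pass to a filtered nilmanifold of smaller dimension with controlled complexity this way. The paper never quotients $G$ at all: it absorbs the found coefficient multiplicatively into $g_*$ (replacing $g_*$ by $g_*\,g_{[j,k]}^{(\cdot)^{j+k}}$) and descends along the lexicographic staircase of $\Poly(\Z\to G)_{[j,k]}$, handling all coordinates of the torus $T_{j+k}$ simultaneously within a fixed ambient $G/\Gamma$. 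To repair your argument you would essentially have to reorganize it into that bottom-up, $[j,k]$-indexed induction, at which point it coincides with the paper's proof.
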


It is instructive to see why Theorem~\ref{nil-contagion} contains Theorem~\ref{thm_contagious} as a special case, so we pause to outline this implication. Let the assumptions be as in Theorem~\ref{thm_contagious}, and allow implied constants to depend on $s,\kappa$.  Apply Theorem~\ref{nil-contagion} with $I=[\frac{1}{2}\varepsilon^{-1/s}P^2,\varepsilon^{-1/s}P^2]$ and with $G/\Gamma=\mathbb{R}/\mathbb{Z}$, equipped with a filtration of degree $s$. Take $g_n(n')=\alpha_1(n)(n')^{s}, g_{n'}'(n)=\alpha_2(n')n^s$. Then for $(n,n')\in \mathcal{N}_0$ we have $g_{n}(n'\cdot)\sim_{\frac{1}{nn'}I,\eta}g'_{n'}(n\cdot)$, since by~\eqref{eq0} for $y\in \frac{1}{nn'}I$ the function $\epsilon(y)\coloneqq g_{n}(n'y)-g_{n'}'(ny)$ satisfies
$$\|\epsilon(y)-\epsilon(y-1)\|_{\R/\Z}\ll \|(\alpha_1(n)(n')^s-\alpha_2(n')n^s)\|_{\R/\Z}y^{s-1}\ll \frac{1}{|\tfrac{1}{nn'}I|}\ll \varepsilon^{1/s},$$
where we used the triangle inequality estimate $\|\beta y^s-\beta(y-1)^s\|_{\mathbb{R}/\mathbb{Z}}\ll \|\beta\|_{\mathbb{R}/\mathbb{Z}}y^{s-1}$ for $\beta\in \mathbb{R}$.
Theorem~\ref{nil-contagion} now tells us that there exists a polynomial $g_{*}$ of degree at most $s$ such that $g_n\sim_{\frac{1}{n}I,\eta^{O(1)}}g_{*}(n\cdot)$ for $n\in S_1^{*}$, where $S_1^{*}\subset S_1$ has cardinality $\gg \eta^{O(1)}P$. Hence, there exists an integer $1\leq q\ll \eta^{-O(1)}$ such that the polynomial $y\mapsto g_n(qy)-g_{*}(nqy)$ is $\eta^{-O(1)}$-smooth on $\frac{1}{qn}I$. Now, denoting by $\alpha_{*}$ the degree $s$ coefficient of $g_{*}$, the highest degree coefficient of this polynomial is $q^s\alpha_1(n)-\alpha_{*}q^sn^s$, so we in particular have  
$$\|q^s(\alpha_1(n)-\alpha_{*}n^s)\|_{\R/\Z}\ll \eta^{-O(1)}\left(\frac{|I|}{qn}\right)^{-s}\ll \eta^{-O(1)}\varepsilon/P^s$$
for $n\in S_1^{*}$, as desired. The same argument gives the analogous conclusion for $\alpha_2$. 

We now prove Theorem~\ref{nil-contagion}.  Let $\kappa, \eta, G/\Gamma$, $d$, $D$, $C$, $P$, $I$ be as in this theorem; we allow implied constants to depend on $\kappa,D,d$.  The strategy is to inductively apply Theorem~\ref{thm_contagious} to certain ``coefficients'' of the $g_n, g'_{n'}$, until all of the coefficients of $g_n, g'_{n'}$ are related to a single polynomial map $g_*$.  These coefficients will be indexed by a pair $(j,k)$ of natural numbers, and are related to a certain way to express the group $\Poly(\Z \to G)$ as a tower of abelian extensions (i.e., to describe $\Poly(\Z \to G)$ as a solvable\footnote{In fact $\Poly(\Z \to G)$ is actually a nilpotent group, but for this argument the solvable structure is more relevant.} group). 

In order to define these coefficients properly, we will need to introduce additional filtration structures on $G$.  Namely, we define additional filtrations $G_{[j,k],\bullet} = (G_{[j,k],i})_{i \geq 0}$ for $j,k \geq 0$ by
$$ G_{[j,k],i}  \coloneqq  G_{j+i+1_{i<k}}.$$
One can check that these are indeed filtrations.  Furthermore we have that $G_{[j,k],i}$ is monotone decreasing with respect to lexicographical ordering on $[j,k]$, thus
$$ G_{[j',k'],i} \leq G_{[j,k],i}$$
whenever $[j',k'] \geq [j,k]$, in the sense that either $j' > j$ or $j'=j$ and $k' > k$.  Also we have
$$ [G_{[j_1,k_1],i_1}, G_{[j_2,k_2],i_2}]  \leq G_{[j_1+j_2,k_1+k_2],i_1+i_2}$$
for any $i_1,i_2,j_1,j_2,k_1,k_2 \geq 0$.  Finally we have the identities
$$ G_\bullet =  G_{[0,0],\bullet} = G_{[0,1],\bullet}$$
and
$$ G_{[j,k],\bullet} = G_{[j+1,0],\bullet}$$
when $j+k > d$.

If we then define $\Poly(\Z \to G)_{[j,k]}$ to be the group of polynomial maps $g$ from $\Z$ to the filtered group $(G, G_{[j,k],\bullet})$, then
$\Poly(\Z \to G)_{[j,k]}$ is monotone decreasing with respect to $[j,k]$ in the sense that
$$ \Poly(\Z \to G)_{[j',k']} \leq \Poly(\Z \to G)_{[j,k]}$$
whenever $[j',k'] \geq [j,k]$, and also we have the filtration property
$$ [ \Poly(\Z \to G)_{[j_1,k_1]}, \Poly(\Z \to G)_{[j_2,k_2]}] \leq \Poly(\Z \to G)_{[j_1+j_2,k_1+k_2]}$$
for any $[j_1,k_1], [j_2,k_2]$; in particular, the quotient groups $\Poly(\Z \to G)_{[j,k]} / \Poly(\Z \to G)_{[j,k+1]}$ are all abelian.  Finally we have the identities
\begin{equation}\label{poly-1}
 \Poly(\Z \to G) = \Poly(\Z \to G)_{[0,0]} = \Poly(\Z \to G)_{[0,1]}
\end{equation}
and
\begin{equation}\label{poly-2}
 \Poly(\Z \to G)_{[j,k]} = \Poly(\Z \to G)_{[j+1,0]}
\end{equation}
when $j+k > d$.  We refer to elements of $\Poly(\Z \to G)_{[j,k]}$ as $[j,k]$-polynomial maps; informally, these are polynomial maps in which a certain number of lower order coefficients vanish, leaving only higher order terms.

An element $g$ of $\Poly(\Z \to G)_{[j,k]}$ can be extended by~\cite[Lemma 4.2]{MRTTZ} to a polynomial map from $\R$ to $G_{[j,k], \bullet}$, thus 
$$ g(n) = \exp( \sum_{\ell=0}^d X_\ell n^\ell )$$
for all $n \in \Z$ and some coefficients $X_\ell \in \log G_{[j,k],\ell}$.  By the Baker--Campbell--Hausdorff formula and induction, for every $m \in \{-1, 0, \dotsc, d\}$ we can obtain representations
$$ g(n) = g'_0 (g'_1)^n \dots (g'_m)^{n^m} \exp( \sum_{\ell=m+1}^d X_{\ell,m} n^\ell )$$
for all $n \in \Z$ and some coeffficients $g'_\ell \in G_{[j,k],\ell}$ and $X_{\ell,m} \in \log G_{[j,k],\ell}$.  Setting $m=d$ and relabeling, we conclude that we have the representation
$$ g(n) = \tilde g_{j+1} \tilde g_{j+2}^n \cdots \tilde g_{j+k}^{n^{k-1}} g_{j+k}^{n^k} \cdots g_{j+d}^{n^d}$$
for all $n \in \Z$, where $\tilde g_i \in G_i$ for $j+1 \leq i \leq j+k$ and $g_i \in G_i$ for $j+k \leq i \leq j+d$, in particular $g$ is equal to the monomial $n \mapsto g_{j+k}^{n^k}$ modulo $\Poly(\Z \to G)_{[j,k+1]}$.  We define the $[j,k]$-frequency map 
$$\log_{[j,k]}\colon \Poly(\Z \to G)_{[j,k]} \to \log G_{j+k} / \log G_{j+k+1}$$
by the formula
$$ \log_{[j,k]} g  \coloneqq  \log g_{j+k} \hbox{ mod } \log G_{j+k+1}.$$
One can verify that this is a surjective group homomorphism.  One can think of $\log_{[j,k]} g$ as the ``lowest order'' or ``most important'' coefficient of the $[j,k]$-polynomial map $g$. 

\begin{example} The following table for $d=2$ shows the form of $n\mapsto g(n)$ for $g \in \Poly(\Z \to G)_{[j,k]}$, as well as the value of $\log_{[j,k]} g$, for various choices of $[j,k]$, where for each $i$, $g_i, \tilde g_i$ denote arbitrary elements of $G_i$:

\begin{center}
\begin{tabular}{ l l l }
$[j,k]$ & $g(n)$ & $\log_{[j,k]} g$ \\
\hline
$[0,0]$ & $g_0 g_1^n g_2^{n^2}$ & $0$ \\
$[0,1]$ & $\tilde g_1 g_1^n g_2^{n^2}$ & $\log g_1 \hbox{ mod } \log G_2$ \\
$[0,2]$ & $\tilde g_1 \tilde g_2^n g_2^{n^2}$ & $\log g_2$ \\
$[0,3]$ & $\tilde g_1 \tilde g_2^n$ & $0$ \\
$[1,0]$ & $g_1 g_2^n$ & $\log g_1 \hbox{ mod } \log G_2$ \\
$[1,1]$ & $\tilde g_2 g_2^n$ & $\log g_2$ \\
$[1,2]$ & $\tilde g_2$ & $0$ \\
$[2,0]$ & $g_2$ & $\log g_2$ \\
$[2,1]$ & $1$ & $0$ \\
$[3,0]$ & $1$ & $0$
\end{tabular}
\end{center}

Observe that we have a subnormal series
\begin{align*}
&\Poly(\Z \to G)_{[0,0]} = \Poly(\Z \to G)_{[0,1]} \geq \Poly(\Z \to G)_{[0,2]} \geq \Poly(\Z \to G)_{[0,3]} \\
= &\Poly(\Z \to G)_{[1,0]} \geq \Poly(\Z \to G)_{[1,1]} \geq \Poly(\Z \to G)_{[1,2]} \\
= &\Poly(\Z \to G)_{[2,0]} \geq \Poly(\Z \to G)_{[2,1]} \\
= &\Poly(\Z \to G)_{[3,0]}
\end{align*}
that starts at $\Poly(\Z \to G) =  \Poly(\Z \to G)_{[0,0]}$ and ends at the trivial group $\{1\} = \Poly(\Z \to G)_{[3,0]}$; the maps $\log_{[j,k]}$ are essentially the quotient maps between adjacent groups in this subnormal series.  In particular, if $g \in \Poly(\Z \to G)_{[j,k]}$ and $[j',k']$ is the successor to $[j,k]$ in the lexicographical ordering, then $g$ lies in the next group $\Poly(\Z \to G)_{[j',k']}$ in the subnormal series if and only if the coefficient $\log_{[j,k]} g$ vanishes. 
\end{example}

\begin{example}  In the case that $G$ is the Heisenberg group generated by two generators $e_1,e_2$ with central commutator $[e_1,e_2]$, then we have $g(n) \Gamma = e_1^{\{P_1(n)\}} e_2^{\{P_2(n)\}} [e_1,e_2]^{\{P_{12}(n)\}} \Gamma$ for some bracket polynomials $P_1(n),P_2(n),P_{12}(n)$; for instance, if $$g(n) = e_1^{\alpha n + \kappa} e_2^{\beta n + \sigma} [e_1,e_2]^{\gamma n^2 + \delta n + \epsilon}$$ 
for various real numbers $\alpha,\beta,\gamma,\delta,\eps,\sigma,\kappa$ then we can take $P_1(n)  \coloneqq  \alpha n + \kappa$, $P_2(n)  \coloneqq  \beta n + \sigma$, and
$$ P_{12}(n)  \coloneqq  \lfloor \alpha n + \kappa \rfloor (\beta n + \sigma) + \gamma n^2 + \delta n + \epsilon.$$
The above table then specializes as follows:

\begin{center}
\begin{tabular}{ l l l l l }
$[j,k]$ & $P_1(n)$ & $P_2(n)$ & $P_{12}(n)$ & $\log_{[j,k]} g$ \\
\hline
$[0,0]$ & $\alpha n + \kappa$ & $\beta n + \sigma$ & $\lfloor \alpha n + \kappa \rfloor (\beta n + \sigma) + \gamma n^2 + \delta n + \epsilon$ & $0$ \\
$[0,1]$ & $\alpha n + \kappa$ & $\beta n + \sigma$ & $\lfloor \alpha n + \kappa \rfloor (\beta n + \sigma) + \gamma n^2 + \delta n + \epsilon$ & $(\alpha,\beta)$ \\
$[0,2]$ & $\kappa$ & $\sigma$ & $\lfloor \kappa \rfloor \sigma + \gamma n^2 + \delta n + \eps$ & $\gamma$ \\
$[0,3]$ & $\kappa$ & $\sigma$ & $\lfloor \kappa \rfloor \sigma + \delta n + \eps$ & $0$ \\
$[1,0]$ & $\kappa$ & $\sigma$ & $\lfloor \kappa \rfloor \sigma + \delta n + \eps$ & $(\kappa,\sigma)$ \\
$[1,1]$ & $0$ & $0$ & $\delta n + \eps$ & $\delta$ \\
$[1,2]$ & $0$ & $0$ & $\eps$ & $0$ \\
$[2,0]$ & $0$ & $0$ & $\eps$ & $\eps$ \\
$[2,1]$ & $0$ & $0$ & $0$ & $0$ \\
$[3,0]$ & $0$ & $0$ & $0$ & $0$
\end{tabular}
\end{center}

On each row of this table, $\log_{[j,k]} g$ informally captures the ``leading coefficients'' of the triple $(P_1(n), P_2(n), P_{12}(n))$ of bracket polynomials, and the kernel of this map produces the slightly smaller class of bracket polynomials in the row below.  If one specializes to the case $\alpha=\beta=\kappa=\sigma=0$ one recovers the classical situation of a quadratic polynomial $\gamma n^2 + \delta n + \epsilon$ and its coefficients $\gamma, \delta, \epsilon$, with the degree $k$ coefficient being extracted by the map $\log_{[d-k,k]}$.
\end{example}

For any interval $I$ of length $\geq 1$, any $\eta\in (0,1/2)$ and any $[j,k]$, we define a relation
$$ g \sim_{I, [j,k],\eta} g'$$
on polynomials $g,g'\colon \Z \to G$ if one has a factorisation
$$ g = \eps g' \gamma g_{[j,k]}$$
where $\eps$ is $\eta^{-1}$-smooth on $I$, $\gamma$ is $\eta^{-1}$-rational, and $g_{[j,k]} \in \Poly(\Z \to G)_{[j,k]}$, that is to say $g$ is equal to $\eps g' \gamma$ modulo $[j,k]$-polynomial maps.  From Lemma~\ref{mult}, we see that $\sim_{I,[j,k],\eta}$ is an equivalence relation (up to polynomial losses in $\eta$). Note that the relation $\sim_{I,[d+1,0],\eta}$ corresponds to the relation $\sim_{I,\eta}$ defined in Definition~\ref{def:nilseqequiv}.

For $i \geq 0$, define the \emph{$i$-torus} $T_i$ to be $T_i  \coloneqq  \log G_i / \log( G_{i+1} \Gamma_i )$; $T_1$ is the horizontal torus when $G_2 = [G_1,G_1]$ and $T_d$ is the vertical torus, while $T_i$ is trivial for $i=0$ or $i>d$.  Given an element $\xi_i$ of $\log G_i / \log G_{i+1}$, we define $\|\xi_i\|_{T_i}$ to be the distance to the origin of the projection of $\xi_i$ to $T_i$.

The relation $\sim_{I, [j,k+1],\eta}$ implies a relation on the $[j,k]$-frequencies of two $[j,k]$-polynomial maps $g,g'$, even if the relation is conjugated by an arbitrary additional polynomial map $g_*$.

\begin{lemma}\label{qg}  Suppose that $g_*\colon \Z \to G$ is a polynomial map and $g,g'\colon \Z \to G$ are $[j,k]$-polynomial maps, and suppose that
$$ g_* g \sim_{I,[j,k+1],\eta}  g_* g'$$
for some interval $I$ and some $\eta\in (0,1/2)$.  Then we have
$$ \| q (\log_{[j,k]}(g) - \log_{[j,k]}(g')) \|_{T_{j+k}} \ll \eta^{-O(1)} |I|^{-j-k}$$
for some integer $1 \leq q \ll \eta^{-O(1)}$.
\end{lemma}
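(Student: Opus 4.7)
The plan is to extract the $\binom{n}{k}$-th Taylor coefficient of both sides of the structural equation implicit in $g_* g \sim_{I, [j,k+1], \eta} g_* g'$, projected into the torus $T_{j+k} = \log G_{j+k}/\log(G_{j+k+1}\Gamma_{j+k})$.

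First, I would unpack the hypothesis as a factorisation $g_* g = \varepsilon (g_* g') \gamma h$ with $\varepsilon$ being $(\eta^{-1}, I)$-smooth, $\gamma$ being $\eta^{-1}$-rational, and $h \in \Poly(\Z\to G)_{[j,k+1]}$, and rearrange to obtain $g(g')^{-1} = (g_*^{-1} \varepsilon g_*) \cdot (g' \gamma h (g')^{-1})$. The left-hand side lies in $\Poly(\Z \to G)_{[j,k]}$ by hypothesis. Using the filtration property $[\Poly(\Z\to G)_{[0,1]}, \Poly(\Z\to G)_{[j,k]}] \leq \Poly(\Z\to G)_{[j,k+1]}$, conjugation of a $[j,k]$-polynomial by an arbitrary polynomial map perturbs it only within $\Poly(\Z\to G)_{[j,k+1]}$; since $\log_{[j,k]}$ vanishes on $\Poly(\Z\to G)_{[j,k+1]}$, this shows that $\log_{[j,k]}$ is insensitive to such conjugations, and I can effectively discard them. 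Modulo $\Poly(\Z\to G)_{[j,k+1]}$, the equation thus reduces to $g(g')^{-1} \equiv \varepsilon \gamma h$, and because the quotient $\Poly(\Z\to G)_{[j,k]}/\Poly(\Z\to G)_{[j,k+1]}$ is abelian with $\log_{[j,k]}$ an isomorphism onto its image, I obtain an additive identity
\[
\log_{[j,k]}(g) - \log_{[j,k]}(g') = \log_{[j,k]}(\varepsilon) + \log_{[j,k]}(\gamma) + \log_{[j,k]}(h)
\]
in $\log G_{j+k}/\log G_{j+k+1}$ (after suitably extending $\log_{[j,k]}$ via projection to the quotient group $G/G_{j+k+1}$, in which $G_{j+k}$ becomes central and the BCH-induced cross terms simplify).

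Next, I would bound each of the three terms. The term $\log_{[j,k]}(h)$ vanishes outright, since the $\binom{n}{k}$-coefficient of $h \in \Poly(\Z\to G)_{[j,k+1]}$ lies in $G_{[j,k+1],k} = G_{j+k+1}$ by construction, hence is trivial modulo $\log G_{j+k+1}$. The term $\log_{[j,k]}(\gamma)$ lies in $(1/q) \log \Gamma_{j+k}$ for some integer $1 \leq q \ll \eta^{-O(1)}$, by the rationality of $\gamma$ and the fact that the Mal'cev basis exhibits $\Gamma_{j+k}$ as a rational lattice; hence $q \log_{[j,k]}(\gamma)$ is annihilated in $T_{j+k}$. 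Finally, for $\log_{[j,k]}(\varepsilon)$ I would combine Lagrange interpolation on $I$ (giving bounds on the $\binom{n}{k}$-Taylor coefficient of $\log\varepsilon$ in Mal'cev coordinates) with the structural constraint that $g(g')^{-1} \in \Poly(\Z\to G)_{[j,k]}$ forces the non-$G_{j+k}$ components of $\log_{[j,k]}(\varepsilon)$ to cancel against the corresponding components of $\log_{[j,k]}(\gamma)$; what remains lies genuinely in $G_{j+k}$ up to a rational correction and therefore enjoys the sharper decay $\eta^{-O(1)}|I|^{-j-k}$ in $T_{j+k}$.

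The main obstacle is this last step: the naive smoothness bound on the $\binom{n}{k}$-Taylor coefficient of $\log\varepsilon$ in the ambient metric only gives $O(\eta^{-O(1)}|I|^{-k})$, and the extra factor $|I|^{-j}$ must be harvested from the filtration depth by exploiting the equation itself to move $\varepsilon$'s effective contribution into the deeper subgroup $G_{j+k}$ (modulo rational and commutator corrections). This is precisely where the hypothesis that the factorisation has the finer filtration exponent $[j,k+1]$ and the observation that the center $G_{j+k}/G_{j+k+1}$ (after the preliminary quotient by $G_{j+k+1}$) allows us to commute factors freely will both be essential.
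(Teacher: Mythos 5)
Your overall shape is the right one and matches the paper's: isolate the $[j,k]$-coefficient of the factorisation, observe that the $\Poly(\Z\to G)_{[j,k+1]}$ factor contributes nothing, that the rational factor is killed by a bounded multiplier $q$, and that the smooth factor contributes $O(\eta^{-O(1)}|I|^{-j-k})$. But there is a genuine gap at the central step, and it is precisely the point where the paper does the real work. The factors $\varepsilon$ and $\gamma$ coming from the relation $\sim_{I,[j,k+1],\eta}$ are \emph{arbitrary} elements of $\Poly(\Z\to G)=\Poly(\Z\to G)_{[0,1]}$, not $[j,k]$-polynomial maps. Consequently: (i) the conjugation you wish to discard, $g_*^{-1}\varepsilon g_* = \varepsilon[\varepsilon^{-1},g_*^{-1}]$, only lands in $\Poly(\Z\to G)_{[0,2]}$ via the filtration property $[\Poly_{[0,1]},\Poly_{[0,1]}]\leq \Poly_{[0,2]}$, which is far from $\Poly(\Z\to G)_{[j,k+1]}$ in general, so it cannot be thrown away at the outset; and (ii) $\log_{[j,k]}$ is simply not defined on $\varepsilon$ or $\gamma$ — the degree-$k$ coefficient of a generic polynomial map lives in $G_k$, not $G_{j+k}$, so there is no homomorphic "extension via projection to $G/G_{j+k+1}$" taking values in $\log G_{j+k}/\log G_{j+k+1}$. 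Your three-term additive identity is therefore not yet meaningful.

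The paper repairs both defects simultaneously by a lexicographic induction over $[0,1]\leq[j',k']\leq[j,k]$: assuming $\varepsilon,\gamma\in\Poly(\Z\to G)_{[j',k']}$ with $[j',k']<[j,k]$, applying $\log_{[j',k']}$ (which annihilates the $[j,k]$-maps $g,g'$ and the commutator term) yields $0=\log_{[j',k']}(\varepsilon)+\log_{[j',k']}(\gamma)$, where the first summand is $O(\eta^{-O(1)}|I|^{-j'-k'})$ by smoothness and the second is $\eta^{-O(1)}$-rational; once $|I|\geq C\eta^{-C}$ (the complementary case being trivial) both must vanish exactly, pushing $\varepsilon,\gamma$ into $\Poly(\Z\to G)_{[j',k'+1]}$. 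Iterating gives $\varepsilon,\gamma\in\Poly(\Z\to G)_{[j,k]}$, after which $[g_*^{-1},\varepsilon]\in[\Poly_{[0,1]},\Poly_{[j,k]}]\leq\Poly_{[j,k+1]}$ really can be discarded, $\log_{[j,k]}$ applies legitimately to all factors, and the leading coefficient of the smooth $[j,k]$-map $\varepsilon$ lies in $G_{j+k}$ and obeys the sharper bound. This exact-vanishing induction is the "cancellation of the non-$G_{j+k}$ components against the rational part" that you defer to your final paragraph as the main obstacle; your instinct about where the extra factor of $|I|^{-j}$ must come from is sound, but the argument is not supplied, and without it the proof does not close.
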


\begin{proof}  We can assume that $|I| \geq C \eta^{-C}$ for some large constant $C$ since the claim is trivial otherwise.  We can also assume $k+j>0$ since the conclusion is trivial for $k=j=0$. 

By hypothesis, we have
$$ g_* g = \eps g_* g' \gamma \hbox{ mod } \Poly(\Z \to G)_{[j,k+1]}$$
for some $(\eta^{-1},I)$-smooth $\eps \in \Poly(\Z \to G)$ and $\eta^{-1}$-rational $\gamma \in \Poly(\Z \to G)$.  Thus
\begin{equation}\label{geq}
 g = [g_*^{-1},\eps] \eps g' \gamma \hbox{ mod } \Poly(\Z \to G)_{[j,k+1]},
\end{equation}
where $[a,b]\coloneqq aba^{-1}b^{-1}$.
We now claim inductively that for all $[0,1] \leq [j',k'] \leq [j,k]$, one has
$$ \eps, \gamma \in \Poly(\Z \to G)_{[j',k']}.$$
This is already established for $[j',k']=[0,1]$ by~\eqref{poly-1}.  Suppose it is true for some $[0,1] \leq [j',k'] < [j,k]$ (so in particular $j+k \geq 1$).  Then we have
$$ g = \eps g' \gamma \hbox{ mod } \Poly(\Z \to G)_{[j',k'+1]}$$
and then on applying $\log_{[j',k']}$ (which annihilates $g,g'$) we have
$$ 0 = \log_{[j',k']}(\eps) +  \log_{[j',k']}(\gamma).$$ 
Since $\eps$ is $\eta^{-O(1)}$-smooth on $I$, $\log \eta$ takes values in a ball of radius $O(\eta^{-O(1)})$ on $I$, so by the Lagrange interpolation theorem, the degree $d$ coefficient of $\log \eps$ has size $O(\eta^{-O(1)} |I|^{-d})$ for all $d$.  In particular, $\log_{[j',k']}(\eps)$ is of size $O(\eta^{-O(1)} |I|^{-j'-k'})$. Since $\gamma$ is $\eta^{-O(1)}$-rational, $\log_{[j',k']}(\gamma)$ is $\eta^{-O(1)}$-rational also.  Recalling that $|I| \geq C \eta^{-C}$, the only way these two statements are consistent are if $\log_{[j',k']}(\eps) = \log_{[j',k']}(\gamma) = 0$.  This implies that $\eps, \gamma \in \Poly(\Z \to G)_{[j',k'+1]}$.  Inducting (using~\eqref{poly-2}) we obtain the claim.  In particular, $\eps, \gamma \in \Poly(\Z \to G)_{[j,k]}$.  Applying $\log_{[j,k]}$ to~\eqref{geq} we now get
$$ \log_{[j,k]} g = \log_{[j,k]} \eps + \log_{[j,k]} g' + \log_{[j,k]} \gamma.$$
Since $\log_{[j,k]}(\eps) = O(\eta^{-O(1)} |I|^{-j-k})$ and $\log_{[j,k]}(\gamma)$ is $\eta^{-O(1)}$-rational, we obtain the claim.\end{proof}

Now we can prove Theorem~\ref{nil-contagion}.

\begin{proof}[Proof of Theorem~\ref{nil-contagion}] Initialize $\mathcal{N}=S_1$, $\mathcal{N}'=S_2$. Then we have 
\begin{align}\label{gp-gpp2}
g_n(n'\cdot)\sim_{\frac{1}{nn'}I,\eta^{O(1)}}g'_{n'}(n\cdot)    
\end{align}
for $\gg \eta^{O(1)}P^2$ pairs $(n,n')\in \mathcal{N}\times \mathcal{N}'$ (in fact, the $\eta^{O(1)}$ term above could be replaced with $\eta$).

We claim inductively that for $[0,1] \leq [j,k] \leq [d+1,0]$, we can refine $\mathcal{N}, \mathcal{N}'$ by a factor of $\eta^{O(1)}$ such that the relation~\eqref{gp-gpp2} still holds for $\gg \eta^{O(1)} P^2$ pairs, and there is a polynomial map $g_*\colon \Z \to G$ such that
\begin{align*} g_n &\sim_{\frac{1}{n} I,[j,k],\eta^{O(1)}} g_*(n \cdot),\\
 g'_{n'} &\sim_{\frac{1}{n'} I,[j,k],\eta^{O(1)}} g_*(n' \cdot)
\end{align*}
for all $n \in \mathcal{N}$, $n' \in \mathcal{N}'$; setting $[j,k] = [d+1,0]$ will give the claim.

For $[j,k] = [0,1]$ this follows by setting $g_*$ to be the identity and using~\eqref{poly-1}. By~\eqref{poly-2} and induction\footnote{Note that the induction proceeds in a ``staircase'' fashion: from case $[0,1]$ we can eventually get to case $[0,d+1]$, which by~\eqref{poly-2} is the same as case $[1,0]$, and then repeating the same procedure we eventually get to case $[1,d+1]$, which by~\eqref{poly-2} is the same as case $[2,0]$, and so on.}, it thus suffices to show that if the claim holds for some $[0,1] \leq [j,k] < [d+1,0]$ then it also holds for $[j,k+1]$. By hypothesis we have
\begin{align*} g_n &\sim_{\frac{1}{n} I, [j,k+1],\eta^{O(1)}} g_*(n \cdot) \tilde g_n\\ 
g'_{n'} &\sim_{\frac{1}{n'} I, [j,k+1],\eta^{O(1)}} g_*(n' \cdot) \tilde g'_{n'}
\end{align*}
for some $[j,k]$-polynomials $\tilde g_n, \tilde g'_{n'}$.  By~\eqref{gp-gpp2} we then have
$$ g_*(nn' \cdot) \tilde g_n( n' \cdot) \sim_{\frac{1}{nn'} I, [j,k+1],\eta^{O(1)}} g_*(nn' \cdot) \tilde g'_{n'}(n \cdot)$$
for $\gg \eta^{O(1)} P^2$ pairs $n \in \mathcal{N}, n' \in \mathcal{N}'$.
Applying Lemma~\ref{qg}, we conclude that
$$ \| q ( \log_{[j,k]}( \tilde g_{n}(n' \cdot) ) - \log_{[j,k]}(\tilde g'_{n'}(n \cdot) )  \|_{T_{j+k}} \ll \eta^{-O(1)} (|I|/P^2)^{-j-k}$$
for some integer $1 \leq q \ll \eta^{-O(1)}$, which we can pigeonhole to be independent of $n,n'$.  From Taylor expansion we have
$$ \log_{[j,k]} g( n \cdot ) = n^{j+k} \log_{[j,k]} g$$
and hence
$$ \| q ( (n')^{j+k} \log_{[j,k]}( \tilde g_{n} ) - n^{j+k} \log_{[j,k]}(\tilde g'_{n'} )  \|_{T_{j+k}} \ll \eta^{-O(1)} (|I|/P^2)^{-j-k}.$$
For $k+j>0$, we may apply Theorem~\ref{thm_contagious} once for each coordinate of the torus $T_{j+k}$, and modify $q$ as necessary, to refine $\mathcal{N}, \mathcal{N}'$ so that
$$ \| q ( \log_{[j,k]}( \tilde g_{n} ) - \beta n^{j+k} ) \|_{T_{j+k}} \ll \eta^{-O(1)} (|I|/P)^{-j-k}$$
and
$$ \| q ( \log_{[j,k]}( \tilde g'_{n'} ) - \beta (n')^{j+k} ) \|_{T_{j+k}} \ll \eta^{-O(1)} (|I|/P)^{-j-k}$$
for all $n \in \mathcal{N}, n' \in \mathcal{N}'$ and some $\beta \in \log G_{j+k}/\log G_{j+k+1}$, while still keeping~\eqref{gp-gpp2} for $\gg \eta^{O(1)} P^2$ pairs.  In the $k=0$ case we can trivially achieve this just by setting $\beta=0$.  From this we may factor
$$ g_n \sim_{\frac{1}{n} I, [j,k+1],\eta^{O(1)}} g_{*}(n \cdot) g_{[j,k]}^{(n\cdot)^{j+k}} $$
and
$$ g'_{n'} \sim_{\frac{1}{n'} I, [j,k+1],\eta^{O(1)}} g_{*}(n' \cdot) g_{[j,k]}^{(n'\cdot)^{j+k}},$$
where $g_{[j,k]}$ is any element of $G_{j+k}$ with $\log_{[j,k]} g_{[j,k]} = \beta$. This allows us to close the induction, replacing $g_*$ with $g_* g_{[j,k]}^{(\cdot)^{j+k}}$.  This completes the proof of Theorem~\ref{nil-contagion}.
\end{proof}

\section{Proof of type II case}\label{type-ii}

In this section we establish the type $II$ cases (ii), (iii) of Theorem~\ref{inverse}, using a version of Walsh's contagion argument~\cite{walsh}.

\begin{figure}[H]
  \centering
\begin{tikzpicture}[node distance=1cm and 1cm]
  \node[draw, rectangle, align=center,fill=yellow!20] (invnew) {Type $II$ inv. theorem \\ \Cref{inverse}(ii), (iii)};
  \node[draw, rectangle, above left=of invnew, align=center] (scaledown) {Scaling down \\ \Cref{prop:scaledown}};
  \node[draw, rectangle, above =of invnew, align=center] (scaleup) {Scaling up \\ \Cref{prop:iterate}};
  \node[draw, rectangle, above right=of invnew, align=center] (conclude) {Scaling conclusion \\ \Cref{prop:conclusion}};
  \node[draw, rectangle, above =of scaleup, align=center,fill=gray!20] (nil) {Nilsequence contagion \\ \Cref{nil-contagion}};
  \node[draw, rectangle, above =of conclude, align=center] (log) {Approx. dilation invar. \cite{matomaki-shao}\\ \Cref{prop:logarithm} ; \\ Abstract type II est. \cite{MSTT-all}\\ \Cref{prop:Furstenberg-Weiss} };
  \node[draw, rectangle, above =of scaledown, align=center] (sieve) {Nilseq. large sieve \cite{MSTT-all} \\ \Cref{large-sieve} };
  \draw[-{Latex[length=4mm, width=2mm]}] (scaledown) -- (invnew);
  \draw[-{Latex[length=4mm, width=2mm]}] (scaleup) -- (invnew);
  \draw[-{Latex[length=4mm, width=2mm]}] (conclude) -- (invnew);
  \draw[-{Latex[length=4mm, width=2mm]}] (log) -- (conclude);
  \draw[-{Latex[length=4mm, width=2mm]}] (nil) -- (scaleup);
  \draw[-{Latex[length=4mm, width=2mm]}] (sieve) -- (scaledown);
\end{tikzpicture}
  \caption{Proof of the type II inverse theorem, \Cref{inverse}(ii), (iii). For the proof of \Cref{nil-contagion}, see \Cref{fig-contagion}.}
  \label{fig-inv}
\end{figure}

Recall the definition of the relation $\sim_{I,\eta}$ from Definition~\ref{def:nilseqequiv}. The following two definitions will also be important for expressing the main argument in this section. 

\begin{definition}[Configurations of points and polynomial sequences]
Let $I$ be an interval and $\sigma>0$. We say that a collection $\mathcal{J}\subset I\times \Poly(\mathbb{Z}\to G)$ is a \emph{$(\sigma,H)$-configuration in $I$}, if $\mathcal{J}$ is of the form $\{(x,g_x)\colon x\in \mathcal{X}\}$, where $|\mathcal{X}|\geq \sigma|I|/H$, $g_x\in \Poly(\Z \to G)$, and the points of $\mathcal{X}\subset I$ are $H$-separated (that is, $x_1,x_2\in \mathcal{X}$ and $x_1\neq x_2$ imply $|x_1-x_2|\geq H$).   
\end{definition}

Informally, a $(\sigma,H)$-configuration abstracts (and discretizes) the notion of having correlations with nilsequences $F(g_x(n) \Gamma)$ on a large family of intervals $(x,x+H]$ in $I$; compare with the definition of an $(X,H)$-family of intervals in~\cite[Section 2]{mrt-fourier} for linear phases.

\begin{definition}[Upwards scaling]\label{def:upscale}
Let $C\geq 1$, $\eta, \sigma\in (0,1/2)$, and $A,H,Y\geq 1$. Let $\mathcal{J}_0$ be a $(\sigma,H)$-configuration in $[Y,CY]$, and let $\mathcal{J}_1$ be a $(\sigma,AH)$-configuration in $[AY,CAY]$.

We write
\begin{align*}
\mathcal{J}_0\longrightarrow_{\eta,C}\mathcal{J}_1   
\end{align*}
if for each $(y,g_y)\in \mathcal{J}_1$ we have
$$
\sum_{A < a \leq CA} \sum_{(x,g_x) \in \mathcal{J}_0} \mathrm{I}_{a,C,AH,\sigma}(y,x) \geq \eta A$$
where $\mathrm{I}_{a,C,AH,\sigma}(y,x)$ is the indicator function
\begin{equation}\label{I-notation}
  \mathrm{I}_{a,C,AH,\sigma}(y,x) \coloneqq 1_{|y-ax| \leq CAH} 1_{g_y(a \cdot) \sim_{\frac{1}{a} (y,y+AH],\sigma} g_x}.
\end{equation}
\end{definition}

Note that the relation $\mathcal{J}_0\longrightarrow_{\eta,C}\mathcal{J}_1$ gets weaker if $\eta$ is decreased or $C$ is increased.  Informally, the relation $\mathcal{J}_0\longrightarrow_{\eta,C}\mathcal{J}_1$ indicates that the nilsequences associated to the intervals $(y,y+AH]$ in $\mathcal{J}_1$ are essentially rescaled versions of the nilsequences associated to the intervals $(x,x+H]$ in $\mathcal{J}_0$.  There are multiple ways to connect these intervals by rescaling; the idea of the contagion argument is to use many existing rescaling relationships at one pair of scales to create additional scaling relationships at a larger pair of scales.

The next lemma shows that the notion of upwards scaling is ``transitive''. 

\begin{lemma}[Transitivity of upwards scaling]\label{le:transitive} Let $X\geq H\geq A_2\geq A_1\geq 3$. Let $\sigma\in \left(0,\frac{1}{2\log(A_1A_2)}\right)$ and $C \in [1,\sigma^{-1}]$. Let $G/\Gamma$ be a filtered nilmanifold of degree and dimension $O(1)$ and complexity $\leq \sigma^{-1}$. Let $\mathcal{J}_0$ be a $(\sigma,H)$-configuration in $[Y,CY]$, let $\mathcal{J}_1$ be a $(\sigma,A_1H)$-configuration in $[A_1Y,CA_1Y]$, and let $\mathcal{J}_2$ be a $(\sigma,A_1A_2H)$-configuration in $[A_1A_2Y,CA_1A_2Y]$. Suppose that 
\begin{align*}
\mathcal{J}_0\longrightarrow_{\sigma,C} \mathcal{J}_1\quad \textnormal{and}\quad\mathcal{J}_1\longrightarrow_{\sigma,C} \mathcal{J}_2. 
\end{align*}
Then we have
\begin{align}\label{eq:transitiverelation}
\mathcal{J}_0\longrightarrow_{\sigma^{O(1)},2C^2} \mathcal{J}_2.
\end{align}
\end{lemma}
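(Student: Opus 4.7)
The plan is to compose the two given scalings directly, and then pass from the resulting multiset of tuples to distinct pairs. Fix $(z,g_z)\in\mathcal{J}_2$. The hypothesis $\mathcal{J}_1\longrightarrow_{\sigma,C}\mathcal{J}_2$ applied at $z$ produces at least $\sigma A_2$ pairs $(a_2,(x_1,g_{x_1}))$ with $a_2\in(A_2,CA_2]$, $|z-a_2 x_1|\le CA_1A_2H$, and $g_z(a_2\cdot)\sim_{\frac{1}{a_2}(z,z+A_1A_2H],\,\sigma}g_{x_1}$. Applying $\mathcal{J}_0\longrightarrow_{\sigma,C}\mathcal{J}_1$ at each such $x_1$ yields at least $\sigma A_1$ pairs $(a_1,(x_0,g_{x_0}))$ satisfying the analogous conditions one scale down. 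Summing over $x_1$ I would thus obtain a set $\mathcal{T}$ of at least $\sigma^{2}A_1A_2$ quadruples $(a_2,x_1,a_1,x_0)$.

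For each quadruple I would form $a\coloneqq a_1 a_2\in(A_1A_2,C^2 A_1A_2]$ and verify the two conditions appearing in the target indicator $\mathrm{I}_{a,2C^2,A_1A_2H,\cdot}(z,x_0)$. The triangle inequality gives
\[ |z-ax_0|\le|z-a_2 x_1|+a_2|x_1-a_1 x_0|\le CA_1A_2H+CA_2\cdot CA_1H\le 2C^2 A_1A_2H. \]
For the nilsequence relation I would write $g_z(a_2\cdot)=\varepsilon_1\, g_{x_1}\,\gamma_1$ and $g_{x_1}(a_1\cdot)=\varepsilon_2\, g_{x_0}\,\gamma_2$ from the two $\sim$ decompositions and substitute $n\mapsto a_1 n$ in the first to get
\[ g_z(a\cdot)=\varepsilon_1(a_1\cdot)\,\varepsilon_2\, g_{x_0}\,\gamma_2\,\gamma_1(a_1\cdot), \]
then track the smoothness and rationality. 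The dilation of the smoothness norm shows $\varepsilon_1(a_1\cdot)$ is $(\sigma^{-1},I_2)$-smooth on $I_2\coloneqq\frac{1}{a}(z,z+A_1A_2H]$, while the original interval of smoothness of $\varepsilon_2$, namely $\frac{1}{a_1}(x_1,x_1+A_1H]$, lies within distance $|z/a-x_1/a_1|=|z-a_2 x_1|/a\le CH$ of $I_2$; hence both intervals embed in a common enclosing interval of length $O(CH)$, and Lemma~\ref{le:equivalence}(ii) (tracking the bounded growth ratio) upgrades $\varepsilon_2$ to be $(\sigma^{-1}C^{O(1)},I_2)$-smooth. Lemma~\ref{mult} then gives $g_z(a\cdot)\sim_{I_2,\,\sigma^{O(1)}/C^{O(1)}}g_{x_0}$, matching the $\sim$ relation demanded by the conclusion (whose density $\sigma^{O(1)}/C^4$ is consistent with this weakening).

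The hardest step is the final counting: the target sum counts \emph{distinct} pairs $(a,x_0)$, so I must bound the multiplicity with which a single pair is produced by $\mathcal{T}$. For fixed $(a,x_0)$, $a_1$ is a divisor of $a$ in the short window $(A_1,CA_1]$ (at most $d(a)$ choices), and once $(a_1,a_2,x_0)$ is fixed the $A_1H$-separation of $\mathcal{J}_1$ constrains the admissible $x_1$ to $O(C)$ values. The worst-case bound $d(a)\ll a^{o(1)}$ is too weak to be absorbed by $\sigma^{-O(1)}$ under the mild assumption $\sigma\le 1/(2\log(A_1A_2))$, so I would overcome this by a Markov-type averaging: the standard second moment estimate $\sum_{a\le C^2 A_1A_2}d(a)^{2}\ll A_1A_2(\log A_1A_2)^{3}$ shows that, for any fixed constant $C'$, only a $o(1)$ fraction of factorizations satisfies $d(a_1 a_2)>(\log A_1A_2)^{C'}$, which under the hypothesis on $\sigma$ is dominated by $\sigma^{-O(1)}$. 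Restricting to these typical quadruples loses only a constant fraction of $\mathcal{T}$, reduces the multiplicity to $\sigma^{-O(1)}C^{O(1)}$, and produces the required $(\sigma^{O(1)}/C^{4})A_1A_2$ distinct pairs.
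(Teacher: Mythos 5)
Your composition step is essentially the paper's: fix $(z,g_z)\in\mathcal{J}_2$, chain the two hypotheses to get $\geq\sigma^2A_1A_2$ quadruples, use the triangle inequality to get $|z-a_1a_2x_0|\leq 2C^2A_1A_2H$, and transfer both $\sim$ relations to the common interval $\frac{1}{a_1a_2}(z,z+A_1A_2H]$ via Lemma~\ref{le:equivalence} and Lemma~\ref{mult}; that part is fine and matches the paper's proof. Where you diverge is the multiplicity/counting step, and there your argument as written has a genuine gap. You truncate at the threshold $d(a_1a_2)\leq(\log A_1A_2)^{C'}$ for a \emph{fixed} constant $C'$ and assert that this ``loses only a constant fraction of $\mathcal{T}$'' because only a $o(1)$ fraction of \emph{all} factorizations $(a_1,a_2)$ is atypical. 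But $\mathcal{T}$ is only guaranteed to have size $\sigma^2A_1A_2$, which may be far smaller than the number of atypical quadruples: the second-moment bound only gives an atypical count of order $C^{O(1)}A_1A_2(\log A_1A_2)^{O(1)-C'}$, and nothing prevents this from exceeding $\sigma^2A_1A_2$ when $\sigma\ll(\log A_1A_2)^{-C'}$. The hypothesis $\sigma<\frac{1}{2\log(A_1A_2)}$ is only an \emph{upper} bound on $\sigma$; in the intended applications $\sigma$ is a power of $\delta$, which can be as small as a negative power of $X$ (e.g.\ in the $d_k$ case), while $\log(A_1A_2)\asymp\log X$. In that regime your truncation could in principle delete all of $\mathcal{T}$, so the inference ``small fraction of all factorizations $\Rightarrow$ small fraction of $\mathcal{T}$'' fails.

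The repair is straightforward and brings you close to the paper: choose the truncation threshold relative to $\sigma$ and $C$ rather than as a fixed power of the logarithm. Comparing the bad-quadruple count directly against $\sigma^2A_1A_2$ via $\sum_{a\leq C^2A_1A_2}d(a)^2\ll C^2A_1A_2\log^3(C^2A_1A_2)$ and the hypothesis $\log(A_1A_2)\leq\sigma^{-1}$ forces a threshold of size $\sigma^{-O(1)}C^{O(1)}$, after which your multiplicity bound $O(C)\cdot d(a)$ gives $\gg\sigma^{O(1)}C^{-O(1)}A_1A_2$ distinct pairs (note this yields a worse explicit power of $C$ than the stated $C^4$, which is harmless in the applications where $C=O_{d,D,\varepsilon}(1)$ but should be acknowledged). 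The paper avoids the threshold altogether: it keeps the divisor weight $d_2(a)$, applies Cauchy--Schwarz against $\sum_a d_2(a)^2$, and then uses the pointwise bound that each $a$ contributes at most $O(C^2)$ pairs, which packages the same multiplicity control in one step and produces the stated $\sigma^{7}/C^4$ density directly.
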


\begin{proof}  
From \Cref{def:upscale}, we see that for every $(z,g_z)\in \mathcal{J}_2$ we have
\begin{align*}
\sum_{A_2<a_2\leq CA_2}\sum_{(y,g_y)\in \mathcal{J}_1} \mathrm{I}_{a_2,C,A_1A_2 H,\sigma}(z,y) \geq \sigma A_2
\end{align*}
and for every $(y,g_y)\in \mathcal{J}_1$ we have
\begin{align*}
\sum_{A_1<a_1\leq CA_1}\sum_{(x,g_x)\in \mathcal{J}_0} \mathrm{I}_{a_1,C,A_1 H,\sigma}(y,x) \geq \sigma A_1.
\end{align*}
Combining the previous two estimates, for every $(z,g_z)\in \mathcal{J}_2$ we obtain
\begin{equation}\label{eq:zsum}
  \sum_{\substack{A_1<a_1\leq CA_1\\ A_2<a_2\leq C A_2}}\sum_{(y,g_y)\in \mathcal{J}_1}\mathrm{I}_{a_2,C,A_1A_2 H,\sigma}(z,y) \sum_{(x,g_x)\in \mathcal{J}_0}\mathrm{I}_{a_1,C,A_1 H,\sigma}(y,x) \geq \sigma^2A_1A_2.
\end{equation}

Note that for every non-zero term in~\eqref{eq:zsum} we have 
$|z-a_2y|\leq CA_1A_2H$ and $|y-a_1x|\leq CA_1H$, so we must have $|z-a_1a_2x|\leq 2C^2A_1A_2H$. Applying Lemma~\ref{le:equivalence}(ii), this implies that for every non-zero term in~\eqref{eq:zsum} we have 
$$g_z(a_1a_2\cdot)\sim_{\frac{1}{a_1a_2}(z,z+A_1A_2H],\sigma}g_y(a_1\cdot) \quad\textnormal{ and  }\quad g_y(a_1\cdot)\sim_{\frac{1}{a_1a_2}(z,z+A_1A_2H],\sigma^{O(1)}}g_x,$$
and by the transitivity of the relation $\sim_{I,\eta}$ (Lemma~\ref{le:equivalence}(i)), these give
$$g_z(a_1a_2\cdot)\sim _{\frac{1}{a_1a_2}(z,z+A_1A_2H],\sigma^{O(1)}}g_x.$$
In other words,
$$ \mathrm{I}_{a_1 a_2, 2C^2, A_1 A_2 H, \sigma^{O(1)}}(z,x) = 1.$$
Hence, we obtain 
$$
\sum_{\substack{A_1<a_1\leq CA_1\\ A_2<a_2\leq C A_2}}\sum_{(x,g_x)\in \mathcal{J}_0} \mathrm{I}_{a_1 a_2, 2C^2, A_1 A_2 H, \sigma^{O(1)}}(z,x) \sum_{\substack{(y,g_y)\in \mathcal{J}_1\\|y-a_1x|\leq CA_1H}} 1 \geq \sigma^2A_1A_2.  
$$
Since the sum over $y$ is bounded by $\ll C$, we may make the substitution $a=a_1 a_2$ and conclude that for every $(z,g_z)\in \mathcal{J}_2$  we have
\begin{align}\label{eq:d_2sum}
 \sum_{A_1A_2<a\leq C^2A_1A_2}\, \sum_{(x,g_x)\in \mathcal{J}_0} d_2(a)\mathrm{I}_{a, 2C^2, A_1 A_2 H, \sigma^{O(1)}}(z,x)
 \gg \frac{\sigma^2}{C}A_1A_2. 
\end{align}

Applying to~\eqref{eq:d_2sum} the Cauchy--Schwarz inequality and the standard divisor sum bound $$\sum_{a\leq X}d_2(a)^2\ll X\log^3 X,$$
for all $(z,g_z)\in \mathcal{J}_2$ we obtain
\begin{align*}
 \sum_{A_1A_2<a\leq C^2A_1A_2}\left(\sum_{(x,g_x)\in \mathcal{J}_0}\mathrm{I}_{a, 2C^2, A_1 A_2 H, \sigma^{O(1)}}(z,x)\right)^2
 &\gg \frac{\sigma^4}{C^4}\frac{A_1A_2}{\log^3(C^2A_1A_2)}\\
 &\gg  \frac{\sigma^{8}}{C^4}A_1A_2.  
\end{align*}
Since the sum inside the square has at most $O(C^2)$ non-zero terms, we conclude that
$$
  \sum_{A_1A_2<a\leq C^2A_1A_2}\,\sum_{(x,g_x)\in \mathcal{J}_0}\mathrm{I}_{a_1 a_2, 2C^2, A_1 A_2 H, \sigma^{O(1)}}(z,x) \gg  \frac{\sigma^{8}}{C^6}A_1A_2.   
$$
The claim~\eqref{eq:transitiverelation} now follows.
\end{proof}

The following proposition importantly tells us that either Theorem~\ref{inverse} holds, or the map $x\mapsto g_x$ can be extended to a smaller scale. 

\begin{proposition}[Scaling down]\label{prop:scaledown}
Let $d,D\geq 1$, $\varepsilon>0$, and let $X \geq H \geq X^{\varepsilon}\geq 3$ and $\delta \in (0, \frac{1}{\log X})$. Let $G/\Gamma$ be a filtered nilmanifold of degree at most $d$, dimension at most $D$, and complexity at most $1/\delta$.  Let $F \colon G/\Gamma \to \C$ be Lipschitz of norm at most $1/\delta$ and mean zero.  Let $f \colon \N \to \C$ be an arithmetic function such that $f$ is a $(\delta,A_{II}^-, A_{II}^+)$ type $II$ sum for some $A_{II}^+ \geq A_{II}^- \geq X^\eps$. Suppose that
\begin{equation}\label{invo2}
\left| \sum_{x < n \leq x+H} f(n) F(g_x(n) \Gamma) \right|^* \geq \delta H
\end{equation}
for all $x$ in a subset $E$ of $[X,2X]$ of measure at least $\delta X$ where, for each $x \in E$, $g_x \colon \Z \to G$ is a polynomial map.  Then at least one of the following holds. 
\begin{itemize}
\item[(i)] We have
\begin{equation*}
 H \ll_{d,D,\varepsilon} \delta^{-O_{d,D,\eps}(1)} A_{II}^+.
\end{equation*}

\item[(ii)] There exists a non-trivial horizontal character $\eta \colon G \to \R$ having Lipschitz norm $O_{d,D,\eps}( \delta^{-O_{d,D,\eps}(1)})$ such that
\begin{equation*}
 \| \eta \circ g_x \|_{C^\infty(x, x+H]} \ll_{d,D,\eps} \delta^{-O_{d,D,\eps}(1)}
\end{equation*}
for all $x$ in a subset of $E$ of measure $\gg_{d,D,\eps} \delta^{O_{d,D,\eps}(1)} X$.

\item[(iii)] 
  There exists $A\in [A_{II}^{-},A_{II}^{+}]$, a $(\delta^{O_{d,D,\varepsilon}(1)},H)$-configuration $\mathcal{J}_0$ in $[X,2X]$ and a $(\delta^{O_{d,D,\varepsilon}(1)},H/A)$-configuration $\mathcal{J}_{-1}$ in $[X/(2A),2X/A]$, such that $\mathcal{J}_0=\{(x,g_x)\colon x\in E'\}$ for some set $E'\subset E$ and 
 \begin{align}\label{eq:relation1}
\mathcal{J}_{-1}\longrightarrow_{\delta^{O_{d,D,\eps}(1)},O_{d,D,\varepsilon}(1)} \mathcal{J}_0.
 \end{align}
 \end{itemize}
\end{proposition}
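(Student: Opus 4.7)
The proof proceeds by combining a Cauchy--Schwarz unfolding of the type $II$ structure with the nilsequence large sieve (Proposition~\ref{large-sieve}). First, via the central Fourier decomposition (Proposition~\ref{central}) and induction on the dimension $D$ of $G/\Gamma$, we reduce to the case where $Z(G)$ is one-dimensional and $F$ oscillates with a non-zero central frequency $\xi$ of Lipschitz norm $\leq \delta^{-O(1)}$; in the abelian setting of Theorem~\ref{inverse}(iii) this reduction is automatic, with $G/\Gamma = \R/\Z$. Writing $f = \alpha * \beta$ and dyadically decomposing the support of $\alpha$ into ranges $a \sim A$ with $A \in [A_{II}^-, A_{II}^+]$, we pigeonhole to fix one dyadic scale $A$, losing only a factor $O(\log X)$ in $\delta$.

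Applying Cauchy--Schwarz over $a$ and using the $L^2$ bound $\sum |\alpha(a)|^2 \leq A/\delta$, we deduce that for $\gg \delta^{O(1)} X$ of the $x \in E$ there exist $\gg \delta^{O(1)} A$ values $a \in (A, 2A]$ for which
$$
\bigg| \sum_{x/a < m \leq (x+H)/a} \beta(m)\, F(g_{x,a}(m)\Gamma) \bigg|^{*} \gg \delta^{O(1)}\, H/A,
$$
where $g_{x,a}(m) \coloneqq g_x(am) \in \Poly(\Z \to G)$ by Lazard--Leibman. If $H \leq \delta^{-C_{d,D,\eps}} A$ for a sufficiently large constant, we are in case (i). Otherwise $H/A$ is large, and we apply Proposition~\ref{large-sieve} to $g_{x,a}$ on the interval $I_{x,a} \coloneqq (x/a, (x+H)/a]$ with weight $\beta$; this produces a family $g_1, \dots, g_K \in \Poly(\Z \to G)$ with $K \ll \delta^{-O(1)}$ such that, for every valid $(x,a)$, either (a) there is a non-trivial horizontal character $\eta$ of Lipschitz norm $\delta^{-O(1)}$ with $\|\eta \circ g_{x,a}\|_{C^\infty(I_{x,a})} \ll \delta^{-O(1)}$, or (b) $g_{x,a} = \varepsilon_{x,a}\, g_i\, \gamma_{x,a}$ with $\varepsilon_{x,a}$ smooth and $\gamma_{x,a}$ rational (both at level $\delta^{-O(1)}$).

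Case (a) feeds into case (ii) of the proposition: if (a) holds for many pairs, pigeonholing on $\eta$ yields for $\gg \delta^{O(1)} X$ of $x$ a common $\eta$ with $\|\eta \circ g_x(a\cdot)\|_{C^\infty(\frac{1}{a}(x, x+H])} \ll \delta^{-O(1)}$ for $\gg \delta^{O(1)} A$ values of $a$, and Corollary~\ref{smooth-dilate} (concatenating dilated smoothness) then provides an integer $1 \leq q \ll \delta^{-O(1)}$ with $\|q\eta \circ g_x\|_{C^\infty(x, x+H]} \ll \delta^{-O(1)}$, giving case (ii). Thus we may assume case (b) holds for $\gg \delta^{O(1)} XA$ pairs, and a further pigeonhole fixes a single index $i$ with $g_x(a\cdot) \sim_{I_{x,a},\,\delta^{O(1)}} g_i$ for $\gg \delta^{O(1)} XA$ pairs $(x,a)$.

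It remains to build the configurations. Set $\mathcal{J}_0 \coloneqq \{(x, g_x)\colon x \in \mathcal{X}_0\}$, where $\mathcal{X}_0 \subset E$ is a greedy $H$-separated set of size $\gg \delta^{O(1)} X/H$ consisting of those $x$ admitting $\gg \delta^{O(1)} A$ valid $a$'s (obtained from the pair count via Fubini--Chebyshev). For $\mathcal{J}_{-1}$, we collect the candidate base points $\{x/a \colon (x,a) \text{ valid}\}$ in $[X/(2A), 2X/A]$, pigeonhole first over residues modulo $\lceil H/A \rceil$, and then greedily extract a $(H/A)$-separated subset $\mathcal{X}_{-1}$ of size $\gg \delta^{O(1)} X/H$, setting $g_y \coloneqq g_i$ for all $y \in \mathcal{X}_{-1}$. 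The relation $\mathcal{J}_{-1} \longrightarrow_{\delta^{O(1)}, O(1)} \mathcal{J}_0$ then follows from the observation that for each $(x, g_x) \in \mathcal{J}_0$, a proportion $\gg \delta^{O(1)}$ of the valid $a$ land within $H/A$ of some retained $y \in \mathcal{X}_{-1}$, so that $|x - ay| \leq O(H)$ and Lemma~\ref{le:equivalence}(ii) transports the $\sim$-relation from $I_{x,a}$ to $\frac{1}{a}(x, x+H]$. The principal difficulty is arranging that the $(H/A)$-thinning of the candidate set preserves, for each fixed $x \in \mathcal{X}_0$, a positive density of retained base points reachable via $a \in (A, 2A]$; this is precisely what the preliminary pigeonhole over residues modulo $\lceil H/A \rceil$ delivers before the greedy selection.
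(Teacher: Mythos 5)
Your overall architecture matches the paper's: dyadic pigeonhole on the scale $A$ of the type $II$ variable, an $L^2$/Markov truncation to produce $\gg \delta^{O(1)} XA$ pairs $(x,a)$ with $\left|\sum_{x/a<m\leq (x+H)/a}\beta(m)F(g_x(am)\Gamma)\right|^{*}\gg \delta^{O(1)}H/A$, the large sieve dichotomy, Case (a) feeding into conclusion (ii) via a pigeonhole on $\eta$ and Corollary~\ref{smooth-dilate}, and Case (b) feeding into the configuration statement (iii). However, there is a genuine gap in Case (b). Proposition~\ref{large-sieve} produces a family $g_1,\dots,g_K$ that depends on the \emph{interval} $I$ and on the function $f$ restricted to it: its proof takes a maximal collection of pairwise-inequivalent polynomial sequences correlating with $f$ \emph{on that interval}. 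You apply it on the intervals $I_{x,a}=(x/a,(x+H)/a]$, which vary with $(x,a)$, yet you treat the output as a single universal family valid ``for every valid $(x,a)$,'' pigeonhole on a single index $i$, and then set $g_y\coloneqq g_i$ for \emph{all} base points $y$ of $\mathcal{J}_{-1}$. The large sieve does not give this; each base point $x/a$ comes with its own family $g_1^{(x,a)},\dots,g_K^{(x,a)}$, and there is no a priori relation between the families attached to different base points. Indeed, if a single $g_i$ worked for all of them, the configuration $\mathcal{J}_{-1}$ would be constant and the entire subsequent contagion/scaling-up machinery would be unnecessary --- you would essentially be assuming the conclusion that the argument of Sections~\ref{contagion-sec}--\ref{type-ii} is designed to prove.

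The paper's fix is the discretization step you are missing: the base points $x/a$ are rounded via $Q(\cdot)$ to multiples of $\delta^{100}H/A$ (this requires the ``good pair'' conditions, i.e.\ $L^1$ and $L^2$ control of $\beta$ on the shifted short intervals, which also furnish the $\sum_{n\in I}|f(n)|^2\leq |I|/\delta$ hypothesis of the large sieve that you do not verify). The large sieve is then applied once per discretized base point $Q(x/a)$, yielding families $(\widetilde g_{Q(x/a),j})_{j\leq J}$ indexed by the base point; only the index $j_0\in\{1,\dots,J\}$ (with $J\ll\delta^{-O(1)}$ uniform) is pigeonholed across base points, and the polynomial attached to $y\in\mathcal{Y}_{t_0}$ in $\mathcal{J}_{-1}$ is $\widetilde g_{Q(y),j_0}$, which genuinely varies with $y$. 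Your closing pigeonhole over residues modulo $\lceil H/A\rceil$ plays a role analogous to the paper's choice of the offset $t_0$, but it addresses only the separation of $\mathcal{X}_{-1}$, not the interval-dependence of the large sieve output. As a secondary point, your opening reduction by induction on $\dim G$ is delicate here: conclusion (iii) concerns polynomial maps into $G$ itself and the relation $\sim_{I,\eta}$ relative to $G/\Gamma$, so passing to a quotient $G/\ker\xi$ does not directly return the stated conclusion for $G$; the paper instead relies on the hypotheses of Theorem~\ref{inverse}(ii)--(iii) being in force when the proposition is invoked.
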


\begin{remark}
\label{rem:charRZ}
   Note that in the case of Theorem~\ref{inverse}(iii) (where $F(g_x(n))=e(P_x(n))$ for some polynomial $P_x\colon \mathbb{Z}\to \mathbb{R}$), if conclusion (ii) of Proposition~\ref{prop:scaledown} holds, then $\eta(u)=qu$ for some integer $1\leq |q|\ll \delta^{-O_{d,\varepsilon}(1)}$, so 
   \begin{align*}
     \|qP_x\mod 1\|_{C^{\infty}(x,x+H]}\ll_{d,\varepsilon} \delta^{-O_{d,\varepsilon}(1)}.  
   \end{align*}
   If $\alpha_{x,j}$ denotes the degree $j$ coefficient of $P_x(x+\cdot)$, this implies that
   \begin{align*}
    \|q\alpha_{x,j}\|_{\R/\Z}\ll \delta^{-O_{d,\varepsilon}(1)}/H^j   
   \end{align*}
   for all $1\leq j\leq d$. This in turn implies that
   \begin{align*}
\| e(P_x(\cdot))\|_{\TV( (x, x+H] \cap \Z; q)} \ll_{d,\eps} \delta^{-O_{d,\eps}(1)}. 
   \end{align*}
   Hence, in the proof of Theorem~\ref{inverse}(ii)--(iii) we may assume that conclusion (iii) above holds. 
\end{remark}

\begin{proof}
We let all implied constants to depend on $d,D,\eps$. As in Remark~\ref{rmk:measurability}, we may assume that $x\mapsto g_x$ is constant on intervals of the form $[m,m+1)$ with $m\in \mathbb{N}$. 

We induct on the dimension $D$ of $G/\Gamma$. In view of Proposition~\ref{central} and Lemma~\ref{le:shiu}, we may assume that $F$ oscillates with a central frequency $\xi \colon Z(G) \rightarrow \R$ of Lipschitz norm at most $\delta^{-O(1)}$. If the center $Z(G)$ has dimension larger than $1$, or $\xi$ vanishes, then the conclusion follows from induction hypothesis applied to $G/\ker\xi$ (via Lemma~\ref{quotient-normal}). Henceforth we assume that $G$ has one-dimensional center and that $\xi$ is non-zero. 

Let $x\in E$. By~\eqref{invo2},  Lemma~\ref{basic-prop}(i) and the assumption that $f=\alpha*\beta$ is a $(\delta, A_{II}^{-}, A_{II}^{+})$ type $II$ sum, we have
 \begin{align*}
\sum_{A_{II}^{-}\leq a\leq A_{II}^{+}}|\alpha(a)|\left|\sum_{x/a< b\leq (x+H)/a}\beta(b)F(g_x(ab)\Gamma)\right|^{*}\geq \delta H. 
\end{align*}
By the pigeonhole principle and the assumption $\delta\leq 1/\log X$, we conclude that there is some $A\in [A_{II}^{-},A_{II}^{+}]$ such that
\begin{align}\label{eq:alphabeta}
\sum_{A<a\leq 2A}|\alpha(a)|\left|\sum_{x/a< b\leq x/a+H/A}\beta(b)F(g_x(ab)\Gamma)\right|^{*}\gg \delta^{2} H.    \end{align}

 Now, let $\mathcal{B}$ be the set of $x\in [X,2X]$ for which we have
\begin{align*}
\sum_{A<a\leq 2A}|\alpha(a)|1_{|\alpha(a)|\geq \delta^{-10}}\left|\sum_{x/a<b\leq x/a+H/A}\beta(b)F(g_x(ab)\Gamma)\right|^{*}\geq  \delta^{3} H.  
\end{align*}
Since $F$ is Lipschitz of norm $\leq \delta^{-1}$, $|F|$ is pointwise bounded by $\delta^{-1}$. Then, by Markov's inequality, the Cauchy--Schwarz inequality, and the assumed $L^2$ bounds on $\alpha$ and $\beta$,  we have 
\begin{align*}
 \delta^{3}H\meas(\mathcal{B})&\leq \delta^{-1} \int_{X}^{2X}\, \sum_{A<a\leq 2A}|\alpha(a)|1_{|\alpha(a)|\geq \delta^{-10}}\sum_{x/a<b\leq x/a+H/A}|\beta(b)|\, dx\\
 &\ll \delta^{-1} H\sum_{A<a\leq 2A}|\alpha(a)|1_{|\alpha(a)|\geq \delta^{-10}} \sum_{b\leq (2X+H)/A}|\beta(b)|\\
 &\ll  \delta^{9} H\sum_{A<a\leq 2A}|\alpha(a)|^2\sum_{b\leq (2X+H)/A}|\beta(b)|\\
 &\ll \delta^{9-3/2}HX,
\end{align*}
so $\meas(\mathcal{B})\ll \delta^{9-3/2-3}X\ll \delta^4 X$. 

From now on, let $x\in E^*\coloneqq E\setminus \mathcal{B}$; the set $E^{*}$ has measure $\gg \delta X$. From~\eqref{eq:alphabeta} we conclude that for $x\in E^*$ we have 
\begin{align*}
\sum_{A<a\leq 2A}|\alpha(a)|1_{|\alpha(a)|\leq \delta^{-10}}\left|\sum_{x/a< b\leq x/a+H/A}\beta(b)F(g_x(ab)\Gamma)\right|^{*}\gg \delta^{2} H,   \end{align*}
so
\begin{align}\label{eq:invo2.5}
\sum_{A<a\leq 2A}\left|\sum_{x/a< b\leq x/a+H/A}\beta(b)F(g_x(ab)\Gamma)\right|^{*}\gg \delta^{12} H.  \end{align}

Let 
\begin{align*}
S(x,a)\coloneqq \left|\sum_{x/a< b\leq x/a+H/A}\beta(b)F(g_x(ab)\Gamma)\right|^{*},    
\end{align*}
and let $\mathcal{B}^{*}$ be the set of $x\in E^{*}$ for which
\begin{align*}
\sum_{A<a\leq 2A}S(x,a)1_{S(x,a)\geq \delta^{-20}\frac{H}{A}}\geq \delta^{13}H.    
\end{align*}
Then, by the Cauchy--Schwarz inequality and the pointwise bound on $|F|$, we have
\begin{align*}
\delta^{13}H\meas(\mathcal{B}^{*})&\leq  \delta^{20}\frac{A}{H}\int_{X}^{2X}\sum_{A<a\leq 2A}S(x,a)^2\, dx\\
&\ll  \delta^{18}\int_{X}^{2X}\sum_{A<a\leq 2A} \sum_{x/a<b\leq x/a+H/A}|\beta(b)|^2\, dx.
\end{align*}
Exchanging the order of integration and summation and using the assumed $L^2$ bound on $\beta$, we see that this is $\ll \delta^{17}HX$. Hence $\meas(\mathcal{B}^{*})\ll \delta^4 X$. 

From now on, let $x\in E^{**}\coloneqq E^{*}\setminus \mathcal{B}^{*}$; the set $E^{**}$ has measure $\gg \delta X$. From~\eqref{eq:invo2.5} we conclude that for $x\in E^{**}$ we have
\begin{align*}
\sum_{A<a\leq 2A}S(x,a)1_{S(x,a)\leq \delta^{-20}\frac{H}{A}}\gg \delta^{12}H.    
\end{align*}  
By Markov's inequality, we deduce that there is a set $\mathcal{S}\subset ((X,2X]\times (A,2A])\cap \mathbb{N}^2$ with $|\mathcal{S}|\gg \delta^{33}AX$ such that for all $(x,a)\in \mathcal{S}$ we have
\begin{align}\label{eq:nilseqlargesieve}
S(x,a) = \left|\sum_{x/a< b\leq x/a+H/A}\beta(b)F(g_x(ab)\Gamma)\right|^{*}\gg \delta^{32} \frac{H}{A}.
\end{align}

Let us say that $(x,a)\in \mathcal{S}$ is \emph{good} if $x\in E^{**}$ and  
\begin{align}\label{eq:good}
\sum_{x/a<b\leq x/a+\delta^{100}H/A}|\beta(b)|\leq \delta^{50}\frac{H}{A}\quad \textnormal{and}\quad \sum_{x/a<b\leq x/a+2H/A}|\beta(b)|^2\leq \delta^{-50}\frac{H}{A}.    
\end{align}
The number of $(x,a)\in \mathcal{S}$ that are not good is by Markov's inequality
\begin{align*}
 &\ll \delta^{-50}\frac{A}{H}\sum_{A<a\leq 2A}\sum_{X<x\leq 2X}\sum_{x/a<b\leq x/a+\delta^{100}H/A}|\beta(b)|\\
 &\quad +\delta^{50}\frac{A}{H}\sum_{A<a\leq 2A}\sum_{X< x\leq 2X}\sum_{x/a<b\leq x/a+2H/A}|\beta(b)|^2\\
 &\ll \delta^{100-50}A^2\sum_{b\leq (2X+H)/A}|\beta(b)|+\delta^{50}A^2\sum_{b\leq (2X+2H)/A}|\beta(b)|^2\ll \delta^{49}AX.
\end{align*}
Thus, there are $\gg \delta^{33}AX$ good pairs $(x,a)\in \mathcal{S}$.

We need to perform some discretization in the possible summation lengths. To this end, for 
 $z>0$, let 
\begin{align*}
Q(z)\coloneqq \min\left([z,\infty)\cap \left(\delta^{100}\frac{H}{A}\mathbb{Z}\right)\right).    
\end{align*}
Now, by~\eqref{eq:nilseqlargesieve} and~\eqref{eq:good}, for any good $(x,a)\in \mathcal{S}$ we have
\begin{align}\label{eq:nilseqlargesieve3}
\left|\sum_{Q(x/a)< b\leq Q(x/a)+H/A}\beta(b)F(g_x(ab)\Gamma)\right|^{*}\gg \delta^{32} \frac{H}{A}.
\end{align}

Let $C_1$ be a large constant, and let $C_2$ be large in terms of $C_1$. We may assume that $H/A\geq \delta^{-C_2}$, since otherwise conclusion (i) of the proposition holds. Since for any good $(x,a)\in \mathcal{S}$ we have~\eqref{eq:nilseqlargesieve3}, we may apply the nilsequence large sieve (Proposition~\ref{large-sieve}) to~\eqref{eq:nilseqlargesieve3} to deduce that there exists a collection $(\widetilde{g}_{Q(x/a),j})_{1\leq j\leq J}$ of elements of $\Poly(\mathbb{Z}\to G)$ such that $J\ll \delta^{-O(1)}$ and such that at least one of the following holds for any good $(x,a)\in \mathcal{S}$:
\begin{enumerate}
    \item[(a)] There is a non-trivial horizontal character $\eta_{x,a}\colon G\to \mathbb{R}$ of Lipschitz norm $\leq \delta^{-C_1}$ such that
    \begin{align*}
     \|\eta_{x,a}\circ g_x(a\cdot )\mod 1\|_{C^{\infty}{(Q(x/a),Q(x/a)+H/A]}}\leq \delta^{-C_1}.   
    \end{align*}
    \item[(b)] For some $1\leq j\leq J$ (depending on $(x,a)$) we have 
      \begin{align*}
 g_{x}(a\cdot)\sim_{(Q(x/a),Q(x/a)+H/A],\delta^{C_1}} \widetilde{g}_{Q(x/a),j}. \end{align*}  
\end{enumerate}

We now split into two cases.

\textbf{Case 1: (a) holds for at least half of the good pairs in $\mathcal{S}$.}

By~\eqref{eq:smoothness}, whenever $(x,a)$ has property (a) we have
\begin{align}\label{eq:etaCinfty}
\|\eta_{x,a}\circ g_x(a\cdot)\mod 1\|_{C^{\infty}(x/a,(x+H)/a]}\ll \delta^{-C_1}.  \end{align}
Now, by the pigeonhole principle there is some non-trivial horizontal character $\eta'\colon G\to \mathbb{R}$ of Lipschitz norm $\leq \delta^{-C_1}$ and some set of integers in $(X,2X]$ of size $\gg \delta^{C_1+O(1)}X$ such that for each such $x$ there exist $\gg \delta^{C_1+O(1)}A$ choices of $a\in (A,2A]\cap \mathbb{N}$ for which~\eqref{eq:etaCinfty} holds with $\eta_{x,a}=\eta'$.
By Corollary~\ref{smooth-dilate} (and the assumption $H/A\geq \delta^{-C_2}$), we deduce that for some integer $1\leq |q|\ll \delta^{-O_{C_1}(1)}$ the horizontal character $\eta''=q\eta'$ satisfies
\begin{align*}
\|\eta''\circ g_x\mod 1\|_{C^{\infty}(x,x+H]}\ll \delta^{-O_{C_1}(1)}
\end{align*} 
for $\gg \delta^{C_1+O(1)}X$ integers $x\in (X,2X]$. Conclusion (ii) follows, since $x\mapsto g_x$ is constant on intervals of the form $[m,m+1)$ with $m\in \mathbb{N}$.

\textbf{Case 2: (b) holds for at least half of the good pairs in $\mathcal{S}$.}

By Lemma~\ref{le:equivalence}(ii), for any pair $(x,a)$ satisfying (b),
for some $1\leq j\leq J$ we have 
\begin{align*}
 g_{x}(a\cdot)\sim_{\frac{1}{a}(x,x+H],\delta^{O(C_1)}} \widetilde{g}_{Q(x/a),j}.
 \end{align*}
By the pigeonhole principle, there exists some $1\leq j_0\leq J$ such that for $\gg \delta^{C_1+O(1)}AX$ pairs $(x,a)\in \mathcal{S}$ we have 
\begin{align}\label{eq:qxa2}
 g_{x}(a\cdot)\sim_{\frac{1}{a}(x,x+H],\delta^{O(C_1)}} \widetilde{g}_{Q(x/a),j_0}.
 \end{align}

For $t\in [0,1)$, let 
$$\mathcal{Y}_t\coloneqq \left(\frac{H}{A}\mathbb{Z}+t\frac{H}{A}\right)\cap  \left[\frac{X}{2A},\frac{2X}{A}\right].$$
Then $\mathcal{Y}_t$ is $H/A$-separated and has size $\gg \frac{X}{H}$. 
By the pigeonhole principle, there is some $t_0\in [0,1)$ such that the set 
\begin{align*}
\{(x,a)\in \mathcal{S}\colon (x,a)\textnormal{ satisfies~\eqref{eq:qxa2} and}\,\, Q(x/a)=Q(y) \textnormal{ for some } y\in \mathcal{Y}_{t_0}\}   
\end{align*} 
has size $\gg \delta^{C_1+O(1)}AX$. By the pigeonhole principle again, this implies that there exists an $H$-separated set $E'\subset (X,2X]\cap \mathbb{N}$ of size $\gg \delta^{C_1+O(1)}X/H$ such that for every $x\in E'$ there exist $\gg  \delta^{C_1+O(1)}A$ choices of $a\in (A,2A]\cap \mathbb{N}$  for which $(x,a)$ satisfies~\eqref{eq:qxa2} and $Q(x/a)=Q(y)$ for some unique $y\in \mathcal{Y}_{t_0}$. 

 For $y\in \mathcal{Y}_{t_0}$, define $\widetilde{g}_y\coloneqq \widetilde{g}_{Q(y),j_0}$. Then for every $x\in E'$ there exist $\gg \delta^{C_1+O(1)}A$ choices of $a\in (A,2A]\cap \mathbb{N}$ such that for some $y\in \mathcal{Y}_{t_0}$ we have $Q(x/a)=Q(y)$ (so that in particular $|y-x/a|\leq \delta^{100}H/A$) and 
\begin{align*}
g_{x}(a\cdot )\sim_{\frac{1}{a}(x,x+H],\delta^{O(C_1)}}\widetilde{g}_y.   \end{align*}
Taking $\mathcal{J}_{-1}=\{(y,\widetilde{g}_{y})\colon y\in \mathcal{Y}_{t_0}\}$ and $\mathcal{J}_0=\{(x,g_x)\colon x\in E'\}$, we now have the desired relation~\eqref{eq:relation1}. 
\end{proof}

Using the improved contagiousness result (Theorem~\ref{nil-contagion}), we have the following result that extends~\cite[Proposition 3.7]{walsh} from the linear phase case to nilsequences and also, crucially, works with smaller $H$.\footnote{Writing~\cite[Proposition 3.7]{walsh} in the notation of Proposition~\ref{prop:iterate}, the condition~\eqref{eq:HA} would be replaced with a more restrictive condition of the type $H\geq \sigma^{-K}A^2$.} This  step crucially allows us to ``scale up'' the map $x\mapsto g_x$ of Proposition~\ref{prop:scaledown} to a larger scale where it is easier to analyze.

\begin{proposition}[Scaling up]\label{prop:iterate} Let $Y\geq H\geq A\geq 2$, $0 < \sigma < \frac{1}{2 \log Y}$,  and $2\leq C\leq \sigma^{-1}$.   Let $G/\Gamma$ be a filtered nilmanifold of degree $d$,  dimension $D$ and complexity $\leq \sigma^{-1}$. Let $\mathcal{J}_0$ be a $(\sigma,H)$-configuration in $[Y,CY]$, and let $\mathcal{J}_1$ be a $(\sigma,AH)$-configuration in $[AY,CAY]$ such that
\begin{align*}
  \mathcal{J}_0 \longrightarrow_{\sigma,C}\mathcal{J}_1.     
\end{align*}
Then, if 
\begin{align}\label{eq:HA}
H\geq \sigma^{-K}A,\, \textnormal{ and }\, A\geq \sigma^{-K}  
\end{align} for a large enough constant $K$ (depending on $d,D$), there exists a $(\sigma^{O(1)},A^2H)$-configuration $\mathcal{J}_2$ in $[A^2Y,2CA^2Y]$ such that 
\begin{align*}
 \mathcal{J}_1 \longrightarrow_{\sigma^{O_{d,D}(1)},O_{d,D}(C^2)}\mathcal{J}_2.  
\end{align*}
\end{proposition}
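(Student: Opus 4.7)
\textbf{Plan for Proposition~\ref{prop:iterate}.} The strategy is to derive the higher-scale scaling relation $\mathcal{J}_1 \longrightarrow \mathcal{J}_2$ by leveraging the nilsequence contagion lemma (Theorem~\ref{nil-contagion}) to extract a coherent ``master'' polynomial structure from the given relation $\mathcal{J}_0 \longrightarrow \mathcal{J}_1$. This mimics Walsh's argument in~\cite[Proposition 3.7]{walsh} in the linear phase case (which uses the simpler Lemma~\ref{Walsh} in place of Theorem~\ref{nil-contagion}), and the improved dependencies in Theorem~\ref{nil-contagion} are precisely what allows us to require only $H\geq \sigma^{-K}A$ rather than the much stronger $H\geq \sigma^{-K}A^2$ that Walsh's method would need.

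Concretely, for each $(y,g_y)\in \mathcal{J}_1$, the hypothesis supplies a set $S_y\subseteq (A,CA]$ of size at least $\sigma A$ and an assignment $a\mapsto (x_a,g_{x_a})$ with $|y-ax_a|\leq CAH$ and $g_y(a\cdot)\sim_{\frac{1}{a}(y,y+AH],\sigma} g_{x_a}$. For any two choices $a_1,a_2\in S_y$, rescaling the first relation by $a_2$ and the second by $a_1$ and invoking Lemma~\ref{le:equivalence} yields the two-variable relation
\[
g_{x_{a_1}}(a_2\cdot)\sim_{\frac{1}{a_1 a_2}(y,y+AH],\ \sigma^{O(1)}} g_{x_{a_2}}(a_1\cdot)
\]
for all pairs $(a_1,a_2)\in S_y^2$, which is exactly of the type required by Theorem~\ref{nil-contagion}, with $P = A$, $I=(y,y+AH]$, and the polynomials $g_n\coloneqq g_{x_n}$. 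Applying Theorem~\ref{nil-contagion} (for a small fixed $\kappa>0$ to be chosen in terms of $d,D$) produces a master polynomial $g_\ast^{(y)}\in \Poly(\Z\to G)$ such that $g_{x_a}\sim g_\ast^{(y)}(a\cdot)$ for many $a\in S_y$ on the scale $\frac{1}{a}(y,y+AH]$. Chaining with the original relation via Lemma~\ref{le:equivalence}(i) then gives $g_y\sim g_\ast^{(y)}$ on $(y,y+AH]$ up to smoothness parameter $\sigma^{O(1)}$.

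With these master polynomials in hand, the configuration $\mathcal{J}_2$ is built as follows: for each $y$ in a large subset of $\mathcal{J}_1$ and each good $a'\in (A,CA]$, one takes the point $z\coloneqq a'y$ (rounded to an $(A^2 H)$-separated grid in $[A^2 Y, 2CA^2 Y]$) equipped with the polynomial sequence $g_z\in \Poly(\Z\to G)$ obtained by extending $g_\ast^{(y)}$ polynomially to $\R$ via~\cite[Lemma 4.2]{MRTTZ} and setting $g_z(n)\coloneqq g_\ast^{(y)}(n/a')$. The identity $g_z(a'\cdot)=g_\ast^{(y)}(\cdot)\sim g_y$ on $(y,y+AH]=\frac{1}{a'}(z,z+A^2 H]$ then delivers the scaling relation required by Definition~\ref{def:upscale}, and a final pigeonhole ensuring both the density of $\mathcal{J}_2$ and that many $(y,a')$ pairs are attached to each constructed $z$ yields $\mathcal{J}_1\longrightarrow_{\sigma^{O(1)},\,O_{d,D}(C^2)}\mathcal{J}_2$.

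The main obstacle I anticipate is verifying the interval-length hypothesis of Theorem~\ref{nil-contagion}, which here becomes $AH\geq \sigma^{-C_{\kappa,d,D}} A^{2+\kappa}$, i.e.\ $H\geq \sigma^{-C_{\kappa,d,D}} A^{1+\kappa}$. Given only $H\geq \sigma^{-K} A$, this is tight and forces $\kappa$ to be chosen sufficiently small in terms of $d,D,K$, and exploits the standing assumption $\sigma<1/(2\log Y)$ to absorb the residual $A^\kappa$ factor into a power of $\log Y$. A secondary bookkeeping difficulty is that the master polynomial $g_\ast^{(y)}$ produced by contagion a priori depends on $y$, so additional pigeonholing (either to stabilize the choice of $g_\ast$ across many $y$, or to index the construction over $y$ and carry the counting through by hand) is needed to guarantee that each $z\in \mathcal{J}_2$ is witnessed by sufficiently many $(y,a')$ pairs, together with a careful tracking of the accumulated $\sigma^{O(1)}$-losses when iterating Lemma~\ref{le:equivalence}(i)--(ii).
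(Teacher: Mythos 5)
There is a genuine gap: you apply the contagion lemma at the wrong scale, and both the quantitative hypothesis and the final counting fail. Quantitatively, you invoke Theorem~\ref{nil-contagion} with $I=(y,y+AH]$ and $P\asymp A$, so its hypothesis $|I|\geq \eta^{-C}P^{2+\kappa}$ becomes $H\geq \sigma^{-O(1)}A^{1+\kappa}$, which is strictly stronger than the available bound $H\geq\sigma^{-K}A$ for every $\kappa>0$ in the main regime where $A$ is a power of $Y$ (say $A=Y^{1/2}$): the residual factor $A^{\kappa}$ is then a power of $Y$, and the standing assumption $\sigma<\frac{1}{2\log Y}$ only makes $\sigma^{-1}$ at least a logarithm, so it cannot absorb it. The paper's proof runs contagion instead at the \emph{top} scale, on the interval $(z,z+A^2H]$ of length $A^2H\geq\sigma^{-K}A^3$ with $P\asymp A$, so the hypothesis reduces to $H\geq\sigma^{-O(1)}A^{\kappa}$ with $\kappa<1$, which \emph{does} follow from $H\geq\sigma^{-K}A$; this placement is precisely what makes the weak hypothesis \eqref{eq:HA} suffice.

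More fundamentally, your construction cannot produce the multiplicity required by Definition~\ref{def:upscale}. For fixed $y$, the family $\{g_{x_a}\}_{a\in S_y}$ already admits $g_y$ itself as a master polynomial (that is the hypothesis $\mathcal{J}_0\longrightarrow_{\sigma,C}\mathcal{J}_1$), so the $g_*^{(y)}$ produced by contagion carries no information beyond $g_y$, and setting $g_z(n)=g_*^{(y)}(n/a')$ attaches to each new point $z$ a polynomial witnessed by the single pair $(a',y)$. But $\mathcal{J}_1\longrightarrow\mathcal{J}_2$ demands, for \emph{each} $z\in\mathcal{J}_2$, at least $\gg\sigma^{O(1)}A$ pairs $(a,\tilde y)$ with $|z-a\tilde y|\leq O(C^2)A^2H$ and $g_z(a\cdot)\sim g_{\tilde y}$; distinct $\tilde y$ whose dilates land near the same $z$ come with a priori unrelated $g_*^{(\tilde y)}$, and no step of your argument ever relates $g_{y_1}$ to $g_{y_2}$ for $y_1\neq y_2$ — your cross-relation $g_{x_{a_1}}(a_2\cdot)\sim g_{x_{a_2}}(a_1\cdot)$ links base-scale polynomials through a common \emph{higher} point, which is the wrong direction. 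The missing idea is the paper's Cauchy--Schwarz step over $\mathcal{J}_0$: from the hypothesis one extracts, for many $C^2A^2H$-separated points $z\in[A^2Y,2C^2A^2Y]$, about $\sigma^{O(1)}A^2$ quadruples $(a_1,a_2,(y_1,g_{y_1}),(y_2,g_{y_2}))$ with $a_1y_1\approx a_2y_2\approx z$, both related to a common $x\in\mathcal{J}_0$, whence $g_{y_1}(a_2\cdot)\sim g_{y_2}(a_1\cdot)$ on $\frac{1}{a_1a_2}(z,z+A^2H]$; only then does contagion, applied to the intermediate-scale polynomials on that long interval, yield a single $g_z$ related to many $(a,y)$ simultaneously. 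The ``final pigeonhole'' you defer to cannot substitute for this: the master polynomials range over a continuum, and stabilizing them across different $y$ is exactly the statement to be proved.
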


\begin{proof}  We allow implied constants to depend on $d,D$.
By \Cref{def:upscale}, we have
\begin{align*}
 \sum_{(y,g_y)\in \mathcal{J}_1}\sum_{A<a\leq CA}\sum_{(x,g_x)\in \mathcal{J}_0} \mathrm{I}_{a,C,AH,\sigma}(y,x)\gg \sigma \frac{Y}{H}\cdot \sigma A.    
\end{align*}
On using the Cauchy--Schwarz inequality, this implies that
\begin{align*}
\sum_{(x,g_x)\in \mathcal{J}_0}\left(\sum_{(y,g_y)\in \mathcal{J}_1}\sum_{A<a\leq CA}\mathrm{I}_{a,C,AH,\sigma}(y,x) \right)^2 &\gg \frac{(\sigma^2 AY/H)^2}{|\mathcal{J}_0|}\\
&\gg \sigma^4C^{-1} A^2\frac{Y}{H}.   
\end{align*}
Opening the square, we obtain
$$
\sum_{(x,g_x)\in \mathcal{J}_0}\sum_{\substack{(y_1,g_{y_1})\in \mathcal{J}_1 \\ (y_2,g_{y_2})\in \mathcal{J}_1}}\sum_{\substack{A<a_1\leq CA\\A<a_2\leq C A}} \quad \mathrm{I}_{a_1,C,AH,\sigma}(y_2,x) \mathrm{I}_{a_2,C,AH,\sigma}(y_1,x) \gg \sigma^4C^{-1} A^2\frac{Y}{H}. 
$$

Note that for every non-zero term in this sum we have $|y_2-a_1x|\leq CAH$ and $|y_1-a_2x|\leq CAH$, hence $|a_1y_1-a_2y_2|\leq 2C^2A^2H$.  In particular, there exists $z\in C^2A^2H\mathbb{Z}$ with $z\in [A^2Y,2C^2A^2Y]$ such that $|z-a_1y_1|\leq 2C^2A^2H$ and $|z-a_2y_2|\leq 2C^2A^2H$. We
conclude that
\begin{align*}
&\sum_{(x,g_x)\in \mathcal{J}_0}\,\sum_{\substack{(y_1,g_{y_1})\in \mathcal{J}_1 \\ (y_2,g_{y_2})\in \mathcal{J}_1}}\,\sum_{\substack{A<a_1\leq CA\\A<a_2\leq C A}}\,\sum_{\substack{z\in[A^2Y, 2C^2A^2Y]\\ z\in C^2A^2H\mathbb{Z}}}1_{|z-a_1y_1|, |z-a_2 y_2|\leq 2C^2A^2H} \mathrm{I}_{a_1,C,AH,\sigma}(y_2,x) \mathrm{I}_{a_2,C,AH,\sigma}(y_1,x) \\
 &\gg \sigma^ 4C^{-1}A^2\frac{Y}{H}.
\end{align*}
Moving the $z$ sum outside and applying the pigeonhole principle, we see that there exists a $C^2A^2H$-separated set $\mathcal{J}_2^{*}$ in $[A^2Y,2C^2A^2Y]$ of size $\gg \sigma^{O(1)} Y/H$ such that for each $z\in \mathcal{J}_2^{*}$ there is a set $\mathcal{Q}_z$ of $\gg \sigma^{O(1)} A^2$ quadruples $(a_1,a_2,(y_1,g_{y_1}),(y_2,g_{y_2}))\in ([A,CA]\cap \mathbb{N})^2\times \mathcal{J}_1^2$ such that  
$$
|z-a_1y_1|, |z-a_2y_2|\leq 2C^2A^2H
$$
and
$$ \mathrm{I}_{a_1,C,AH,\sigma}(y_2,x) = \mathrm{I}_{a_2,C,AH,\sigma}(y_1,x) = 1 $$
for some $(x,g_x) \in \mathcal{J}_0$.  For fixed $z$, Each $a_1$ is associated to $O(\sigma^{-O(1)})$ many choices of $y_1$, so by pruning we may assume that $y_1$ is determined by $a_1$; similarly we may assume $y_2$ is determined by $a_2$.  By further pruning, we may assume that whenever $(a_1,a_2,(y_1,g_{y_1}),(y_2,g_{y_2}))\in \mathcal{Q}_z$ holds for some $(a_1,a_2)$, it holds for $\gg \sigma^{O(1)}A^2$ such tuples. 
By Lemma~\ref{le:equivalence}(ii), we now obtain
$$ g_{y_1}(a_2\cdot)\sim_{\frac{1}{a_1a_2}(z,z+A^2H], \sigma^{O(1)}}g_x,$$
and similarly
$$ g_{y_2}(a_1\cdot)\sim_{\frac{1}{a_1a_2}(z,z+A^2H], \sigma^{O(1)}}g_x,$$
and hence by Lemma~\ref{le:equivalence}(i) for $(a_1,a_2,(y_1,g_{y_1}),(y_2,g_{y_2}))\in \mathcal{Q}_z$ we have
$$ g_{y_1}(a_2\cdot)\sim_{\frac{1}{a_1a_2}(z,z+A^2H], \sigma^{O(1)}}g_{y_2}(a_1\cdot).$$

Let $z\in \mathcal{J}_2^{*}$, and let $(y,g_y)\in \mathcal{J}_1$ be the last coordinate of some element of $\mathcal{Q}_z$. Since $H/A\geq \sigma^{-K}$ and $K$ is a large enough constant, from the nilsequence contagion result (Theorem~\ref{nil-contagion}) and Lemma~\ref{le:equivalence}(i) we conclude that there exists $g_z\in \Poly(\mathbb{Z}\to G)$ and $\gg \sigma^{O(1)}A$ values of $a\in [A,CA]\cap \mathbb{N}$ such that 
\begin{align*}
g_z(a\cdot)\sim_{\frac{1}{a}(z,z+A^2H], \sigma^{O(1)}}g_{y}. 
\end{align*}
We select such a $g_z$ for each $z\in \mathcal{J}_2^{*}$, and set $\mathcal{J}_2=\{(z,g_z)\colon z\in \mathcal{J}_2^{*}\}$.  The desired claim follows.
\end{proof}

We are going to apply Proposition~\ref{prop:iterate} iteratively. The next proposition tells us how we can obtain information on $x\mapsto g_x$ in the original scale after passing to a suitably large scale.

\begin{proposition}[Conclusion of repeated scaling]\label{prop:conclusion}  Let $X\geq H\geq 2$ and $M\geq X$. Let $0 < \sigma <\frac{1}{2\log M}$ and $2\leq C\leq \sigma^{-1}$. Let $G/\Gamma$ be a filtered nilmanifold of degree $d$,  dimension $D$ and complexity $\leq \sigma^{-1}$. Suppose that there exists a $(\sigma,H)$-configuration $\mathcal{J}_0=\{(x,g_x)\colon x\in E'\}$ in $[X,2X]$ and a $(\sigma,MH)$-configuration $\mathcal{J}_1$ in $[MX,CMX]$ such that
\begin{align*}
\mathcal{J}_0\longrightarrow_{\sigma,C} \mathcal{J}_1.
\end{align*}
Then, if $H\geq \sigma^{-K}$ for a large enough constant $K$ (in terms of $d,D$), the following hold. 
\begin{itemize}
    \item[(i)] If $G/\Gamma=\mathbb{R}/\mathbb{Z}$ and $g_x=P_x$ for some polynomials $P_x\colon \mathbb{Z}\to \mathbb{R}/\mathbb{Z}$ of degree at most $d$, then there exists a real number $|T| \ll \sigma^{-O_{d,D,C}(1)} (X/H)^{d+1}$ and a positive integer $1 \leq q' \ll \sigma^{-O_{d,D,C}(1)}$ such that
    \begin{align*}
       \| e(P_x(n))n^{-iT}\|_{\TV( (x, x+H] \cap \Z; q')} \ll \sigma^{-O_{d,D,C}(1)}.  
    \end{align*} 
		for all $x$ in a subset of $E'$ of cardinality $\gg \sigma^{O_{d,D,C}(1)} X/H$.
    \item[(ii)] If $G$ is non-abelian with one-dimensional center, then there exists a non-trivial horizontal character $\eta \colon G \to \R$ of Lipschitz norm $O(\sigma^{-O_{d,D,C}(1)})$ such that
\begin{equation*}
 \| \eta \circ g_x \|_{C^\infty(x, x+H]} \ll \sigma^{-O_{d,D,C}(1)}
\end{equation*}
for all $x$ in a subset of $E'$ of cardinality $\gg \sigma^{O_{d,D,C}(1)} X/H$.
\end{itemize}
\end{proposition}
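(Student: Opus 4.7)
The strategy will be to convert the scaling relation $\mathcal{J}_0\longrightarrow_{\sigma,C}\mathcal{J}_1$ into many dilation relations on the \emph{upper-scale} polynomial sequences $g_y$ ($y\in\mathcal{J}_1$), apply Proposition~\ref{prop:Furstenberg-Weiss} (in case (ii)) or Proposition~\ref{prop:logarithm} (in case (i)) to each such $g_y$ with parameters $(X,H)\leftarrow(y,MH)$ and dilation scale $A\sim M$, and then propagate the resulting rigidity back down to $g_x$ via the scaling factorization. Working at the upper scale is forced upon us: because $M\geq X$, our dilations $a\in(M,CM]$ would fail the constraint $A\leq X$ in Proposition~\ref{prop:Furstenberg-Weiss} if applied to $g_x$, yet they satisfy $A\leq y\asymp MX$ when applied to $g_y$. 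Moreover, the hypothesis $H\geq\sigma^{-K}$ is precisely what rules out the trivial alternative ``$MH\ll\sigma^{-O(1)}\max(M,y/M)=\sigma^{-O(1)}M$'' in both propositions.

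To manufacture the dilation relations on $g_y$, I will fix a typical $y\in\mathcal{J}_1$. By definition $y$ is associated with a set $A_y\subset(M,CM]\cap\mathbb{Z}$ of size $\gg\sigma M$, where each $a\in A_y$ comes with some $x(y,a)\in E'$ satisfying $|y-ax(y,a)|\leq CMH$ and a factorization $g_y(an)=\varepsilon_a(n)\,g_{x(y,a)}(n)\,\gamma_a(n)$ valid for all $n\in\mathbb{Z}$, with $\varepsilon_a$ being $(\sigma^{-1},\tfrac{1}{a}(y,y+MH])$-smooth and $\gamma_a$ being $\sigma^{-1}$-rational. Partitioning $(M,CM]$ into the intervals $I_{A'}=(A',(1+H/X)A']$, if $a_1,a_2\in A_y$ lie in the same $I_{A'}$ then $|y/a_1-y/a_2|=O(H)$, so the $H$-separation of $E'$ forces $x(y,a_1)=x(y,a_2)=x$; eliminating $g_x$ from the two scaling relations yields
\begin{equation*}
g_y(a_2\cdot)=(\varepsilon_{a_2}\varepsilon_{a_1}^{-1})\cdot g_y(a_1\cdot)\cdot(\gamma_{a_1}^{-1}\gamma_{a_2}),
\end{equation*}
and Lemmas~\ref{mult} and~\ref{le:equivalence}(ii) extend both flanking factors to the interval $J_{A'}$ appearing in Proposition~\ref{prop:Furstenberg-Weiss} with only a polynomial loss in the smoothness/rationality constants. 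A Cauchy--Schwarz/pigeonhole argument on the distribution of $A_y$ among the $I_{A'}$'s supplies, for a typical $y$, the $\gg\sigma^{O(1)}X/H$ intervals $I_{A'}$ each carrying $\gg\sigma^{O(1)}|I_{A'}|^2$ pairs demanded by the propositions. Applying them to $g_y$ (or, in case (i), to $qP_y$ for an integer $q\ll\sigma^{-O(1)}$ clearing the $\gamma$-denominators so that the hypothesis is not swamped by the rational contribution) yields, for each good $y$, either a non-trivial horizontal character $\eta_y$ with $\|\eta_y\circ g_y\|_{C^\infty(y,y+MH]}\ll\sigma^{-O(1)}$ in case (ii), or reals $T_y,q_y$ with $\|q_yP_y-T_y\log\|_{C^d(y,y+MH]}\ll\sigma^{-O(1)}$ and $|T_y|\ll\sigma^{-O(1)}(X/H)^{d+1}$ in case (i). A final pigeonholing over $y\in\mathcal{J}_1$, discretising $T_y$ at a $\sigma^{O(1)}$-scale to which the conclusion is insensitive, consolidates to a single $\eta$, resp.\ single $(q,T)$, valid for $\gg\sigma^{O(1)}|\mathcal{J}_1|$ values of $y$.

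The last step is the downwards propagation: for each good $y$ and each $a\in A_y$ corresponding to some $x$, apply the homomorphism $\eta$ (resp.\ subtract $T\log$) to both sides of $g_y(a\cdot)=\varepsilon_a g_x\gamma_a$. The term $\eta\circ g_y(a\cdot)$ is smooth on $(x,x+H]$ by the dilation identity~\eqref{eq:dil} applied to the smoothness of $\eta\circ g_y$ on $(y,y+MH]$ (after a polynomial inflation via Lemma~\ref{le:equivalence}(ii) to cover the slight mismatch between $\tfrac{1}{a}(y,y+MH]$ and $(x,x+H]$); the term $\eta\circ\varepsilon_a$ is smooth because $\varepsilon_a$ is; and the term $\eta\circ\gamma_a$ becomes integer-valued modulo $1$ after multiplying $\eta$ by a small integer clearing the rationality denominators. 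The analogous computation in case (i), using $\log(an)=\log a+\log n$ to absorb the constant $T\log a$ into the total variation, yields via Lemma~\ref{basic-prop}(ii) the claimed bound on $\|e(P_x(n))n^{-iT}\|_{\TV((x,x+H]\cap\mathbb{Z};q')}$. Counting distinct $x$'s (each $x\in E'$ is hit by at most $O(C^2)$ pairs $(y,a)$, since $\mathcal{J}_1$ is $MH$-separated) furnishes the required $\gg\sigma^{O(1)}X/H$ values of $x\in E'$. The principal technical obstacle I anticipate is the bookkeeping in the first step: the smoothness intervals $\tfrac{1}{a_j}(y,y+MH]$ of the two factors $\varepsilon_{a_j}$ can be shifted relative to each other and to $J_{A'}$ by as much as $O(CH)$, which may exceed their lengths, so one must extend each factor to a polynomially-inflated interval covering $J_{A'}$ and carefully track the resulting $\sigma^{-O(1)}$ losses through the multiplicative composites; this is the analogue at the nilpotent level of Walsh's contagion bookkeeping, but now complicated by the non-abelian structure of $G$ in case (ii).
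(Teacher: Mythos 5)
Your proposal follows essentially the same route as the paper's proof: fix an upper-scale element of $\mathcal{J}_1$, eliminate the lower-scale sequences $g_x$ from pairs of scaling relations to produce many approximate dilation invariances $g_y(a_1\cdot)\sim g_y(a_2\cdot)$ on the intervals $J_{A'}$, feed these into Proposition~\ref{prop:Furstenberg-Weiss} (resp.\ Proposition~\ref{prop:logarithm}) with $(y,M,MH)$ in place of $(X,A,H)$ so that the degenerate alternative reads $H\ll\sigma^{-O(1)}$ and is killed by $H\geq\sigma^{-K}$, and then propagate the resulting rigidity back down to the $g_x$. The paper organizes the double count by first fixing $x$ and noting that $\mathcal{M}_x$ is localized in an interval of length $O(CMH/X)$ around $z/x$, whereas you organize by the intervals $I_{A'}$ first; these are dual bookkeepings of the same sum and both work.

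Two slips are worth correcting. First, your claim that $a_1,a_2$ lying in the same $I_{A'}$ \emph{forces} $x(y,a_1)=x(y,a_2)$ is false as stated: one only gets $|x(y,a_1)-x(y,a_2)|=O(C^2H)$, so the $H$-separation of $E'$ leaves up to $O(C^2)$ candidate values of $x$; a pigeonhole within each $I_{A'}$ (costing a factor $O(C^2)\leq\sigma^{-2}$, which is absorbed into $\sigma^{-O_C(1)}$) repairs this. Second, the final ``pigeonholing over $y\in\mathcal{J}_1$'' to consolidate to a single $(q,T)$ in case (i) does not work as described: the conclusion is only insensitive to perturbations of $T$ of size $O(X/H)$, so the number of discretization cells is $\asymp\sigma^{-O(1)}(X/H)^{d}$, which vastly exceeds $|\mathcal{J}_1|\asymp\sigma X/H$ once $d\geq 2$. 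This consolidation is, however, unnecessary: since the hypothesis $\mathcal{J}_0\longrightarrow_{\sigma,C}\mathcal{J}_1$ holds for \emph{every} element of $\mathcal{J}_1$, one arbitrary $y$ already yields $\gg\sigma^{O(1)}X/(CH)$ distinct values of $x$ under the downward propagation (each $x$ is hit by $O(CMH/X)$ values of $a$ out of the $\gg\sigma M$ available), which is exactly what the paper does. With these two adjustments your argument coincides with the paper's.
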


\begin{proof} We allow implied constants to depend on $d,D,C$. We arbitrarily fix an element $(z,g_z)$ of $\mathcal{J}_1$.  By assumption, we have
\begin{align*}
\sum_{(x,g_x)\in \mathcal{J}_0} \sum_{M<m\leq CM} \mathrm{I}_{m,C,MH,\sigma}(z,x) \geq \sigma M.    
\end{align*}
Applying the pigeonhole principle and the fact that $|\mathcal{J}_0|\leq \frac{X}{H}$, we see that there is a $(\sigma/(2C),H)$-configuration $\mathcal{J}_{0}'\subset \mathcal{J}_0$ such that for every $(x,g_x)\in \mathcal{J}_{0}'$ we have
\begin{align*}
\sum_{M<m\leq CM}\mathrm{I}_{m,C,MH,\sigma}(z,x) \gg \sigma \frac{MH}{X}.    
\end{align*}
Let $\mathcal{M}_{x}$ be the set of integers $M < m \leq CM$ for which the summand is positive. Then $|\mathcal{M}_{x}|\gg \sigma MH/X$. Note that 
\begin{align}\label{eq:Mx}
\mathcal{M}_x\subset \left[\frac{z}{x}-\frac{CMH}{X},\frac{z}{x}+\frac{CMH}{X}\right]   
\end{align}
and that for all $m\in \mathcal{M}_{x}$ we have
\begin{align}\label{eq:gzmi2}
 g_z(m\cdot)\sim_{\frac{1}{m}(z,z+MH],\sigma}g_x.
\end{align}
Using Lemma~\ref{le:equivalence}(i)--(ii) and the fact that $|\frac{z}{m_i}-\frac{z}{(m_1+m_2)/2}|\ll CH$ for $i=1,2$ and $m_i\in \mathcal{M}_x$, for any $x$ that is the first coordinate of some element of $\mathcal{J}_0'$ and any $(m_1,m_2)\in \mathcal{M}_x^2$ we have 
\begin{align*}
 g_z(m_1\cdot)\sim_{\frac{2}{m_1+m_2}(z,z+MH],\sigma^{O(1)}}g_z(m_2\cdot).  
\end{align*}

For any $M'\geq 1$, set $J_{M'}\coloneqq ((1-\frac{10MH}{z})\frac{z}{M'}, (1+\frac{10MH}{z})\frac{z}{M'}]$. In view of the pigeonhole principle, Lemma~\ref{le:equivalence}(ii) and~\eqref{eq:Mx} and~\eqref{eq:gzmi2}, we can find a $10CMH/X$-separated set $\mathcal{M}\subset [M,CM]$ (which is the union of $\mathcal{M}_x$ for some $x\in \mathcal{J}_0'$) of cardinality $\gg \sigma^{O(1)}X/H$ such that, denoting $I_{M'}=(M',(1+\frac{H}{X})M')$, for each $M'\in \mathcal{M}$ there exist $\gg \sigma^{O(1)}|I_{M'}|^2$ pairs $(m_1,m_2)\in I_{M'}^2$ for which
\begin{align}\label{eq:gzmi2a}
 g_z(m_1\cdot)\sim_{J_{M'},\sigma^{O(1)}}g_z(m_2\cdot),  
\end{align}
and additionally 
\begin{align}\label{eq:gzmi2b}
 g_z(m_1\cdot)\sim_{\frac{1}{m_1}(z,z+MH],\sigma}g_x
\end{align}
for some $x$ that is the first coordinate of an element of $\mathcal{J}_0'$ with $|z/x-m_1|\leq 2CMH/X$.
Unwrapping the definition of the relation $\sim_{I,\eta}$, from~\eqref{eq:gzmi2a} we conclude that for each $M'\in \mathcal{M}$ there exist $\gg \sigma^{O(1)}|I_{M'}|^2$ pairs $(m_1,m_2)\in I_{M'}^2$ for which there exists a factorization
\begin{align}\label{eq:key}
g_z(m_1\cdot)=\varepsilon_{m_1,m_2}g_z(m_2\cdot )\gamma_{m_1,m_2},
\end{align}
where $\varepsilon_{m_1,m_2}$ is $(\sigma^{-O(1)},J_{M'})$-smooth and $\gamma_{m_1,m_2}$ is $\sigma^{-O(1)}$-rational. The argument now splits into two cases. 

\textbf{Proof of part (ii).} Applying Proposition~\ref{prop:Furstenberg-Weiss} (with $(z,M,MH)$ in place of $(X,A,H)$) and using the fact that $G$ is non-abelian, we conclude that either
\begin{align}\label{eq:MH}
 MH\ll \sigma^{-O(1)}\max\left(M,X\right)   
\end{align}
or there exists a non-trivial horizontal character $\eta \colon G \to \R$ of Lipschitz norm at most $O(\sigma^{-O(1)})$ such that 
\begin{align}\label{eq:geta} \| \eta \circ g_z \|_{C^\infty(z,z+MH]} \ll \sigma^{-O(1)}.
\end{align}
The estimate~\eqref{eq:MH} cannot hold if $K$ is chosen large enough in the statement of the proposition. Hence we have~\eqref{eq:geta}. Recalling that~\eqref{eq:gzmi2b} holds for any $m_1 \in \mathcal{M}$, by Lemma~\ref{le:equivalence}(ii) we see that for any $m\in \mathcal{M}$ there is some $x$ that is the first coordinate of an element of $\mathcal{J}_0'$ with $|z/x-m|\leq 2CMH/X$ such that
\begin{align}\label{eq:gzmi2c}
 \| \eta \circ g_x \|_{C^\infty(x,x+H]} \ll \sigma^{-O(1)}.   
\end{align}
Since $\mathcal{M}$ has cardinality $\gg \sigma^{O(1)}X/H$ and is $10CMH/X$-separated, each $m\in \mathcal{M}$ corresponds to a different $x$, so there must exist $\gg \sigma^{O(1)}X/H$ elements $x\in E'$ for which~\eqref{eq:gzmi2c} holds. Now part (ii) follows. 

\textbf{Proof of part (i).} Now we have $G/\Gamma=\mathbb{R}/\mathbb{Z}$.  Then by~\eqref{eq:key} for any $M'\in \mathcal{M}$ there exist $\gg\sigma^{O(1)}|I_{M'}|^2$ pairs $(m_1,m_2)\in I_{M'}^2$ such that
\begin{align*}
  \|q_{m_1,m_2,z}(g_z(m_2\cdot) - g_z(m_1\cdot))\mod 1\|_{C^{\infty}(J_{M'})}\ll \sigma^{-O(1)}.
  \end{align*}
for some integer $1\leq q_{m_1,m_2,z}\leq \sigma^{-O(1)}$. Pigeonholing in the possible choices of $q_{m_1,m_2,z}$, we can assume that this quantity is independent of $m_1,m_2,z$. Now Proposition~\ref{prop:logarithm} (with $(z,M,MH)$ in place of $(X,A,H)$) implies that there exists some integer $1\leq q\leq \sigma^{-O(1)}$ and some real number $T\ll (X/H)^{d+1}\sigma^{-O(1)}$ such that 
\begin{align}\label{eq:gz}
\|q\cdot g_z-T\log(\cdot)\|_{C^d(z,z+MH]}\ll \sigma^{-O(1)}.
\end{align}
Let $m\in \mathcal{M}$.  Dilating~\eqref{eq:gz} by $m$ (using~\eqref{eq:dil} and the fact that $T\log(\cdot)$ is close to a polynomial of degree $d$ by Taylor expansion), we obtain
$$ \|q\cdot g_z(m \cdot)-T\log(\cdot)\|_{C^d(\frac{z}{m},\frac{z}{m}+\frac{MH}{m}]}\ll \sigma^{-O(1)}.$$
On the other hand, from~\eqref{eq:gzmi2b} there is some $x$ that is the first coordinate of some element of $\mathcal{J}_0'$ with $|z/x-m|\leq 2CMH/X$ for which we have 
$$ \| q'_{z,m,x} \cdot g_z(m \cdot) - q'_{z,m,x} \cdot g_x \|_{C^d(\frac{z}{m},\frac{z}{m}+\frac{MH}{m}]}
\ll \sigma^{-O(1)}$$
for some integers $1 \leq q'_{z,m,x} \ll \sigma^{-O(1)}$.  Hence by the triangle inequality and~\eqref{eq:smoothness} (and the fact that $T\log()$ is close to a polynomial of degree $d$) we have
$$ \| qq'_{z,m,x} \cdot g_x  -q'_{z,m,x} T \log(\cdot) \|_{C^d(x,x+H]} \ll \sigma^{-O(1)}.$$
By the pigeonhole principle, we may thus find a natural number $1 \leq q'' \ll \sigma^{-O(1)}$ such that
\begin{equation}\label{eq:Qx1} \| qq'' \cdot g_x  -q'' T \log(\cdot) \|_{C^d(x,x+H]} \ll \sigma^{-O(1)}.
\end{equation}
Since $\mathcal{M}$ is $10CMH/X$-separated, each $m\in \mathcal{M}$ with $|z/x-m|\leq 2CMH/X$ corresponds to a different choice of $x$, so the above holds for all $x$ in a subset of $E'$ of cardinality $\gg \sigma^{O(1)}X/H$. 

 By Taylor expansion, we can find a polynomial $Q_x$ of degree $d' = O(1)$ such that
\begin{equation}\label{eq:Qx2} \| Q_x - \frac{T}{q} \log(\cdot) \|_{C^d(x,x+H]} \ll 1
\end{equation}
and thus by~\eqref{eq:Qx1},~\eqref{eq:Qx2} and the triangle inequality
$$ \| qq'' (g_x - Q_x) \|_{C^d(x,x+H]} \ll \sigma^{-O(1)}.$$
This implies that
$$ qq''(g_x(n)-Q_x(n)) = \sum_{j=0}^{\max(d,d')} \alpha_j \binom{n-\lfloor x\rfloor}{j}\mod 1$$
for some coefficients $\alpha_j$ of size $O( \sigma^{-O(1)} / H^j )$.  One can then calculate
$$ \| e(g_x - Q_x) \|_{\TV((x,x+H]; \max(d,d')! qq'')} \ll \sigma^{-O(1)}$$
and thus by~\eqref{eq:Qx2} and the triangle inequality
$$ \| e(g_x - \frac{T}{q} \log(\cdot)) \|_{\TV((x,x+H]; \max(d,d')! qq'')} \ll \sigma^{-O(1)}.$$
This completes the proof of part (i) after some relabeling.
\end{proof}

We are now ready to conclude the proof of our type $II$ estimates by combining the results of this section.

\begin{proof}[Proof of Theorem~\ref{inverse}(ii)--(iii)] 
  Let the notation and assumptions be as in Theorem~\ref{inverse}(ii)--(iii). Let $B$ be a large enough constant (in terms of $d,D,\varepsilon$), and let us say that $x$ is \emph{bad} if there is no non-trivial horizontal character $\eta$ of Lipschitz norm $\leq \delta^{-B}$ such that $\|\eta\circ g_x\mod 1\|_{C^{\infty}(x,x+H]}\leq \delta^{-B}$. We may assume that all $x\in E$ are bad (since if the set $E'$ of non-bad $x$ has measure $\geq \delta X/2$, say, we are done, and otherwise we may consider the set $E\setminus E'$, which has measure $\geq \delta X/2$). 

 We apply  Proposition~\ref{prop:scaledown}. If conclusion (i) or (ii) of that proposition holds, we are done. Suppose then that conclusion (iii) holds. Then there exists $A\in [A_{II}^{-},A_{II}^{+}]$, a $(\delta^{O_{d,D,\eps}(1)},H)$-configuration $\mathcal{J}_0$ in $[X,2X]$ and a $(\delta^{O_{d,D,\eps}(1)},H/A)$-configuration $\mathcal{J}_{-1}$ in $[X/(2A),2X/A]$, such that $\mathcal{J}_0=\{(x,g_x)\colon x\in E'\}$ for some set $E'\subset E$ and 
 \begin{align*}
\mathcal{J}_{-1}\longrightarrow_{\delta^{O_{d,D,\eps}(1)},O_{d,D,\varepsilon}(1)} \mathcal{J}_0.
 \end{align*}
 
  Let $K\in \mathbb{N}$ be the least integer such that $A^{K}\geq X$. Note that as $A_{II}^{-}\geq X^{\varepsilon}$ we have $K\ll_{\varepsilon} 1$. Applying Proposition~\ref{prop:iterate} $K$ times, and noting that for any given constant $C'$ we may assume that $H\geq \delta^{-C'}A$, we see that for all $2\leq k\leq K$ there is some $C_k=O_{d,D,\varepsilon}(1)$ and some $(\delta^{O_{d,D,\varepsilon}(1)},HA^k)$-configuration $\mathcal{J}_k$ in $[A^kX/2,C_kA^k X/2]$ such that we have
  \begin{align*}
\mathcal{J}_{k-1} \longrightarrow_{\delta^{O_{d,D,k}(1)},C_k} \mathcal{J}_{k}.     
  \end{align*}
  By applying Lemma~\ref{le:transitive} repeatedly, this implies that
  \begin{align*}
\mathcal{J}_0\longrightarrow_{\delta^{O_{d,D,\varepsilon}(1)},C_K'} \mathcal{J}_K     
  \end{align*} 
  for some $C_K'=O_{d,D,\varepsilon}(1)$.
  But now by Proposition~\ref{prop:conclusion} (and the assumption that $x$ is bad for all $x\in E$), we obtain one of the conclusions of Theorem~\ref{inverse}(ii)--(iii).
\end{proof}

\section{Controlling the Gowers uniformity norms}\label{gowers-sec}

In order to deduce our Gowers uniformity result in almost all short intervals (Theorem~\ref{thm_gowers}) from Theorem~\ref{discorrelation-thm}, we wish to apply the inverse theorem for the Gowers norms to $\Lambda-\Lambda^{\sharp}$, $d_k-d_{k}^{\sharp}$, $\mu$. However, before we can apply the inverse theorem, we need to show that the functions $\Lambda-\Lambda^{\sharp}$, $d_k-d_{k}^{\sharp}$ possess pseudorandom majorants even when restricted to \emph{short} intervals\footnote{In some applications of Gowers norm theory, the recent quasipolynomial inverse theorem of Leng--Sah--Sawhney~\cite{LSS} has been able to dispose of the need for pseudorandom majorants, when combined with strong dimension-uniform bounds on the nilsequence side. As discussed in Remark~\ref{rmk:quantitative}, we do not know how to obtain good dimension dependencies in our main theorems, and therefore we need to construct pseudorandom majorants.}. In the case of long intervals, the existence of pseudorandom majorants for these functions follows from earlier works~\cite{green-tao},~\cite{matthiesen-linear}. In the case of short intervals $(X, X+H]$, the existence of these pseudorandom majorants is established in the prequel~\cite{MSTT-all} for $\Lambda$ when $H \geq X^{\eps}$, and for $d_k$ when $H \geq X^{1/5+\eps}$. The main purposes of this section are to show that the majorant for $d_k$ constructed in~\cite[Section 9]{MSTT-all} is pseudorandom in $(x, x+H]$  for almost all $x \in [X, 2X]$ in the larger regime $H \geq X^{\eps}$ and to deduce Theorem~\ref{thm_gowers} from this. For this deduction in the case of $d_k$, we will need a logarithm-free type $I$ estimate given as Proposition~\ref{prop:sharpinverse}.

We begin by recalling from~\cite{MSTT-all} the relevant definition of pseudorandomness.

\begin{definition}[Pseudorandomness over short intervals]
Let $x,H\geq 1$. Let $D\in \mathbb{N}$ and $0<\eta<1$. We say that a function $\nu\colon \mathbb{Z}\to \mathbb{R}_{\geq 0}$ is \emph{$(D,\eta)$-pseudorandom at location $x$ and scale $H$} if the function $\nu_x(n)\coloneqq \nu(x+n)$ satisfies the following. Let $d, t \leq D$ and $\psi_1,\ldots, \psi_t$ be affine-linear forms, where each $\psi_i\colon \mathbb{Z}^d\to \mathbb{Z}$ has the form $\psi_i(\mathbf{x})=\dot{\psi_i}\cdot \mathbf{x}+\psi_i(0)$, with $\dot{\psi_i}\in \mathbb{Z}^d$ and $\psi_i(0)\in \mathbb{Z}$ satisfying $|\dot{\psi_i}|\leq D$ and $|\psi_i(0)|\leq DH$, and with $\dot{\psi_i}$ and $\dot{\psi_j}$ linearly independent whenever $i\neq j$. Then, for any convex body $K\subset [-H,H]^d$, 
\begin{align*}
\left|\sum_{\mathbf{n}\in K}\nu_x(\psi_1(\mathbf{n}))\cdots \nu_x(\psi_t(\mathbf{n}))-\textnormal{vol}(K)\right|\leq \eta H^d.    \end{align*}
\end{definition}

\begin{remark}
\label{rem:Pseudorandom->GowersUniform}
Note that if a function $\nu \colon \mathbb{Z} \to \mathbb{R}_{\geq 0}$ is $(D, \eta)$-pseudorandom at location $x$ and scale $H$, then in particular
\[
\Vert \nu_x - 1 \Vert _{U^s(x, x+H]} \ll_D \eta^{1/2^s}.
\]
for every $s \leq D$. 
\end{remark}
We then state the inverse theorem for unbounded functions that we are going to use.

\begin{proposition}[An inverse theorem for pseudorandomly bounded functions]\label{prop_inverse}\cite[Proposition 9.4]{MSTT-all}
Let $s\in \mathbb{N}$ and $\eta \in (0,1)$. Let $I$ be an interval of length $\geq 2$. Let $f\colon I\to \mathbb{C}$ be a function, and suppose that the following hold.
\begin{itemize}
    \item There exists a function $\nu\colon I\to \mathbb{R}_{\geq 0}$ such that $\|\nu-1\|_{U^{2s}(I)}\leq \eta$ and $|f(n)|\leq \nu(n)$.   
    
    \item For any filtered $(s-1)$-step nilmanifold $G/\Gamma$ and any Lipschitz function $F\colon G/\Gamma\to \mathbb{C}$, we have
    \begin{align*}
    \sup_{g\in \Poly(\mathbb{Z}\to G)}\left|\frac{1}{|I|}\sum_{n\in I}f(n)\overline{F}(g(n)\Gamma)\right|\ll_{\|F\|_{\textnormal{Lip}},G/\Gamma} \eta.    
    \end{align*}
\end{itemize}
Then we have the Gowers uniformity estimate
\begin{align*}
 \|f\|_{U^s(I)}=o_{s;\eta\to 0}(1).   
\end{align*}
\end{proposition}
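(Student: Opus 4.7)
The plan is to argue by contradiction, showing that if $\|f\|_{U^s(I)} \geq \delta$ for some $\delta > 0$, then $\delta$ must tend to zero as $\eta \to 0$ (at a rate depending on $s$). Since a quantitative rate is not required, I can afford to appeal to ineffective inverse theorems.

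First I would exploit the pseudorandom majorization. The hypothesis $|f| \leq \nu$ with $\|\nu - 1\|_{U^{2s}(I)} \leq \eta$ says that $\nu$ is, up to an $\eta$-error in the $U^{2s}$ norm, the constant function $1$. By the generalized von Neumann theorem (and repeated Cauchy--Schwarz), this forces $\nu$ to satisfy approximate linear forms conditions at complexity $s$, so $\nu$ serves as a ``pseudorandom measure'' in the Green--Tao sense relevant for $U^s$-estimates. I would then invoke the relative (pseudorandomly bounded) inverse theorem for the Gowers $U^s$ norm, as developed in the work of Green--Tao--Ziegler and sharpened by Conlon--Fox--Zhao and Leng--Sah--Sawhney: under the stated pseudorandomness of $\nu$, if $\|f\|_{U^s(I)} \geq \delta$, then there exist a filtered $(s-1)$-step nilmanifold $G/\Gamma$ of degree and dimension $O_s(1)$ and complexity $\ll_{s,\delta} 1$, a mean-zero Lipschitz function $F \colon G/\Gamma \to \mathbb{C}$ of norm $\ll_{s,\delta} 1$, and a polynomial sequence $g \in \Poly(\Z \to G)$, such that
\begin{equation*}
\left|\frac{1}{|I|}\sum_{n \in I} f(n)\overline{F}(g(n)\Gamma)\right| \geq c(s,\delta) - C(s,\delta)\, \eta,
\end{equation*}
where $c(s,\delta) > 0$ is the (ineffective) inverse-theorem constant and $C(s,\delta)$ absorbs the error incurred in replacing $\nu$ by the constant $1$ in the linear-forms estimates.

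Next, the second hypothesis of the proposition applies directly to this particular $G/\Gamma$ and $F$: since both have complexity $\ll_{s,\delta} 1$, the correlation above is also bounded by $\ll_{s,\delta} \eta$. Combining the two bounds yields $c(s,\delta) \ll_{s,\delta} \eta$, which forces $\delta \to 0$ as $\eta \to 0$ for each fixed $s$, i.e.\ $\|f\|_{U^s(I)} = o_{s;\eta\to 0}(1)$, as desired.

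The main obstacle is the relative inverse theorem step. The transition from the bounded to the pseudorandomly bounded case is by now a standard but nontrivial transference argument, which can be carried out either via a dense-model / Koopman--von Neumann decomposition of $f$ as a bounded piece plus a $U^s$-negligible error, or by directly inspecting the Green--Tao--Ziegler inverse argument and checking that $\nu$-pseudorandomness suffices at each step where boundedness was previously used. Since this material has already been developed in the prequel~\cite{MSTT-all} (and earlier in the work on the primes), my plan would be to quote the relative inverse theorem from there rather than reprove it; the present proposition then follows by the two-line comparison above.
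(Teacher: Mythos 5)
The paper itself gives no proof of this proposition: it is quoted verbatim from the prequel (\cite{MSTT-all}, Proposition 9.4), and your overall architecture --- transfer from the pseudorandomly bounded setting to the inverse theorem for $1$-bounded functions, then play the resulting correlation with a bounded-complexity nilsequence off against the second hypothesis to force $\delta\to 0$ as $\eta\to 0$ --- is exactly the standard route underlying the cited result, and your endgame (finitely many candidate nilmanifolds of complexity $O_{s,\delta}(1)$, so the implied constants in hypothesis (2) can be taken uniform) is fine.

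The genuine gap is in how you propose to make $\nu$ usable in a relative inverse theorem. You assert that $\|\nu-1\|_{U^{2s}(I)}\leq\eta$ ``forces $\nu$ to satisfy approximate linear forms conditions at complexity $s$'' by the generalized von Neumann theorem and repeated Cauchy--Schwarz. This is circular: the relative generalized von Neumann inequality for unbounded functions is itself proved under a linear-forms (pseudorandomness) hypothesis on the majorant, so it cannot be used to establish that hypothesis. After writing $\nu=1+(\nu-1)$ and expanding a linear-forms average, the terms with two or more factors of $\nu-1$ cannot be bounded by a Gowers norm of $\nu-1$ without already knowing pseudorandomness of a majorant of $\nu-1$ (only the cube systems arising in Gowers norms themselves are controlled softly via Gowers--Cauchy--Schwarz). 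Indeed, in this paper the implication runs the other way: the explicit majorants are shown to satisfy the full linear-forms conditions by direct sieve computations (Lemma~\ref{le_pseudoinshort}), and the $U^{2s}$ bound is then a consequence (Remark~\ref{rem:Pseudorandom->GowersUniform}); the point of Proposition~\ref{prop_inverse} is precisely that the weaker $U^{2s}$ hypothesis alone suffices, which is achieved not by recovering linear-forms conditions but by a dense-model/densification-type transference in which correlations of $\nu-1$ against (products of) dual functions are bounded directly via Gowers--Cauchy--Schwarz --- this is where the exponent $2s$ (rather than $s$) enters. Your fallback of ``quoting the relative inverse theorem from the prequel'' is of course what the present paper does, but that quoted result is the proposition at hand, so as a standalone proof your proposal leaves exactly this transference step unestablished.
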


The following lemma from~\cite{MSTT-all} will be important for us.

\begin{lemma}[Pseudorandomness over long intervals implies pseudorandomness over short intervals ]\cite[Lemma 9.5]{MSTT-all}\label{le_pseudo}
Let $\varepsilon \in (0,1)$, $D,k\in \mathbb{N}$ be fixed. Let $C\geq 1$ be large enough in terms of $k$ and $D$.  Let $H\in [X^{\varepsilon},X/2]$ and $\eta\in ((\log X)^{-C},1/2)$, with $X\geq 3$  large enough in terms of $C$. Let $\nu$ be $(D,\eta)$-pseudorandom  at location $0$ and scale $H$. Also let $1\leq A,B\leq \log X$ be integers. 

Suppose that there is an exceptional set $\mathscr{S}\subset \mathbb{Z}$ and a sequence $\lambda_n$ such that 
\begin{align}\label{eqqn2}\begin{split}
\nu(n)&=\sum_{\substack{d\mid An+B\\d\leq X^{\varepsilon/(2D)}}}\lambda_d \quad \textnormal{for}\,\, n\not \in \mathscr{S},\\
|\lambda_n|&\leq (\log X)^{k}d(n)^k\quad \textnormal{for all}\,\, n,\\
|\nu(n)|&\leq (\log X)^kd(An+B)^k \quad \textnormal{for}\,\, n \in \mathscr{S}.
\end{split}
\end{align} 
Also suppose that $\mathscr{S}$ is small in the sense that
\begin{align*}
|\mathscr{S}\cap [y-2DH,y+2DH]|\ll H/(\log X)^{4C}\quad \textnormal{ for }\quad y\in \{0,X\}. 
\end{align*}
Then $\nu$ is $(D,2\eta)$-pseudorandom at location $X$ and scale $H$.
\end{lemma}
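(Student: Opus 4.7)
The plan is to use the type I structure \eqref{eqqn2} to transfer pseudorandomness from location $0$ to location $X$, absorbing the shift by $X$ into the ``constant term'' of the divisor sum. First, I would remove the contribution of those ``bad'' $\mathbf{n}\in K$ for which $X+\psi_i(\mathbf{n})\in\mathscr{S}$ for some $i$. Since each form $\mathbf{n}\mapsto X+\psi_i(\mathbf{n})$ has slope at most $D$ and $|\mathscr{S}\cap[X-2DH,X+2DH]|\ll H(\log X)^{-4C}$, the number of such bad $\mathbf{n}$ is $O(tH^{d-1}\cdot H(\log X)^{-4C})$. H\"older's inequality combined with the divisor bound $\nu(n)\leq(\log X)^k d(An+B)^k$ on $\mathscr{S}$ and standard divisor-function moments then controls their contribution to $\sum_{\mathbf{n}\in K}\prod_i\nu_X(\psi_i(\mathbf{n}))$ by $\ll H^d(\log X)^{-4C+O_{D,k,t}(1)}$, which is $\leq \eta H^d/10$ provided $C$ is large enough in terms of $D$ and $k$. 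A symmetric argument using $|\mathscr{S}\cap[-2DH,2DH]|\ll H(\log X)^{-4C}$ handles the analogous error near $y=0$.

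Next, for the good $\mathbf{n}$, I would apply \eqref{eqqn2} with $B'\coloneqq AX+B$ and $Y\coloneqq X^{\varepsilon/(2D)}$, so that $\nu_X(\psi_i(\mathbf{n}))=\sum_{d_i\leq Y,\ d_i\mid A\psi_i(\mathbf{n})+B'}\lambda_{d_i}$. Multiplying, summing over $\mathbf{n}\in K$, and re-adding the negligible bad contributions (both on the original side and in the divisor expansion, the latter controlled by the same divisor-moment argument as above) yields
\[
\sum_{\mathbf{n}\in K}\prod_{i=1}^t\nu_X(\psi_i(\mathbf{n}))=\sum_{\mathbf{d}\in[1,Y]^t}\prod_{i=1}^t\lambda_{d_i}\cdot N_X(\mathbf{d};K)+O(\eta H^d/5),
\]
where $N_X(\mathbf{d};K)\coloneqq |\{\mathbf{n}\in K:d_i\mid A\psi_i(\mathbf{n})+B'\ \forall i\}|$. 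The analogous expansion at location $0$, combined with the $(D,\eta)$-pseudorandomness hypothesis, yields $\sum_{\mathbf{d}}\prod_i\lambda_{d_i}\,N_0(\mathbf{d};K)=\operatorname{vol}(K)+O(6\eta H^d/5)$ with $N_0$ defined using $B$ in place of $B'$. The lemma then reduces to verifying
\[
\Biggl|\sum_{\mathbf{d}\in[1,Y]^t}\prod_{i=1}^t\lambda_{d_i}\bigl(N_X(\mathbf{d};K)-N_0(\mathbf{d};K)\bigr)\Biggr|\leq \eta H^d/2.
\]

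For this final step, I would exploit that for each fixed $\mathbf{d}$ both $N_X(\mathbf{d};K)$ and $N_0(\mathbf{d};K)$ count lattice points of $\mathbb{Z}^d$ in $K$ lying in cosets of the common sublattice $\Lambda_\mathbf{d}^\ast\coloneqq\{\mathbf{n}\in\mathbb{Z}^d:A\dot\psi_i\cdot\mathbf{n}\equiv 0\pmod{d_i}\,\forall i\}$, so standard convex-body lattice counting gives $N_X(\mathbf{d};K),\,N_0(\mathbf{d};K)=\operatorname{vol}(K)/[\mathbb{Z}^d:\Lambda_\mathbf{d}^\ast]+O_D(H^{d-1})$ and hence $|N_X(\mathbf{d};K)-N_0(\mathbf{d};K)|\ll_D H^{d-1}$ per tuple. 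Using the bound $\sum_{\mathbf{d}\in[1,Y]^t}\prod_i|\lambda_{d_i}|\leq(\sum_{d\leq Y}|\lambda_d|)^t\leq Y^t(\log X)^{O_{D,k}(1)}\leq X^{\varepsilon/2}(\log X)^{O(1)}$, valid because $|\lambda_d|\leq(\log X)^k d(d)^k$ and $\sum_{d\leq Y}d(d)^k\ll Y(\log X)^{O(1)}$, the cumulative error is $\ll H^{d-1}X^{\varepsilon/2}(\log X)^{O(1)}\leq \eta H^d/2$, using $H\geq X^\varepsilon$ and $\eta\geq(\log X)^{-C}$ with $C$ sufficiently large. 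The hard part will be precisely this final balancing: the divisor length $Y^t=X^{\varepsilon/2}$ is tightly matched to the per-tuple lattice-count discrepancy $H^{d-1}$, so that for $H\geq X^\varepsilon$ the cumulative error just barely fits within the $\eta H^d$ budget; this is exactly what dictates the choice $Y=X^{\varepsilon/(2D)}$ in the hypothesis \eqref{eqqn2}.
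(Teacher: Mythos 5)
Your reductions at the start are sound, and your numerology is the right one: removing the $\mathbf{n}$ with $X+\psi_i(\mathbf{n})\in\mathscr{S}$ (or $\psi_i(\mathbf{n})\in\mathscr{S}$) via the hypothesis on $|\mathscr{S}\cap[y-2DH,y+2DH]|$, expanding via \eqref{eqqn2}, and noting that $Y^t\le X^{\varepsilon t/(2D)}\le X^{\varepsilon/2}\le H^{1/2}$ so that a per-tuple error of size $O_D(H^{d-1})$ is affordable, is indeed exactly what the truncation $d\le X^{\varepsilon/(2D)}$ is designed for. (Note the present paper does not prove this lemma — it imports it from the prequel — so the comparison is with that proof, not one in this text.) The genuine gap is in your last step: the claim $N_X(\mathbf{d};K),N_0(\mathbf{d};K)=\operatorname{vol}(K)/[\mathbb{Z}^d:\Lambda^\ast_{\mathbf{d}}]+O_D(H^{d-1})$, hence $|N_X-N_0|\ll_D H^{d-1}$, is false. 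Both quantities count points of $K$ in a coset of the same lattice $\Lambda^\ast_{\mathbf{d}}$, but that coset can be \emph{empty} at one location and not at the other, because the simultaneous congruences $A\dot\psi_i\cdot\mathbf{n}\equiv -(A\psi_i(0)+c)\ (\mathrm{mod}\ d_i)$ need not be solvable, and solvability depends on $c$ modulo the $d_i$; here $c=B$ versus $c=AX+B$ have different residues. Concretely, take $A$ odd, $X$ odd, a form with $\dot\psi_1=(2,0,\dots,0)$ (allowed since $D\ge 2$), and the tuple with $d_1=2$, all other $d_i=1$: then $2\mid A\psi_1(\mathbf{n})+c$ holds for \emph{all} $\mathbf{n}$ or for \emph{no} $\mathbf{n}$ according to the parity of $A\psi_1(0)+c$, and this parity flips between the two locations. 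For such tuples $|N_X(\mathbf{d};K)-N_0(\mathbf{d};K)|=\operatorname{vol}(K)+O(H^{d-1})\asymp H^d$, while $\lambda_2$ has no reason to vanish, so termwise summation against $\prod_i|\lambda_{d_i}|$ cannot give anything like $\eta H^d$.

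This is not a removable technicality within your scheme. After the expansion, your argument uses the pseudorandomness hypothesis only once (to evaluate $\sum_{\mathbf{d}}\prod_i\lambda_{d_i}N_0(\mathbf{d};K)$), and then tries to bound $\sum_{\mathbf{d}}\prod_i\lambda_{d_i}\bigl(N_X(\mathbf{d};K)-N_0(\mathbf{d};K)\bigr)$ by absolute values without any further input. But this difference is \emph{not} small for arbitrary $\lambda$ obeying the divisor bound: the local density of $\{\mathbf{n}:d_i\mid A\psi_i(\mathbf{n})+c\ \forall i\}$ genuinely depends on $c$ modulo $\mathrm{lcm}(d_1,\dots,d_t)$, and the whole content of the lemma is that the $\lambda$-weighted average of these densities at $c=AX+B$ agrees with its value at $c=B$. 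Any correct argument must therefore feed the location-$0$ pseudorandomness back in at this stage — e.g.\ exploiting that $\mathrm{lcm}(d_i)\le X^{\varepsilon/2}\le H^{1/2}$ so the shift $X$ can be replaced, modulo the moduli of a given tuple, by an admissible shift of size $O(H)$ in the constant terms — and the difficulty that this replacement depends on the tuple $\mathbf{d}$ (so it cannot simply be pulled out of the $\mathbf{d}$-sum) is precisely what your proposal elides. As written, the final displayed inequality is unjustified and the proof is incomplete.
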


We are now ready to state and prove the existence of pseudorandom majorants over short intervals for $W$-tricked versions of our functions of interest. Let us recall that, for any $w\geq 2$,
\begin{align*}
\Lambda_w(n)\coloneqq \frac{W}{\varphi(W)}1_{(n,W)=1},    
\end{align*}
where $W=\prod_{p\leq w}p$.

\begin{lemma}[Pseudorandom majorants over short intervals for $\Lambda-\Lambda_w$, $d_k-d_{k}^{\sharp}$]\label{le_pseudoinshort} Let $\varepsilon>0$ and $D,k\in \mathbb{N}$ be fixed. Let  $X\geq H\geq X^{\varepsilon}\geq 2$. Let $2\leq w\leq w(X)$, where $w(X)$ is a slowly enough growing function of $X$, and denote $W=\prod_{p\leq w}p$. Also let $w\leq \widetilde{w}\leq \exp((\log X)^{1/10})$ and $\widetilde{W}=\prod_{p\leq \widetilde{w}}p$.

\begin{enumerate}
    \item[(i)] There exists a constant $C_0\geq 1$ such that each of the functions 
\begin{align*}
\frac{\varphi(W)}{W}\Lambda(Wn+b)/C_0,\quad \frac{\varphi(W)}{W}\Lambda_{\widetilde{w}}(Wn+b)  \end{align*}
for $1\leq b\leq W$ with $(b,W)=1$, is majorized on $(X, X+H]$ by a $(D,\eta)$-pseudorandom function at location $X$ and scale $H$ for some $\eta=o_{w\to \infty}(1)$. In fact, the latter of the two functions is $(D,\eta)$-pseudorandom at location $X$ and scale $H$.

\item[(ii)] Let $W'$ be such that $W\mid W'\mid W^{\lfloor w\rfloor}$.  There exists a constant $C_k\geq 1$ such that, for all but at most $XH^{-1/2}(\log X)^{O(1)}$ values of $x \in [X, 2X]$, the functions
\begin{align*}
&(\log X)\frac{\varphi(W)}{W}\prod_{w\leq p\leq X}\left(1+\frac{k}{p}\right)^{-1}d_k(W'n+b)/C_k,\\
&(\log X)\frac{\varphi(W)}{W}\prod_{w\leq p\leq X}\left(1+\frac{k}{p}\right)^{-1}d_k^{\sharp}(W'n+b)/C_k
\end{align*}
\end{enumerate}
for $1\leq b\leq W'$ with $(b,W')=1$, are majorized on $(x, x+H]$ by a $(D,\eta)$-pseudorandom function at location $x$ and scale $H$ for some $\eta=o_{w\to \infty}(1)$.  
\end{lemma}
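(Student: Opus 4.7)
The plan is to adapt the pseudorandom majorants built for long intervals in \cite{green-tao,matthiesen-linear} and in \cite[Section~9]{MSTT-all}, and then transfer pseudorandomness from location $0$ and scale $X$ down to location $X$ (or $x \asymp X$) and scale $H \geq X^\varepsilon$ by invoking Lemma~\ref{le_pseudo}. The essential structural input is that all these standard majorants are already representable as short truncated divisor sums of the form $\sum_{d \mid An+B,\, d\leq X^{\varepsilon/(2D)}} \lambda_d$ outside a small exceptional set $\mathscr{S}$, which is precisely the hypothesis \eqref{eqqn2} of Lemma~\ref{le_pseudo}.

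For part (i), the Green--Tao enveloping sieve gives a majorant of the form $\nu_\Lambda(n) = \frac{\varphi(W)}{W}\log R \bigl(\sum_{d\mid Wn+b,\, d\leq R}\mu(d)\chi(\tfrac{\log d}{\log R})\bigr)^{2}$ with $R = X^{\varepsilon/(4D)}$ and $\chi$ a smooth cutoff; its $(D,\eta)$-pseudorandomness at location $0$ and scale $X$ with $\eta = o_{w\to\infty}(1)$ is proven in~\cite{green-tao}, and it is manifestly a truncated divisor sum with empty exceptional set. Applying Lemma~\ref{le_pseudo} with $A = W$, $B = b$ then yields pseudorandomness at location $X$ and scale $H$. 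For the second object, $\frac{\varphi(W)}{W}\Lambda_{\tilde w}(Wn+b) = \frac{\varphi(W)}{\varphi(\tilde W)}1_{(Wn+b,\,\tilde W/W)=1}$; by M\"obius inversion this is itself exactly the truncated divisor sum $\frac{\varphi(W)}{\varphi(\tilde W)}\sum_{d\mid (Wn+b,\,P(\tilde w)/P(w)),\, d\leq X^{\varepsilon/(4D)}}\mu(d)$ up to terms supported on $n$ with a prime factor in $(X^{\varepsilon/(4D)}, \tilde w]$, which contribute an exceptional set of size $\ll X\exp(-(\log X)^{1/20})$ by Mertens; a direct Brun-sieve computation establishes the $(D,\eta)$-pseudorandomness at location $0$, and Lemma~\ref{le_pseudo} again transfers it to location $X$, giving the self-majorizing pseudorandomness claimed in the lemma.

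For part (ii), the majorant $\nu_{d_k}$ for $\frac{\varphi(W)}{W}(\log X)\prod_{w\leq p\leq X}(1+k/p)^{-1}d_k(W'n+b)$ is constructed in~\cite[Section~9]{MSTT-all} by decomposing $d_k = \sum_{\vec N} d_{k,\vec N}$ into $O(\log^{k-1} X)$ dyadic pieces and applying an enveloping sieve to each piece; its $(D,\eta)$-pseudorandomness at location $0$ and scale $X$ with $\eta = o_{w\to\infty}(1)$ follows from Matthiesen's long-interval analysis~\cite{matthiesen-linear}, and the same construction yields a truncated divisor sum representation with exceptional set $\mathscr{S}$ consisting of those $n$ for which $W'n+b$ has an atypical factorization (typically a prime-square divisor of size $\geq X^{\varepsilon/(10D)}$ or a non-dyadic factorization pattern). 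A Shiu-type bound gives $|\mathscr{S}\cap [1,3X]|\ll X(\log X)^{O(1)}/H^{1/2}$. Summing $|\mathscr{S}\cap[x-2DH,x+2DH]|$ over $x\in[X,2X]$ and applying Markov, we see that the set of $x$ where $|\mathscr{S}\cap[x-2DH,x+2DH]|> H/(\log X)^{4C}$ has measure $\ll XH^{-1/2}(\log X)^{O(1)}$. For every other $x$, the hypotheses of Lemma~\ref{le_pseudo} are met at location $x$, giving the desired $(D,\eta)$-pseudorandomness at scale $H$. The corresponding statement for $d_k^\sharp$ is strictly easier, since by Lemma~\ref{le:dkdecompose}(ii) the function $d_k^\sharp$ is already a genuine type $I$ sum of divisor length $\leq X^{\varepsilon/5}$, and the same majorant that dominates $d_k$ also dominates $d_k^\sharp$ up to a multiplicative constant $C_k$.

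The main obstacle is the second step: ensuring the exceptional set $\mathscr{S}$ is small at almost all locations in the regime $H \geq X^\varepsilon$. In~\cite[Section~9]{MSTT-all} the requirement $H \geq X^{1/5+\varepsilon}$ was imposed precisely in order to control $\mathscr{S}$ pointwise at every $x$ via divisor-sum estimates in intervals of length $H$, which became ineffective below the $X^{1/5}$ threshold. The present almost-all statement bypasses this barrier by absorbing the rare $x$ where $\mathscr{S}$ clusters into the $O(XH^{-1/2}(\log X)^{O(1)})$ exceptional set, which is permissible for the applications to Gowers uniformity (Theorem~\ref{thm_gowers}) since the exceptional set in those results is polynomially smaller in $X$.
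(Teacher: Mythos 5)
Your overall strategy coincides with the paper's: part (i) is exactly the prequel's result (the paper simply cites \cite[Lemma 9.6(1)]{MSTT-all}), and for part (ii) one reuses the majorant constructed in \cite[Section 9]{MSTT-all}, checks that it has the truncated divisor-sum form \eqref{eqqn2} outside an exceptional set $\mathscr{S}$, bounds $|\mathscr{S}\cap[x-2DH,x+2DH]|$ for almost all $x\in[X,2X]$, and then invokes Lemma~\ref{le_pseudo}. You also correctly identify the genuinely new point relative to the prequel: replacing pointwise control of $\mathscr{S}$ near every $x$ (which forced $H\geq X^{1/5+\varepsilon}$ there) by an almost-all-$x$ bound with exceptional measure $\ll XH^{-1/2}(\log X)^{O(1)}$.

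However, the step ``a Shiu-type bound gives $|\mathscr{S}\cap[1,3X]|\ll X(\log X)^{O(1)}H^{-1/2}$, then sum over $x$ and apply Markov'' fails for the exceptional set that is actually required. For the $d_k$-majorant to admit the representation \eqref{eqqn2} with divisors truncated at $X^{\varepsilon/(2D)}$, the set $\mathscr{S}$ must contain not only integers with a large prime-square divisor, but also integers divisible by $p^{v}$ with $v\geq\max\{2,8C\log\log X/\log p\}$ for small $p$, and integers whose $X^{1/(\log\log X)^3}$-smooth part exceeds $X^{\gamma/\log\log X}$. Those two subsets have density of order $(\log X)^{-O(C)}$, which is far larger than $H^{-1/2}$ when $H=X^{\varepsilon}$; hence your asserted global bound on $\mathscr{S}$ is false, and a single global-bound-plus-Markov argument with threshold $H(\log X)^{-4C}$ yields nothing. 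The repair, which is what the paper does, is to split $\mathscr{S}$: the smooth-part condition and all prime-power conditions with $p\leq H^{1/2}$ are bounded by $\ll H(\log X)^{-4C}$ in \emph{every} window of length $4DH$ (Rankin's trick combined with Shiu's bound for the smooth part; direct sums such as $\sum_{p}Hp^{-2}$ and $Hp^{-8C\log\log X/\log p}$ for the prime powers), and only the contribution of squares of primes $p>H^{1/2}$, for which no deterministic per-window bound exists, is treated by averaging over $x$ and Markov, which is exactly where the $\ll XH^{-1/2}(\log X)^{O(1)}$ exceptional set arises. With this splitting your argument goes through. A smaller point: in part (i), Lemma~\ref{le_pseudo} requires pseudorandomness of the majorant at location $0$ and scale $H$, not scale $X$; this is what \cite[Lemma 9.6(1)]{MSTT-all} supplies (and that lemma already gives part (i) in full, so no new argument is needed there).
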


\begin{proof}
Part (i) is precisely~\cite[Lemma 9.6(1)]{MSTT-all}. For part (ii), we follow the construction in the proof of~\cite[Lemma 9.6(2)]{MSTT-all}.    From the definition of $d_k^{\sharp}$, it follows that $d_k^{\sharp}(n)\ll_k d_k(n)$ for all $1\leq n\leq 2X$ (see~\cite[(3.14)]{MSTT-all}), so by Lemma~\ref{le_pseudo} it suffices to show that, for some $C_k'\geq 1$ and all $x \in [X, 2X]$ outside an admissible exceptional set, the function 
$$h(n)\coloneqq (\log X)\frac{\varphi(W)}{W}\prod_{w\leq p\leq X}\left(1+\frac{k}{p}\right)^{-1}d_k(W'n+b)/C_k'$$
is majorized by a function that is $(D,o_{w\to \infty}(1))$-pseudorandom at location $0$ and scale $H$, and which is of the form~\eqref{eqqn2} outside an exceptional set $\mathscr{S}$ satisfying the size bounds
\begin{equation}\label{eqqn3b}
|\mathscr{S}\cap [x-2DH,x+2DH]|\ll H/(\log X)^{4C},\quad |\mathscr{S}\cap [-2DH,2DH]|\ll H/(\log X)^{4C}    
\end{equation}
for some large enough $C\geq 1$. 

We claim that this follows from~\cite[Lemma 9.6]{MSTT-all}. Indeed, by that lemma, the function $h$ is majorized by a $(D,o_{X\to \infty}(1))$-pseudorandom function at location $0$ and scale $H$. The fact that the majorant is of the form~\eqref{eqqn2} follows from the arguments in~\cite[proof of Lemma 9.6]{MSTT-all}, with the exceptional set $\mathscr{S}$ being defined as
\begin{align*}
\mathscr{S}\coloneqq \left\{n\leq 2DX\colon \exists \, p\colon v_p(n)\geq \max\left\{2,8C\frac{\log \log X}{\log p}\right\}\,\, \textnormal{or}\,\, \prod_{p\leq X^{1/(\log \log X)^3}}p^{v_p(n)}\geq X^{\gamma/\log \log X}\right\}    
\end{align*} 
for some constant $\gamma>0$. Also from~\cite[proof of Lemma 9.6]{MSTT-all},  we have the second estimate in~\eqref{eqqn3b}. The remaining task now is to prove the first estimate in~\eqref{eqqn3b}.

We now use Rankin's trick to bound $|\mathscr{S} \cap [x-2DH, x+2DH]|$ for $x \in [X, 2X]$. Denoting $v=X^{1/(\log \log X)^3}$, the number of $n\in [x-2DH, x+2DH]$ that satisfy $\prod_{p\leq X^{1/(\log \log X)^3}}p^{v_p(n)}\geq X^{\gamma/\log \log X}$ is (as in~\cite{MSTT-all})
\begin{align}\label{eq:g(p)}
&\ll \sum_{\substack{ab\in [x-2DH,x+2DH]\\p\mid a\implies p>v\\p\mid b\implies p\leq v\\b\geq X^{\gamma/\log \log X}}}1 \leq \sum_{\substack{ab\in [x-2DH,x+2DH]\\p\mid a\implies p>v\\p\mid b\implies p\leq v}}\left(\frac{b}{X^{\gamma/\log \log X}}\right)^{\frac{10C(\log \log X)^2}{\gamma\log X}}\nonumber\\
&\ll \frac{1}{(\log X)^{10C}}\sum_{n\in [x-2DH,x+2DH]}g(n),    
\end{align}
where $g$ is the completely multiplicative function satisfying
\begin{align*}
g(p)=\begin{cases}1,\quad &\textnormal{if}\quad p>v,\\ p^{\frac{10C(\log \log X)^2}{\gamma\log X}}\quad &\textnormal{if}\quad p\leq v.\end{cases}.   
\end{align*}
By Shiu's bound (\Cref{le:shiu}), we conclude that~\eqref{eq:g(p)} is $\ll H(\log X)^{-4C}$.

We are left with bounding the number of $n\in [x-2DH, x+2DH]$ satisfying $v_p(n)\geq \max\{2,8C\frac{\log \log X}{\log p}\}$ for some $p$. This number is 
\begin{align}\label{eq:p2}
&\ll \sum_{p< (\log X)^{4C}}Hp^{-8C(\log \log X)/(\log p)}+\sum_{(\log X)^{4C}\leq p\leq H^{1/2}}\frac{H}{p^2}\nonumber\\
&\quad +\sum_{H^{1/2}<p\leq (2X)^{1/2}}|\{n \in [x-2DH, x+2DH]\colon p^2\mid n\}|\nonumber \\
& \ll H(\log X)^{-4C}   + \sum_{H^{1/2}<p\leq (2X)^{1/2}}|\{n \in [x-2DH, x+2DH]\colon p^2\mid n\}|.
\end{align}

Define $\mathcal{E}$ to be the set of $x \in [X, 2X]$ such that the sum over $p$ in~\eqref{eq:p2} is $\geq H(\log X)^{-4C}$.
Observe that
\begin{align*}
|\mathcal{E}|H(\log X)^{-4C}&\leq \sum_{\substack{X \leq x \leq 2X\\x\in \mathbb{N}}} \sum_{H^{1/2}<p\leq (2X)^{1/2}}|\{n \in [x-2DH, x+2DH]\colon p^2\mid n\}|\\
&\ll H \sum_{H^{1/2}<p\leq (2X)^{1/2}}\sum_{\substack{X-2DH \leq n \leq 2X+2DH \\ p^2\mid n}} 1\\
&\ll XH\sum_{H^{1/2}<p\leq (2X)^{1/2}}\frac{1}{p^2} \ll XH^{1/2},
\end{align*}
so we have $|\mathcal{E}| \ll XH^{-1/2}(\log X)^{4C}$. We conclude that the desired property~\eqref{eqqn3b} holds for all $x\in [X,2X]$ outside the sufficiently small exceptional set $\mathcal{E}$. 
\end{proof}

\subsection{Gowers uniformity of the von Mangoldt and M\"obius functions}

We are now ready to prove Theorem~\ref{thm_gowers}(i)--(ii).

\begin{proof}[Proof of Theorem~\ref{thm_gowers}(i)--(ii)] 
(i) Let $1\leq a\leq W$ with $(a,W)=1$ (where, as in the theorem, $W=\prod_{p\leq w}p$ with $w$ tending to infinity slowly with $X$). Set $f=\frac{\varphi(W)}{W}(\Lambda-\Lambda^{\sharp})$. By Theorem~\ref{discorrelation-thm}(ii), there is an exceptional set $\mathcal{E}$ of measure $\ll_A X(\log X)^{-A}$ such that for all $x \in [X(\log X)^{-A/2}, 2X]\setminus \mathcal{E}$, $X^{1/3+\varepsilon/2}\leq H\leq X^{1-\varepsilon}$ and for any $F,G/\Gamma$ as in that theorem, we have
\begin{align*}
&\sup_{g \in \Poly(\Z \to G)} \left| \sum_{x < n \leq x+H} f(Wn+a) \overline{F}(g(n)\Gamma) \right|\\
=& \sup_{g \in \Poly(\Z \to G)} \left| \sum_{\substack{Wx+a < n \leq W(x+H)+a\\n\equiv a\pmod W}} f(n) \overline{F}\left(g\left(\frac{n-a}{W}\right)\Gamma\right) \right|\\
\ll&_A H/\log^{A}X, 
\end{align*} 
since there exists a polynomial sequence $\widetilde{g}\colon \mathbb{Z}\to G$ such that $\widetilde{g}(n)=g((n-a)/W)$ for all $n\equiv a\pmod W$. Using Lemma~\ref{le_pseudoinshort}(i), we can find functions  $\nu_j$ for $j\in \{1,2\}$ that are $(2s,o_{w\to \infty}(1))$-pseudorandom at location $x$ and scale $H$ for all $x\in [X(\log X)^{-A/2},2X]\setminus \mathcal{E}$, and such that $\nu_1(n)$ and $\nu_2(n)$ majorize 
$$\frac{\varphi(W)}{W}\Lambda(Wn+a)/C_0,\quad \textnormal{and}\quad  \nu_2(n)=\frac{\varphi(W)}{W}\Lambda^{\sharp}(Wn+a),$$
respectively, where $C_0$ is a suitably chosen constant. In particular, by the triangle inequality for the Gowers norms, we have 
$$
\left\|\frac{\nu_1+\nu_2}{2}-1\right\|_{U^{2s}(x,x+H]}\leq \frac{1}{2}(\|\nu_1-1\|_{U^{2s}(x,x+H]}+\|\nu_2-1\|_{U^{2s}(x,x+H]})=o_{w\to \infty}(1).
$$  
Since $|f(Wn+a)|\ll \nu_1(n)+\nu_2(n)$, the inverse theorem for the Gowers norms (Proposition~\ref{prop_inverse}) tells us that
\begin{align}\label{eq:LambdaW1}
\left\|\frac{\varphi(W)}{W}(\Lambda(W\cdot+a)-\Lambda^{\sharp}(W\cdot+a))\right\|_{U^{s}(x,x+H]}=\|f(W\cdot +a)\|_{U^{s}(x,x+H]}=o_{w\to \infty}(1).    
\end{align}
On the other hand, by Lemma~\ref{le_pseudoinshort}(i), Remark~\ref{rem:Pseudorandom->GowersUniform}, and the definition of $\Lambda^{\sharp}$ we also have for $x\in [X(\log X)^{-A/2},2X]\setminus \mathcal{E}$ that
\begin{align}\label{eq:LambdaW2}
\left\|\frac{\varphi(W)}{W}\Lambda^{\sharp}(W\cdot+a)-1\right\|_{U^{s}(x,x+H]}=o_{w\to \infty}(1).
\end{align}
By~\eqref{eq:LambdaW1},~\eqref{eq:LambdaW2} and the triangle inequality for the Gowers norms, we obtain
\begin{align*}
\left\|\frac{\varphi(W)}{W}\Lambda(W\cdot+a)-1\right\|_{U^{s}(x,x+H]}=o_{w\to \infty}(1).    
\end{align*}
By~\cite[Lemma 9.8]{MSTT-all}, this implies that also
\begin{align*}
 \left\|\Lambda-\Lambda_w\right\|_{U^{s}(x,x+H]}=o_{w\to \infty}(1)  \end{align*}
holds for all $x\in [X(\log X)^{-A/2},2X]\setminus \mathcal{E}$. 

(ii) This part follows from the same argument as part (i), taking $f=\mu$ and $W=1$ and applying Theorem~\ref{discorrelation-thm}(iv) (and now Lemma~\ref{le_pseudoinshort} is not needed). 
\end{proof}

\subsection{Gowers uniformity of the divisor functions}

For the proof of Theorem~\ref{thm_gowers}(iii), we shall need a version of the type $I$ inverse theorem that does not lose any logarithms. Note that Theorem~\ref{inverse}(i) does lose logarithms since we assume there that $\delta<1/\log X$, so that losing powers of $\delta$ corresponds to losing powers of logarithm). 

\begin{proposition}[A log-free type $I$ estimate]\label{prop:sharpinverse}
Let $d,D,k\geq 1$, $X\geq H\geq 2$, and $\delta \in (0, 1/2)$. Let $1\leq M\leq \delta^{K}H$ for some large enough $K=K(d,D)$, and let $G/\Gamma$ be a filtered nilmanifold of degree at most $d$, dimension at most $D$, and complexity at most $1/\delta$. Let $F\colon G/\Gamma\to \mathbb{C}$ be Lipschitz of norm at most $1/\delta$ and mean zero. Let $f\colon \mathbb{N}\to \mathbb{C}$ be an arithmetic function such that
\begin{align*}
\left|\sum_{X<n\leq X+H}f(n)F(g(n)\Gamma)\right|^{*}\geq \delta H(\log X)^{k-1}, \end{align*}
where $g \colon \Z \to G$ is a polynomial map. Suppose that $f=\alpha*\beta$, where $|\alpha(m)|\leq d_{k-1}(m)$, $\alpha$ is supported on $(M,2M]$, and $\|\beta\|_{\TV(\mathbb{N})}\leq \frac{1}{\delta}$. Then we have
\begin{align}\label{eq:fconclusion}
\left|\sum_{X<n\leq X+H}f(n)\right|^{*}\gg_{d,D,k} \delta^{O_{d,D,k}(1)} H(\log X)^{k-1}.    
\end{align}
\end{proposition}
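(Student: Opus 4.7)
My plan is to argue by induction on the dimension $D$ of $G/\Gamma$, using Theorem~\ref{qlt} together with Lemma~\ref{factor-simple} at each step to pass to a proper subnilmanifold until we arrive at a zero-dimensional (hence vacuous) situation. The extra $(\log X)^{k-1}$ factor in the hypothesis, compared to the standard type $I$ inverse theorem (Theorem~\ref{inverse}(i)), is absorbed by the $d_{k-1}$-average over the outer variable $m \sim M$, and the ``log-free'' character of the conclusion comes from exploiting that the hypothesis already forces $\delta \ll (\log X)^{-c}$.

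First I would expand $f=\alpha*\beta$ and use summation by parts (Lemma~\ref{basic-prop}(ii)) together with $\|\beta\|_{\TV}\leq \delta^{-1}$ to obtain
\[
\sum_{m\sim M}|\alpha(m)|\cdot\Bigl|\sum_{X/m<\ell\leq (X+H)/m} F(g(m\ell)\Gamma)\Bigr|^{*}\gg \delta^{2}H(\log X)^{k-1}.
\]
Trimming those $m$ for which the inner sum $T_{m}$ is smaller than $\delta^{C}H/m$ (whose contribution is bounded by $\delta^{C}H\sum d_{k-1}(m)/m\ll \delta^{C}H(\log X)^{k-2}$ and is negligible), one extracts a set $\mathcal{M}\subset (M,2M]$ with $\sum_{m\in\mathcal{M}}d_{k-1}(m)/m\gg \delta^{O(1)}(\log X)^{k-1}$ on which $|T_{m}|^{*}\gg \delta^{O(1)}H/m$. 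Theorem~\ref{qlt} then produces for each $m\in\mathcal{M}$ a non-trivial horizontal character $\eta_{m}$ of Lipschitz norm $\leq \delta^{-O(1)}$ with $\|\eta_{m}\circ g(m\cdot)\|_{C^{\infty}(X/m,(X+H)/m]}\ll \delta^{-O(1)}$, and pigeonholing over the $O(\delta^{-O(1)})$ possibilities collapses $\eta_{m}$ to a single $\eta$ on a subset $\mathcal{M}'$ retaining the bound. Cauchy--Schwarz using $\sum_{m\sim M}d_{k-1}(m)^{2}/m^{2}\ll (\log M)^{(k-1)^{2}-1}/M$ then yields $|\mathcal{M}'|\gg \delta^{O(1)}M$, at which point Corollary~\ref{smooth-dilate} (applicable since $M\leq \delta^{K}H$) produces an integer $q\ll \delta^{-O(1)}$ with $\|q\eta\circ g\|_{C^{\infty}(X,X+H]}\ll \delta^{-O(1)}$.

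Having obtained smoothness on the full interval, I would apply Lemma~\ref{factor-simple} to factor $g=\varepsilon g'\gamma$ with $g'$ taking values in $G'=\ker(q\eta)$ of dimension $D-1$, and partition $(X,X+H]$ into $O(\delta^{-O(1)})$ subprogressions on which $\varepsilon$ is essentially constant and $\gamma$ is exactly constant. On each such subprogression $P$, $F(g(n)\Gamma)$ is well-approximated by $\tilde F_{P}(g'(n)\Gamma'_{P})$ for a Lipschitz function $\tilde F_{P}$ on a subnilmanifold $G'/\Gamma'_{P}$ of complexity $O(\delta^{-O(1)})$. Decomposing $\tilde F_{P}=c_{P}+\tilde F_{P}^{\circ}$ with $c_{P}=\int \tilde F_{P}$ and pigeonholing over $P$ to isolate a progression contributing $\gg \delta^{O(1)}H(\log X)^{k-1}$ to the original correlation, either $|c_{P}|\gg \delta^{O(1)}$ (in which case $|\sum_{n\in P}f(n)|\gg \delta^{O(1)}H(\log X)^{k-1}$ immediately, yielding the conclusion), or the mean-zero remainder $\tilde F_{P}^{\circ}$ dominates and the induction hypothesis applied to $(\tilde F_{P}^{\circ},g',G'/\Gamma'_{P})$ gives the same bound. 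The base case $D=0$ is vacuous since a mean-zero function on a point vanishes.

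The main obstacle will be the careful preservation of the $(\log X)^{k-1}$ factor through the argument; the standard type $I$ inverse theorem loses this factor into its $\delta^{-O(1)}$ slack, whereas here we must retain it on both sides of the iteration. The key leverage is that the hypothesis forces $\delta\ll (\log X)^{-c}$, so the passage from the $d_{k-1}$-weighted count on $\mathcal{M}'$ to an unweighted count via Cauchy--Schwarz incurs no additional logarithmic loss. A secondary complication is the $P$-dependence of the subnilmanifold $G'/\Gamma'_{P}$ after dimension reduction, which must be controlled uniformly via the complexity estimates for rational subgroups and the Mal'cev basis of $G'$ inherited from $G$.
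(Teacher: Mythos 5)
Your proposal is correct, but it takes a genuinely different route from the paper's. The paper applies the Green--Tao factorization theorem once at the outset to write $g=\varepsilon g'\gamma$ with $g'$ \emph{totally equidistributed} on a subnilmanifold, reduces to that case, converts the $d_{k-1}$-weighted sum over $m\sim M$ into an unweighted sum over tuples $(m_1,\dots,m_{k-1})$ by expanding $d_{k-1}$ as a $(k-1)$-fold convolution and pigeonholing into dyadic boxes, applies Vinogradov's lemma iteratively in each variable $m_i$, and finally derives a contradiction with total equidistribution via the converse to the quantitative Leibman theorem; the conclusion \eqref{eq:fconclusion} is extracted from the mean of the restricted Lipschitz function. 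You instead run the dimension-reduction induction of \Cref{thm_generalseq} (quantitative Leibman theorem, then \Cref{smooth-dilate} and \Cref{factor-simple}, then restriction to subprogressions, then recursion on $\ker\eta$), and you dispose of the $d_{k-1}$ weight by trimming plus Cauchy--Schwarz, leaning on the observation that the hypothesis forces $\delta^{3}\ll(\log X)^{-1}$, so the logarithmic loss $(\log M)^{(k-1)^2-1}(\log X)^{-2(k-1)}$ incurred by Cauchy--Schwarz is absorbed into $\delta^{O_k(1)}$; I checked this arithmetic and it does give $|\mathcal{M}'|\gg\delta^{O_k(1)}M$ for all $k\geq 2$ (and degenerates harmlessly when $k=1$ or $M=O(1)$, where no smallness of $\delta$ is forced but the required density is trivial). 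The paper's treatment of the weight is ``exact'' and needs no a priori bound on $\delta$; yours is shorter and avoids both the factorization theorem and the converse Leibman theorem, at the cost of the usual inductive bookkeeping: $K(d,D)$ must dominate the polynomial losses accumulated over the at most $D$ steps so that $M\leq(\delta^{O(1)})^{K(d,D-1)}H$ persists, and the Lipschitz-approximation error on each subprogression, of size $O(\delta^{C'}\sum_n|f(n)|)=O(\delta^{C'-1}H(\log X)^{k-2})$, must be compared against the \emph{original} lower bound before pigeonholing (as in the proof of \Cref{thm_generalseq}), since $\delta$ may be far smaller than any fixed power of $(\log X)^{-1}$. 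These points are routine but worth spelling out in a full write-up.
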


\begin{proof}
In what follows, we let $d,D,k\geq 1$ be fixed; we allow all implied constants to depend on these parameters. Let $A\geq 1$ be large in terms of these fixed parameters. By the factorization theorem for nilsequences~\cite[Theorem 1.19]{green-tao-ratner}, there exist a rational subgroup $G'\subset G$  and $\delta^{-1}\leq Q\leq \delta^{-O_A(1)}$ such that we can factorize $g=\varepsilon g'\gamma$, where the polynomial sequences $\varepsilon,g'\gamma\colon \Z\to G$ have the following properties:
\begin{itemize}
    \item $\varepsilon$ is $(Q,[X,X+H])$-smooth;
    \item $g'$ is totally $Q^{-A}$-equidistributed in $G'/(G'\cap \Gamma)$;
    \item $\gamma$ is $Q$-rational, with $(\gamma(n)\Gamma)_{n\in \mathbb{Z}}$ having period at most $Q$.
\end{itemize}
Moreover, $G'/(G'\cap \Gamma)$ has a Mal'cev basis in which each element is a $Q$-rational combination of elements of the Mal'cev basis of $G$.

By splitting the interval $[X,X+H]$ into arithmetic progressions of length $\asymp H/Q^2$ and modulus equal to the period of $\gamma$, we conclude that for some $Q^{O(1)}$-rational elements $\varepsilon_0,\gamma_0\in G'$ we have
\begin{align*}
\left|\sum_{X<n\leq X+H}f(n)F(\varepsilon_0 g'(n)\gamma_0\Gamma)\right|^{*}\gg \delta^{O(1)} H(\log X)^{k-1}, \end{align*}
Let $G''=\gamma_0^{-1}G'\gamma_0$, $\Gamma''=\Gamma\cap G''$,  $g''(n)=\gamma_0^{-1}g'(n)\gamma_0$, and let $\widetilde{F}\colon G''/\Gamma''\to \mathbb{C}$ be given by $\widetilde{F}(x)=F(\varepsilon_0 \gamma_0 x)$. Then we have 
\begin{align}\label{eq:Fg''}
\left|\sum_{X<n\leq X+H}f(n)\widetilde{F}(g''(n)\Gamma)\right|^{*}\gg \delta^{O(1)} H(\log X)^{k-1}. 
\end{align}
By subtracting off the mean of $\widetilde{F}$, we conclude that either~\eqref{eq:fconclusion} holds, or else~\eqref{eq:Fg''} holds with $\widetilde{F}$ having mean $0$. We may assume that the latter holds.  Performing some nil-algebra as in~\cite[Proof of Claim in Section 2]{gt-mobius}, we see that there is a Mal'cev basis $\mathcal{Y}$ for $G''/\Gamma''$, which is a $Q^{O(1)}$-rational combination of the elements of the Mal'cev basis of $G$, such that  $g''$ is totally $Q^{-cA+O(1)}$-equidistributed on $G''/\Gamma''$ with respect to the metric induced by $\mathcal{Y}$ for some $c>0$ depending only on the fixed parameters, and additionally $\widetilde{F}$ has Lipschitz norm $\ll \delta^{-O(1)}$.  

From the  assumption on $f$ and the triangle inequality, we have
\begin{align*}
\sum_{M<m\leq 2M}d_{k-1}(m)\left|\sum_{X/m < r\leq (X+H)/m}\beta(r)\widetilde{F}(g''(rm)\Gamma)\right|^{*}\gg \delta^{O(1)} H(\log X)^{k-1}.   \end{align*}
Since $\|\beta\|_{\TV(\mathbb{N})}\leq \frac{1}{\delta}$, invoking Lemma~\ref{basic-prop}(ii) we obtain
\begin{align*}
\sum_{M<m\leq 2M}d_{k-1}(m)\left|\sum_{X/m< r\leq (X+H)/m}\widetilde{F}(g''(rm)\Gamma)\right|^{*}\gg \delta^{O(1)} H(\log X)^{k-1}.    
\end{align*}
By the pigeonhole principle, this implies that for some $M_1,\ldots, M_{k-1}\geq 1$ with $M_1\cdots M_{k-1}\asymp M$ we have
\begin{align*}
 &\sum_{M_1<m_1\leq 2M_1}\cdots \sum_{M_{k-1}<m_{k-1}\leq 2M_{k-1}}\left|\sum_{X/(m_1\cdots m_{k-1})< r\leq (X+H)/(m_1\cdots m_{k-1})}\widetilde{F}(g''(rm_1\cdots m_{k-1})\Gamma)\right|^{*}\\
 &\quad \gg \delta^{O(1)} H.
\end{align*}
Since $|F|\leq 1/\delta$ by the assumption that $F$ has Lipschitz norm at most $1/\delta$, from the pigeonhole principle we conclude that there exists a set $\mathcal{M}$ of $\gg \delta^{O(1)} M$ integer tuples $(m_1,\ldots, m_{k-1})\in (M_1,2M_1]\times \cdots \times (M_{k-1},2M_{k-1}]$ such that 
\begin{align*}
\left|\sum_{X/(m_1\cdots m_{k-1})< r\leq (X+H)/(m_1\cdots m_{k-1})}\widetilde{F}(g''(rm_1\cdots m_{k-1})\Gamma)\right|^{*}\gg \delta^{O(1)} \frac{H}{M_1\cdots M_{k-1}}.
\end{align*}

From the quantitative Leibman theorem (Lemma~\ref{qlt}), we deduce that for $(m_1,\ldots, m_{k-1})\in \mathcal{M}$ there exists some non-trivial horizontal character $\eta\colon G''\to \mathbb{R}$ (depending on the tuple) of Lipschitz norm $\ll \delta^{-O(1)}$ such that
\begin{align}\label{eq:gm}
\|(\eta\circ g'')(m_1\cdots m_{k-1}\cdot)\mod 1\|_{C^{\infty}(\frac{X}{m_1\cdots m_{k-1}},\frac{X+H}{m_1\cdots m_{k-1}}]}\ll \delta^{-O(1)}.  \end{align}
By pigeonholing, we can find some such horizontal character $\eta$ such that~\eqref{eq:gm} holds for $\gg \delta^{O(1)}M$ integer tuples $(m_1,\ldots, m_{k-1})\in (M_1,2M_1]\times \cdots \times (M_{k-1},2M_{k-1}]$. Let $\mathcal{M}'$ be the set formed by these tuples. Since $\eta\circ g''$ is a polynomial of degree at most $d$, we can Taylor expand it as 
$$(\eta\circ g'')(n)=\alpha_d(n-X)^{d}+\cdots+\alpha_1(n-X)+\alpha_0\mod 1$$
for some $\alpha_j\in \mathbb{R}/\mathbb{Z}$. 
Therefore, for any $m\geq 1$ we have
$$(\eta\circ g'')(mn)=\sum_{j=0}^d \alpha_j m^j\left(n-\frac{X}{m}\right)^j\mod 1.$$

Now~\eqref{eq:gm} and~\eqref{eq:smoothcomparison} give for any $(m_1,\ldots, m_{k-1})\in \mathcal{M}'$ the bound
\begin{align}\label{eq:gm2}
\max_{1\leq j\leq d}\left(\frac{H}{M_1\cdots M_{k-1}}\right)^j\|\alpha_j(m_1\cdots m_{k-1})^j\|_{\R/\Z}\ll \delta^{-O(1)}.
\end{align}
In particular, there exist $\gg \delta^{O(1)}M_1$ integers $m_1\in (M_1,2M_1]$ for which~\eqref{eq:gm2} holds for $\gg \delta^{O(1)}M_2\cdots M_{k-1}$ integer tuples $(m_2,\ldots, m_{k-1})\in (M_2,2M_2]\times \cdots \times (M_{k-1},2M_{k-1}]$. By Vinogradov's lemma (Lemma~\ref{vin}), this implies that for some integer $1\leq q_1\ll \delta^{-O(1)}$ we have 
\begin{align*}
\max_{1\leq j\leq d}\left(\frac{H}{M_2\cdots M_{k-1}}\right)^j\|q_1\alpha_j(m_2\cdots m_{k-1})^j\|_{\R/\Z}\ll \delta^{-O(1)}
\end{align*}
for $\gg \delta^{O(1)}M_2\cdots M_{k-1}$ integer tuples $(m_2,\ldots, m_{k-1})\in (M_2,2M_2]\times \cdots \times (M_{k-1},2M_{k-1}]$ (by pigeonholing we can take $q_1$ to be independent of the tuple).
Continuing inductively, we see that for some positive integer $q\ll \delta^{-O(1)}$ we have 
\begin{align*}
 \max_{1\leq j\leq d}H^j\|q\alpha_j\|_{\R/\Z}\ll \delta^{-O(1)}.   
\end{align*}
But by the definition of the smoothness norms, this implies that
\begin{align*}
\|(q\eta)\circ g''\mod 1\|_{C^{\infty}(X,X+H]}\ll \delta^{-O(1)}. 
\end{align*}
The horizontal character $q\eta$ has Lipschitz norm $\ll \delta^{-O(1)}$. As noted above, $g''$ is totally $Q^{-cA+O(1)}$-equidistributed on $G''/\Gamma''$. But now we obtain a contradiction from the converse to the quantitative Leibman theorem~\cite[Lemma 5.3]{FH-Fourier} and the assumption that $A$ was large enough. 
\end{proof}

We are now ready to prove Theorem~\ref{thm_gowers}(iii). 

\begin{proof}[Proof of Theorem~\ref{thm_gowers}(iii)] 
We apply Theorem~\ref{discorrelation-thm}(v). Following the same argument as in the proof of Theorem~\ref{thm_gowers}(i), but with 
$$f(n)=\left(\frac{W}{\varphi(W)}\right)^{k-1}(\log^{1-k}X)(d_k(n)-d_k^{\sharp}(n))$$
and using Lemma~\ref{le_pseudoinshort}(ii) in place of Lemma~\ref{le_pseudoinshort}(i), we see that there is some exceptional set $\mathcal{E}$ of measure $\ll_AX/\log^{A}X$ such that for all $x\in [X(\log X)^{-A/2},2X]\setminus \mathcal{E}$ we have
\begin{equation}\label{eq:dkgowers}
\left\|\left(\frac{W}{\varphi(W)}\right)^{k-1}(\log^{1-k}X)(d_k(W'\cdot+a)-d_k^{\sharp}(W'\cdot+a))\right\|_{U^{s}(x,x+H]}=o_{w\to \infty}(1).    
\end{equation}
Arguing as in~\cite[Proof of Theorem 1.5]{MSTT-all} this implies that
\begin{align*}
\|d_k-d_k^{\sharp}\|_{U^s(x,x+H]}=o_{X\to \infty}(\log^{k-1}X)     
\end{align*}
for all $x\in [X,2X]\setminus \mathcal{E}'$, where $\mathcal{E}'\supset \mathcal{E}$ is another exceptional set of measure $\ll_AX/\log^{A}X$. We still need to show that the second estimate in~\eqref{eq:gowersdk} holds outside a small exceptional set.

In view of~\eqref{eq:dkgowers} and the triangle inequality for the Gowers norms, we are left with showing that
\begin{align}\label{eq:dksupbound}
\left\|\left(\frac{W}{\varphi(W)}\right)^{k-1}(\log^{1-k}X)d_k^{\sharp}(W'\cdot+a)-\frac{1}{(k-1)!}\right\|_{U^{s}(x,x+H]}=o_{w\to \infty}(1)  \end{align}
for all $x\in [X,2X]\setminus \mathcal{E}''$, where $\mathcal{E}''\supset \mathcal{E}'$ is another exceptional set of measure $\ll_AX/\log^{A}X$.

By Lemma~\ref{le_pseudoinshort}(ii) and Proposition~\ref{prop_inverse}, it suffices to show that, for any filtered $(s-1)$-step nilmanifold $G/\Gamma$ and any Lipschitz function $F\colon G/\Gamma\to \mathbb{C}$, we have
\begin{align*}
&\sup_{g\in \Poly(\mathbb{Z}\to G)}\left|\sum_{x<n\leq x+H}\left(\left(\frac{W}{\varphi(W)}\right)^{k-1}(\log^{1-k}X)d_k^{\sharp}(W'n+a)-\frac{1}{(k-1)!}\right)\overline{F}(g(n)\Gamma)\right|\\
=&o_{w\to \infty;\|F\|_{\Lip};G/\Gamma}(H)  \end{align*}
for all $x\in [X,2X]$ outside of an exceptional set of measure $\ll_AX/\log^{A}X$.

By Lemma~\ref{le:dkdecompose}(ii), the function inside the sum is a sum of $O(1)$ terms of the form $\alpha_j*\beta_j$, where $\|\beta_j\|_{\TV(\mathbb{N})}\ll 1$ and $|\alpha_j(n)| \leq d_{k-1}(n) 1_{[1,X^{\varepsilon/5}]}(n)$, Hence, from Proposition~\ref{prop:sharpinverse} we conclude 
that~\eqref{eq:dksupbound} holds provided that
\begin{align*}
\left|\sum_{x<n\leq x+H}\left(\left(\frac{W}{\varphi(W)}\right)^{k-1}(\log^{1-k}X)d_k^{\sharp}(W'n+a)-\frac{1}{(k-1)!}\right)\right|^{*}=o_{w\to \infty}(H).    
\end{align*}

From~\eqref{eq:dkgowers} with $s=1$, it follows that
\begin{align*}
&\left|\sum_{x<n\leq x+H}\left(\left(\frac{W}{\varphi(W)}\right)^{k-1}(\log^{1-k}X)(d_k(W'n+a)-d_k^{\sharp}(W'n+a))\right)\right|^{*}\\
=&o_{X\to \infty}(H).        
\end{align*}
for all $x\in [X,2X]\setminus \mathcal{E}$. Hence, it suffices to show that
\begin{align*}
\left|\sum_{x<n\leq x+H}\left(\left(\frac{W}{\varphi(W)}\right)^{k-1}(\log^{1-k}X)d_k(W'n+a)-\frac{1}{(k-1)!}\right)\right|^{*}=o_{X\to \infty}(H)    
\end{align*}
 for all $x\in [X,2X]$ outside of an exceptional set of measure $\ll X/\log^A X$. But since $W'\ll \log \log X$ (say) this follows from Lemma~\ref{le:dkshort}.
\end{proof}

\section{Applications}\label{sec:applications}

\subsection{The polynomial phase case}\label{sec:polyphase}

\begin{proof}[Proof of Corollary~\ref{cor:discorrelation}] 
Part (i) of the corollary is immediate from  Theorem~\ref{discorrelation-thm}(i). From Theorem~\ref{discorrelation-thm}(ii), we either have the conclusion of part (ii) or else 
\begin{align*}
     \left|\sum_{x<n\leq x+H}\Lambda^{\sharp}(n)e(P_x(n))\right|\geq \frac{H}{\log^A X}   
    \end{align*}
for $x \in E$, with $E$ a subset of $[X,2X]$ of measure $\gg_{A,d,\varepsilon} X(\log X)^{-A}$. By Lemma~\ref{le:dkdecompose}(i), this implies that for some sequence $a\colon \mathbb{N}\to \mathbb{C}$ supported on $[1,X^{\varepsilon/5}]$ and satisfying $|a(n)|\ll \log X$ we have
\begin{align*}
     \left|\sum_{x<n\leq x+H}(a*1)(n)e(P_x(n))\right|\geq \frac{H}{2\log^A X}   
    \end{align*}
for all $x \in E$. 
    Applying Theorem~\ref{inverse}(i) (and Remark~\ref{rem:RZhorchar}), we conclude that, for some integer $1\leq q\ll (\log X)^{O_{A,\varepsilon,d}(1)}$, we have
    \begin{align*}
\|qP_x\mod 1\|_{C^{\infty}(x,x+H]}\ll_{d} (\log X)^{O_{A,\varepsilon, d}(1)}
    \end{align*}
    for all $x$ in a subset of $E$ of measure $\gg_{A,d,\varepsilon}X\log^{-A}X$. The desired conclusion follows from~\eqref{eq:smoothcomparison}.
\end{proof}

\subsection{Calculating the main terms}

In order to prove Theorems~\ref{thm:HL} and~\ref{thm:divisor}, we must in particular evaluate the correlations of $\Lambda_w$ and $d_{k,W} \coloneqq \prod_{p \leq w} d_{k,p}$ (where $d_{k,p}$ is defined in Theorem~\ref{thm:divisor}), since these yield the main terms in the correlations asymptotics for $\Lambda$ and $d_k$, respectively. We do this in a more general setting as follows. 

\begin{proposition}[Factorizing correlations of multiplicative functions]\label{prop:simplecorrrelation}
Let $k,\ell\in \mathbb{N}$ be fixed. Let $3\leq w\leq \exp((\log X)^{1/2})$ and $W=\prod_{p\leq w}p$. Let $1\leq h\leq X$ be an integer. Let $g\colon \mathbb{N}\to \mathbb{C}$ be a multiplicative function, and denote by $g_p$ the multiplicative function defined on prime powers by $g_p(p^{a})=g(p^{a})$ and $g_p(q^{a})=1$ if $q\neq p$. Define $g_W\coloneqq \prod_{p\leq w}g_p$. Suppose that $|g_p(p^{a})|\leq d_k(p^{a})$ for all $p\leq w$ and $a\in \mathbb{N}$. Then we have
\begin{align*}
\mathbb{E}_{n\leq X}\prod_{j=0}^{\ell-1}g_W(n+jh)=\prod_{p\leq w} \left(\lim_{Y\to \infty}\mathbb{E}_{n\leq Y}\prod_{j=0}^{\ell-1}g_p(n+jh)\right)+O(\exp(-(\log X)^{1/2}/(30\ell))).
\end{align*}
\end{proposition}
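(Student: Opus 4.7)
The strategy is a Dirichlet convolution expansion followed by a Rankin-type truncation estimate. Define the multiplicative function $h_W$, supported on $W$-smooth integers, by $h_W(p^a)=g(p^a)-g(p^{a-1})$ for $p\le w,\,a\ge1$, so that $g_W=h_W*1$ and the hypothesis $|g_p(p^a)|\le d_k(p^a)$ yields $|h_W(d)|\le d_{2k}(d)$ for $d\mid W^\infty$. Expanding and swapping summations,
\[
\sum_{n\le X}\prod_{j=0}^{\ell-1}g_W(n+jh)=\sum_{\mathbf{d}=(d_0,\ldots,d_{\ell-1})}\prod_j h_W(d_j)\,N(\mathbf{d};X),
\]
where $N(\mathbf{d};X)=|\{n\le X:\, d_j\mid n+jh\ \forall j\}|$. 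By CRT the congruences are solvable iff, for every prime $p$, the exponent tuple $\mathbf{a}_p:=(v_p(d_j))_j$ satisfies the local consistency condition $\min(a_{p,j},a_{p,j'})\le v_p((j-j')h)$ for all $j\ne j'$; in that case $N(\mathbf{d};X)=X/D+\eta(\mathbf{d})$ with $D:=\operatorname{lcm}(d_j)$, $|\eta|\le 1$, and $\eta=-X/D$ (so $N=0$) whenever $D>\ell X$. The leading piece $X\sum_{\mathbf{d}\text{ cons.}}\prod_j h_W(d_j)/D$ factors as an Euler product over primes $p\le w$; a short local calculation (expanding $g_p=h_p*1$ and summing against the $p$-adic density of each consistent tuple) identifies its $p$-th factor with $L_p$, so this contribution equals $X\prod_{p\le w}L_p$.

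To handle the remaining error $\sum_{\mathbf{d}\text{ cons.}}\prod h_W(d_j)\,\eta(\mathbf{d})$, set $T:=X^{1/\ell}\exp\!\bigl(-(\log X)^{1/2}/(20\ell^2)\bigr)$ and $\alpha:=(\log X)^{-1/2}/25$, and split by the size of $D$. For $D>\ell X$ (where $\eta=-X/D$), the contribution is bounded by $X\sum_{D>\ell X,\text{ cons.}}\prod|h_W|/D\le X\cdot X^{-\alpha}\prod_p(\text{local})$ by Rankin. For $D\le T$, a crude separable estimate gives a bound of $\bigl(\sum_{d\le T,\,d\mid W^\infty}d_{2k}(d)\bigr)^{\ell}\ll T^{\ell}(\log X)^{O(1)}$. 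For $T<D\le\ell X$, using $1\le\ell X/D$ in this range and applying Rankin as above yields $\ll X\,T^{-\alpha}\prod_p(\text{local})$. In each Rankin step the $p$-th local factor is $\sum_{\mathbf{a}_p\text{ cons.}}\prod_j|h_p(p^{a_{p,j}})|/p^{(1-\alpha)\max_j a_{p,j}}=1+O_k(1/p^{1-\alpha})$, using $|h_p(p^a)|\ll a^{k-1}$ and a geometric summation. The crucial hypothesis $w\le\exp((\log X)^{1/2})$ forces $\alpha\log w\le 1/25$, so $p^\alpha=O(1)$ uniformly on $p\le w$, $\sum_{p\le w}1/p^{1-\alpha}\ll\log\log X$, and the entire Euler product over $p\le w$ is $(\log X)^{O(1)}$.

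Assembling the three contributions, dividing by $X$, and absorbing the residual $(\log X)^{O(1)}=\exp(O(\log\log X))$ into the exponential (valid since for $X$ large the savings $\exp(-(\log X)^{1/2}/(25\ell))$ dwarfs any $\exp(O(\log\log X))$), the total error is $O(\exp(-(\log X)^{1/2}/(30\ell)))$, as desired. The principal obstacle is the joint calibration of $T$ and $\alpha$: one needs the three quantities $T^{\ell}/X$, $T^{-\alpha}$ and $X^{-\alpha}$ to all fall below the target $\exp(-(\log X)^{1/2}/(30\ell))$ while simultaneously keeping the local Rankin factors (and hence the Euler product over $p\le w$) bounded. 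This is only possible because the hypothesis $w\le\exp((\log X)^{1/2})$ matches the scale $(\log X)^{1/2}$ of the desired exponential savings; any larger $w$ would make $p^\alpha$ grow and destroy the Euler product bound.
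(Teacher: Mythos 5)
Your overall architecture --- expand $g_W=h_W*1$ (this is the paper's $\psi=\mu*g$ restricted to $W$-smooth numbers), swap summation, identify the density term with an Euler product of local factors via dominated convergence, and control the tail by Rankin's trick exploiting $w\le\exp((\log X)^{1/2})$ --- is essentially the paper's, and your ranges $D\le T$ and $T<D\le\ell X$ are calibrated correctly. The genuine gap is the range $D=\mathrm{lcm}(d_0,\dots,d_{\ell-1})>\ell X$. Your claim that $N(\mathbf{d};X)=0$ there is false: a consistent system $n\equiv -jh\pmod{d_j}$ has its solutions in a single residue class modulo $D$, so $N(\mathbf{d};X)\le 1$, but that unique representative can perfectly well land in $[1,X]$ even when $D$ is enormous --- each individual $d_j$ is at most $\ell X$ (it divides $n+jh$), yet their lcm can be as large as $(\ell X)^\ell$ (e.g.\ $\ell=2$ with $d_0\mid n$ and $d_1\mid n+h$ both close to $n$). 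Consequently the quantity you must bound in this range is $\sum_{D>\ell X}\prod_j|h_W(d_j)|\,N(\mathbf{d};X)$ with $N\le 1$, and since $\ell X/D<1$ here you cannot majorize $N$ by $\ell X/D$; your bound $X\sum_{D>\ell X}\prod_j|h_W(d_j)|/D$ controls only the density term you are subtracting, not the actual counts.

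To close the gap, note that a consistent tuple with $D>\ell X$ and $N\ge 1$ forces $\prod_j(n+jh,W^\infty)\ge D>\ell X$, hence some shift $n+jh$ has $W$-smooth part exceeding $(\ell X)^{1/\ell}$. So you need the additional input that the set of $n\le \ell X$ whose $W$-smooth part exceeds $X^{c/\ell}$ has cardinality $\ll X^{1-c'/(\ell\log w)}(\log w)^{O(1)}$ --- another Rankin argument, this time over $e\mid W^\infty$ with $e>X^{c/\ell}$, using $\sum_{e\mid W^\infty}e^{-1+1/\log w}\ll(\log w)^{O(1)}$ --- combined with Cauchy--Schwarz and Shiu's bound to absorb the divisor-bounded weights on those exceptional $n$; the hypothesis $\log w\le(\log X)^{1/2}$ then converts the saving into $\exp(-c''(\log X)^{1/2}/\ell)$. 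This is exactly the paper's exceptional set $\mathcal{B}$: the paper sidesteps your difficulty by truncating the convolution to $e\le X^{1/(10\ell)}$ \emph{before} swapping summation, so the lcm's it meets never exceed $X^{1/10}$ and every surviving congruence class contributes $X/D+O(1)$ honestly. With that one extra estimate your calibration of $T$ and $\alpha$ goes through and the proof is complete.
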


\begin{proof}
 Let $\mathcal{B}$ be the set of $n\leq X$ for which $(n,W^{\infty})>X^{\frac{1}{10\ell}}$. Since every $n\in \mathcal{B}$ can be written uniquely as $e_1e_2$, where $e_1\mid W^{\infty}$ with $e_1>X^{\frac{1}{10\ell}}$ and $(e_2,W)=1$, we have
 \begin{align}\label{eq:Bbound}\begin{split}
 |\mathcal{B}|&= \sum_{\substack{e_1\mid W^{\infty}\\e_1>X^{\frac{1}{10\ell}}}}\sum_{\substack{(e_2,W)=1\\e_2\leq X/e_1}}1 \ll X\sum_{\substack{e_1\mid W^{\infty}\\e_1>X^{\frac{1}{10\ell}}}}\frac{1}{e_1} \ll X^{1-\frac{1}{10\ell\log w}}\sum_{\substack{e_1\mid W^{\infty}\\e_1>X^{\frac{1}{10\ell}}}}\frac{1}{e_1^{1-\frac{1}{\log w}}}\\
 &\ll  X^{1-\frac{1}{10\ell\log w}}\prod_{p\leq w}\left(1+\frac{1}{p^{1-\frac{1}{\log w}}}+O\left(\frac{1}{p^{2(1-\frac{1}{\log w})}}\right)\right)\\
 &\ll X^{1-\frac{1}{10\ell\log w}}\prod_{p\leq w}\left(1+O\left(\frac{1}{p}\right)\right) \ll X^{1-\frac{1}{10\ell\log w}}(\log w)^{O(1)}.
 \end{split}
 \end{align}
 
By M\"{o}bius inversion we can write 
 $$g_W(n) = g((n, W^\infty)) =\sum_{\substack{e\mid (n, W^{\infty})}}\psi(e),$$
 where $\psi\coloneqq \mu*g$ is multiplicative. For any prime power $p^{a}$ we have $\psi(p^{a})=g_W(p^{a})-g_W(p^{a-1})$. In particular, this implies that 
\begin{align}\label{eq:psibound} 
|\psi(p^{a})|\leq 2d_k(p^{a})\cdot 1_{p\leq w}. 
\end{align}
Let us also define the truncated version
 \begin{align*}
 \widetilde{g}_W(n)\coloneqq \sum_{\substack{e\mid (n, W^{\infty}) \\e\leq X^{\frac{1}{10\ell}}}}\psi(e).
 \end{align*}

For $n \notin \mathcal{B}$ we have $g_W(n) = \widetilde{g}_W(n)$. Hence, by the triangle inequality, for some $i_0\in \{0, 1, \dotsc, \ell-1\}$,  we have
 \begin{align}\label{eq:gprod}
 \sum_{n\leq X}\prod_{j=0}^{\ell-1}g_W(n+jh)&=\sum_{\substack{n\leq X\\n+ih\not \in 
 \mathcal{B} \, \forall 0\leq i\leq \ell-1}}\prod_{j=0}^{\ell-1}\widetilde{g}_W(n+jh)+O\left(\max_{0\leq i\leq \ell-1}\sum_{\substack{n\leq X\\n+ih\in 
 \mathcal{B}}}\prod_{j=0}^{\ell-1}|g_W(n+jh)|\right)\nonumber\\
 &=\sum_{n\leq X}\prod_{j=0}^{\ell-1}\widetilde{g}_W(n+jh)+O\left(\sum_{\substack{n\leq X\\n+i_0h\in 
 \mathcal{B}}}\prod_{j=0}^{\ell-1}(|g_W|+|\widetilde{g}_W|)(n+jh)\right).
 \end{align}
Since $|g_W(n)|, |\widetilde{g}_W(n)|\leq d_{k+2}(n)$, the error term here is by H\"older's inequality (with exponents $1/2,1/(2\ell),\ldots, 1/(2\ell)$),~\eqref{eq:Bbound} and Shiu's bound (\Cref{le:shiu})
\begin{align*}
\leq \left(\sum_{\substack{n\leq X\\n+i_0h\in \mathcal{B}}}1\right)^{1/2}\left(\sum_{n\leq X+(\ell-1)h}d_{k+2}^{2\ell}\right)^{1/2}&\ll X(\log X)^{O(1)}\exp\left(-\frac{\log X}{20\ell\log w}\right)\\
&\ll X\exp\left(-\frac{(\log X)^{1/2}}{30\ell}\right). 
\end{align*}

We are now left with estimating the main term in \eqref{eq:gprod}. By the definition of $\widetilde{g}_W$, this can be expanded out as
\begin{align}\label{eq:psisum}
\sum_{\substack{e_0,\ldots, e_{\ell-1}\mid W^{\infty}\\e_0,\ldots, e_{\ell-1}\leq X^{\frac{1}{10\ell}}}}\psi(e_0)\cdots \psi(e_{\ell-1})\sum_{\substack{n\leq X\\e_i\mid n+ih\,\forall 0\leq i\leq \ell-1}}1.     
\end{align}
Let 
\begin{align}\label{eq:alphadef}
\alpha_h(e_0,\ldots, e_{\ell-1})\coloneqq \lim_{Y\to \infty}\mathbb{E}_{n\leq Y}\prod_{j=0}^{\ell-1}1_{e_j\mid n+jh}.    
\end{align}
Then we have
\begin{align*}
\sum_{\substack{n\leq X\\e_i\mid n+ih\,\forall 0\leq i\leq \ell-1}}1=\alpha_h(e_0,\ldots,e_{\ell-1})X+O(1).     
\end{align*}
Now, recalling~\eqref{eq:psibound}, which implies that $|\psi(n)|\ll n^{o(1)}$, the expression~\eqref{eq:psisum} is
\begin{align}\label{eq:psisum2}
X\sum_{\substack{e_0,\ldots, e_{\ell-1}\mid W^{\infty}\\e_0,\ldots, e_{\ell-1}\leq X^{\frac{1}{10\ell}}}}\psi(e_0)\cdots \psi(e_{\ell-1})\alpha_h(e_0,\ldots, e_{\ell-1})+O(X^{1/10+o(1)}).     
\end{align}

By inclusion--exclusion, the main term in~\eqref{eq:psisum2} is, for some $I \subset \{0, 1, \dotsc, \ell-1\}$,
\begin{align}\label{eq:alphasum}\begin{split}
&X \sum_{e_0,\ldots, e_{\ell-1}\mid W^{\infty}}\psi(e_0)\cdots \psi(e_{\ell-1})\alpha_h(e_0,\ldots, e_{\ell-1})\\
+&O\left(X\sum_{\substack{e_0,\ldots, e_{\ell-1}\mid W^{\infty}\\e_{i}> X^{\frac{1}{10\ell}}\,\forall\, i\in I}}\psi(e_0)\cdots \psi(e_{\ell-1})\alpha_h(e_0,\ldots, e_{\ell-1})\right).  
\end{split}
\end{align}
We first bound the error term here. Note that, for any $e_j \in \mathbb{N}$,
\begin{align*}
 \alpha_h(e_0,\ldots, e_{\ell-1})\leq \frac{1}{[e_0,\ldots, e_{\ell-1}]}.
 \end{align*}   
Hence, for some $i_0\in \{0,\ldots, \ell-1\}$, the error term in~\eqref{eq:alphasum} is 
\begin{align*}
&\ll X\sum_{\substack{e_0,\ldots, e_{\ell-1}\mid W^{\infty}\\e_{i_0}>X^{\frac{1}{10\ell}}}}\frac{|\psi(e_0)|\cdots |\psi(e_{\ell-1})|}{[e_0,\cdots,e_{\ell-1}]}\\
&\ll X^{1-\frac{1}{10\ell\log w}} \sum_{e_0,\ldots, e_{\ell-1}\mid W^{\infty}} \frac{|\psi(e_0)|\cdots |\psi(e_{\ell-1})|}{[e_0,\cdots,e_{\ell-1}]} e_{i_0}^{\frac{1}{\log w}} = X^{1-\frac{1}{10\ell\log w}} \prod_{p \leq w} \beta_p,
\end{align*}
where
$$
\beta_p = \sum_{e_0,\ldots, e_{\ell-1}\mid p^{\infty}} \frac{|\psi(e_0)|\cdots |\psi(e_{\ell-1})|}{[e_0,\cdots,e_{\ell-1}]} e_{i_0}^{\frac{1}{\log w}} = \sum_{E\mid p^{\infty}} \frac{1}{E} \sum_{\substack{e_0,\ldots,e_{\ell-1} \\ [e_0,\cdots,e_{\ell-1}]=E}} |\psi(e_0)|\cdots |\psi(e_{\ell-1})| e_{i_0}^{\frac{1}{\log w}}.
$$
For $E = p^a$ with $a \geq 1$, by~\eqref{eq:psibound} the inner sum above is
$$ 
\ll (2d_k(E))^\ell E^{\frac{1}{\log w}} \sum_{\substack{e_0,\ldots,e_{\ell-1} \\ [e_0,\cdots,e_{\ell-1}]=E}} 1 \ll (2(a+1)^k)^{\ell}(a+1)^{\ell} p^{\frac{a}{\log w}}. 
$$
It follows that
$$
\beta_p = 1 + O\left(\sum_{a \geq 1} \frac{(2(a+1))^{(k+1)\ell}}{(p^a)^{1-\frac{1}{\log w}}}\right) = 1+O\left(\frac{1}{p}\right)
$$
for $p \leq w$. Hence the error term in~\eqref{eq:alphasum} is 
$$ \ll X^{1-\frac{1}{10\ell\log w}} \prod_{p \leq w}\left(1 + O\left(\frac{1}{p}\right)\right) \ll X^{1-\frac{1}{10\ell\log w}} (\log w)^{O(1)} \ll X\exp(-(\log X)^{1/2}/(30\ell)). $$

The remaining task is to evaluate the main term in~\eqref{eq:alphasum}. Since the $\ell$-variable function $\alpha_h(e_0,\ldots, e_{\ell-1})$ is multiplicative, we have
\begin{align*}
 \sum_{e_0,\ldots, e_{\ell-1}\mid W^{\infty}}\psi(e_0)\cdots \psi(e_{\ell-1})\alpha_h(e_0,\ldots, e_{\ell-1})=\prod_{p\leq w}\sum_{e_0,\ldots, e_{\ell-1}\mid p^{\infty}}\psi(e_0)\cdots \psi(e_{\ell-1})\alpha_h(e_0,\ldots, e_{\ell-1}).   
\end{align*}
For each prime $p\leq w$, by~\eqref{eq:alphadef} we can write
\begin{align*}
&\sum_{e_0,\ldots, e_{\ell-1}\mid p^{\infty}}\psi(e_0)\cdots \psi(e_{\ell-1})\alpha_h(e_0,\ldots, e_{\ell-1}) \\
&=\sum_{e_0,\ldots, e_{\ell-1}\mid p^{\infty}} \psi(e_0)\cdots \psi(e_{\ell-1}) \lim_{Y\to \infty} \mathbb{E}_{n\leq Y} \prod_{j=0}^{\ell-1} 1_{e_j\mid n+jh}.
\end{align*}
By the dominated convergence theorem together with the convergence of 
\begin{align*}
\sum_{e_0,\ldots, e_{\ell-1}\mid p^{\infty}}\frac{|\psi(e_0)|\cdots |\psi(e_{\ell-1})|}{[e_0,\cdots, e_{\ell-1}]}   
\end{align*}
we can exchange the order of limits and summation. Thus
\begin{align*}
\sum_{e_0,\ldots, e_{\ell-1}\mid p^{\infty}}\psi(e_0)\cdots \psi(e_{\ell-1})\alpha_h(e_0,\ldots, e_{\ell-1}) &=\lim_{Y\to \infty}\mathbb{E}_{n\leq Y}\sum_{e_0,\ldots, e_{\ell-1}\mid p^{\infty}}\prod_{j=0}^{\ell-1}\psi(e_j)1_{e_j\mid n+jh}\\
&=\lim_{Y\to \infty}\mathbb{E}_{n\leq Y}\prod_{j=0}^{\ell-1}g_{p}(n+jh),
\end{align*}
and the claim follows.
\end{proof}

Applying Proposition~\ref{prop:simplecorrrelation} to the primes or the higher order divisor function, we obtain the following corollary. Here and later we write
\[
d_{k, W}(n) := \prod_{p \leq w} d_{k, p}(n)
\]
with $d_{k, p}$ as in Theorem~\ref{thm:divisor}.

\begin{corollary}\label{cor:simplecorrelation}
Let $k,\ell\in \mathbb{N}$ be fixed. Let $\ell< w\leq \exp((\log X)^{1/2})$, $W=\prod_{p\leq w}p$ and $H\geq 2$. 
\begin{enumerate}
    \item[(i)] For $(1-O(w^{-1/2}))H$ values of $1 \leq h \leq H$ we have
    \begin{align*}
    \mathbb{E}_{n\leq X}\Lambda_w(n)\Lambda_w(n+h)\cdots \Lambda_{w}(n+(\ell-1)h)=(1+o_{w\to \infty}(1))\mathfrak{S}(0,h,\ldots, (\ell-1)h),    
    \end{align*}
    where $\mathfrak{S}(h_1,\ldots, h_{\ell})$ is the singular series defined in~\eqref{eq:singular}. 
    \item[(ii)] Let $H \geq 1$. For $(1-O(w^{-1/2}))H$ values of $1 \leq h \leq H$ we have
    \begin{align*}
     \mathbb{E}_{n\leq X}d_{k,W}(n)d_{k,W}(n+h)\cdots d_{k,W}(n+(\ell-1)h)=\left(\frac{W}{\varphi(W)}\right)^{k\ell-\ell} \left(C(h,k,\ell)+o_{w\to \infty}(1))\right),   
    \end{align*}
    where $C(h,k,\ell)$ is as in~\eqref{eq:Chkl}.
\end{enumerate}    
\end{corollary}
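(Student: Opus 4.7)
My plan is to derive both parts as direct consequences of \Cref{prop:simplecorrrelation}, applied to carefully chosen multiplicative functions, and then handle the tails by an elementary Markov argument.

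For part (i), I would first observe that $\Lambda_w = \tfrac{W}{\varphi(W)} g_W$, where $g \colon \N \to \R$ is the multiplicative function defined by $g(p^a) = 0$ for $p \le w$, $a \geq 1$, and $g(p^a) = 1$ for $p > w$ (equivalently, $g_W(n) = 1_{(n,W)=1}$). Applying \Cref{prop:simplecorrrelation} to this $g$ and multiplying both sides by $\left(\tfrac{W}{\varphi(W)}\right)^\ell$, the local factor at each prime $p \leq w$ is simply
\[
\lim_{Y \to \infty} \E_{n \leq Y} \prod_{j=0}^{\ell-1} g_p(n+jh) = 1 - \frac{|\{0,h,\dots,(\ell-1)h\} \mod p|}{p},
\]
and $\left(\tfrac{W}{\varphi(W)}\right)^\ell = \prod_{p\leq w}(1-\tfrac{1}{p})^{-\ell}$, so the main term on the right-hand side is precisely the truncation of the singular series $\mathfrak{S}(0,h,\dots,(\ell-1)h)$ to primes $p \leq w$. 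The task therefore reduces to showing the tail
\[
T_w(h) \coloneqq \prod_{p > w} \left(1 - \frac{|\{0,h,\dots,(\ell-1)h\} \mod p|}{p}\right)\left(1-\frac{1}{p}\right)^{-\ell}
\]
equals $1 + o_{w\to\infty}(1)$ for all but $O(H/w^{1/2})$ integers $h \in [1,H]$.

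For part (ii), I would apply \Cref{prop:simplecorrrelation} directly with $g = d_k$, so that $g_p = d_{k,p}$ and $g_W = d_{k,W}$. A routine $p$-adic computation using $\sum_{a\geq 0}\binom{k-1+a}{k-1}p^{-a} = (1-1/p)^{-k}$ gives $\lim_{Y\to\infty}\E_{n\leq Y} d_{k,p}(n) = (1-1/p)^{-(k-1)}$, so $\prod_{p\leq w}(\E d_{k,p})^\ell = (W/\varphi(W))^{(k-1)\ell}$. Factoring this out of the product in \Cref{prop:simplecorrrelation} and recognizing the remaining quantity as the product over $p \leq w$ of the $p$-th factor in the definition \eqref{eq:Chkl} of $C(h,k,\ell)$, I am left with showing that the tail
\[
T_w^{(k,\ell)}(h) \coloneqq \prod_{p > w} \frac{\lim_{Y \to \infty} \E_{n\leq Y} \prod_{j=0}^{\ell-1} d_{k,p}(n+jh)}{(\E d_{k,p})^\ell}
\]
equals $1+o_{w\to\infty}(1)$ for $(1-O(w^{-1/2}))H$ values of $h$.

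The key step, and the only part requiring genuine estimates, is bounding these tails. For $p > w > \ell - 1$ and $p \nmid h$, the residues $\{0,h,\dots,(\ell-1)h\}\bmod p$ are distinct, so at most one of the $n+jh$ can be divisible by $p$; a direct computation then shows the local factor is $1+O_{k,\ell}(1/p^2)$ in both settings. For $p > w$ with $p \mid h$, all $n+jh$ lie in the same residue class mod $p$, and a similar but slightly more involved local computation gives a local factor of $1+O_{k,\ell}(1/p)$. Consequently
\[
\log T_w(h), \log T_w^{(k,\ell)}(h) \ll_{k,\ell} \frac{1}{w} + \sum_{\substack{p \mid h \\ p > w}} \frac{1}{p}.
\]
The final ingredient is a first-moment bound:
\[
\sum_{h \leq H} \sum_{\substack{p \mid h \\ p > w}} \frac{1}{p} \leq H \sum_{p > w} \frac{1}{p^2} \ll \frac{H}{w\log w},
\]
so by Markov's inequality the set of $h \leq H$ with $\sum_{p \mid h, \, p > w} 1/p > w^{-1/2}$ has size $O(H/(w^{1/2}\log w)) = O(H/w^{1/2})$. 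Combining this with the error $O(\exp(-(\log X)^{1/2}/(30\ell)))$ supplied by \Cref{prop:simplecorrrelation}, which is negligible compared to any fixed $o_{w\to\infty}(1)$ quantity since $w \leq \exp((\log X)^{1/2})$, yields both claims.

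The only delicate step is the local computation for $d_{k,p}$ when $p \mid h$; the rest consists of the proposition, generating-function identities, and Markov. No serious obstacle is expected.
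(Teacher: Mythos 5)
Your proposal is correct and follows essentially the same route as the paper: apply Proposition~\ref{prop:simplecorrrelation} (with a $g$ whose components $g_p$, $p\leq w$, agree with yours, and with $g=d_k$ for part (ii)), identify the local factors, and then show the tail product over $p>w$ is $1+O(w^{-1/2})$ outside the set of $h$ with $\sum_{p\mid h,\,p>w}1/p>w^{-1/2}$, which is discarded by exactly the same first-moment/Markov bound. The local computations you leave as "routine" (the value $(1-1/p)^{1-k}$ of $\lim_Y\mathbb{E}_{n\leq Y}d_{k,p}(n)$ and the bounds $C_p=1+O(1/p^2)$ for $p\nmid h$, $C_p=1+O(1/p)$ for $p\mid h$) are precisely the ones carried out in the paper's proof.
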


\begin{proof} For part (i), we apply Proposition \ref{prop:simplecorrrelation} with $g=1_{(\cdot, P(X^{1/2}))=1}$. This gives
\begin{align*}
 \mathbb{E}_{n\leq X}\prod_{j=0}^{\ell-1}\Lambda_w(n+jh)=\left(\frac{W}{\varphi(W)}\right)^{\ell}\prod_{p\leq w}\lim_{Y\to \infty}\mathbb{E}_{n\leq Y}\prod_{j=0}^{\ell-1}1_{(n+jh,p)=1}+O(\exp(-(\log X)^{1/2}/(30\ell))).   
\end{align*}
The limit inside the product over $p$ equals to $1-\nu_p/p$, where $\nu_p$ is the size of the set $\{0,h,\ldots, (\ell-1)h\}\pmod p$. Now the claim follows by noting that 
\begin{align*}
\prod_{p>w}\left(1-\frac{1}{p}\right)^{-\ell}\left(1-\frac{\nu_p}{p}\right)=\exp\left(O\left(\sum_{p>w}\frac{1}{p^2}+\frac{1_{p\mid h}}{p}\right)\right)=1+O(w^{-1/2})   
\end{align*}
unless
\begin{align*}
\sum_{\substack{p\mid h\\p>w}}\frac{1}{p}>w^{-1/2}.    
\end{align*}
But by Markov's inequality the number of such integers $1\leq h\leq H$ is
\begin{align*}
\leq w^{1/2}\sum_{h\leq H}\sum_{\substack{p\mid h\\p>w}}\frac{1}{p}\ll w^{1/2}\sum_{p>w}\frac{H}{p^2}\ll Hw^{-1/2},    
\end{align*}
so they can be included in the exceptional set.

Let us now turn to part (ii). We have $d_{k, p} = 1_p \ast d_{k-1, p}$, where $1_p$ is the multiplicative function such that $1_p(p^\alpha) = 1$ for every $\alpha \geq 0$ and $1_p(q^\alpha) = 0$ for every prime $q \neq p$ and every $\alpha \geq 1$. Thus we have
\begin{align*}
\lim_{Y \to \infty} \mathbb{E}_{n \leq Y} d_{k, p}(n) &= \lim_{Y \to \infty} \sum_{a \geq 0} d_{k-1, p}(p^a) \frac{1}{Y} \sum_{\substack{n \leq Y \\ p^a \mid n}} 1 = \sum_{a \geq 0} \frac{1}{p^a} \binom{k-2+a}{a} \\
&= \sum_{a \geq 0} \binom{1-k}{a} \left(\frac{-1}{p}\right)^a = \left(1-\frac{1}{p}\right)^{1-k},
\end{align*}
so that
\[
\prod_{p \leq w} \lim_{Y \to \infty} \mathbb{E}_{n \leq Y} d_{k, p}(n) = \left(\frac{W}{\varphi(W)}\right)^{k-1}.
\]
Now we apply Proposition~\ref{prop:simplecorrrelation} with $g=d_k$, obtaining
\begin{align}\label{eq:Cp}
 \mathbb{E}_{n\leq X}\prod_{j=0}^{\ell-1}d_{k,W}(n+jh)&=\left(\frac{W}{\varphi(W)}\right)^{(k-1)\ell} \prod_{p\leq w}C_p(h,k,\ell) +O(\exp(-(\log X)^{1/2}/(30\ell))), 
 \end{align}
 where
 \begin{align*}
C_p(h,k,\ell)\coloneqq \lim_{Y\to \infty}\frac{\mathbb{E}_{n\leq Y}\prod_{j=0}^{\ell-1}d_{k,p}(n+jh)}{(\mathbb{E}_{n\leq Y}d_{k,p}(n))^{\ell}}.
 \end{align*}
 Thus it suffices to show that $\prod_{p>w}C_p(h,k,\ell) = 1+o_{w\rightarrow\infty}(1)$ for all but at most $O(w^{-1/2}H)$ values of $1 \leq h \leq H$.
 
 Since $d_{k,p}=1_p*d_{k-1,p}$, we can write
 $$
\mathbb{E}_{n\leq Y}\prod_{j=0}^{\ell-1}d_{k,p}(n+jh) = \sum_{a_0,\ldots,a_{\ell-1} \geq 0} d_{k-1,p}(p^{a_0}) \cdots d_{k-1,p}(p^{a_{\ell-1}}) \cdot \frac{1}{Y}\sum_{\substack{n \leq Y \\ p^{a_j} \mid n+jh\ \forall j\in [0,\ell-1]}} 1.
 $$
Thus we have the estimate
\begin{align}\label{eq:dkpestimate}
\lim_{Y\rightarrow\infty} \mathbb{E}_{n\leq Y}\prod_{j=0}^{\ell-1}d_{k,p}(n+jh) &= 1 + O\left(\sum_{a \geq 1}\sum_{\substack{a_0,\ldots,a_{\ell-1} \geq 0 \\ \max\{a_0,\ldots,a_{\ell-1}\}=a}} d_{k-1,p}(p^{a_0}) \cdots d_{k-1,p}(p^{a_{\ell-1}}) p^{-a}\right)\nonumber \\
&= 1 + O\left(\sum_{a \geq 1} a^{O(1)} p^{-a}\right) = 1 + O\left(\frac{1}{p}\right),
\end{align}
and for any prime $p > \ell$ such that $p\nmid h$ we can compute more precisely
 \begin{align}\label{eq:dkpcorrelation}
 \lim_{Y\to \infty}\mathbb{E}_{n\leq Y}\prod_{j=0}^{\ell-1}d_{k,p}(n+jh)&=\sum_{a \geq 0}\sum_{\substack{a_0,\ldots, a_{\ell-1}\geq 0\\ \{a_0,\ldots,a_{\ell-1}\}=\{a,0,\ldots,0\}}} d_{k-1,p}(p^{a_0})\cdots d_{k-1,p}(p^{a_{\ell-1}})p^{-a}\nonumber\\
 &=1+\frac{(k-1)\ell}{p}+O\left(\frac{1}{p^2}\right).
 \end{align}
 A special case implies that
 \begin{align}\label{eq:dkpsum}
\left(\lim_{Y\to \infty}\mathbb{E}_{n\leq Y}d_{k,p}(n)\right)^{\ell}=1+\frac{(k-1)\ell}{p}+O\left(\frac{1}{p^2}\right). \end{align}
It follows from~\eqref{eq:dkpestimate} and~\eqref{eq:dkpsum} that $C_p(h,k,\ell) = 1+O(1/p)$, and from~\eqref{eq:dkpcorrelation} and~\eqref{eq:dkpsum} that $C_p(h,k,\ell) = 1+O(1/p^2)$ for $p\nmid h$. Hence
$$
\prod_{p > w} C_p(h,k,\ell) = \prod_{\substack{p > w \\ p\mid h}} C_p(h,k,\ell) \cdot \prod_{\substack{p > w \\ p\nmid h}} \left(1 + O\left(\frac{1}{p^2}\right)\right) = \exp\Biggl(O\Biggl(w^{-1}+\sum_{\substack{p > w \\ p\mid h}} \frac{1}{p}\Biggr)\Biggr).
$$
As in part (i), this is $1+O(w^{-1/2})$ for all but $\ll Hw^{-1/2}$ integers $1\leq h\leq H$, and such integers can be included in the exceptional set. Now the claim follows from~\eqref{eq:Cp}.
\end{proof}

We shall also need an upper bound of the correct order of magnitude for correlations of multiplicative functions.

\begin{lemma}[An upper bound for correlations of multiplicative functions]\label{le:henriot}
Let $\ell\in \mathbb{N}$ be fixed. For $1\leq j\leq \ell$, let $1\leq a_j,h_j\leq \ell X$ be integers with $(a_j,h_j)=1$ and $a_ih_j\neq a_jh_i$ for all $i\neq j$. Let $f_1,\ldots, f_{\ell}\colon \mathbb{N}\to \mathbb{R}_{\geq 0}$ be multiplicative functions with $f_j(ab)\leq f_j(a)f_j(b)$ for all $1\leq j\leq \ell$ and $a,b\in \mathbb{N}$. Suppose that $f_j(n)\ll n^{o(1)}$ and that $f_j(p^{r})\leq e^{O(r)}$ for all $1\leq j\leq \ell$, all primes $p$ and all $r\in \mathbb{N}$. Then we have
\begin{align*}
&\sum_{X^{1/2}\leq n\leq X}\prod_{j=1}^{\ell}f_j(a_jn+h_j)\\
&\ll \Delta_DX\prod_{\ell<p\leq X^{1/2}}\left(1-\frac{|\{t\pmod p\colon \prod_{j=1}^{\ell}(a_jt+h_j)\equiv 0\pmod p\}|}{p}\right)\prod_{j=1}^{\ell}\sum_{\substack{n\leq X^{1/2}\\(n,D)=1}}\frac{f_j(n)}{n},    
\end{align*}
where $D=\prod_{1\leq i<j\leq \ell}(a_ih_j-a_jh_i)^2$ and 
\begin{align*}
\Delta_D=\prod_{p\mid D}\left(1+\sum_{\substack{0\leq \nu_1,\ldots, \nu_{\ell}\leq 1\\ (\nu_1,\ldots, \nu_{\ell})\neq (0,\ldots, 0)}}\frac{|\{n\pmod{p^2}\colon p\mid \mid a_jn+h_j\,\forall 1\leq j\leq \ell\}|}{p^2}\prod_{j=1}^{\ell}f_j(p^{\nu_j})\right).    
\end{align*}
In particular, we have
\begin{align}\label{eq:deltaD}
\Delta_D\ll \prod_{p\mid D}\left(1+O\left(\frac{1}{p}\right)\right).    
\end{align}
\end{lemma}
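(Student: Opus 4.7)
The plan is to deduce Lemma~\ref{le:henriot} as a direct application of Henriot's theorem on correlations of multiplicative functions with uniformity in the discriminant, which is purpose-built to estimate sums of the form $\sum_{n \leq X} \prod_{j=1}^{\ell} f_j(a_j n + h_j)$ and provides an explicit local factor at primes dividing $D = \prod_{i<j}(a_i h_j - a_j h_i)^2$. The hypotheses on the $f_j$ imposed in the lemma --- submultiplicativity, $f_j(p^r) \leq e^{O(r)}$, and $f_j(n) \ll n^{o(1)}$ --- are precisely the standard Nair--Tenenbaum growth conditions under which Henriot's framework applies.

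The first step is to quote Henriot's estimate and identify the local factors with those appearing in our statement. For generic primes $p > \ell$ with $p \nmid D$, the $\ell$ linear forms $a_j t + h_j$ have pairwise distinct roots modulo $p$ (since $a_i h_j - a_j h_i \not\equiv 0 \pmod p$), so the local contribution is the Euler factor $1 - \rho(p)/p$, where $\rho(p)$ counts common roots of $\prod_j (a_j t + h_j)$ modulo $p$. The truncation of this Euler product at $X^{1/2}$ matches the truncation of the sums $\sum_{n \leq X^{1/2},\ (n,D)=1} f_j(n)/n$ via the standard Mertens-type manipulation built into Henriot's argument. Primes $p \mid D$ require a separate local factor $\Delta_D$, since the linear forms may exhibit non-generic joint divisibility behavior modulo $p$ and $p^2$; this is captured by the sum over joint valuation patterns $(\nu_1,\dots,\nu_\ell) \in \{0,1\}^\ell$ weighted by $\prod_j f_j(p)$, with higher-valuation contributions absorbed into the assumption $f_j(p^r) \leq e^{O(r)}$.

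The second step is to verify the estimate~\eqref{eq:deltaD}. For each prime $p \mid D$, the inner sum over nonzero $\vec{\nu}$ has $2^\ell - 1 = O_\ell(1)$ terms. For each such term, the local density $|\{n \pmod{p^2} : \cdots\}|/p^2$ is at most $1/p$: the divisibility $p \mid a_j n + h_j$ for any $j$ pins $n$ to a single residue class modulo $p$ (using $(a_j, h_j) = 1$, which forces $p \nmid a_j$). Combining this with the bound $f_j(p) \leq e^{O(1)} = O(1)$, each term in the inner sum contributes $O(1/p)$, which yields the claimed $\Delta_D \ll \prod_{p \mid D}(1 + O(1/p))$.

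The main obstacle is essentially notational: one must match our formulation of $\Delta_D$ to the precise form in which Henriot's local density factor is expressed in the literature, and check that the range of summation $[X^{1/2}, X]$ and the truncations of both the Euler product and the sums $\sum_{n \leq X^{1/2}} f_j(n)/n$ at $X^{1/2}$ are consistent with the setup of his theorem. Once this alignment is made, no further arithmetic input is required and the lemma follows.
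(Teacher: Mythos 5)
Your proposal matches the paper's proof: the lemma is obtained by directly invoking Henriot's Corollary~1 with $x=X^{1/2}$, $y=X$, $Q_j(m)=a_jm+h_j$, identifying the discriminant with $D$ and the local factor with $\Delta_D$, and using the trivial inequality $\sum_{n_1\cdots n_\ell\leq x}a_{n_1,1}\cdots a_{n_\ell,\ell}\leq \prod_j\sum_{n\leq x}a_{n,j}$; your verification of \eqref{eq:deltaD} via the density bound $1/p$ and $f_j(p)=O(1)$ is likewise the intended argument. The only point worth noting is the one the paper also flags: one must cite Henriot's Corollary~1 rather than his Theorem~3, since the latter assumes $Q$ has no fixed prime divisor, which need not hold here.
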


\begin{proof} 
We will apply Henriot's upper bound for correlations of multiplicative functions~\cite[Corollary 1]{Henriot} with $x=X^{1/2}$, $y=X$, $\delta=1/2$, $Q_j(m)=a_jm+h_j$, $Q(m)=\prod_{j=1}^{\ell}Q_j(m)$ and $F(n_1,\ldots, n_{\ell})=\prod_{j=1}^{\ell}f_j(n)$. Note that the discriminant of $Q$ is $\prod_{1\leq i<j\leq \ell}(a_ih_j-a_jh_i)^2=D$, and that the sum of absolute values of coefficients $\|Q\|$ of $Q$ is $\ll X^{\ell}$. Note also that by Gauss' lemma $Q$ is primitive in the sense that the greatest common divisor of its coefficients is $1$. Now the claim follows from~\cite[Corollary 1]{Henriot}
(noting, as in the end of~\cite[Section 3]{Henriot}, that $\tilde{F}=\tilde{G}=F$ in that corollary) and  the trivial inequality $$\sum_{n_1\cdots n_{\ell}\leq x}a_{n_1,1}\cdots a_{n_{\ell},\ell}\leq \prod_{j=1}^{\ell}\sum_{n\leq x}a_{n,j}$$
for $a_{n,j}\geq 0$.\footnote{Note that~\cite[Theorem 3]{Henriot} contains the assumption that $Q$ has no fixed prime divisor, which is not necessarily satisfied in our case, but ~\cite[Corollary 1]{Henriot} does not assume this.}
\end{proof}

\subsection{Proofs of the correlation asymptotics}

The proofs of Theorems~\ref{thm:HL} and~\ref{thm:divisor} require the Gowers uniformity estimates from Theorem~\ref{thm_gowers} together with the generalized von Neumann theorem which we state in the following form.

\begin{lemma}[Generalized von Neumann theorem]\label{le:gvnthm} 
Let $s, d,t,L\geq 1$ be given, and let $D$ be sufficiently large depending on $s,d,t,L$. Let $\nu$ be $(D, o_{N\to \infty}(1))$-pseudorandom at location $0$ and scale $N$, and let $f_1,\ldots, f_t\colon \mathbb{Z}\to \mathbb{R}$ satisfy $|f_i(x)|\leq \nu(x)$ for all $i\in [t]$ and $x\in [N]$. Let $\Psi=(\psi_1,\ldots, \psi_t)$ be a system of affine-linear forms of complexity $s$ with integer coefficients such that all the linear coefficients of $\psi_i$ are bounded by $L$ in modulus, and such that $|\psi_i(0)|\leq DN$. Let $K\subset [-N,N]^d$ be a convex body with $\Psi(K)\subset (0,N]^d$.   Suppose that for some $\delta>0$ we have   
\begin{align*}
\min_{1\leq i\leq t}\|f_i\|_{U^{s+1}[N]} \leq \delta.  
\end{align*}
Then we have
\begin{align*}
\sum_{\mathbf{n}\in K}\prod_{i=1}^tf_i(\psi_i(\mathbf{n}))=o_{\delta\to 0}(N^d).  
\end{align*}
\end{lemma}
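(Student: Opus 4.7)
The plan is to follow the standard Green--Tao--Ziegler template of iterated Cauchy--Schwarz, the only novelty being the accommodation of the convex body $K$ rather than the full product box and the short-interval flavor of pseudorandomness. Without loss of generality assume that $\|f_1\|_{U^{s+1}[N]}\leq \delta$. The assumption that $\Psi=(\psi_1,\ldots,\psi_t)$ is in $s$-normal form means, by definition, that there exist $s+1$ variables $n_{i_0},\ldots,n_{i_s}$ among $n_1,\ldots,n_d$ and a labeling so that $\psi_1$ has nonzero linear coefficient in each of them, while each other form $\psi_i$ ($i\geq 2$) fails to involve at least one of these variables. This is precisely the combinatorial structure needed to isolate $f_1$ and delete $f_2,\dots,f_{s+1}$ (say) via $s+1$ Cauchy--Schwarz applications.

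First I would replace $\sum_{\mathbf{n}\in K}$ by a smooth cutoff $\sum_{\mathbf{n}\in \Z^d}\chi(\mathbf{n}/N)$, where $\chi$ is a smooth compactly supported approximation of $1_K$; by standard boundary/approximation arguments (and since the $\nu$-mass of any $(N/M)$-neighborhood of $\partial K$ is controlled by the linear forms condition on $\nu$) this costs only an $o(N^d)$ error. Then I would apply Cauchy--Schwarz once in each of the variables $n_{i_0},\ldots,n_{i_s}$ in sequence. Each Cauchy--Schwarz in variable $n_{i_j}$ doubles the other variables via a new shift parameter $h_j$, eliminates one of the functions $f_2,\ldots,f_{s+1}$ (absorbing it into a pointwise bound by $\nu$), and duplicates the remaining factors. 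After $s+1$ iterations the original sum is bounded, up to a normalization of size $N^d$, by
\[
\Bigl(\,\sum_{\mathbf n}\,\sum_{\vec h\in[-N,N]^{s+1}} \!\!\!\prod_{\vec\omega\in\{0,1\}^{s+1}} f_1\!\bigl(\psi_1(\mathbf n)+\vec\omega\cdot\vec h\bigr)\,\cdot\, W_\nu(\mathbf n,\vec h)\Bigr)^{1/2^{s+1}},
\]
where $W_\nu$ is a product of $\nu$-evaluations at a bounded family of affine forms in $(\mathbf n,\vec h)$ of coefficients $O_{s,d,t,L}(1)$.

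The next step is to replace $W_\nu$ by its average value $1$. Because the forms appearing in $W_\nu$ have coefficient bounds $O_{s,d,t,L}(1)$ and are pairwise non-proportional (a direct consequence of the $s$-normal form property of $\Psi$), and because $\nu$ is $(D,o_{N\to\infty}(1))$-pseudorandom at location $0$ and scale $N$ with $D$ taken large enough in terms of $s,d,t,L$, the $\nu$-weighted inner sum equals the unweighted one plus $o(N^{d+s+1})$; this is the usual ``$\nu-1$ is Gowers-uniform'' consequence of the linear forms condition. With the weights removed, after the change of variables $\mathbf n\mapsto \psi_1(\mathbf n)$, the remaining expression is essentially $N^{d-1}\cdot\|f_1\cdot 1_{[N]}\|_{U^{s+1}(\Z)}^{2^{s+1}}$, which by hypothesis is $\leq \delta^{2^{s+1}}N^{d+s+1}$. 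Taking the $1/2^{s+1}$-th power and multiplying by the $N^d$ normalization gives the bound $\delta\cdot N^d + o(N^d) = o_{\delta\to 0}(N^d)$, as required.

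The main obstacle is bookkeeping: one must write down explicitly the family of affine forms in $(\mathbf n,\vec h)$ produced after $s+1$ rounds of Cauchy--Schwarz, verify that the $s$-normal form assumption makes them pairwise non-proportional and of bounded height, and quantify exactly how large $D$ (the complexity of the linear forms condition that $\nu$ must satisfy) has to be. A secondary point is handling the convex body $K\subset[-N,N]^d$ uniformly rather than a box, which is routine but requires either a smoothing step as above or a decomposition of $K$ into $O(1)$-many boxes with $\nu$-controlled boundary terms.
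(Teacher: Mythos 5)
The paper offers no proof of this lemma at all: it is imported verbatim from the prequel \cite[Lemma 10.1]{MSTT-all}, where it is established by essentially the argument you outline, namely the Green--Tao generalized von Neumann scheme (iterated Cauchy--Schwarz in the $s$-normal-form variables, removal of the $\nu$-weights via the linear forms condition with $D$ large in terms of $s,d,t,L$, and approximation of the convex body $K$). Your sketch therefore follows the same route as the underlying proof, and the remaining work is only the bookkeeping you already flag.
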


\begin{proof}  If $\Psi$ is in $s$-normal form for some $s$, this follows from \cite[Lemma 10.1]{MSTT-all}.  In general\footnote{We thank Jinseong Kim for stressing the need to cover the non-$s$-normal form case.}, we can first apply \cite[Lemma 4.5]{green-tao} to lift to the $s$-normal form case and then argue exactly as in the end of \cite[\S 4]{green-tao}.
\end{proof}

\begin{proof}[Proof of Theorems~\ref{thm:HL} and~\ref{thm:divisor}]
Let $k,\ell\in \mathbb{N}$ and $\varepsilon>0$ be fixed. Let $f\in \{\Lambda,d_k\}$ and $X^{c_f+\varepsilon}\leq H\leq X^{1-\varepsilon}$, where $c_{f}=1/3$ if $f=\Lambda$ and $c_{f}=0$ if $f=d_k$.  Let $X\geq 3$ and let $w\in \mathbb{N}$ satisfy $2\leq w\leq w(X)$, where $w(X)$ is some function tending to infinity slowly enough. Also let $W=\prod_{p<w}p$. Denote
\begin{align*}
f_w=\begin{cases}\Lambda_w \,\, &\textnormal{if}\quad f=\Lambda\\
 \left(\frac{\varphi(W)}{W}\right)^{k-1} \frac{\log^{k-1} X}{(k-1)!} \cdot d_{k,W} \,\, &\textnormal{if}\quad f=d_k.
\end{cases}
\end{align*}

\textbf{Step 1: Splitting into main term and error term.}
 Writing $f=f_w+(f-f_w)$, we can split
\begin{equation}\label{eq:main_and_error}\begin{split}
 \sum_{n\leq X}\prod_{j=0}^{\ell-1}f(n+jh)&=\sum_{n\leq X}\prod_{j=0}^{\ell-1}f_w(n+jh)+O\big(\max_{\substack{\mathcal{I}\subset \{0,\ldots, \ell-1\}\\\mathcal{I}\neq \emptyset}}|\mathcal{E}_\mathcal{I}(h)|\big),\quad \textnormal{where}\\
 \mathcal{E}_{\mathcal{I}}(h)&=\sum_{n\leq X}\prod_{j\in \mathcal{I}}(f(n+jh)-f_w(n+jh))\prod_{j\in \{0,\ldots, \ell-1\}\setminus \mathcal{I}}f_w(n+jh).\end{split}  \end{equation}
By Corollary~\ref{cor:simplecorrelation}, for $(1-o_{w\to \infty}(1))H$ integers $1\leq h\leq H$, the main term on the right-hand side of~\eqref{eq:main_and_error} produces the desired main term for Theorems~\ref{thm:HL} and~\ref{thm:divisor}. Therefore, we are left with bounding the error term in~\eqref{eq:main_and_error}.

By Markov's inequality, for either $f\in \{\Lambda, d_k\}$ and any $\eta>0$, we have
\begin{align*}
\left|\left\{h\leq H\colon |\mathcal{E}_{\mathcal{I}}(h)|\geq \eta \sum_{n\leq X}\prod_{j=0}^{\ell-1}f_w(n+jh)\right\}\right|\leq \frac{1}{\eta\sum_{n\leq X}\prod_{j=0}^{\ell-1}f_w(n+jh)}\sum_{h\leq H}|\mathcal{E}_{\mathcal{I}}(h)|.    
\end{align*}
Hence, to complete the proof it suffices to show that 
\begin{align}\label{eq:EIh}
\sum_{h\leq H}|\mathcal{E}_{\mathcal{I}}(h)|=o_{w\to \infty}\left(H\sum_{n\leq X}\prod_{j=0}^{\ell-1}f_w(n+jh)\right).    
\end{align}

\textbf{Step 2: Bounding the error term.}
By Corollary~\ref{cor:simplecorrelation}, proving~\eqref{eq:EIh} reduces to showing that, uniformly for all unimodular sequences $c\colon \mathbb{N}\to \mathbb{C}$, we have
\begin{align*}
 \sum_{h\leq H}c(h)\mathcal{E}_{\mathcal{I}}(h)
 =\begin{cases}
o_{w\to \infty}\left(HX\right). &\textnormal{if}\quad f=\Lambda,\\
 o_{w\to \infty}\left( HX (\log X)^{(k-1)\ell} \right) &\textnormal{if}\quad  f=d_k.    
\end{cases}       
\end{align*}
Using for $m\ll X$ the crude bounds $f(m),f_w(m)\ll X^{o(1)}$, we can shift the $n$ sum in the definition of $\mathcal{E}_{\mathcal{I}}(h)$ slightly to obtain
\begin{align}\label{eq:chEI}\begin{aligned}
&\sum_{h\leq H}c(h)\mathcal{E}_{\mathcal{I}}(h)\\
&=\frac{1}{H}\sum_{h\leq H}c(h)\sum_{m\leq X}\sum_{r\leq H}\prod_{j\in \mathcal{I}}(f-f_w)(m+jh+r)\prod_{j\in \{0,\ldots, \ell-1\}\setminus \mathcal{I}}f_w(m+jh+r)+O(H^2X^{o(1)}). 
\end{aligned}\end{align}
Let 
\begin{align*}
W' \coloneqq \begin{cases}W\,\, &\textnormal{if}\quad f=\Lambda\\
W^{w}\,\, &\textnormal{if}\quad f=d_k.
\end{cases}    
\end{align*}
By splitting the variables into residue classes $\pmod{W'}$, the main term on the right-hand side of~\eqref{eq:chEI} becomes
\begin{align*}
&\sum_{0\leq b_1,b_2,b_3< W'}S(b_1,b_2,b_3) + O(W'^3X^{1+o(1)}),
\end{align*}
where
\begin{align*}
&S(b_1,b_2,b_3) \\
&\coloneqq \frac{1}{H}\sum_{h\leq H/W'}c(W'h+b_2)\sum_{m\leq X/W'}\sum_{r\leq H/W'}\prod_{j\in \mathcal{I}}(f-f_w)(W'(m+jh+r)+b_1+jb_2+b_3)\\
&\quad\cdot \prod_{j\in \{0,\ldots, \ell-1\}\setminus \mathcal{I}}f_w(W'(m+jh+r)+b_1+jb_2+b_3).
\end{align*}
Thus, our goal is to show that
\begin{align}\label{eq:Sbgoal}
\sum_{0\leq b_1,b_2,b_3< W'}S(b_1,b_2,b_3)
 =\begin{cases}
o_{w\to \infty}\left(HX\right). &\textnormal{if}\quad f=\Lambda,\\
 o_{w\to \infty}\left( HX (\log X)^{(k-1)\ell} \right) &\textnormal{if}\quad  f=d_k.    
\end{cases}       
\end{align}

Now we bound this separately in the cases $f=\Lambda$ and $f=d_k$.

\textbf{Case $f=\Lambda$.}
Note that $S(b_1,b_2,b_3)\ll (H/W^2)X^{1/2}\log X$ unless $(b_1+jb_2+b_3,W)=1$ for all $0\leq j<\ell$ since if $\Lambda(Wn+r)>0$ and $(W,r)>1$, then $Wn+r$ is a perfect power.  If in turn  $(b_1+jb_2+b_3,W)=1$ for all $0\leq j<\ell$, we can simplify
\begin{align}\label{eq:Sb1b2b3}\begin{split}
&S(b_1,b_2,b_3)\\
&= \frac{1}{H}\sum_{h\leq H/W}c(W'h+b_2)\sum_{m\leq X/W}\sum_{r\leq H/W}\prod_{j\in \mathcal{I}}\left(\frac{\varphi(W)}{W}\Lambda-1\right)(W(m+jh+r)+b_1+jb_2+b_3) \\
&\quad\cdot \prod_{j=0}^{\ell-1}\Lambda_w(b_1+jb_2+b_3). 
\end{split}
\end{align}

Let $D\in \mathbb{N}$ be a large constant. Then by Lemma~\ref{le_pseudoinshort}(i) and Theorem~\ref{thm_gowers}(i) there exists a constant $C_0\geq 1$ and an exceptional set $E\subset [1,X/W]\cap \mathbb{N}$ of size $\ll_A X/\log^{A} X$ such that for any $m\in [1,X/W]\setminus E$ the following hold for any integer $1\leq b\leq W$ with $(b,W)=1$:
\begin{enumerate}
    \item[(i)] The function $(\frac{\varphi(W)}{W}\Lambda(Wn+b)+1)/C_0$ is majorized by a $(D,o_{w\to \infty}(1))$-pseudorandom function at location $m$ and scale $H$.

    \item[(ii)] We have $$\left\|\frac{\varphi(W)}{W}\Lambda(Wn+b)-1\right\|_{U^{D}(m,m+\ell H]}=o_{w\to \infty}(1).$$
\end{enumerate}
Separating on the right-hand side of~\eqref{eq:Sb1b2b3} the terms with $m\not \in E$ and applying Lemma~\ref{le:gvnthm}, it then  follows that
\begin{align}\label{eq:SbE}
S(b_1,b_2,b_3)&=o_{w\to \infty}\left(\frac{HX}{W^3}\prod_{j=0}^{\ell-1}\Lambda_w(b_1+jb_2+b_3)\right)+O\left(\frac{(\log X)^{\ell}}{H}\left(\frac{H}{W}\right)^2\cdot |E|\right).   
\end{align}
Since $|E|\ll_A X/\log^{A} X$, the $O(\cdot)$ error term here is $\ll_A HX/(W^3\log^{100}X)$, which is negligible. 

Observe that for any functions $g_p\colon \mathbb{Z}^3\to \mathbb{C}$ that are $p$-periodic in each coordinate, we have the identity
\begin{align*}
\sum_{0\leq b_1,b_2,b_3<W}\prod_{p<w}g_p(b_1,b_2,b_3)=\prod_{p<w}\sum_{0\leq b_1,b_2,b_3<p}g_p(b_1,b_2,b_3).     
\end{align*}
Applying this with $g_p(b_1,b_2,b_3)=\prod_{j=0}^{\ell-1}\Lambda_p(b_1+jb_2+b_3)$, where $\Lambda_p\coloneqq \frac{p}{p-1}1_{(\cdot, p)=1}$, we obtain
\begin{align*}
\frac{1}{W^3}\sum_{0\leq b_1,b_2,b_3<W}\prod_{j=0}^{\ell-1}\Lambda_w(b_1+jb_2+b_3)&=\prod_{p<w}\frac{1}{p^3}\sum_{0\leq b_1,b_2,b_3<p}\prod_{j=0}^{\ell-1}\Lambda_p(b_1+jb_2+b_3)\\
&= \prod_{p<w}\left(\frac{p}{p-1}\right)^{\ell}\left(\frac{p-1}{p}\left(1-\frac{\min\{\ell, p\}}{p}\right)+\frac{1}{p}\left(1-\frac{1}{p}\right)\right)\\
&\ll \prod_{p<w}\left(1+O\left(\frac{1}{p^2}\right)\right)\ll 1.  
\end{align*}
 Now from~\eqref{eq:SbE} we conclude that
\begin{align*}
\frac{1}{W^3}\sum_{0\leq b_1,b_2,b_3<W}S(b_1,b_2,b_3)=o_{w\to \infty}(HX),    
\end{align*}
as desired.

\textbf{Case $f=d_k$.} Note that $d_{k,W}(W'n+b)=d_{k,W}(b)$ whenever $p^{w}\nmid b$ for all $p\leq w$. Let us say that an integer tuple $(b_1,b_2,b_3)\in [0,W')^3$ is \emph{good} if 
$$p^{w}\nmid b_1+jb_2+b_3 \textnormal{ for all } p\leq w \textnormal { and } j = 0, \dotsc, \ell-1.$$
Recall that we need to prove~\eqref{eq:Sbgoal}. We estimate the sum there separately for good tuples and tuples that are not good.

\textbf{Contribution of good tuples.} For any good $(b_1,b_2,b_3)$, denote for brevity 
$$G_j=G_j(b_1,b_2,b_3)=(W',b_1+jb_2+b_3).$$
Note that with this notation, for any good $(b_1, b_2, b_3)$, we have
\begin{align*}
&d_k(W'(m+jh+r)+b_1+jb_2+b_3) = d_{k}(G_j)d_k\left(\frac{W'(m+jh+r)+b_1+jb_2+b_3}{G_j}\right) \\
&= d_{k, W}(b_1+jb_2+b_3) d_k\left(\frac{W'(m+jh+r)+b_1+jb_2+b_3}{G_j}\right).
\end{align*}
Hence
\begin{align*}
&S(b_1,b_2,b_3)\\
&= \left(\frac{\log^{k-1} X}{(k-1)!} \left(\frac{\varphi(W)}{W}\right)^{k-1}\right)^\ell \frac{1}{H}\sum_{h\leq H/W'}c(W'h+b_2) \prod_{j=0}^{\ell-1}d_{k,W}(b_1+jb_2+b_3) \\
&\sum_{m\leq X/W'}\sum_{r\leq H/W'}\prod_{j\in \mathcal{I}}\Bigg(d_k\Bigg(\frac{W'}{G_j}(m+jh+r)+\frac{b_1+jb_2+b_3}{G_j}\Bigg) \left(\frac{W}{\varphi(W)}\right)^{k-1} \frac{(k-1)!}{\log ^{k-1} X} -1\Bigg).
\end{align*}

Let $D\in \mathbb{N}$ be a large constant. Then by Lemma~\ref{le_pseudoinshort}(ii) and Theorem~\ref{thm_gowers}(iii) there exists a constant $C_0\geq 1$ and an exceptional set $E\subset [1,X/W']\cap \mathbb{N}$ of size $\ll_A X/\log^{A} X$ such that for any $m\in [1,X/W']\setminus E$ the following hold for any  integer $1\leq b\leq W'$ with $(b,W')=1$:
\begin{enumerate}
    \item[(i)] The function 
    \[
    \frac{\log^{1-k} X \left(\frac{W}{\varphi(W)}\right)^{k-1} d_k(W'n+b)+1}{C_0}
    \]
    is majorized by a $(D,o_{w\to \infty}(1))$-pseudorandom function at location $m$ and scale $H$.

    \item[(ii)] We have 
    $$\left\|\left(\frac{W}{\varphi(W)}\right)^{k-1} \frac{(k-1)!}{(\log X)^{k-1}} d_k(W'n+b)-1\right\|_{U^{D}(m,m+\ell H/W']}=o_{w\to \infty}(1).$$
\end{enumerate}
Write $B_j=b_1+jb_2+b_3$ for brevity. From Lemma~\ref{le:gvnthm}, it then follows that
\begin{align}\label{eq:Sb1}\begin{aligned}
S(b_1,b_2,b_3)&=o_{w\to \infty}\left(\frac{HX}{W'^3}(\log X)^{(k-1)\ell} \left(\frac{\varphi(W)}{W}\right)^{(k-1)\ell} \prod_{j=0}^{\ell-1}d_{k,W}(b_1+jb_2+b_3)\right)\\
&\quad +O\left(\frac{(\log X)^{(k-1)\ell}}{H}\sum_{h\leq H/W'}\sum_{\substack{m\leq X/W'\\m\in E}}\sum_{r\leq H/W'}\prod_{j=0}^{\ell-1}d_k(W'(m+jh+r)+B_j)\right).
\end{aligned}\end{align}
Using the inequality $x_0\cdots x_{\ell-1}\leq x_0^{\ell}+\cdots +x_{\ell-1}^{\ell}$ for $x_j\geq 0$ and the estimate
$$\max_{1\leq x\leq 2X}\max_{1\leq b\leq (\ell+1)W'}\sum_{x\leq n\leq x+H}d_k(W'n+b)^{\ell}\ll H(\log X)^{O(1)}$$
coming from Shiu's bound (\Cref{le:shiu}),
and recalling that $|E|\ll_A X/\log^{A} X$, the $O(\cdot)$ term in~\eqref{eq:Sb1} gives a contribution of $\ll HX/(W'^3(\log X)^{100})$, say, which is negligible.

Since the function $d_{k,W}$ is constant on those progressions $b\pmod{W'}$ for which $p^{w}\nmid b$ for all $p\leq w$, we obtain
\begin{align*}
&\left(\frac{\varphi(W)}{W}\right)^{(k-1)\ell}\frac{1}{W'^3}\sum_{\substack{0\leq b_1,b_2,b_3<W'\\b_1+jb_2+b_3 \textnormal{ good}}}\prod_{j=0}^{\ell-1}d_{k,W}(b_1+jb_2+b_3)\\
&\quad =\left(\frac{\varphi(W)}{W}\right)^{(k-1)\ell}\frac{1}{W'^3\lfloor X/W'\rfloor^2}\sum_{\substack{b_1,b_2\leq W'\lfloor X/W'\rfloor\\0\leq b_3< W'\\b_1+jb_2+b_3 \textnormal{ good}}}\prod_{j=0}^{\ell-1}d_{k,W}(b_1+jb_2+b_3)\\
&\quad \ll \left(\frac{\varphi(W)}{W}\right)^{(k-1)\ell}\frac{1}{X^2}\sum_{b_1,b_2\leq X+W'}\prod_{j=0}^{\ell-1}d_{k,W}(b_1+jb_2).    
\end{align*}
By Lemma~\ref{le:henriot}, this is
\begin{align*}
&\ll \left(\frac{\varphi(W)}{W}\right)^{(k-1)\ell}\frac{1}{X}\sum_{b_2\leq 2X}\prod_{p\mid b_2}\left(1+O\left(\frac{1}{p}\right)\right)\prod_{\ell<p\leq (2X)^{1/2}}\left(1-\frac{\ell}{p}\right)\left(\sum_{n\leq (2X)^{1/2}}\frac{d_{k,W}(n)}{n}\right)^{\ell}\\
&\ll \frac{1}{X}\sum_{b_2\leq 2X}\prod_{p\mid b_2}\left(1+O\left(\frac{1}{p}\right)\right)\ll 1. 
\end{align*}
Returning to~\eqref{eq:Sb1}, we obtain
\begin{align*}
\sum_{\substack{0\leq b_1,b_2,b_3<W'\\(b_1,b_2,b_3)\textnormal{ good}}}S(b_1,b_2,b_3)=o_{w\to \infty}(HX(\log X)^{k\ell-\ell}).   
\end{align*}

\textbf{Contribution of non-good tuples.} It remains to bound the contribution of the non-good tuples $(b_1,b_2,b_3)\in [0,W')^3$ to~\eqref{eq:Sbgoal}. We can split such tuples according to the greatest common divisors 
$$G_j=(W',b_1+jb_2+b_3).$$
Let
\begin{align}\label{eq:sigmadef}
\Sigma(G_0, \dotsc, G_{\ell-1})\coloneqq \frac{1}{W'^3}\sum_{\substack{0\leq b_1,b_2,b_3< W'\\(W',b_1+jb_2+b_3)=G_j\, \forall j}}S(b_1,b_2,b_3).    
\end{align}
Note that since $(b_1, b_2, b_3)$ is not good, $p^w\mid G_j$ for some $0\leq j\leq \ell-1$ and $2\leq p\leq w$.

Then it suffices to show that
\begin{align*}
\sum_{\substack{G_0, \dotsc, G_{\ell-1} \mid W^{\infty}\\ \exists j: \, p^{w}\mid G_j \textnormal{ for some } p\leq w}}|\Sigma(G_0, \dotsc, G_{\ell-1})|=o_{w\to \infty}(HX(\log X)^{(k-1)\ell}).      
\end{align*}

Let $(b_1,b_2,b_3)$ be a tuple that is not good. Write $B_j=b_1+jb_2+b_3$ for brevity.  Using the inequalities $d_k(ab)\leq d_k(a)d_k(b)$, $d_{k,W}(ab)\leq d_{k,W}(a)d_{k,W}(b)$ for $a,b\in \mathbb{N}$, we see that the contribution of $S(b_1,b_2,b_3)$ is
\begin{align*}
&\ll \frac{1}{H}\sum_{h\leq H/W'}\sum_{m\leq X/W'}\sum_{r\leq H/W'}\prod_{j=0}^{\ell-1}\left(d_k+(\log^{k-1} X) \left(\frac{\varphi(W)}{W}\right)^{k-1}d_{k,W} \right)(W'(m+jh+r)+B_j)\\
&\ll \frac{1}{W'}\sum_{h\leq H/W'}\sum_{m\leq 2X/W'}\prod_{j=0}^{\ell-1} \left(d_k+(\log^{k-1} X) \left(\frac{\varphi(W)}{W}\right)^{k-1}d_{k,W}\right)(W'(m+jh)+B_j)\\
&\ll \frac{\prod_{j=0}^{\ell-1} d_k(G_j)}{W'}\sum_{h\leq H/W'}\sum_{m\leq 2X/W'}\prod_{j=0}^{\ell-1}\left(d_k+(\log^{k-1} X) \left(\frac{\varphi(W)}{W}\right)^{k-1}d_{k,W}\right)\left(\frac{W'}{G_j}(m+jh)+\frac{B_j}{G_j}\right).
\end{align*}
 We have the crude bound $\prod_{j=0}^{\ell-1} d_k(G_j)\ll (G_0 \dotsm G_{\ell-1})^{1/(100 \ell)}$, say. Hence, for some $\mathcal{J}\subset \{0,\ldots, \ell-1\}$, the above is
\begin{align}\label{eq:G100}\begin{split}
&\ll  \frac{(G_0 \dotsm G_{\ell-1})^{1/(100 \ell)}}{W'}\sum_{h\leq H/W'}\sum_{m\leq 2X/W'}\prod_{j\in \mathcal{J}} (\log^{k-1} X) d_{k,W}\left(\frac{W'}{G_j}(m+jh)+\frac{B_j}{G_j}\right)\\
&\quad \cdot \prod_{j\in \{0,\ldots, \ell-1\}\setminus \mathcal{J}}d_k\left(\frac{W'}{G_j}(m+jh)+\frac{B_j}{G_j}\right).  
\end{split} \end{align}
We now apply Lemma~\ref{le:henriot} to upper bound this. We take $a_j=W'/G_j$, $h_j=(W'/G_j)jh+B_j/G_j$ there and note that $(a_j,h_j)=1$. Denote $D=\prod_{i<j}(a_ih_j-a_jh_i)^2$. Then 
$$p \mid D \implies p \mid W'\prod_{\substack{|i|\leq \ell\\i\neq 0}}\left(W'ih+ib_2\right).$$
Then Lemma~\ref{le:henriot},~\eqref{eq:deltaD} and Shiu's bound (\Cref{le:shiu}) imply that~\eqref{eq:G100} is
\begin{align*}
&\ll \frac{(G_0 \dotsm G_{\ell-1})^{1/(100 \ell)}}{W'}\sum_{h\leq H/W'} \prod_{p\mid W'}\left(1+O\left(\frac{1}{p}\right)\right)\prod_{\substack{|i|\leq \ell\\i\neq 0}}\prod_{p\mid W'ih+ib_2}\left(1+O\left(\frac{1}{p}\right)\right)\\
&\quad \cdot \frac{X}{W'}\prod_{\ell<p\leq (2X/W')^{1/2}}\left(1-\frac{\ell\cdot 1_{p>w}}{p}\right)\prod_{j=0}^{\ell-1}\sum_{n\leq (2X/W')^{1/2}}\frac{\max\{d_k(n),(\log^{k-1} X)d_{k,W}(n)\}}{n}\\
&\ll  \frac{(G_0 \dotsm G_{\ell-1})^{1/(100 \ell)}}{W'}\sum_{h\leq H/W'} (\log w)^{O(1)}\frac{X}{W'}(\log X)^{(k-1)\ell}\prod_{\substack{|i|\leq \ell\\i\neq 0}}\prod_{p\mid W'ih+ib_2}\left(1+O\left(\frac{1}{p}\right)\right).
\end{align*}
Using the inequality $x_1\cdots x_{\ell}\leq \ell \max_{i}x_i^{\ell}$ for $x_j\geq 0$, we see from Shiu's bound that 
\begin{align*}
\sum_{h\leq H/W'}\prod_{\substack{|i|\leq \ell\\i\neq 0}}\prod_{p\mid W'ih+ib_2}\left(1+O\left(\frac{1}{p}\right)\right)&\ll \max_{\substack{|i|\leq \ell\\i\neq 0}}\sum_{h\leq H/W'}\prod_{p\mid W'ih+ib_2}\left(1+O\left(\frac{1}{p}\right)\right)^{\ell}\\
&\ll \max_{\substack{|i|\leq \ell\\i\neq 0}}\sum_{\substack{h'\leq H\ell\\h'\equiv ib_2\pmod{W'}}}\prod_{p\mid h'}\left(1+O\left(\frac{1}{p}\right)\right)^{\ell}\\
&\ll (\log w)^{O(1)}\frac{H}{W'}.
\end{align*}

    Since $(\log w)^{B}\ll_B (\max G_j)^{1/100}$ for any $B>0$ and the number of tuples $(b_1,b_2,b_3)\in [1,W']^3$ with $(W',b_1+jb_2+b_3)=G_j$ for all $j$ is $\ll (W')^3/\max_j G_j$, recalling~\eqref{eq:sigmadef}we conclude that
\begin{align*}
|\Sigma(G_0,\ldots, G_{\ell-1})|\ll \frac{1}{W'^3}(\log^{(k-1)\ell} X)HX(\max_j G_j)^{2/100-1}\ll \frac{(\log^{(k-1)\ell} X)HX}{(\max_j G_j)^{1/2}}.     
\end{align*}
Hence
\begin{align*}
&(\log X)^{-(k-1)\ell}(HX)^{-1}\sum_{\substack{G_0, \dotsc, G_{\ell-1} \mid W^{\infty}\\\exists j\colon p^{w}\mid G_j\textnormal{ for some } p\leq w}}\Sigma(G_0, \dotsc, G_{\ell-1}) \\
&\ll \sum_{\substack{G_0, \dotsc, G_{\ell-1} \mid W^{\infty}\\\exists j\colon p^{w}\mid G_j\textnormal{ for some } p\leq w}} (G_0 \dotsm G_{\ell-1})^{-1/(2\ell)}\\
&\ll 2^{-w/(6\ell)}\sum_{G_0, \dotsc, G_{\ell-1} \mid W^{\infty}}(G_0 \dotsm G_{\ell-1})^{-1/(3\ell)} = 2^{-w/(6\ell)}\prod_{p\leq w}\left(1+O\left(\frac{1}{p^{1/(3\ell)}}\right)\right)^\ell\\
&\ll 2^{-w/(6\ell)}\exp(O(w^{1-1/(3\ell)})) \ll 2^{-w/(10\ell)},
\end{align*}
say. This completes the proof. 
\end{proof}

\section{Fixed Fourier estimate for \texorpdfstring{$d_2$}{d2}}\label{d2-sec}

In this section we establish Theorem~\ref{thm_d2}.

\begin{proof}[Proof of Theorem~\ref{thm_d2}]
Let $Q=X^{\varepsilon/2} \leq H^{1/2}$. By Dirichlet's approximation theorem, there exist some integers $a,q$ with $(a,q)=1$ and $1\leq q\leq Q$ such that $|\alpha-a/q|\leq 1/(qQ)$. We take $H' = q Q^{1/2}$. Then, for all $n\in [x,x+H']$ we have
$$
e(\alpha n)=e(an/q) \cdot e((\alpha-a/q)x)+O(H'/(qQ))) =e(an/q) \cdot e((\alpha-a/q)x)+O(X^{-\varepsilon/4}).
$$
Hence, by splitting into intervals of length $H' \leq H^{3/4}$, it suffices to show that we have
$$
    \Bigg |\sum_{x < n \leq x + H'} (d(n) - d^{\sharp}(n)) e(an/q) \Bigg |^{*}  \leq H'X^{ - \varepsilon/1000}/2.
    $$
for all $x\in [X,2X]$ outside of a set of measure $O_{\varepsilon}(X^{1-\varepsilon/400})$.  

Let $H_1=X^{1-1/100}$. By splitting into progressions modulo $q$ and applying~\cite[Proposition 3.2(ii)]{MSTT-all}, we see that for any $x\in [X,2X]$ we have 
\begin{align*}
 \Bigg |\sum_{x < n \leq x + H_1} (d(n) - d^{\sharp}(n)) e(an/q) \Bigg |^{*}  \leq q  \Bigg |\sum_{x < n \leq x + H_1} (d(n) - d^{\sharp}(n)) \Bigg |^{*}\ll Q\frac{H_1^2}{X}\ll H_1X^{-\varepsilon}.
\end{align*}

Hence, it suffices to show that
\begin{align*}
\max_{P \textnormal{ arithmetic progression}}\left|\frac{1}{H'}\sum_{\substack{x<n\leq x+H'\\n\in P}}d(n)e\left(\frac{an}{q}\right)-\frac{1}{H_1}\sum_{\substack{x<n\leq x+H_1\\n\in P}}d(n)e\left(\frac{an}{q}\right)\right|\ll X^{-\varepsilon/1000}    
\end{align*}
and
\begin{align*}
\max_{P \textnormal{ arithmetic progression}}\left|\frac{1}{H'}\sum_{\substack{x<n\leq x+H'\\n\in P}}d^{\sharp}(n)e\left(\frac{an}{q}\right)-\frac{1}{H_1}\sum_{\substack{x<n\leq x+H_1\\n\in P}}d^{\sharp}(n)e\left(\frac{an}{q}\right)\right|\ll X^{-\varepsilon/1000}  \end{align*}
for all $x\in [X,2X]$ outside of a set of measure $O_{\varepsilon}(X^{1-\varepsilon/400})$. In the maxima above, we may restrict to arithmetic progressions of modulus $\leq X^{\varepsilon/999}$ thanks to the triangle inequality and the divisor bound. Hence, by the union bound, it suffices to show that for any $1\leq b\leq r\leq X^{\varepsilon/999}$ we have
\begin{align*}
\left|\frac{1}{H'}\sum_{\substack{x<n\leq x+H'\\n\equiv b\pmod r}}d(n)e\left(\frac{an}{q}\right)-\frac{1}{H_1}\sum_{\substack{x<n\leq x+H_1\\n\equiv b\pmod r}}d(n)e\left(\frac{an}{q}\right)\right|\leq \frac{1}{10} X^{-\varepsilon/1000}    
\end{align*}
and
\begin{align*}
\left|\frac{1}{H'}\sum_{\substack{x<n\leq x+H'\\n\equiv b\pmod r}}d^{\sharp}(n)e\left(\frac{an}{q}\right)-\frac{1}{H_1}\sum_{\substack{x<n\leq x+H_1\\n\equiv b\pmod r}}d^{\sharp}(n)e\left(\frac{an}{q}\right)\right|\leq \frac{1}{10} X^{-\varepsilon/1000}  \end{align*}
for all $x\in [X,2X]$ outside of a set of measure $O_{\varepsilon}(X^{1-\varepsilon/300})$. By Fourier expanding the summation condition $n\equiv b\pmod r$, we reduce to showing that for any $1\leq q'\leq q X^{\varepsilon/999}$ and $(a',q')=1$ we have  
\begin{align}\label{eq:d2}
\left|\frac{1}{H'}\sum_{x<n\leq x+H'}d(n)e\left(\frac{a'n}{q'}\right)-\frac{1}{H_1}\sum_{x<n\leq x+H_1}d(n)e\left(\frac{a'n}{q'}\right)\right|\leq \frac{1}{10}X^{-\varepsilon/1000}    
\end{align}
and
\begin{align}\label{eq:sharp}
\left|\frac{1}{H'}\sum_{x<n\leq x+H'}d^{\sharp}(n)e\left(\frac{a'n}{q'}\right)-\frac{1}{H_1}\sum_{x<n\leq x+H_1}d^{\sharp}(n)e\left(\frac{a'n}{q'}\right)\right|\leq \frac{1}{10} X^{-\varepsilon/1000}  \end{align}
for all $x\in [X,2X]$ outside of a set of measure $O_{\varepsilon}(X^{1-\varepsilon/300})$.

By Lemma~\ref{le:dkdecompose}(ii), 
the function $d_k^{\sharp}$ is a sum of $O(1)$ terms of the form $a*\psi$, where $|a(m)|\ll d_{k-1}(m)$ and $a$ is supported on $[1,X^{\varepsilon/5}]$ and each function $\psi$ is of the form $\psi(n)=(\log^{\ell} n)/(\log^{\ell} X)$ with $0\leq \ell\leq k-1$ an integer. The estimate~\eqref{eq:sharp} follows from this combined with the triangle inequality and partial summation (splitting into residue classes $\pmod{q'}$).

We are left with showing~\eqref{eq:d2}. By Markov's inequality, it suffices to show that
\begin{align*}
\int_{X}^{2X}\left|\frac{1}{H'}\sum_{x<n\leq x+H'}d(n)e\left(\frac{a'n}{q'}\right)-\frac{1}{H_1}\sum_{x<n\leq x+H_1}d(n)e\left(\frac{a'n}{q'}\right)\right|\,d x\ll X^{1-\varepsilon/100}.    
\end{align*}
Factorizing $n=\ell m$ with $\ell\mid (q')^{\infty}$ and  $(m,q')=1$, and applying the triangle inequality, we reduce to showing that
\begin{align*}
&\sum_{\ell \mid (q')^{\infty}}d(\ell)\int_{X}^{2X}\left|\frac{1}{H'}\sum_{x/\ell<m\leq x/\ell+H'/\ell}d(m)e\left(\frac{a'\ell m}{q'}\right)1_{(m,q')=1}\right.\\
&\left.\quad -\frac{1}{H_1}\sum_{x/\ell<m\leq x/\ell+H_1/\ell}d(m)e\left(\frac{a'\ell m}{q'}\right)1_{(m,q')=1}\right|\,d x\ll X^{1-\varepsilon/100}.  \end{align*}
For any $L\geq 1$, we have the bound
\begin{align*}
\sum_{\substack{\ell \mid (q')^{\infty}\\\ell \geq L}}\frac{1}{\ell}\leq \frac{1}{\sqrt{L}}\sum_{\substack{\ell \mid (q')^{\infty}\\\ell \geq L}}\frac{1}{\ell^{1/2}}\leq\frac{1}{\sqrt{L}}\prod_{p\mid q'}\left(1+\frac{1}{p^{1/2}}+O\left(\frac{1}{p}\right)\right)\ll \frac{1}{\sqrt{L}}\exp(O((\log q')^{1/2})).    
\end{align*}
Combining this with the triangle inequality, we see that it suffices to show that
\begin{align*}
&\int_{X}^{2X}\left|\frac{1}{H'}\sum_{x/\ell<m\leq x/\ell+H'/\ell}d(m)e\left(\frac{a'\ell m}{q'}\right)1_{(m,q')=1}\right.\\
&\left.\quad -\frac{1}{H_1}\sum_{x/\ell<m\leq x/\ell+H_1/\ell}d(m)e\left(\frac{a'\ell m}{q'}\right)1_{(m,q')=1}\right|\,d x\ll X^{1-\varepsilon/30}   
\end{align*}
for any $1\leq \ell\leq X^{\varepsilon/40}$. 

For any integer $b$, we have the Fourier expansion
\begin{align*}
e\left(\frac{bm}{q'}\right)1_{(m,q')=1}=\frac{1}{\varphi(q')}\sum_{\chi\pmod{q'}}\tau(\overline{\chi})\chi(bm),  \end{align*}
where $\tau(\chi)$ is the Gauss sum. Using the bound $|\tau(\chi)|\leq q^{1/2}$ for non-principal $\chi\pmod q$ together with the triangle inequality and Cauchy--Schwarz, we reduce to showing that
\begin{align*}
&\sum_{\chi\pmod{q'}}\int_{X}^{2X}\left|\frac{1}{H'}\sum_{x/\ell<m\leq (x+H')/\ell}d(m)\chi(m )-\frac{1}{H_1}\sum_{x/\ell<m\leq (x+H_1)/\ell}d(m)\chi(m)\right|^2\,d x\ll X^{1-\varepsilon/14}.   
\end{align*}

By Lemma~\ref{le:perron}(i), we reduce to showing that
\begin{align}\label{eq:chisum}
\sum_{\chi\pmod{q'}}\max_{T\geq \frac{X}{H'}}\frac{X/H'}{T}\int_{X^{1/400}\leq |t|\leq T}|D(1+it,\chi)|^2\, dt\ll X^{-\varepsilon/14},
\end{align}
where 
\begin{align*}
D(s,\chi)\coloneqq \sum_{X/8<n\leq 8X}d(n)\chi(n)n^{-s}.
\end{align*}
By the mean value theorem for Dirichlet polynomials and crude estimation, we may restrict the maximum over $T$ to $T\leq X^{1-\varepsilon/1000}$. 

Using Perron's formula,  we have, for $|t| \leq X^{1-\varepsilon/1000}$
\begin{align*}
D(1+it,\chi)=\frac{1}{2\pi i}\int_{1-i|t|/2}^{1+i|t|/2}L(1+it+w,\chi)^2\frac{(8X)^{w}-(X/8)^{w}}{w}\, dw +O\left(\frac{\log X}{|t|}\right).
\end{align*}
Moving the line of integration to $\Re(w)=-1/2$ and applying the triangle inequality, we obtain
\begin{align*}
|D(1+it,\chi)|\ll X^{-1/2} \int_{-|t|/2}^{|t|/2}|L(1/2+i(t+u), \chi)|^2\frac{du}{1+|u|} +O\left(\frac{\log X}{|t|}\right).
\end{align*}
Substituting this to~\eqref{eq:chisum} and applying the Cauchy--Schwarz inequality, we see that it suffices to show that
\begin{align*}
\sum_{\chi\pmod{q'}}\max_{\frac{X}{H'}\leq T\leq X}\frac{X/H'}{T}\int_{X^{1/500}\leq |t|\leq T}|L(1/2+it,\chi)|^4\, dt\ll X^{1-\varepsilon/10}.
\end{align*}
By splitting characters according to their conductors, and using the Euler product formula, we reduce to showing that for all $r\mid q'$ we have
\begin{align*}
\asum_{\chi'\pmod{r}}\max_{\frac{X}{H'}\leq T\leq X}\frac{X/H'}{T}\int_{X^{1/500}\leq |t|\leq T}|L(1/2+it,\chi')|^4\prod_{p\mid \frac{q'}{r}}\left(1+p^{-1/2}\right)^4\, dt\ll X^{1-\varepsilon/9}.
\end{align*}
where $\asum$ denotes a sum over primitive characters. Without loss of generality, we may restrict the maximum over $T$ to powers of two, so, upper bounding the product over $p$ by $x^{\varepsilon/100}$, it suffices to show that
\begin{align}\label{eq:final}
\max_{\frac{X}{H'}\leq T\leq X}\frac{X/H'}{T}\asum_{\chi'\pmod{r}}\int_{X^{1/200}\leq |t|\leq T}|L(1/2+it,\chi')|^4\, dt\ll X^{1-\varepsilon/8}.
\end{align}
But by the fourth moment bound for Dirichlet $L$-functions~\cite[Theorem 10.1]{Montgomery-topics}, the left-hand side of~\eqref{eq:final} is
\begin{align*}
\ll \varphi(r)\frac{X}{H'} \log^4 X.
\end{align*}
The bound~\eqref{eq:final} immediately follows from this since $r \leq q' \leq q X^{\varepsilon/999}$ and $H' = q Q^{1/2} = qX^{\varepsilon/4}$.
\end{proof}

\bibliography{refs}
\bibliographystyle{plain}

\end{document}